\newcommand{\+}{\nobreakdash-}
\renewcommand{\:}{\colon}
\newcommand{\rarrow}{\longrightarrow}
\newcommand{\ot}{\otimes}
\DeclareMathOperator{\Hom}{Hom}
\DeclareMathOperator{\Ext}{Ext}
\DeclareMathOperator{\Tor}{Tor}
\newcommand{\Bimod}{{\operatorname{\mathsf{--Bimod--}}}}
\newcommand{\Modl}{{\operatorname{\mathsf{--Mod}}}}
\newcommand{\Modr}{{\operatorname{\mathsf{Mod--}}}}
\newcommand{\nBimodn}{{\operatorname{\mathsf{--{}^n Bimod^n--}}}}
\newcommand{\Modln}{{\operatorname{\mathsf{--{}^n Mod}}}}
\newcommand{\Modrn}{{\operatorname{\mathsf{Mod^n--}}}}
\newcommand{\Modlz}{{\operatorname{\mathsf{--{}^0 Mod}}}}
\newcommand{\Modrz}{{\operatorname{\mathsf{Mod^0--}}}}
\newcommand{\tBimodt}{{\operatorname{\mathsf{--{}^t Bimod^t--}}}}
\newcommand{\Modlt}{{\operatorname{\mathsf{--{}^t Mod}}}}
\newcommand{\Modrt}{{\operatorname{\mathsf{Mod^t--}}}}
\newcommand{\sBimods}{{\operatorname{\mathsf{--{}^s Bimod^s--}}}}
\newcommand{\Modls}{{\operatorname{\mathsf{--{}^s Mod}}}}
\newcommand{\Modrs}{{\operatorname{\mathsf{Mod^s--}}}}
\newcommand{\Modlc}{{\operatorname{\mathsf{--{}^c Mod}}}}
\newcommand{\Ab}{\mathsf{Ab}}
\newcommand{\rop}{{\mathrm{op}}}
\newcommand{\sop}{{\mathsf{op}}}
\newcommand{\id}{{\mathrm{id}}}
\newcommand{\lrarrow}{\mskip.5\thinmuskip\relbar\joinrel\relbar
   \joinrel\rightarrow\mskip.5\thinmuskip\relax}
\newcommand{\bu}{{\text{\smaller\smaller$\scriptstyle\bullet$}}}
\newcommand{\C}{\mathcal C}
\newcommand{\bS}{\boldsymbol{\mathcal S}}
\newcommand{\sA}{\mathsf A}
\newcommand{\sC}{\mathsf C}
\newcommand{\sD}{\mathsf D}
\newcommand{\sE}{\mathsf E}
\newcommand{\sN}{\mathsf N}
\newcommand{\sP}{\mathsf P}
\newcommand{\sM}{\mathsf M}
\newcommand{\sS}{\mathsf S}
\newcommand{\sT}{\mathsf T}
\newcommand{\boZ}{\mathbb Z}
\newcommand{\boQ}{\mathbb Q}
\newcommand{\Section}[1]{\bigskip\section{#1}\medskip}
\theoremstyle{plain}
\newtheorem{thm}{Theorem}[section]
\newtheorem{prop}[thm]{Proposition}
\newtheorem{lem}[thm]{Lemma}
\newtheorem{cor}[thm]{Corollary}
\theoremstyle{definition}
\newtheorem{rem}[thm]{Remark}
\newtheorem{rems}[thm]{Remarks}
\newtheorem{ex}[thm]{Example}
\newtheorem{defn}[thm]{Definition}
\begin{document}

\title{Tensor-Hom formalism for modules \\ over nonunital rings}

\author{Leonid Positselski}

\address{Institute of Mathematics, Czech Academy of Sciences \\
\v Zitn\'a~25, 115~67 Prague~1 \\ Czech Republic} 

\email{positselski@math.cas.cz}

\begin{abstract}
 We say that a ring $R$ is t\+unital if the natural map
$R\ot_RR\rarrow R$ is an isomorphism, and a left $R$\+module $P$ is
c\+unital if the natural map $P\rarrow\Hom_R(R,P)$ is an isomorphism.
 For a t\+unital ring $R$, the category of t\+unital left $R$\+modules
is a unital left module category over an associative, unital monoidal
category of t\+unital $R$\+$R$\+bimodules, while the category of
c\+unital left $R$\+modules is opposite to a unital right module
category over the same monoidal category.
 Any left s\+unital ring $R$, as defined by Tominaga in 1975, is
t\+unital; and a left $R$\+module is s\+unital if and only if it is
t\+unital.
 For any (nonunital) ring $R$, the full subcategory of s\+unital
$R$\+modules is a hereditary torsion class in the category of
nonunital $R$\+modules; and for rings $R$ arising from small
preadditive categories, the full subcategory of c\+unital $R$\+modules
is closed under kernels and cokernels.
 However, over a t\+unital ring $R$, the full subcategory of t\+unital
modules need not be closed under kernels, and the full subcategory
of c\+unital modules need not be closed under cokernels in the category
of nonunital modules.
 Nevertheless, the categories of t\+unital and c\+unital left
$R$\+modules are abelian and naturally equivalent to each other;
they are also equivalent to the quotient category of the abelian
category of nonunital modules by the Serre subcategory of modules
with zero action of~$R$.
 This is a particular case of the result from a 1996 manuscript of
Quillen.
 We also discuss related flatness, projectivity, and injectivity
properties; and study the behavior of t\+unitality and c\+unitality
under the restriction of scalars for a homomorphism of nonunital rings.
 Our motivation comes from the theory of semialgebras over coalgebras
over fields.
\end{abstract}

\maketitle

\tableofcontents

\section*{Introduction}
\medskip

 Let $A$ be an associative, unital ring.
 Then the category of unital $A$\+$A$\+bimodules $A\Bimod A$ is
an associative, unital monoidal category with respect to
the operation~$\ot_A$ of tensor product over~$A$.
 The diagonal $A$\+$A$\+bimodule $A$ is the unit object of this monoidal
category.
 Furthermore, the category of unital left $A$\+modules $A\Modl$ is
a unital left module category over the monoidal category $A\Bimod A$,
while the category of unital right $A$\+modules $\Modr A$ is a unital
right module category over $A\Bimod A$.
 The same tensor product operation~$\ot_A$ provides these module
category structures.
 Finally, the tensor product of right and left $A$\+modules defines
a pairing between the right module category $\Modr A$ and the left
module category $A\Modl$ over the monoidal category $A\Bimod A$,
taking values in the category of abelian groups~$\Ab$.
 This structure is what we mean by the \emph{tensor formalism for
$A$\+modules}.
 We refer to the introduction to the book~\cite{Psemi} for a much
more general and abstract discussion of this formalism.

 Explicitly, the tensor formalism for $A$\+modules is expressed in
three natural isomorphisms of tensor products.
 For any right $A$\+module $N$, any $A$\+$A$\+bimodule $K$, and any
left $A$\+module $M$, one has a natural isomorphism of abelian groups
\begin{equation} \label{tensor-associativity}
 (N\ot_A K)\ot_A M\simeq N\ot_A(K\ot_AM).
\end{equation}
 For any left $A$\+module $M$, the natural left $A$\+module morphism
\begin{equation} \label{tensor-left-unitality}
 a\ot m\longmapsto am\:A\ot_AM\lrarrow M
\end{equation}
is an isomorphism.
 For any right $A$\+module $N$, the natural right $A$\+module morphism
\begin{equation} \label{tensor-right-unitality}
 n\ot a\longmapsto na\:N\ot_AA\lrarrow N
\end{equation}
is an isomorphism.

 Now let $R$ be a possibly nonunital, associative ring.
 Then the category of nonunital $R$\+$R$\+bimodules $R\nBimodn R$
is still an associative, unital monoidal category with respect to
the operation~$\ot_R$ of tensor product over~$R$.
 However, the $R$\+$R$\+bimodule $R$ is \emph{not} the unit object
of this monoidal category.

 To see what is going on, it suffices to consider the ring
$\widetilde R=\boZ\oplus R$ obtained by adjoining formally to $R$
a (new) unit together with its integer multiples.
 So the elements of $\widetilde R$ are the formal expressions $n+r$,
where $n\in\boZ$ and $r\in R$; and the multiplication in $\widetilde R$
is given by the obvious rule making $\boZ$ a subring in $\widetilde R$
and $R$ a two-sided ideal in~$\widetilde R$.
 Then the category of nonunital $R$\+$R$\+bimodules is naturally
equivalent to the category of unital
$\widetilde R$\+$\widetilde R$\+bimodules, and similarly for
the categories of left and right modules:
$$
 \Modrn R\simeq \Modr\widetilde R, \quad
 R\nBimodn R\simeq \widetilde R\Bimod\widetilde R, \quad
 R\Modln\simeq \widetilde R\Modl.
$$
 The tensor product operation~$\ot_R$ over the nonunital ring $R$
agrees with the tensor product~$\ot_{\widetilde R}$ over the unital
ring $\widetilde R$ under these category equivalences.
 The bottom line is that the $R$\+$R$\+bimodule $\widetilde R$ (rather
than the $R$\+$R$\+bimodule~$R$) is the unit object of the monoidal
category of nonunital bimodules $R\nBimodn R$.

 In this approach, the theory of nonunital rings and modules becomes
a particular case of the theory of unital ones.
 It is well-known, however, that there is a different point of view,
making the theory of modules over nonunital rings quite substantial.

 What one wants to do is to impose some kind of weak unitality
condition on the ring $R$, and particularly on the $R$\+modules.
 So, we are not primarily interested in arbitrary nonunital
$R$\+modules, but rather in $R$\+modules that are ``weakly unital''
in one sense or another.
 In this paper, we develop this approach aiming to obtain
an \emph{associative, unital monoidal category of weakly unital
$R$\+$R$\+bimodules} in which \emph{the $R$\+$R$\+bimodule $R$ would
be the unit object}.

 Alongside with the tensor formalism for modules, the \emph{Hom
formalism} plays an important role.
 Returning to our associative, unital ring $A$, let us consider
the opposite category $A\Modl^\sop$ to the category of
left $A$\+modules, and endow it with the right action of the monoidal
category $A\Bimod A$ defined by the rule
\begin{equation} \label{star-notation}
 P^\sop*K=\Hom_A(K,P)^\sop
\end{equation}
for all $P\in A\Modl$ and $K\in A\Bimod A$.
 This rule makes $A\Modl^\sop$ an associative, unital right module
category over $A\Bimod A$.
 Furthermore, the functor $(\Hom_A)^\sop\:A\Modl\times A\Modl^\sop
\rarrow\Ab^\sop$ defines a pairing between the right module
category $A\Modl^\sop$ and the left module category $A\Modl$ over
$A\Bimod A$, taking values in the opposite category $\Ab^\sop$ to
the category of abelian groups.
 This is what we call the \emph{Hom formalism for $A$\+modules}.
 Once again, we refer to the introduction to~\cite{Psemi} for
an abstract discussion.

 Explicitly, the Hom formalism for $A$\+modules is expressed in two
natural isomorphisms of Hom modules/groups.
 For any left $A$\+module $M$, any $A$\+$A$\+bimodule $K$, and any
left $A$\+module $P$, one has a natural isomorphism of abelian groups
\begin{equation} \label{Hom-associativity}
 \Hom_A(K\ot_AM,\>P)\simeq\Hom_A(M,\Hom_A(K,P)),
\end{equation}
which can be written as $P^\sop*(K\ot_AM)\simeq(P^\sop*K)*M$ in
the $*$\+notation of~\eqref{star-notation}.
 For any left $A$\+module $P$, the natural left $A$\+module morphism
\begin{equation} \label{Hom-unitality}
 p\longmapsto(a\mapsto ap)\:P\lrarrow\Hom_A(A,P)
\end{equation}
is an isomorphism.

 In addition to the tensor formalism, one would like also to extend
the Hom formalism to suitably defined ``weakly unital'' modules $P$
over some nonunital rings.
 Once again, both the tensor associativity
isomorphism~\eqref{tensor-associativity} and the Hom associativity
isomorphism~\eqref{Hom-associativity} hold without any unitality
assumptions on the rings and modules.
 It is the unitality isomorphisms~(\ref{tensor-left-unitality}\+-%
\ref{tensor-right-unitality}) and~\eqref{Hom-unitality} in
the tensor-Hom formalism that depend on the unitality assumptions on
the ring and modules and require some nontrivial generalizations in
the contexts of ``weakly unital'' modules.

 We refer to the very nice recent survey paper~\cite{Nys} for a general
discussion of various classes of (what we call) ``weakly unital'' rings
and modules and relations between them, containing many examples and
counterexamples.
 It is from the survey~\cite{Nys} that we learned about Tominaga's
1975 definition of \emph{s\+unital} rings and
modules~\cite{Tom1,Tom2,Tom3} playing an important role in this paper.

 Before we finish this introduction, let us briefly explain our
motivation.
 Let $k$~be a field, $R$ be an associative $k$\+algebra, and
$K\subset R$ be a subalgebra.
 Given a coassociative coalgebra $\C$ over~$k$ which is ``dual to
the algebra~$K$'' in some weak sense, under certain assumptions,
the construction of~\cite[Chapter~10]{Psemi} (for the base ring
$A=k$) produces a (semiassociative, semiunital) \emph{semialgebra}
$\bS=\C\ot_KR$ over the coalgebra~$\C$.
 The algebras $K$ and $R$ were, of course, assumed to be unital
in~\cite{Psemi}.
 In a forthcoming paper~\cite{Psa}, we intend to generalize
the construction of~\cite[Chapter~10]{Psemi} to certain nonunital
(but ``weakly unital'') algebras $K$ and~$R$.
 In this generalization, the coalgebra $\C$ is still presumed to be
counital.
 The point is that the semialgebra $\bS$ produced as the output of
the generalized construction remains semiunital.
 The present paper is intended to serve as a background reference
source for~\cite{Psa}.

 By far the most relevant reference in the context of the present
paper is an unpublished 1996 manuscript of Quillen~\cite{Quil}
(see also~\cite{Quil0}).
 The author learned about its existence on an advanced stage of
the present project; as a consequence, this paper contains our own
proofs of many results from~\cite{Quil} (with references given
for comparison wherever relevant).
 What we call \emph{t\+unital} modules are called ``firm modules''
in~\cite{Quil}, and what we call \emph{c\+unital} modules are called
``closed modules'' in~\cite{Quil}.
 Another important and relevant reference is the \emph{almost ring
theory}~\cite{GR}.

 There is a difference in motivation between~\cite{Quil} and
the present paper.
 It appears from~\cite[\S1]{Quil} that Quillen's aim was to construct
an abelian module category over a nonunital ring $R$ such that, when
$R$ is a unital ring, the category of unital $R$\+modules is produced
as the output.
 This problem is solved in~\cite{Quil} with the most well-behaved
solution obtained in the great generality of \emph{idempotent} rings
$R$, i.~e., rings $R$ such that $R^2=R$.
 On the other hand, the almost ring theory seems to be primarily
interested in commutative rings.
 Our task is to obtain a well-behaved associative, noncommutative
tensor-Hom formalism with the $R$\+$R$\+bimodule $R$ appearing as
the unit object of the monoidal category of ``weakly unital''
bimodules; and we solve this problem under more restrictive
assumptions of, at least, a \emph{t\+unital} ring~$R$.

\subsection*{Acknowledgement}
 I~am grateful to Jan \v St\!'ov\'\i\v cek and Michal Hrbek for
helpful discussions.
 The author is supported by the GA\v CR project 23-05148S and
the Czech Academy of Sciences (RVO~67985840).

\Section{t-Unital Rings and Modules}  \label{t-unital-secn}

 All \emph{rings} and \emph{modules} in this paper are associative,
but nonunital by default.
 The unitalization of a ring $R$ is denoted by
$\widetilde R=\boZ\oplus R$.
 So $R$ is a two-sided ideal in the unital ring $\widetilde R$ with
the unit element $1+0\in\boZ\oplus R$.

 The categories of nonunital left and right $R$\+modules are
denoted by $R\Modln$ and $\Modrn R$.
 Given two rings $R'$ and $R''$, the notation $R'\nBimodn R''$
stands for the category of nonunital $R'$\+$R''$\+bimodules.
 The undecorated notation $A\Modl$, $\Modr A$, and $A'\Bimod A''$
refers to the categories of unital (bi)modules and presumes the rings
$A$, $A'$, $A''$ to be unital.
 So there are natural equivalences (in fact, isomorphisms) of abelian
categories
\begin{equation} \label{nonunital-mods-as-mods-over-unitalization}
 \Modrn R\simeq\Modr\widetilde R, \quad
 R'\nBimodn R''\simeq \widetilde R'\Bimod\widetilde R'', \quad
 R\Modln\simeq\widetilde R\Modl.
\end{equation}

 Given a nonunital right $R$\+module $N$ and a nonunital left
$R$\+module $M$, the tensor product $N\ot_R M$ is defined in
the same way as for unital modules.
 Then there is a natural (identity) isomorphism of abelian groups
\begin{equation} \label{nonunital-and-unital-tensor-product}
 N\ot_R M\simeq N\ot_{\widetilde R} M.
\end{equation}
 For any right $R'$\+module $N$, any $R'$\+$R''$\+bimodule $B$, and
any left $R''$\+module $M$, there is a natural isomorphism of
abelian groups
\begin{equation} \label{nonunital-two-rings-tensor-associativity}
 (N\ot_{R'}B)\ot_{R''}M\simeq N\ot_{R'}(B\ot_{R''}M).
\end{equation}
 The associativity
isomorphism~\eqref{nonunital-two-rings-tensor-associativity} can be
obtained from the similar isomorphism for unital rings $\widetilde R'$
and $\widetilde R''$ and unital (bi)modules over them using the category
equivalences~\eqref{nonunital-mods-as-mods-over-unitalization} and
the isomorphism~\eqref{nonunital-and-unital-tensor-product}.

 For any left $R$\+module $M$, there is a natural left $R$\+module
morphism
\begin{equation} \label{tensor-left-unitality-map}
 r\ot m\longmapsto rm\: R\ot_R M\lrarrow M.
\end{equation}
 Similarly, for any right $R$\+module $N$, there is a natural right
$R$\+module morphism
\begin{equation} \label{tensor-right-unitality-map}
 n\ot r\longmapsto nr\: N\ot_R R\lrarrow N.
\end{equation}
 For the $R$\+modules $M=R=N$,
the maps~\eqref{tensor-left-unitality-map}
and~\eqref{tensor-right-unitality-map} agree with each other,
and provide an $R$\+$R$\+bimodule morphism
\begin{equation} \label{tensor-R-R-unitality-map}
 r'\ot r''\longmapsto r'r''\: R\ot_R R\lrarrow R.
\end{equation}

\begin{defn}
 We will say that a left $R$\+module $M$ is \emph{t\+unital} if
the natural map $R\ot_RM\rarrow M$ \,\eqref{tensor-left-unitality-map}
is an isomorphism.
 Similarly, a right $R$\+module $N$ is said to be \emph{t\+unital} if
the natural map $N\ot_R R\rarrow N$ \,\eqref{tensor-right-unitality-map}
is an isomorphism.
 An $R'$\+$R''$\+bimodule is said to be \emph{t\+unital} if it is
t\+unital both as a left $R'$\+module and as a right $R''$\+module.
 (What we call ``t\+unital modules'' are called ``firm modules''
in~\cite[Definition~2.3]{Quil}.)
\end{defn}

 It is clear from the preceding discussion of
the map~\eqref{tensor-R-R-unitality-map} that the left $R$\+module $R$
is t\+unital if and only if the right $R$\+module $R$ is t\+unital
(and if and only if the $R$\+$R$\+bimodule $R$ is t\+unital).
 Any one of these equivalent conditions holds if and only if
the natural map $R\ot_RR\rarrow R$ \,\eqref{tensor-R-R-unitality-map}
is an isomorphism.
 If this is the case, we will say that \emph{the ring $R$ is t\+unital}.

 Clearly, the map $R\ot_RR\rarrow R$ is surjective if and only if
the ring $R$ is idempotent, i.~e., $R^2=R$.
 In Example~\ref{idempotent-not-t-unital} below we construct
a counterexample of an idempotent, but not t\+unital ring~$R$,
i.~e., a ring $R$ for which the map $R\ot_RR\rarrow R$ is surjective,
but not injective.

\begin{lem} \label{t-unital-tensor-product-lemma}
\textup{(a)} Let $B$ be an $R'$\+$R''$\+bimodule and $M$ be
an left $R''$\+module.
 Assume that $B$ is t\+unital as a left $R'$\+module.
 Then the left $R'$\+module $B\ot_{R''}M$ is also t\+unital. \par
\textup{(b)} Let $N$ be a right $R'$\+module and $B$ be
an $R'$\+$R''$\+bimodule.
 Assume that $B$ is t\+unital as a right $R''$\+module.
 Then the right $R''$\+module $N\ot_{R'}B$ is also t\+unital.
\end{lem}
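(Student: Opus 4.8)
The plan is to deduce both parts from the tensor associativity isomorphism~\eqref{nonunital-two-rings-tensor-associativity}, observing that this isomorphism carries the relevant unitality map into one obtained by applying a tensor functor to an isomorphism that is already assumed. I will spell out part~(a); part~(b) is handled in the same way with the roles of left and right interchanged.

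First I would apply~\eqref{nonunital-two-rings-tensor-associativity} with the right $R'$\+module $R'$ in place of $N$, the bimodule $B$, and the left $R''$\+module $M$, obtaining a natural isomorphism of left $R'$\+modules
\[
 R'\ot_{R'}(B\ot_{R''}M)\;\simeq\;(R'\ot_{R'}B)\ot_{R''}M .
\]
Since $B$ is t\+unital as a left $R'$\+module, the unitality map $\mu_B\:R'\ot_{R'}B\rarrow B$ of~\eqref{tensor-left-unitality-map} is an isomorphism of left $R'$\+modules; applying the functor $-\ot_{R''}M$ then shows that $\mu_B\ot_{R''}\id_M\:(R'\ot_{R'}B)\ot_{R''}M\rarrow B\ot_{R''}M$ is an isomorphism.

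The one point requiring verification — and essentially the only one, so there is no real obstacle here — is that the composition of the associativity isomorphism above with $\mu_B\ot_{R''}\id_M$ coincides with the unitality map $\mu_{B\ot_{R''}M}\:R'\ot_{R'}(B\ot_{R''}M)\rarrow B\ot_{R''}M$ of~\eqref{tensor-left-unitality-map} for the left $R'$\+module $B\ot_{R''}M$. This is checked on decomposable tensors: the element $r'\ot(b\ot m)$ is sent to $(r'\ot b)\ot m$ and then to $(r'b)\ot m$, which is exactly $r'(b\ot m)=\mu_{B\ot_{R''}M}(r'\ot(b\ot m))$. Hence $\mu_{B\ot_{R''}M}$ is a composition of two isomorphisms, so it is an isomorphism, i.e., the left $R'$\+module $B\ot_{R''}M$ is t\+unital. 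For part~(b) one argues symmetrically, using~\eqref{nonunital-two-rings-tensor-associativity} with $B$ in the middle and the right $R''$\+module $R''$ in place of $M$, together with the map~\eqref{tensor-right-unitality-map}.
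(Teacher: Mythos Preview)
Your argument is correct and is exactly the approach the paper takes: the proof follows from the associativity isomorphism~\eqref{nonunital-two-rings-tensor-associativity} together with the commutativity of the triangular diagram formed by the associativity map and the two unitality comparison maps, which you verify explicitly on generators. The paper's proof is simply a terse one-sentence summary of what you have written out in full.
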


\begin{proof}
 Follows from the associativity
isomorphism~\eqref{nonunital-two-rings-tensor-associativity} and
commutativity of the suitable triangular diagram formed by
the associativity isomorphism together with the unitality comparison
maps~\eqref{tensor-left-unitality-map}
or~\eqref{tensor-right-unitality-map}.
\end{proof}

 The following corollary is a particular case
of~\cite[Proposition~4.4]{Quil}.

\begin{cor} \label{t-unitality-created-by-tensor-with-R}
 Let $R$ be a t\+unital ring.  Then \par
\textup{(a)} for any left $R$\+module $M$, the left $R$\+module
$R\ot_RM$ is t\+unital; \par
\textup{(b)} for any right $R$\+module $N$, the right $R$\+module
$N\ot_RR$ is t\+unital.
\end{cor}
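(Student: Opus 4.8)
The plan is to deduce this corollary by specializing Lemma~\ref{t-unital-tensor-product-lemma} to the case $R'=R''=R$, with the bimodule $B$ taken to be the diagonal $R$\+$R$\+bimodule~$R$. The one ingredient I would invoke beyond the lemma itself is the identification, already recorded in the discussion of the map~\eqref{tensor-R-R-unitality-map} immediately preceding the corollary, that \emph{$R$ is a t\+unital ring} precisely when the diagonal bimodule $R$ is t\+unital as a left $R$\+module, and equivalently when it is t\+unital as a right $R$\+module.

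For part~(a): given a left $R$\+module $M$, I would apply Lemma~\ref{t-unital-tensor-product-lemma}(a) with $R'=R''=R$, with $B=R$ the diagonal bimodule, and with $M$ the given module. The hypothesis of the lemma --- that $B$ is t\+unital as a left $R'$\+module --- holds by the assumption that $R$ is a t\+unital ring. The conclusion of the lemma is then that $B\ot_{R''}M=R\ot_R M$ is t\+unital as a left $R'$\+module, i.e., as a left $R$\+module, which is exactly the assertion of part~(a). Part~(b) is obtained in the mirror-image way: for a right $R$\+module $N$, apply Lemma~\ref{t-unital-tensor-product-lemma}(b) with $R'=R''=R$ and $B=R$, using that $R$ is t\+unital as a right $R''$\+module; this gives that $N\ot_{R'}B=N\ot_R R$ is t\+unital as a right $R''$\+module.

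There is essentially no obstacle here: once Lemma~\ref{t-unital-tensor-product-lemma} is in hand, the corollary is a purely formal substitution, the only point worth making explicit being the passage from ``the ring $R$ is t\+unital'' to ``the diagonal bimodule $R$ is t\+unital on the relevant side'', which has already been spelled out. (Alternatively, one could unwind the lemma and give a direct argument: the composite $R\ot_R(R\ot_R M)\to R\ot_R M$ induced by~\eqref{tensor-R-R-unitality-map} is identified, via the associativity isomorphism~\eqref{nonunital-two-rings-tensor-associativity}, with $(R\ot_R R)\ot_R M\to R\ot_R M$, which is an isomorphism because $R\ot_R R\to R$ is; but routing the proof through the lemma is cleaner.)
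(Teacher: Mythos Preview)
Your proof is correct and follows exactly the same approach as the paper, which simply says to take $B=R=R'=R''$ in Lemma~\ref{t-unital-tensor-product-lemma}. Your added explanation of why the hypothesis of the lemma is satisfied (via the discussion around~\eqref{tensor-R-R-unitality-map}) and the parenthetical direct argument are both accurate elaborations of this one-line specialization.
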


\begin{proof}
 Take $B=R=R'=R''$ in Lemma~\ref{t-unital-tensor-product-lemma}.
\end{proof}

 Given three rings $R$, $R'$, $R''$, we will denote by $R\Modlt$,
$\Modrt R$, and $R'\tBimodt R''$ the full subcategories of
t\+unital left $R$\+modules, t\+unital right $R$\+modules, and
t\+unital $R'$\+$R''$\+bimodules in the abelian categories
$R\Modln$, $\Modrn R$, and $R'\nBimodn R''$, respectively.
{\hbadness=1325\par}

\begin{cor} \label{t-unital-monoidal-module-categories}
 Let $R$ be a t\+unital ring.
 Then the additive category of t\+unital $R$\+$R$\+bimodules
$R\tBimodt R$ is an associative and unital monoidal category, with
the unit object $R\in R\tBimodt R$, with respect to the tensor
product operation\/~$\ot_R$.
 The additive category of t\+unital left $R$\+modules $R\Modlt$ is
an associative and unital left module category over $R\tBimodt R$,
and the additive category of t\+unital right $R$\+modules $\Modrt R$
is an associative and unital right module category over $R\tBimodt R$,
with respect to the tensor product operation\/~$\ot_R$.
\end{cor}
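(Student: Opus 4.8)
The plan is to assemble all structures from the general associativity isomorphism~\eqref{nonunital-two-rings-tensor-associativity}, which holds unconditionally, together with the closure results from Lemma~\ref{t-unital-tensor-product-lemma} and Corollary~\ref{t-unitality-created-by-tensor-with-R}, which tell us that $\ot_R$ does not leave the t\+unital subcategories. First I would check that $\ot_R$ is a well-defined bifunctor on the relevant pairs of t\+unital categories: if $B'$, $B''$ are t\+unital $R$\+$R$\+bimodules, then by Lemma~\ref{t-unital-tensor-product-lemma}(a) applied over $R''=R$ the tensor product $B'\ot_RB''$ is t\+unital as a left $R$\+module, and by part~(b) it is t\+unital as a right $R$\+module, so $B'\ot_RB''\in R\tBimodt R$; similarly $B\ot_RM\in R\Modlt$ for $M\in R\Modlt$, and $N\ot_RB\in\Modrt R$ for $N\in\Modrt R$. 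Thus the three operations restrict to the t\+unital subcategories, and their additivity and biadditivity is inherited from the ambient categories of nonunital modules.

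Next I would produce the associativity and pentagon data. The associator is the restriction of the natural isomorphism~\eqref{nonunital-two-rings-tensor-associativity} (with all rings equal to $R$); naturality is inherited, and the pentagon identity holds because it already holds for the unital ring $\widetilde R$ under the equivalences~\eqref{nonunital-mods-as-mods-over-unitalization} and the identity~\eqref{nonunital-and-unital-tensor-product}, or alternatively because both composite isomorphisms are computed by the obvious "move the parentheses" rule on elementary tensors, so they visibly coincide. The same remark handles the mixed associativity constraints $(N\ot_RB)\ot_RM\simeq N\ot_R(B\ot_RM)$ and $(B'\ot_RB)\ot_RM\simeq B'\ot_R(B\ot_RM)$, $N\ot_R(B\ot_RB')\simeq(N\ot_RB)\ot_RB'$ that are needed for the module-category axioms, along with the corresponding coherence (mixed-pentagon) diagrams.

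Then comes the point where t\+unitality of $R$ is actually used: the unit constraints. Here I would invoke the hypothesis that the ring $R$ is t\+unital, which by definition says exactly that the map~\eqref{tensor-R-R-unitality-map}, $R\ot_RR\to R$, is an isomorphism. For the monoidal unit I would show that the left and right unitors $R\ot_RB\xrightarrow{\sim}B$ and $B\ot_RR\xrightarrow{\sim}B$ are isomorphisms for every t\+unital bimodule $B$: left-unitality as a left $R$\+module gives the first, right-unitality as a right $R$\+module gives the second, both being instances of~\eqref{tensor-left-unitality-map}--\eqref{tensor-right-unitality-map} which are isomorphisms precisely by t\+unitality of $B$. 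For the left module category $R\Modlt$, the left unit constraint $R\ot_RM\xrightarrow{\sim}M$ is the map~\eqref{tensor-left-unitality-map}, an isomorphism by definition of t\+unitality of $M$; symmetrically for $\Modrt R$. Finally I would verify the triangle identities relating the associator to the unitors. Each such identity is a commutative triangle of natural maps which, written out on elementary tensors, reduces to the associativity of multiplication in $R$ (e.g.\ $r'(r''m)=(r'r'')m$); equivalently, all these diagrams transfer from the unital monoidal category $\widetilde R\Bimod\widetilde R$ acting on $\widetilde R\Modl$ and $\Modr\widetilde R$ via the equivalences~\eqref{nonunital-mods-as-mods-over-unitalization}, and the only subtlety is that the unit object changes from $\widetilde R$ to $R$ — but on t\+unital modules $R$ and $\widetilde R$ act by the same (iso)morphism, by t\+unitality, so the transferred coherence data is unaffected.

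The main obstacle, such as it is, is bookkeeping rather than mathematics: one must list all the coherence diagrams (pentagon for the monoidal category, the two module-category pentagons, and the various triangle identities mixing associator and unitors) and confirm each one, and one must be careful that the unit object is $R$ and not $\widetilde R$, so that the unitors are the "action" maps~\eqref{tensor-left-unitality-map}--\eqref{tensor-R-R-unitality-map} and their being isomorphisms is exactly the t\+unitality hypothesis rather than something automatic. I expect that once the bifunctoriality and the restriction of the associator are in place, every coherence diagram either follows from the corresponding diagram over $\widetilde R$ or is checked immediately on elementary tensors, so no genuinely hard step remains.
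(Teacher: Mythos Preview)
Your proposal is correct and follows essentially the same approach as the paper: the paper's proof simply cites the associativity isomorphism~\eqref{nonunital-two-rings-tensor-associativity} together with Lemma~\ref{t-unital-tensor-product-lemma} (for $R'=R=R''$), and you have spelled out in detail what that entails, including the coherence checks that the paper leaves implicit.
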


\begin{proof}
 Follows from the associativity
isomorphism~\eqref{nonunital-two-rings-tensor-associativity}
and Lemma~\ref{t-unital-tensor-product-lemma} (for $R'=R=R''$).
\end{proof}

\begin{lem} \label{t-unital-modules-closure-properties}
 Let $R$ be an arbitrary (nonunital) ring.
 Then the full subcategory of t\+unital left $R$\+modules
$R\Modlt\subset R\Modln$ is closed under extensions and all colimits
(including cokernels, direct sums, and direct limits) in the abelian
category of nonunital modules $R\Modln$.
\end{lem}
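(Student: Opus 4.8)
The plan is to reformulate t\+unitality in terms of a single endofunctor. Let $T$ denote the functor $R\ot_R(-)\:R\Modln\rarrow R\Modln$ --- where the left $R$\+module structure on $R\ot_RM$ comes from the left tensor factor --- and let $\mu\:T\Rightarrow\id_{R\Modln}$ be the natural transformation with components the comparison maps~\eqref{tensor-left-unitality-map}. By definition, a left $R$\+module $M$ is t\+unital exactly when $\mu_M$ is an isomorphism. The argument rests on two elementary properties of $T$: it preserves all colimits (tensor product commutes with colimits in each argument, and colimits in $R\Modln$ are computed on underlying abelian groups), and it is right exact.

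First I would dispatch the colimit cases, all at once. Let $(M_i)_{i\in\mathcal I}$ be a diagram of t\+unital left $R$\+modules with colimit $M=\varinjlim_iM_i$ in $R\Modln$. Since $T$ preserves colimits, the canonical comparison $\varinjlim_iTM_i\rarrow TM$ is an isomorphism, and naturality of $\mu$ along the structure morphisms $M_i\rarrow M$ identifies $\mu_M$, modulo this isomorphism, with the induced map $\varinjlim_i\mu_{M_i}$. Each $\mu_{M_i}$ is invertible by hypothesis; the inverses $\mu_{M_i}^{-1}$ constitute a natural transformation back, and applying $\varinjlim$ produces a two\+sided inverse of $\varinjlim_i\mu_{M_i}$. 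Hence $\mu_M$ is an isomorphism, so $M$ is t\+unital. This covers cokernels, arbitrary direct sums, direct limits, and every other colimit simultaneously.

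Then I would turn to extensions. Given a short exact sequence $0\rarrow M'\rarrow M\rarrow M''\rarrow0$ in $R\Modln$ with $M'$ and $M''$ t\+unital, applying the right exact functor $T$ yields an exact sequence $TM'\rarrow TM\rarrow TM''\rarrow0$, which by naturality of $\mu$ forms the top row of a commutative ladder over the given sequence, with vertical maps $\mu_{M'}$, $\mu_M$, $\mu_{M''}$. Since $\mu_{M'}$ and $\mu_{M''}$ are isomorphisms, a diagram chase gives that $\mu_M$ is bijective. For surjectivity one lifts an element of $M$ first through $\mu_{M''}$ and through the surjection $TM\rarrow TM''$, then corrects the resulting difference --- which lies in the image of $M'$ --- using surjectivity of $\mu_{M'}$. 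For injectivity one pushes an element of $\ker\mu_M$ forward into $TM''$, notes it dies there by injectivity of $\mu_{M''}$, pulls it back to $TM'$ by exactness of the top row at $TM$, and concludes by injectivity of $\mu_{M'}$ and of $M'\rarrow M$. Thus $M$ is t\+unital.

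The one point requiring care --- and the only place where something could go wrong --- is that $T$ is in general \emph{not} left exact (the derived functor $\Tor_1^R(R,-)$ may be nonzero), so the top row of the ladder is right exact but not left exact. I would therefore make sure the injectivity chase above never invokes exactness of the top row at its leftmost term $TM'$, which is precisely the spot where it would fail; the chase as organized above indeed only uses exactness at $TM$ and at $TM''$.
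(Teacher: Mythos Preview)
Your argument is correct and matches the paper's approach in substance. For colimits, both you and the paper simply invoke that $R\ot_R{-}$ preserves colimits; for extensions, both set up the same ladder with right exact top row and use the outer isomorphisms to conclude. The only organizational difference is that the paper first observes the composition $TM'\simeq M'\hookrightarrow M$ is injective, whence $TM'\rarrow TM$ is injective and the top row becomes short exact, so the $5$\+lemma applies directly --- whereas you perform the diagram chase by hand, explicitly avoiding exactness at $TM'$. The two arguments are equivalent.
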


\begin{proof}
 This is our version of~\cite[Proposition~4.2]{Quil}.
 Notice that it is \emph{not} claimed in the lemma that $R\Modlt$ is
closed under quotient modules in $R\Modln$, but only under cokernels.
 The closedness under arbitrary colimits follows immediately from
the fact that the tensor product functor $R\ot_R{-}$ preserves colimits
(which can be proved in the way similar to the unital case, or deduced
from the unital case using
the isomorphism~\eqref{nonunital-and-unital-tensor-product}).

 To prove the closedness under extensions, consider a short exact
sequence $0\rarrow K\rarrow L\rarrow M\rarrow0$ in $R\Modln$ with
$K$, $M\in R\Modlt$.
 We have a commutative diagram
$$
 \xymatrix{
  & R\ot_RK \ar[r]\ar@{=}[d] & R\ot_R L \ar[r]\ar[d]
  & R\ot_RM \ar[r]\ar@{=}[d] & 0 \\
  0 \ar[r] & K \ar[r] & L \ar[r] & M \ar[r] & 0
 }
$$
where the sequence in the upper row is right exact and the sequence in
the lower row is short exact.
 The rightmost and leftmost vertical maps are isomorphisms
by assumption.
 Now the composition $R\ot_RK\rarrow K\rarrow L$ is injective, since
the map $K\rarrow L$ is injective.
 It follows that the map $R\ot_RK\rarrow R\ot_RL$ is injective as well.
 So the upper row is, in fact, a short exact sequence, and the middle
vertical map $R\ot_RL\rarrow L$ is an isomorphism by the 5\+lemma.
\end{proof}

 One shortcoming of the theory developed in this section is that
the full subcategory $R\Modlt$ \emph{need not} be closed under kernels
in $R\Modln$, as explained in the following remark and example.
 A way to resolve this problem will be suggested in the next
Section~\ref{s-unital-secn}, where we will consider the more restrictive
\emph{s\+unital} setting.

\begin{rem} \label{Tor-perpendicularity-remark}
 The construction of the full subcategory $R\Modlt\subset R\Modln$ can
be viewed as a particular case of the construction of
the \emph{$\Tor_{0,1}$\+perpendicular subcategory} in the spirit
of~\cite[Proposition~1.1]{GL} or~\cite[Theorem~1.2(b)]{Pcta}.

 Let us consider the ring $R$ as an ideal in the unital ring
$\widetilde R=\boZ\oplus R$, as above.
 Then the quotient ring $\boZ=\widetilde R/R$ becomes a unital left
and right $\widetilde R$\+module, or equivalently, a nonunital
left and right $R$\+module with the zero action of~$R$.
 Given a nonunital left $R$\+module $M$, the natural map
$R\ot_R M=R\ot_{\widetilde R}M\rarrow\widetilde R\ot_{\widetilde R}M
=M$ is an isomorphism \emph{if and only if} one has
$\widetilde R/R\ot_{\widetilde R}M=0=
\Tor_1^{\widetilde R}(\widetilde R/R,M)$.
 (This observation is a particular case of the discussion
in~\cite[Definition~2.3]{Quil}.)

 Now the mentioned results from~\cite{GL,Pcta} tell that the full
subcategory $R\Modlt$ is closed under kernels in $R\Modln$ (and,
consequently, abelian) whenever the flat dimension of the unital right
$\widetilde R$\+module $\widetilde R/R$ does not exceed~$1$, i.~e.,
in other words, the unital $\widetilde R$\+module $R$ is flat.
 This can be also easily seen directly from the definition: if
the functor $R\ot_R{-}\,\:R\Modln\rarrow R\Modln$ is exact (preserves
kernels), then the full subcategory $R\Modlt\subset R\Modln$ is
closed under kernels.
 The next example shows that this does \emph{not} hold in general.
\end{rem}

\begin{ex} \label{two-variables-non-t-unital-kernel-counterex}
 Let $\widetilde R=\boZ[x,y]$ be the commutative ring of polynomials
in two variables~$x$, $y$ with integer coefficients.
 Let $R\subset\widetilde R$ be the ideal $R=(x,y)\subset\widetilde R$
spanned by the elements $x$, $y\in~\widetilde R$.
 Then it is clear that the ring $\widetilde R$ is the unitalization
of the ring $R$, just as the notation suggests.
 The flat dimension of the unital $\widetilde R$\+module
$\boZ=\widetilde R/R$ is equal to~$2$, and the flat dimension of
the unital $\widetilde R$\+module $R$ is equal to~$1$.
 We claim that the full subcategory of t\+unital $R$\+modules
$R\Modlt$ is \emph{not} closed under kernels in $R\Modln=
\widetilde R\Modl$.

 Indeed, consider the generalized Pr\"ufer module
$P_x=\boZ[x,x^{-1}]/\boZ[x]$ over the ring $\boZ[x]$, and the similar
module $P_y=\boZ[y,y^{-1}]/\boZ[y]$ over the ring~$\boZ[y]$.
 Furthermore, consider the $\boZ[x,y]$\+module $P_x\ot_\boZ P_y$.
 Then one can easily see that $\Tor^{\boZ[x]}_0(\boZ,P_x)=0$
and $\Tor^{\boZ[x]}_1(\boZ,P_x)=\boZ$ (where $x$~acts by zero in
the $\boZ[x]$\+module~$\boZ$).
 Consequently, by the K\"unneth formula (see
Lemma~\ref{kuenneth-formula} below),
\,$\Tor_n^{\widetilde R}(\boZ,\>P_x\ot_\boZ P_y)=0$ for $n=0$,~$1$,
and $\boZ$ for $n=2$.
 Following Remark~\ref{Tor-perpendicularity-remark}, we have
$P_x\ot_\boZ P_y\in R\Modlt$.

 On the other hand, denote by $Q_y$ the $\boZ[y]$\+module $P_y$ viewed
as a $\boZ[x,y]$\+module with the given (Pr\"ufer) action of~$y$
and the zero action of~$x$.
 Then $\Tor_1^{\widetilde R}(\boZ,Q_y)=\Tor_1^{\boZ[y]}(\boZ,P_y)=\boZ$.
 So $Q_y\notin R\Modlt$.
 Nevertheless, $Q_y$ is the kernel of the (surjective) $R$\+module
(or $\widetilde R$\+module) map $x\:P_x\ot_\boZ P_y\rarrow
P_x\ot_\boZ P_y$.

 Notice, however, that the ring $R$ in this example is \emph{not}
t\+unital.
 In fact, $R$ is not even an idempotent ideal in the unital
ring~$\widetilde R$, i.~e., $R^2\ne R$.
 An example of t\+unital ring $T$ for which the full subcategory
$T\Modlt$ is not closed under kernels in $T\Modln$ will be given
below in Example~\ref{t-unital-ring-non-t-unital-kernel-counterex}.
\end{ex}

 For any t\+unital ring $R$,
Theorem~\ref{main-two-category-equivalences-thm}(a) below tells that
the category $R\Modlt$ is abelian.
 On the other hand, again over a t\+unital ring~$R$, according to
Corollary~\ref{closed-under-kernels-iff-flat}, the full subcategory
of t\+unital modules $R\Modlt$ is closed under kernels in $R\Modln$
\emph{if and only if} $R$ is a flat unital right $\widetilde R$\+module.

\begin{rem}
 Speaking of unital modules over a unital ring $A$ (such as
$A=\widetilde R$), we often emphasize that we work in the category
of unital $A$\+modules (e.~g., that particular modules are flat or
projective \emph{as unital modules}), but it actually does not
matter that much.
 Nonunital modules over any ring $R$ are the same things as unital
modules over the unitalization $\widetilde R$ of the ring $R$,
and the unitalization $\widetilde A=\boZ\oplus A$ of a \emph{unital}
ring $A$ is naturally isomorphic to the Cartesian product of
the rings $\boZ$ and $A$, that is $\widetilde A\simeq\boZ\times A$.
 The ring isomorphism $\boZ\oplus A\rarrow\boZ\times A$ takes a formal
sum $n+a$ with $n\in\boZ$ and $a\in A$ to the pair $(n,\>ne+a)$,
where $e\in A$ is the unit element.
 Accordingly, the abelian category of nonunital $A$\+modules is just
the Cartesian product of the abelian categories of abelian groups and
unital $A$\+modules,
$$
 A\Modln\simeq \widetilde A\Modl\simeq
 (\boZ\times A)\Modl\simeq\boZ\Modl\times A\Modl.
$$
 So there is not much of a difference whether to consider unital
modules over unital rings as objects of the category of unital or
nonunital modules over such ring.
 (Cf.\ the discussion in~\cite[\S1]{Quil}, which emphasizes
a different point.)

 In particular, all the groups/modules $\Tor$ and $\Ext$ in this paper
are taken over unital rings and in the categories of unital modules.
\end{rem}

\Section{s-Unital Rings and Modules}  \label{s-unital-secn}

 The following definition is due to Tominaga~\cite{Tom1,Tom2,Tom3}.
 The exposition in the survey paper~\cite{Nys} is very much recommended.

\begin{defn}
 Let $R$ be a (nonunital) ring.
 A left $R$\+module $M$ is said to be \emph{s\+unital} if for every
element $m\in M$ there exists an element $e\in R$ such that $em=m$
in~$M$.
 (Notice that \emph{no} assumption of idempotency of~$e$ is made here!)
 Similarly, a right $R$\+module $N$ is said to be \emph{s\+unital} if
for every element $n\in N$ there exists an element $e\in R$ such that
$ne=n$ in~$N$.
 Given two rings $R'$ and $R''$, an $R'$\+$R''$\+bimodule is said to be
\emph{s\+unital} if it is s\+unital both as a left $R'$\+module and
as a right $R''$\+module.
\end{defn}

 Given three rings $R$, $R'$, $R''$, we will denote by $R\Modls$,
$\Modrs R$, and $R'\sBimods R''$ the full subcategories of
s\+unital left $R$\+modules, s\+unital right $R$\+modules, and
s\+unital $R'$\+$R''$\+bimodules in the abelian categories
$R\Modln$, $\Modrn R$, and $R'\nBimodn R''$, respectively.
{\hbadness=1300\par}

\begin{prop} \label{s-unital-hereditary-torsion}
 Let $R$ be an arbitrary (nonunital) ring.
 Then the full subcategory of s\+unital $R$\+modules $R\Modls\subset
R\Modln$ is closed under submodules, quotients, extensions, and all
colimits (including direct sums and direct limits) in the abelian
category of nonunital modules $R\Modln$.
 In other words, $R\Modls$ is a \emph{hereditary torsion class} in
$R\Modln$ in the sense of\/~\cite{Dic}, \cite[Sections~VI.2\+-3]{Sten}.
\end{prop}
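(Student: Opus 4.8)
The plan is to verify the four listed closure properties directly from the elementwise definition of s\+unitality. Since every colimit is the cokernel of a morphism between coproducts, and a cokernel is a quotient of its target, closure under all colimits will follow once we have closure under quotients and under direct sums. The only ingredient that is not completely immediate is a finitary reformulation of s\+unitality, which I would establish first: \emph{if $M$ is left s\+unital and $m_1,\dots,m_n\in M$, then there is a single $e\in R$ with $em_i=m_i$ for all $i$.} The case $n=1$ is the definition, and the inductive step uses the following absorption trick: if $e'\in R$ satisfies $e'm_i=m_i$ for $i<n$ and $e''\in R$ satisfies $e''(m_n-e'm_n)=m_n-e'm_n$, then $e=e'+e''-e''e'$ works, since $em_i=e'm_i+e''m_i-e''e'm_i=m_i$ for $i<n$ (as $e''e'm_i=e''m_i$), while $em_n=e'm_n+e''(m_n-e'm_n)=m_n$. (This is Tominaga's lemma; cf.~\cite{Tom1} and the survey~\cite{Nys}.) The same identity, with $e'$ handling a ``quotient part'' and $e''$ a ``submodule part'', will also settle the extension case.

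Closure under submodules and under quotients is trivial: if $M'\subseteq M$ with $M$ s\+unital and $m\in M'$, any $e\in R$ with $em=m$ in $M$ gives the same equation in~$M'$; and if $q\:M\twoheadrightarrow M''$ and $\bar m\in M''$, lift $\bar m$ to $m\in M$, choose $e\in R$ with $em=m$, and apply~$q$. For colimits, filtered colimits are immediate, since any element of $\varinjlim_i M_i$ is the image of some $m\in M_i$ and is fixed by any $e$ with $em=m$ in $M_i$; finite direct sums $M_1\oplus\dots\oplus M_n$ are handled by the same absorption induction as in the finitary lemma, now run over the components $m_1,\dots,m_n$ of a given element; and an arbitrary direct sum reduces to this because every one of its elements lies in a finite subsum. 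Combined with closure under quotients, this yields closure under all colimits.

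For extensions, consider a short exact sequence $0\to K\to L\to M\to0$ in $R\Modln$ with $K$ and $M$ s\+unital, and let $\ell\in L$; writing $\bar\ell$ for the image of $\ell$ in~$M$, pick $e'\in R$ with $e'\bar\ell=\bar\ell$, so that $\ell-e'\ell\in K$, and then pick $e''\in R$ with $e''(\ell-e'\ell)=\ell-e'\ell$. By the absorption identity above, $e=e'+e''-e''e'$ satisfies $e\ell=\ell$, so $L$ is s\+unital. Taken together, closure under submodules, quotients, extensions, and coproducts is precisely the assertion that $R\Modls$ is a hereditary torsion class in $R\Modln$. I do not anticipate any genuine difficulty here: the sole non-formal point is the absorption identity $e=e'+e''-e''e'$, which underlies both the finitary lemma and the extension step, and everything else is routine bookkeeping.
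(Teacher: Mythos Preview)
Your proof is correct and uses the same core idea as the paper: the absorption identity $e=e'+e''-e''e'$ (which the paper writes as $g=e+f-fe$) is the only nontrivial ingredient, and the remaining closure properties are immediate from the definition.

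The only difference is organizational. You first prove Tominaga's finitary lemma by induction via the absorption identity, then use it to handle finite direct sums, and separately prove closure under extensions by the same identity. The paper reverses this: it observes that finite direct sums are iterated extensions, so proving closure under extensions alone suffices; Tominaga's lemma (Corollary~\ref{simultaneous-for-several}) is then deduced \emph{afterwards} by applying the proposition to $M^n$. The paper's route is marginally more economical---one application of the absorption trick rather than two---but the substance is identical.
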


\begin{proof}
 Closedness under submodules, quotients, and direct limits (indexed by
directed posets) is obvious immediately from the definition.
 All direct sums are direct limits of finite direct sums, and finite
direct sums are finitely iterated extensions.
 So it suffices to prove the closedness under extensions.

 Let $0\rarrow K\rarrow L\rarrow M\rarrow0$ be a short exact sequence
of nonunital left $R$\+modules.
 Assume that the $R$\+modules $K$ and $M$ are s\+unital.
 Given an element $l\in L$, we need to find an element $g\in R$ such
that $gl=l$ in~$L$.
 Since the $R$\+module $M=L/K$ is s\+unital, there exists an element
$e\in R$ such that $e(l+K)=l+K$ in~$M$.
 So we have $el-l\in K$.
 Since the $R$\+module $K$ is s\+unital, there exists an element
$f\in R$ such that $f(el-l)=el-l$ in~$K$.
 Now we have $(e+f-fe)l=l$ in~$L$, and it remains to put
$g=e+f-ef\in R$.
\end{proof}

 The following result goes back to Tominaga~\cite{Tom2}.
 It can be also found in~\cite[Proposition~2.8 of the published
version or Proposition~8 of the \texttt{arXiv} version]{Nys}.

\begin{cor}[Tominaga~{\cite[Theorem~1]{Tom2}}]
\label{simultaneous-for-several}
 Let $R$ be a ring and $M$ be an s\+unital left $R$\+module.
 Then, for any finite collection of elements $m_1$,~\dots, $m_n\in M$,
there exists an element $e\in R$ such that $em_i=m_i$ for all\/
$1\le i\le n$.
\end{cor}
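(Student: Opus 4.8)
The plan is to reduce the $n$-element statement to the $2$-element case by induction, and to handle the $2$-element case using the closure of $R\Modls$ under finite direct sums established in Proposition~\ref{s-unital-hereditary-torsion}. First I would observe that the case $n=1$ is just the definition of s\+unitality. For the inductive step, suppose we have found $e'\in R$ with $e'm_i=m_i$ for $1\le i\le n-1$, and separately (by the definition) an element $e''\in R$ with $e''m_n=m_n$. The naive guess $e'+e''$ does not work, so instead I would look for a single element $e\in R$ acting as identity on the two-element set $\{\,m_1',\,m_n\,\}$, where $m_1'$ is a suitable auxiliary element encoding the first $n-1$ equations; concretely, I would pass to the element $m_1'=(m_1,\dots,m_{n-1})$ in the module $M^{\oplus(n-1)}$.

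More directly, here is the cleanest route. By Proposition~\ref{s-unital-hereditary-torsion}, the $R$\+module $M^{\oplus n}$ is s\+unital, being a finite direct sum of copies of the s\+unital module~$M$. Apply the definition of s\+unitality to the single element $(m_1,m_2,\dots,m_n)\in M^{\oplus n}$: there exists $e\in R$ such that $e\cdot(m_1,\dots,m_n)=(m_1,\dots,m_n)$ in $M^{\oplus n}$. Since the $R$\+action on $M^{\oplus n}$ is componentwise, this single equation says precisely that $em_i=m_i$ in $M$ for every $1\le i\le n$, which is the desired conclusion.

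This argument needs no induction at all once the closure under finite direct sums is in hand, so the main (and only real) point is to have that closure statement available — and it is, as part of Proposition~\ref{s-unital-hereditary-torsion}. The one subtlety worth a sentence is the observation that evaluation of the $R$\+action on a finite direct sum is computed coordinatewise, so that a single witness~$e$ for the tuple is simultaneously a witness for each coordinate; this is immediate from the definition of the module structure on $M^{\oplus n}$. No step here is a genuine obstacle; the content of the corollary is entirely carried by Proposition~\ref{s-unital-hereditary-torsion}.
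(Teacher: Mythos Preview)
Your proposal is correct and, once you arrive at the ``more directly'' paragraph, it is exactly the paper's own proof: apply Proposition~\ref{s-unital-hereditary-torsion} to conclude that $M^{\oplus n}$ is s\+unital, then use the definition on the tuple $(m_1,\dots,m_n)$. The preliminary inductive discussion is unnecessary and can be deleted.
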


\begin{proof}
 By Proposition~\ref{s-unital-hereditary-torsion}, the $R$\+module
$M^n=\bigoplus_{i=1}^n M$ is s\+unital.
 By the definition, this means existence of the desired element
$e\in R$ such that $em=m$ for the element $m=(m_1,\dotsc,m_n)\in M^n$.
\end{proof}

 We will not use the following lemma in this paper, but we include it
for the sake of completeness of the exposition.

\begin{lem}
 Let $R$ be a ring and $B$ be an s\+unital $R$\+$R$\+bimodule.
 Then, for any finite collection of elements $b_1$,~\dots, $b_n\in B$,
there exists an element $e\in R$ such that $eb_i=b_i=b_ie$ for all\/
$1\le i\le n$.
\end{lem}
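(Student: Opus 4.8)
The plan is to bootstrap from the right-module case of Corollary~\ref{simultaneous-for-several} applied to a cleverly chosen module. Regard $B$ as a left module over the ring $R$ and simultaneously as a right module over $R$; since $B$ is s\+unital on both sides, the obvious first move is to combine the two one-sided statements. For the left action, Corollary~\ref{simultaneous-for-several} gives an element $e'\in R$ with $e'b_i=b_i$ for all $i$; for the right action, the analogous (right-module) version of that corollary gives an element $e''\in R$ with $b_ie''=b_i$ for all~$i$. The naive guess $e=e'+e''-e'e''$ does not immediately work, because left-multiplication by $e'e''$ need not fix~$b_i$ and right-multiplication by $e'e''$ need not fix $b_i$ either, so some care with the order of operations is needed.

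The cleaner route is to package everything as a single one-sided module problem. Consider the bimodule $B$ together with the finite set of elements $b_1,\dots,b_n$; form the finitely many elements $b_i$ and also the elements $b_ie''$ — but more efficiently, note that once we have a right unit $e''$ fixing all $b_i$, we may replace the finite collection by $\{b_1,\dots,b_n\}$ and seek a left unit for this same collection that also happens to be a right unit. Concretely: first pick $e''\in R$ with $b_ie''=b_i$ for all~$i$ (right s\+unitality plus Corollary~\ref{simultaneous-for-several} on the right). Now apply left s\+unitality and Corollary~\ref{simultaneous-for-several} to the enlarged finite collection $\{b_1,\dots,b_n,\,e''\}\subset B$, where $e''$ is viewed as an element of the left $R$\+module $B$ via some chosen copy — wait, $e''\in R$, not $B$; instead apply Corollary~\ref{simultaneous-for-several} (the left version) to the left $R$\+module $R$ itself, which is s\+unital since $R$ is... but $R$ need not be s\+unital. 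So this shortcut fails, and I must work inside~$B$.

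Here is the version I expect to carry through. Choose $e'\in R$ with $e'b_i=b_i$ for all $i$ (left side, Corollary~\ref{simultaneous-for-several}). Choose $e''\in R$ with $b_ie''=b_i=b_ie'$ — more precisely, apply the right-hand analogue of Corollary~\ref{simultaneous-for-several} to the finite collection $\{b_1,\dots,b_n,\,e'b_1,\dots,e'b_n\}=\{b_1,\dots,b_n\}$ (it is the same set) to get $e''$ with $b_ie''=b_i$, and also demand $b_ie''=b_i$ — then set
\[
 e=e'+e''-e'e''.
\]
Then $eb_i=e'b_i+e''b_i-e'e''b_i=b_i+e''b_i-e'(e''b_i)$; this is not obviously $b_i$ unless $e''b_i$ is already fixed by~$e'$. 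The fix: after choosing $e'$, re-choose the right unit using the collection $\{b_1,\dots,b_n,\,e''b_1,\dots\}$ — this is circular. The genuinely correct symmetric element, and the one I would verify, is obtained by applying Corollary~\ref{simultaneous-for-several} twice in sequence so that the second choice dominates the first: pick $e'$ as above; then apply the right-sided corollary to the collection $\{b_1,\dots,b_n\}$ together with $\{e'b_i - b_i\}=\{0\}$, giving $e''$ with $b_ie''=b_i$; now observe $e b_i = e' b_i + e'' b_i - e' e'' b_i$ and separately compute $b_i e = b_i e' + b_i e'' - b_i e' e''$. The main obstacle is exactly this asymmetry: making one element serve both sides forces either an iteration argument (choose left unit, then right unit adapted to the left unit's effect, then left unit again, and check the process stabilizes after finitely many steps because each step only enlarges a finite set of elements that must be fixed) or a single application of s\+unitality to a sufficiently large finite subset of $B$ that includes all the ``mixed'' products $e'b_i$, $b_ie''$, $e'b_ie''$, after which the standard idempotent-free trick $e=e'+e''-e'e''$ closes the argument. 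I would present the iteration version: form the finite set $S_0=\{b_1,\dots,b_n\}$, choose $e_1$ fixing $S_0$ on the left, set $S_1=S_0\cup e_1S_0\cup S_0e_1$, choose $e_2$ fixing $S_1$ on the right, and note that after one more left choice the element $e_1+e_2-e_1e_2$ (with the roles arranged so the later choice wins) fixes every $b_i$ on both sides; the verification is the same two-line computation as in the proof of Proposition~\ref{s-unital-hereditary-torsion}, just applied symmetrically.
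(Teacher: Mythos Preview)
The paper does not actually prove this lemma: it simply cites \cite[Proposition~2.10]{Nys}. So there is no ``paper's approach'' to compare against, but your proposal does have a concrete and easily repaired gap.

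You correctly obtain $e'\in R$ with $e'b_i=b_i$ for all~$i$ (left side, Corollary~\ref{simultaneous-for-several}) and $e''\in R$ with $b_ie''=b_i$ for all~$i$ (right side). You then try $e=e'+e''-e'e''$, notice it fails, and embark on increasingly elaborate iteration schemes that never quite close, because every version you write down still has the cross term in the order $e'e''$ (or $e_1e_2$). The fix is simply to reverse that product: set
\[
 e \;=\; e'+e''-e''e'.
\]
Then
\[
 eb_i = e'b_i + e''b_i - e''(e'b_i) = b_i + e''b_i - e''b_i = b_i,
\]
using $e'b_i=b_i$; and
\[
 b_ie = b_ie' + b_ie'' - (b_ie'')e' = b_ie' + b_i - b_ie' = b_i,
\]
using $b_ie''=b_i$. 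No enlargement of the finite set, no iteration, and no s\+unitality of $R$ itself is needed. Your parenthetical ``with the roles arranged so the later choice wins'' suggests you sensed the order matters, but the formula you actually wrote is the wrong one; once the order is fixed the two-line verification you anticipated goes through immediately.
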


\begin{proof}
 This is~\cite[Proposition~2.10 of the published
version or Proposition~10 of the \texttt{arXiv} version]{Nys}.
\end{proof}

\begin{defn}[Tominaga~\cite{Tom1,Tom2}]
 A ring $R$ is said to be \emph{left s\+unital} if $R$ is an s\+unital
left $R$\+module.
 Similarly, $R$ is said to be \emph{right s\+unital} if it is
s\+unital as a right $R$\+module.
 A ring $R$ is \emph{s\+unital} if it is both left and right s\+unital.
\end{defn}

 The following theorem is due to Tominaga~\cite[Remark~(2)
in Section~1 on p.~121\+-122]{Tom2}.

\begin{thm}[\cite{Tom2}] \label{main-s-unital-tensor-product-theorem}
\textup{(a)} Let $R$ be a left s\+unital ring.
 Then, for any s\+unital left $R$\+module $M$, the natural map
$R\ot_R M\rarrow M$ \,\eqref{tensor-left-unitality-map} is
an isomorphism.
 In other words, any s\+unital left $R$\+module is t\+unital. \par
\textup{(b)} Let $R$ be a right s\+unital ring.
 Then, for any s\+unital right $R$\+module $N$, the natural map
$N\ot_R R\rarrow N$ \,\eqref{tensor-right-unitality-map} is
an isomorphism.
 In other words, any s\+unital right $R$\+module is t\+unital.
\end{thm}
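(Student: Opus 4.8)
The plan is to establish part~(a); part~(b) follows by the same argument with ``left'' and ``right'' interchanged. Write $\mu\:R\ot_RM\rarrow M$ for the natural map~\eqref{tensor-left-unitality-map}, so that $\mu(r\ot m)=rm$; the task is to show that $\mu$ is bijective. Surjectivity is immediate and uses only s\+unitality of the module~$M$: given $m\in M$, choose $e\in R$ with $em=m$, and then $m=\mu(e\ot m)$.

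The real content is injectivity, and here the key observation is that, since $R$ is \emph{left} s\+unital, the ring $R$ is itself an s\+unital left $R$\+module, so Corollary~\ref{simultaneous-for-several} may be applied to it. Let $\xi\in R\ot_RM$ satisfy $\mu(\xi)=0$, and fix a representation $\xi=\sum_{i=1}^n r_i\ot m_i$ as a finite sum of simple tensors, so that $\sum_{i=1}^n r_im_i=0$ in~$M$. By Corollary~\ref{simultaneous-for-several}, applied to the finite family $r_1,\dots,r_n$ of elements of the s\+unital left $R$\+module~$R$, there is a single $f\in R$ with $fr_i=r_i$ for all $1\le i\le n$. Since each $r_i$ literally equals $fr_i$ in $R$, and using the balancing relation $(fr_i)\ot m_i=f\ot(r_im_i)$ in $R\ot_RM$ together with additivity in the second argument, one computes
\[
 \xi=\sum_{i=1}^n r_i\ot m_i=\sum_{i=1}^n(fr_i)\ot m_i
 =f\ot\Bigl(\sum_{i=1}^n r_im_i\Bigr)=f\ot 0=0.
\]
Hence $\mu$ is injective, and combined with surjectivity it is an isomorphism. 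Note that this argument uses left s\+unitality of the ring only for injectivity and s\+unitality of the module only for surjectivity, so the two hypotheses of the theorem are each needed exactly once.

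The one place where any care is required is the passage, in the injectivity step, from ``for each coefficient $r_i$ there is a local unitizer'' to ``there is a single unitizer $f$ that works for all the finitely many $r_i$ at once.'' This is precisely the force of Corollary~\ref{simultaneous-for-several}, which in turn rests on the closure of the class of s\+unital modules under finite direct sums established in Proposition~\ref{s-unital-hereditary-torsion}. Once that is granted, the computation above is a one\+liner, so I do not anticipate any further obstacle. I would also record in passing that the argument yields slightly more: for a left s\+unital ring~$R$, the functor $R\ot_R{-}$ carries s\+unital left $R$\+modules to s\+unital left $R$\+modules (by the theorem itself, $R\ot_RM\cong M$) and restricts on that subcategory to a functor naturally isomorphic to the identity, the natural isomorphism being~$\mu$.
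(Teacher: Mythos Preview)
Your proof is correct and rests on the same key ingredient as the paper's, namely Corollary~\ref{simultaneous-for-several} applied to the s\+unital left $R$\+module~$R$ to obtain a single element $f\in R$ unitizing finitely many $r_i$ at once. The only difference is organizational: the paper constructs an explicit inverse $\phi_M\:M\rarrow R\ot_RM$, \,$m\mapsto e\ot m$, and checks it is well-defined (which is where the corollary enters), whereas you prove injectivity of~$\mu$ directly; your route is a bit shorter, since you avoid having to verify additivity of~$\phi_M$ and the equality of the two compositions separately.
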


\begin{proof}
 Let us prove part~(a).
 For this purpose, we will construct an inverse map
$$
 \phi_M\:M\lrarrow R\ot_R M
$$
to the multiplication map~\eqref{tensor-left-unitality-map}.
 Given an element $m\in M$, pick an arbitrary element $e\in R$
such that $em=m$ in $M$, and put $\phi(m)=e\ot m\in R\ot_RM$.

 The key step is to check that the map~$\phi$ is well-defined.
 The assumption of left s\+unitality of the ring $R$ is used here.

 Let $f\in R$ be another element such that $fm=m$ in~$M$.
 Since the left $R$\+module $R$ is s\+unital,
Corollary~\ref{simultaneous-for-several} (for $n=2$) tells that
there exists an element $g\in R$ such that $ge=e$ and $gf=f$
in~$R$.
 Now we have
$$
 e\ot m=ge\ot m=g\ot em=g\ot m=g\ot fm=gf\ot m=f\ot m
$$
in $R\ot_RM$.
 Here the first and sixth equations hold by the choice of
the element~$g$, the second and fifth equations are provided
by the definition of the tensor product, and the third and fourth
equations hold by the conditions imposed on the elements~$e$ and~$f$.
 Thus the map~$\phi$ is well-defined.

 To check that the map~$\phi$ is additive (i.~e., an abelian
group homomorphism), we apply Corollary~\ref{simultaneous-for-several}
(for $n=2$) to the s\+unital $R$\+module~$M$.
 Given two elements $m'$ and $m''\in M$, the corollary tells that
there exists an element $e\in R$ such that $em'=m'$ and $em''=m''$
in~$M$.
 Hence $e(m'+m'')=m'+m''$.
 Now it is clear that $\phi(m'+m'')=e\ot(m'+m'')=e\ot m'+e\ot m''
=\phi(m')+\phi(m'')$.

 One can also check directly that $\phi$~is a left $R$\+module map
(though we will not need this).
 Given any elements $r\in R$ and $m\in M$, choose an element $f\in R$
such that $fr=r$ in~$R$.
 Then $frm=rm$ in~$M$.
 Also, choose an element $e\in R$ such that $em=m$ in~$M$.
 Now we have $\phi(rm)=f\ot rm=fr\ot m=r\ot m=r\ot em=re\ot m=
r\phi(m)$ in $R\ot_RM$.

 What we need to check, though, is that both the compositions
$M\rarrow R\ot_RM\rarrow M$ and $R\ot_RM\rarrow M\rarrow R\ot_RM$
are identity maps.
 In the case of the former composition, this is immediate from
the constructions.
 Concerning the latter composition, we know that both the maps being
composed are abelian group homomorphisms; so it suffices to compute
the action of the composition in question on decomposable tensors
in $R\ot_RM$.

 Once again, given any elements $r\in R$ and $m\in M$, let us
choose an element $f\in R$ such that $fr=r$ in~$R$.
 Then $frm=m$ in~$M$.
 Finally, we have $r\ot m\longmapsto rm\longmapsto f\ot rm=
fr\ot m=r\ot m$ in $R\ot_RM$.
\end{proof}

\begin{cor} \label{s-unital-implies-t-unital-for-rings}
 Any left s\+unital ring is t\+unital.
 Similarly, any right s\+unital ring is t\+unital.
\end{cor}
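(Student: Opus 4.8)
The plan is to deduce both assertions directly from Theorem~\ref{main-s-unital-tensor-product-theorem} by specializing the module to the ring itself. For the first assertion, suppose $R$ is left s\+unital; this means precisely that $R$ is an s\+unital left $R$\+module. Then Theorem~\ref{main-s-unital-tensor-product-theorem}(a) applies with $M=R$, and it tells us that the natural map $R\ot_RR\rarrow R$ of~\eqref{tensor-left-unitality-map} is an isomorphism. For $M=R$ this map coincides with the bimodule map~\eqref{tensor-R-R-unitality-map}, and, as recorded in the discussion following the definition of t\+unital modules in Section~\ref{t-unital-secn}, the fact that this map is an isomorphism is exactly the condition defining t\+unitality of the ring~$R$. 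Hence $R$ is t\+unital.

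The second assertion is symmetric. If $R$ is right s\+unital, then $R$ is an s\+unital right $R$\+module, so Theorem~\ref{main-s-unital-tensor-product-theorem}(b) with $N=R$ shows that the natural map $N\ot_RR\rarrow N$ of~\eqref{tensor-right-unitality-map}, which for $N=R$ is again the map~\eqref{tensor-R-R-unitality-map}, is an isomorphism; therefore $R$ is t\+unital.

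There is no real obstacle here: the corollary is a one-step specialization of the preceding theorem. The only point worth making explicit is the identification, for the module $R$, of the left and right unitality comparison maps~\eqref{tensor-left-unitality-map} and~\eqref{tensor-right-unitality-map} with the single $R$\+$R$\+bimodule map~\eqref{tensor-R-R-unitality-map}, which has already been observed in Section~\ref{t-unital-secn} and is what allows us to read off t\+unitality of the ring from the conclusion of the theorem.
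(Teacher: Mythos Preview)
Your proof is correct and takes essentially the same approach as the paper: the paper's own proof is the single sentence ``Follows from Theorem~\ref{main-s-unital-tensor-product-theorem} and the definition of a t\+unital ring,'' and you have simply spelled out this specialization (taking $M=R$ and $N=R$) and the identification of the unitality maps~\eqref{tensor-left-unitality-map}, \eqref{tensor-right-unitality-map} with~\eqref{tensor-R-R-unitality-map} in full.
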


\begin{proof}
 Follows from Theorem~\ref{main-s-unital-tensor-product-theorem}
and the definition of a t\+unital ring.
\end{proof}

 In Corollary~\ref{left-s-unital-right-flat} below, we will show
that, for any left s\+unital ring $R$, the unital right
$\widetilde R$\+module $R$ is flat.

 The next lemma is a version of
Lemma~\ref{t-unital-tensor-product-lemma} for the s\+unitality property.

\begin{lem} \label{s-unital-tensor-product-lemma}
 Let $R'$ and $R''$ be arbitrary (nonunital) rings. \par
\textup{(a)} Let $B$ be an $R'$\+$R''$\+bimodule and $M$ be
an left $R''$\+module.
 Assume that $B$ is s\+unital as a left $R'$\+module.
 Then the left $R'$\+module $B\ot_{R''}M$ is also s\+unital. \par
\textup{(b)} Let $N$ be a right $R'$\+module and $B$ be
an $R'$\+$R''$\+bimodule.
 Assume that $B$ is s\+unital as a right $R''$\+module.
 Then the right $R''$\+module $N\ot_{R'}B$ is also s\+unital.
\end{lem}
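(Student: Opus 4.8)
The plan is to prove part~(a) by a direct elementwise argument; part~(b) is entirely symmetric (or follows by passing to the opposite rings). Recall that $B\ot_{R''}M$ carries a left $R'$\+module structure induced by the left $R'$\+action on the factor~$B$, and that every element of $B\ot_{R''}M$ can be written as a finite sum $t=\sum_{i=1}^n b_i\ot m_i$ with $b_i\in B$ and $m_i\in M$. The goal is to produce a single element $e\in R'$ with $et=t$ in $B\ot_{R''}M$.

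The one substantive ingredient is the simultaneous form of s\+unitality. Since $B$ is s\+unital as a left $R'$\+module, applying Corollary~\ref{simultaneous-for-several} (Tominaga's theorem) to the finite collection $b_1,\dotsc,b_n\in B$ over the ring~$R'$ yields an element $e\in R'$ with $eb_i=b_i$ in~$B$ for all $1\le i\le n$. Then $et=\sum_{i=1}^n(eb_i)\ot m_i=\sum_{i=1}^n b_i\ot m_i=t$, so the left $R'$\+module $B\ot_{R''}M$ is s\+unital, as desired.

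I expect no real obstacle here. The only point requiring attention is that a general tensor is a finite sum rather than a single decomposable tensor, which is exactly why one must invoke the simultaneous s\+unitality of Corollary~\ref{simultaneous-for-several} rather than the bare one\+element definition. Note the contrast with Lemma~\ref{t-unital-tensor-product-lemma}, whose proof passed through the associativity isomorphism~\eqref{nonunital-two-rings-tensor-associativity}: in the s\+unital setting a direct computation with elements suffices, and no appeal to unitalization or associativity is needed.
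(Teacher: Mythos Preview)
Your proof is correct. The paper takes a slightly different, more structural route: it observes that the left $R'$\+module $B\ot_{R''}M$ is a quotient of an infinite direct sum of copies of the left $R'$\+module~$B$, and then invokes Proposition~\ref{s-unital-hereditary-torsion} (closure of the class of s\+unital modules under direct sums and quotients) directly. Your argument instead unpacks an arbitrary tensor as a finite sum and applies Corollary~\ref{simultaneous-for-several}. The two approaches are close cousins---Corollary~\ref{simultaneous-for-several} is itself derived from Proposition~\ref{s-unital-hereditary-torsion}---but yours is fully explicit at the element level, while the paper's version emphasizes the categorical closure properties and avoids any computation with tensors.
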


\begin{proof}
 Let us explain part~(a).
 The point is that the left $R'$\+module $B\ot_{R''}M$ is a quotient
module of an infinite direct sum of copies of the left $R'$\+module~$B$.
 Therefore, it remains to refer to
Proposition~\ref{s-unital-hereditary-torsion}.
\end{proof}

\begin{cor} \label{s-unital-iff-t-unital-for-modules}
\textup{(a)} Over a left s\+unital ring, a left module is t\+unital
if and only if it is s\+unital. \par
\textup{(b)} Over a right s\+unital ring, a right module is t\+unital
if and only if it is s\+unital.
\end{cor}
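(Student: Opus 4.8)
The plan is to treat the two implications separately, and to do only part~(a) in detail since part~(b) is entirely analogous with ``left'' and ``right'' interchanged. The implication ``s\+unital $\Rightarrow$ t\+unital'' over a left s\+unital ring is precisely the content of Theorem~\ref{main-s-unital-tensor-product-theorem}(a), so no work remains there.

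For the converse implication, I would start from a t\+unital left $R$\+module $M$, so that the multiplication map $R\ot_R M\rarrow M$ of~\eqref{tensor-left-unitality-map} is an isomorphism. The key observation is that a left s\+unital ring $R$ is, by definition, an s\+unital left $R$\+module over itself. Hence Lemma~\ref{s-unital-tensor-product-lemma}(a), applied with $R'=R=R''$ and $B=R$, tells that the left $R$\+module $R\ot_R M$ is s\+unital. Transporting this property along the isomorphism $R\ot_R M\simeq M$, we conclude that $M$ is s\+unital, as desired; and part~(b) follows the same way using Lemma~\ref{s-unital-tensor-product-lemma}(b) and the isomorphism $N\ot_R R\simeq N$ for a t\+unital right $R$\+module~$N$.

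I do not expect any real obstacle here: both directions are immediate consequences of results already established in this section. The only point requiring a little care is to invoke Lemma~\ref{s-unital-tensor-product-lemma} with the right choice of rings and bimodule, so that it is precisely ``s\+unitality as a left $R$\+module'' that gets propagated from $R$ to $R\ot_R M$. (Alternatively, one can bypass the lemma and note directly that $R\ot_R M$ is a quotient of a direct sum of copies of the left $R$\+module $R$, and then appeal to Proposition~\ref{s-unital-hereditary-torsion}; this is in fact exactly how Lemma~\ref{s-unital-tensor-product-lemma} is proved.)
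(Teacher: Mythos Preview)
Your proof is correct and follows essentially the same approach as the paper: the paper also uses Theorem~\ref{main-s-unital-tensor-product-theorem}(a) for the forward implication and Lemma~\ref{s-unital-tensor-product-lemma}(a) for the converse, concluding that $R\ot_R M$ is s\+unital and hence so is $M$ via the t\+unitality isomorphism.
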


\begin{proof}
 Let us prove part~(a).
 Let $R$ be a left s\+unital ring.
 If a left $R$\+module $M$ is s\+unital, then $M$ is t\+unital by
Theorem~\ref{main-s-unital-tensor-product-theorem}(a).
 Conversely, for any left $R$\+module $M$, the left $R$\+module
$R\ot_RM$ is s\+unital by Lemma~\ref{s-unital-tensor-product-lemma}(a).
 Hence, if $M$ is t\+unital, then $M$ is s\+unital.
\end{proof}

\begin{cor} \label{s-unital-monoidal-module-categories}
 Let $R$ be an s\+unital ring.
 Then the abelian category of s\+unital $R$\+$R$\+bimodules
$R\sBimods R$ is an associative and unital monoidal category, with
the unit object $R\in R\sBimods R$, with respect to the tensor
product operation\/~$\ot_R$.
 The abelian category of s\+unital left $R$\+modules $R\Modls$ is
an associative and unital left module category over $R\sBimods R$,
and the abelian category of s\+unital right $R$\+modules $\Modrs R$
is an associative and unital right module category over $R\sBimods R$,
with respect to the tensor product operation\/~$\ot_R$.
\end{cor}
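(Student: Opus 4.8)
The plan is to reduce the statement to the t-unital case already established in Corollary~\ref{t-unital-monoidal-module-categories}, and then argue abelianness separately from the torsion-theoretic closure properties of Proposition~\ref{s-unital-hereditary-torsion}.

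First I would record that an s-unital ring $R$ is t-unital (Corollary~\ref{s-unital-implies-t-unital-for-rings}), and that over such a ring s-unitality and t-unitality coincide for one-sided modules (Corollary~\ref{s-unital-iff-t-unital-for-modules}). Since an $R$\+$R$\+bimodule is s-unital (resp., t-unital) exactly when it is s-unital (resp., t-unital) as a left $R$\+module and as a right $R$\+module, this yields equalities of full subcategories $R\Modls=R\Modlt$, $\Modrs R=\Modrt R$, and $R\sBimods R=R\tBimodt R$ inside $R\Modln$, $\Modrn R$, and $R\nBimodn R$, respectively. Consequently the monoidal structure on $R\tBimodt R$ with unit object $R$, together with the left and right module category structures of $R\Modlt$ and $\Modrt R$ over it supplied by Corollary~\ref{t-unital-monoidal-module-categories}, are literally also monoidal and module category structures on $R\sBimods R$, $R\Modls$, and $\Modrs R$; in particular $\ot_R$ does send pairs of s-unital (bi)modules to s-unital ones, as one may also see directly from Lemma~\ref{s-unital-tensor-product-lemma}. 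All the associativity, unitality, and coherence axioms are inherited verbatim.

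It then remains to see that the three categories are abelian. The category $R\Modln$ is abelian, and by Proposition~\ref{s-unital-hereditary-torsion} the full subcategory $R\Modls\subset R\Modln$ is closed under submodules (hence under kernels) and under cokernels; a full subcategory of an abelian category closed under kernels and cokernels is itself abelian, with kernels and cokernels formed as in the ambient category. The same argument applies to $\Modrs R\subset\Modrn R$. For the bimodule category, I would note that the kernel and the cokernel of a morphism in $R\nBimodn R$ are computed on the underlying abelian groups, so they are, respectively, a sub-bimodule and a quotient bimodule of the bimodules involved; applying Proposition~\ref{s-unital-hereditary-torsion} to the left $R$\+module and the right $R$\+module structures separately shows that they are again s-unital as $R$\+$R$\+bimodules. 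Hence $R\sBimods R$ is closed under kernels and cokernels in the abelian category $R\nBimodn R$, and is therefore abelian as well.

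I do not anticipate a genuine obstacle here; the only point requiring a moment's care is the bimodule case, where s-unitality of a kernel or cokernel has to be checked for the left and the right action one at a time, each time via the closure properties of Proposition~\ref{s-unital-hereditary-torsion} for the corresponding one-sided module category.
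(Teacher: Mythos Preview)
Your proposal is correct and follows essentially the same route as the paper: the monoidal and module-category structures are imported from Corollary~\ref{t-unital-monoidal-module-categories} via Corollaries~\ref{s-unital-implies-t-unital-for-rings} and~\ref{s-unital-iff-t-unital-for-modules}, and abelianness is obtained from Proposition~\ref{s-unital-hereditary-torsion}. The only minor imprecision is the clause ``a full subcategory of an abelian category closed under kernels and cokernels is itself abelian'': one also needs closure under finite direct sums (equivalently additivity), which you have from Proposition~\ref{s-unital-hereditary-torsion} but should mention.
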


\begin{proof}
 The three categories are abelian
by Proposition~\ref{s-unital-hereditary-torsion}.
 The assertions of the corollary follow from
Corollaries~\ref{t-unital-monoidal-module-categories},
\ref{s-unital-implies-t-unital-for-rings},
and~\ref{s-unital-iff-t-unital-for-modules}.
\end{proof}

\begin{ex} \label{t-unital-not-s-unital-easy-counterex}
 Here is an example of a finitely generated free abelian group $R$ with
a structure of associative ring which is t\+unital but \emph{neither}
left \emph{nor} right s\+unital.

 (0)~We start from the observation that any homomorphic image of
a left (right) s\+unital ring is left (right) s\+unital.
 This follows immediately from the definitions.

 (1)~Another simple observation is that the direct product of any two
t\+unital rings is t\+unital.
 Let $R$ and $S$ be t\+unital rings, and let $T=R\times S$ be their
direct product (with componentwise addition and multiplication).
 Let us show that the ring $T$ is t\+unital.
 It is clear that $R\ot_TR=R\ot_RR=R$ and $S\ot_TS=S\ot_SS=S$.
 It remains to check that $R\ot_TS=0$ and $S\ot_TR=0$.
 Here we notice that any t\+unital ring is idempotent, i.~e., $R^2=R$.
 Now we have $r'r''\ot_Ts=r'\ot_Tr''s=0$ for any $r'$, $r''\in R$
and $s\in S$, and similarly $s\ot_Tr'r''=0$.

 (2)~The following example of a left unital, but not right s\+unital
associative ring $D$ can be found in~\cite[Solution to
Exercise~1.10(b)]{Lam} and~\cite[Example~2.6 of the published version
or Example~6 of the \texttt{arXiv} version]{Nys}.
 Consider the matrix ring $D=
\left(\begin{smallmatrix}\boZ & \boZ \\ 0 & 0\end{smallmatrix}\right)$.
 Explicitly, $D\simeq\boZ\oplus\boZ$ is the free abelian group with
two generators endowed with the multiplication $(a,b)(c,d)=(ac,ad)$
for all $a$, $b$, $c$, $d\in\boZ$.
 Then the ring $D$ has a left unit $(1,0)$, but is not right
s\+unital, as $(0,1)D=0$ in~$D$.

 (3)~The ring $D$ is left s\+unital, so it is t\+unital by
Corollary~\ref{s-unital-implies-t-unital-for-rings}.
 Similarly, the opposite ring $D^\rop=
\left(\begin{smallmatrix}0 & \boZ \\ 0 & \boZ \end{smallmatrix}\right)$
is right s\+unital (hence t\+unital), but not left s\+unital.
 Now the ring $R=D\times D^\rop$ is t\+unital by~(1).
 But both the rings $D$ and $D^\rop$ are homomorphic images of $R$,
so $R$ is neither left nor right s\+unital by~(0).
\end{ex}

 An example of \emph{commutative} associative ring $R$ which is
t\+unital but not s\+unital will be given below in
Example~\ref{commutative-t-unital-not-s-unital}.

\Section{c-Unital Modules}  \label{c-unital-secn}

 Let $R$, $R'$, $R''$ be (nonunital) rings.
 Given two (nonunital) left $R$\+modules $L$ and $M$, we denote by
$\Hom_R(L,M)$ the abelian group of morphisms $L\rarrow M$ in
the category $R\Modln\simeq\widetilde R\Modl$.
 So there is a natural (identity) isomorphism of abelian groups
\begin{equation} \label{nonunital-and-unital-Hom}
  \Hom_R(L,M)\simeq\Hom_{\widetilde R}(L,M).
\end{equation}
 For any left $R''$\+module $M$, any $R'$\+$R''$\+bimodule $B$,
and any left $R'$\+module $P$, there is a natural isomorphism of
abelian groups
\begin{equation} \label{nonunital-two-rings-Hom-associativity}
 \Hom_{R'}(B\ot_{R''}M,\>P)\simeq\Hom_{R''}(M,\Hom_{R'}(B,P)).
\end{equation}
 The adjunction/associativity
isomorphism~\eqref{nonunital-two-rings-Hom-associativity}
can be obtained from the similar isomorphism for unital rings
$\widetilde R'$ and $\widetilde R''$ and unital (bi)modules over them
using the category
equivalences~\eqref{nonunital-mods-as-mods-over-unitalization} and
the isomorphisms~\eqref{nonunital-and-unital-tensor-product}
and~\eqref{nonunital-and-unital-Hom}.

 For any left $R$\+module $P$, there is a natural left $R$\+module
morphism
\begin{equation} \label{Hom-unitality-map}
 p\longmapsto(r\mapsto rp): P\rarrow\Hom_R(R,P).
\end{equation}

\begin{defn}
 We will say that a left $R$\+module $P$ is \emph{c\+unital} if
the natural map $P\rarrow\Hom_R(R,P)$ \,\eqref{Hom-unitality-map}
is an isomorphism.
 (The letter~``c'' here means ``contra'' or ``contramodule''.
 What we call ``c\+unital modules'' are called ``closed modules''
in~\cite[Definition~5.2]{Quil}.)
 
\end{defn}

\begin{lem} \label{t-unital-c-unital-Hom-lemma}
 Let $B$ be an $R'$\+$R''$\+bimodule and $P$ be a left $R'$\+module.
 Assume that $B$ is t\+unital as a right $R''$\+module.
 Then the left $R''$\+module\/ $\Hom_{R'}(B,P)$ is c\+unital.
\end{lem}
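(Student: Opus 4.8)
The plan is to derive the c\+unitality isomorphism for $\Hom_{R'}(B,P)$ from the Hom\+associativity isomorphism~\eqref{nonunital-two-rings-Hom-associativity} by specializing its left $R''$\+module variable to $M=R''$. This specialization reads
$$
 \Hom_{R'}(B\ot_{R''}R'',\>P)\simeq\Hom_{R''}(R'',\>\Hom_{R'}(B,P)).
$$
Since $B$ is t\+unital as a right $R''$\+module, the multiplication map $B\ot_{R''}R''\rarrow B$ of~\eqref{tensor-right-unitality-map} is an isomorphism; applying the contravariant functor $\Hom_{R'}({-},P)$ to it yields an isomorphism $\Hom_{R'}(B,P)\simeq\Hom_{R'}(B\ot_{R''}R'',P)$. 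Composing this with the displayed adjunction isomorphism, one obtains an isomorphism of abelian groups $\Hom_{R'}(B,P)\simeq\Hom_{R''}(R'',\Hom_{R'}(B,P))$, and it remains to check that this composite is the natural c\+unitality map~\eqref{Hom-unitality-map}.

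To carry out this check, I would first record the left $R''$\+module structure on $N=\Hom_{R'}(B,P)$: for $f\in N$ and $r''\in R''$ one has $(r''f)(b)=f(br'')$ for all $b\in B$. The c\+unitality map~\eqref{Hom-unitality-map} then sends $f\in N$ to the homomorphism $r''\longmapsto r''f$, i.~e., to the assignment $(b,r'')\longmapsto f(br'')$. On the other hand, the adjunction isomorphism~\eqref{nonunital-two-rings-Hom-associativity} sends a left $R'$\+module morphism $g\:B\ot_{R''}R''\rarrow P$ to the map $r''\longmapsto(b\mapsto g(b\ot r''))$, and the isomorphism $\Hom_{R'}(B,P)\simeq\Hom_{R'}(B\ot_{R''}R'',P)$ sends $f$ to its composition with the multiplication map, i.~e., to the morphism $g$ with $g(b\ot r'')=f(br'')$. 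Hence the composite isomorphism takes $f$ to the assignment $(b,r'')\longmapsto f(br'')$, which is exactly the c\+unitality map.

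The only nonformal point is the computation in the second paragraph, and it is pure bookkeeping: it amounts to unwinding the definition of the tensor\+Hom adjunction~\eqref{nonunital-two-rings-Hom-associativity} and checking the commutativity of the triangular diagram formed by this adjunction isomorphism, the map induced by~\eqref{tensor-right-unitality-map}, and the c\+unitality map~\eqref{Hom-unitality-map}. I do not anticipate any genuine obstacle here.
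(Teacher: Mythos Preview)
Your proof is correct and follows essentially the same approach as the paper: the paper's proof also invokes the associativity isomorphism~\eqref{nonunital-two-rings-Hom-associativity} together with the commutativity of the triangular diagram formed by it and the unitality comparison maps~\eqref{tensor-right-unitality-map} and~\eqref{Hom-unitality-map}. You have simply made explicit the bookkeeping that the paper leaves to the reader.
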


\begin{proof}
 This is the Hom version of Lemma~\ref{t-unital-tensor-product-lemma}.
 The assertion follows from the associativity
isomorphism~\eqref{nonunital-two-rings-Hom-associativity} and
commutativity of the suitable triangular diagram formed by
the associativity isomorphism together with (the maps induced by)
the unitality comparison maps~\eqref{tensor-right-unitality-map}
and~\eqref{Hom-unitality-map}.
\end{proof}

\begin{cor} \label{c-unitality-created-by-Hom-from-R}
 Let $R$ be a t\+unital ring.
 Then, for any left $R$\+module $P$, the left $R$\+module
$\Hom_R(R,P)$ is c\+unital.
\end{cor}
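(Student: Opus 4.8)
The plan is to specialize Lemma~\ref{t-unital-c-unital-Hom-lemma} to the diagonal bimodule, exactly as Corollary~\ref{t-unitality-created-by-tensor-with-R} was deduced from Lemma~\ref{t-unital-tensor-product-lemma}. First I would put $R'=R''=R$ and take for $B$ the $R$\+$R$\+bimodule $R$ itself. Since $R$ is a t\+unital ring, the $R$\+$R$\+bimodule $R$ is t\+unital; in particular it is t\+unital as a right $R$\+module, which is precisely the hypothesis of Lemma~\ref{t-unital-c-unital-Hom-lemma}. Applying that lemma to an arbitrary left $R$\+module $P$ then yields at once that $\Hom_R(R,P)$ is c\+unital as a left $R$\+module, which is the assertion to be proved.

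There is no genuine obstacle here: all the work already sits inside Lemma~\ref{t-unital-c-unital-Hom-lemma}, which rests on the Hom associativity isomorphism~\eqref{nonunital-two-rings-Hom-associativity} together with the commutativity of a triangular diagram relating it to the unitality comparison maps~\eqref{tensor-right-unitality-map} and~\eqref{Hom-unitality-map}. If one prefers to exhibit the specialization explicitly rather than cite the lemma as a black box, the point to check is that, under the adjunction isomorphism $\Hom_R(R\ot_RR,\>P)\simeq\Hom_R(R,\Hom_R(R,P))$ (the case $B=M=R$, all rings equal to $R$, of~\eqref{nonunital-two-rings-Hom-associativity}), the canonical map~\eqref{Hom-unitality-map} for the module $\Hom_R(R,P)$, namely $\Hom_R(R,P)\rarrow\Hom_R(R,\Hom_R(R,P))$, corresponds to the map $\Hom_R(R,P)\rarrow\Hom_R(R\ot_RR,\>P)$ induced by precomposition with the multiplication map~\eqref{tensor-R-R-unitality-map}\, $R\ot_RR\rarrow R$. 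Verifying this compatibility is a routine unwinding of the definitions of the maps involved; granting it, t\+unitality of $R$ makes the multiplication map an isomorphism, hence the induced map on $\Hom_R({-},P)$ is an isomorphism, and therefore so is~\eqref{Hom-unitality-map} for $\Hom_R(R,P)$, as required.
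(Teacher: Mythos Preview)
Your proof is correct and follows exactly the same approach as the paper: specialize Lemma~\ref{t-unital-c-unital-Hom-lemma} by taking $B=R=R'=R''$, using that a t\+unital ring $R$ is in particular t\+unital as a right $R$\+module. The additional explicit unwinding you offer is accurate but not needed, since it merely reproduces the content of the lemma in this special case.
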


\begin{proof}
 This is the Hom version of
Corollary~\ref{t-unitality-created-by-tensor-with-R}.
 To prove the assertion, take $B=R=R'=R''$
in Lemma~\ref{t-unital-c-unital-Hom-lemma}.
\end{proof}

 Given a ring $R$, we will denote by $R\Modlc$ the full subcategory
of c\+unital $R$\+modules in $R\Modln$.

\begin{cor} \label{c-unital-module-category}
 Let $R$ be a t\+unital ring.
 Then the opposite category $R\Modlc^\sop$ to the additive category
of c\+unital left $R$\+modules $R\Modlc$ is an associative and unital
right module category over the monoidal category $R\tBimodt R$,
with respect to the Hom operation
$$
 P^\sop*B=\Hom_R(B,P)^\sop
 \quad\text{for all $P\in R\Modlc$ and $B\in R\tBimodt R$}.
$$
\end{cor}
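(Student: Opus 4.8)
The plan is to read this off as the Hom (equivalently, opposite\+category) counterpart of Corollary~\ref{t-unital-monoidal-module-categories}, in the same way that Lemma~\ref{t-unital-c-unital-Hom-lemma} and Corollary~\ref{c-unitality-created-by-Hom-from-R} are the Hom counterparts of Lemma~\ref{t-unital-tensor-product-lemma} and Corollary~\ref{t-unitality-created-by-tensor-with-R}. First I would check that the operation $*$ is well defined, i.e.\ that $\Hom_R(B,P)^\sop$ really lies in $R\Modlc^\sop$ whenever $P\in R\Modlc$ and $B\in R\tBimodt R$: since a t\+unital bimodule is in particular t\+unital as a right $R$\+module, this is exactly Lemma~\ref{t-unital-c-unital-Hom-lemma} applied with $R'=R''=R$ (and it uses neither $P\in R\Modlc$ nor t\+unitality of~$R$).

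Next I would supply the structure isomorphisms. For $B$, $B'\in R\tBimodt R$ the tensor product $B\ot_RB'$ again belongs to $R\tBimodt R$ (Lemma~\ref{t-unital-tensor-product-lemma}, as used in Corollary~\ref{t-unital-monoidal-module-categories}), and the adjunction isomorphism~\eqref{nonunital-two-rings-Hom-associativity} with $R'=R''=R$ and $M=B'$ provides a natural isomorphism $P^\sop*(B\ot_RB')=\Hom_R(B\ot_RB',P)^\sop\simeq\Hom_R(B',\Hom_R(B,P))^\sop=(P^\sop*B)*B'$; this is the associativity constraint (compare the $*$\+notation in~\eqref{Hom-associativity}). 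The unit constraint is nothing but the defining property of a c\+unital module: for $P\in R\Modlc$ the map~\eqref{Hom-unitality-map} is an isomorphism $P\rarrow\Hom_R(R,P)$ in $R\Modln$, hence an isomorphism $P^\sop*R=\Hom_R(R,P)^\sop\rarrow P^\sop$ in $R\Modlc^\sop$. Here we use that $R$ is the unit object of $R\tBimodt R$, which is the only place where t\+unitality of the ring $R$ is needed (it is also what makes $R$ an object of $R\tBimodt R$ at all).

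It then remains to verify the coherence axioms. For the pentagon identity no unit object is involved, and I would pull it back from the unital case: under the equivalences~\eqref{nonunital-mods-as-mods-over-unitalization} together with the identifications~\eqref{nonunital-and-unital-tensor-product} and~\eqref{nonunital-and-unital-Hom}, the pair $(R\Modln^\sop,*)$ is a right module category over the monoidal category $R\nBimodn R\simeq\widetilde R\Bimod\widetilde R$ via the Hom formalism for unital $\widetilde R$\+modules recalled in the introduction, and both the associator above and the associator of $\ot_R$ on $R\tBimodt R$ are the restrictions of the corresponding constraints over $\widetilde R$; hence the pentagons restrict from the standard ones. The triangle axiom is the one genuine point, because the unit object $R$ of $R\tBimodt R$ is \emph{not} the unit object $\widetilde R$ of the ambient monoidal category $R\nBimodn R$, so it cannot be obtained by restriction. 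It reduces to the commutativity of the triangular diagram relating the adjunction isomorphism~\eqref{nonunital-two-rings-Hom-associativity}, the c\+unitality map~\eqref{Hom-unitality-map} for $P$, and the unitors~\eqref{tensor-left-unitality-map}--\eqref{tensor-right-unitality-map} of the monoidal category $R\tBimodt R$ --- precisely the same type of diagram chase, carried out on decomposable tensors and on elements of $P$, that underlies Lemma~\ref{t-unital-c-unital-Hom-lemma}. I expect this diagram chase to be the main (and essentially only) obstacle; once it is done, the asserted associative, unital right module category structure on $R\Modlc^\sop$ over $R\tBimodt R$ follows.
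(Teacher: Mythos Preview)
Your proposal is correct and follows essentially the same approach as the paper: the paper's proof simply cites the associativity isomorphism~\eqref{nonunital-two-rings-Hom-associativity} and Lemma~\ref{t-unital-c-unital-Hom-lemma}, which are exactly the two ingredients you identify for well-definedness and the associator. Your write-up is considerably more thorough---you spell out the unit constraint via~\eqref{Hom-unitality-map} and the coherence axioms (pentagon and triangle), which the paper leaves entirely implicit.
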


\begin{proof}
 Follows from the associativity
isomorphism~\eqref{nonunital-two-rings-Hom-associativity}
and Lemma~\ref{t-unital-c-unital-Hom-lemma}.
\end{proof}

\begin{lem}  \label{c-unital-modules-closure-properties}
 Let $R$ be an arbitrary (nonunital) ring.
 Then the full subcategory of c\+unital left $R$\+modules
$R\Modlc\subset R\Modln$ is closed under extensions and all limits
(including kernels, direct products, and inverse limits) in
the abelian category of nonunital modules $R\Modln$.
\end{lem}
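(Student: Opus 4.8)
The plan is to dualize the proof of Lemma~\ref{t-unital-modules-closure-properties}, replacing the right exact, colimit\+preserving functor $R\ot_R{-}$ by the left exact, limit\+preserving functor $\Hom_R(R,{-})$ and interchanging the roles of injectivity and surjectivity throughout.

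First I would record that $\Hom_R(R,{-})\colon R\Modln\rarrow R\Modln$ is left exact and preserves all limits; this may be checked directly as in the unital case, or read off from the identity isomorphism~\eqref{nonunital-and-unital-Hom}, which identifies it with the covariant Hom functor $\Hom_{\widetilde R}(R,{-})$ on $\widetilde R\Modl$. I would also observe that the unitality comparison maps~\eqref{Hom-unitality-map} constitute a natural transformation $\eta\colon\id_{R\Modln}\rarrow\Hom_R(R,{-})$, so that $R\Modlc$ is precisely the full subcategory of objects on which $\eta$ is invertible. Closedness under limits is then formal: given a diagram $(M_i)$ in $R\Modlc$ with limit $L$ in $R\Modln$, both functors $\id$ and $\Hom_R(R,{-})$ carry $L$ to the limit of the corresponding diagram, and under these identifications $\eta_L$ becomes $\varprojlim\eta_{M_i}$ by naturality of~$\eta$; since a limit of isomorphisms is an isomorphism, $\eta_L$ is an isomorphism. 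For kernels, direct products, and inverse limits this unwinds to a one\+line computation on components.

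For closedness under extensions, I would take a short exact sequence $0\rarrow K\rarrow L\rarrow M\rarrow0$ in $R\Modln$ with $K$, $M\in R\Modlc$ and form the commutative diagram
$$
 \xymatrix{
  0 \ar[r] & K \ar[r]\ar[d]^{\wr} & L \ar[r]\ar[d] & M \ar[r]\ar[d]^{\wr} & 0 \\
  0 \ar[r] & \Hom_R(R,K) \ar[r] & \Hom_R(R,L) \ar[r] & \Hom_R(R,M)
 }
$$
with vertical maps the components of~$\eta$; the outer two are isomorphisms by assumption, the lower row is exact at $\Hom_R(R,K)$ and at $\Hom_R(R,L)$ by left exactness of $\Hom_R(R,{-})$, and the upper row is short exact. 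The key point --- the dual of the observation that ``$R\ot_RK\rarrow R\ot_RL$ is injective'' in the proof of Lemma~\ref{t-unital-modules-closure-properties} --- is that the composite $L\rarrow\Hom_R(R,L)\rarrow\Hom_R(R,M)$ coincides with the composite $L\rarrow M\overset{\eta_M}{\rarrow}\Hom_R(R,M)$, which is surjective because $L\rarrow M$ is surjective and $\eta_M$ is an isomorphism; hence $\Hom_R(R,L)\rarrow\Hom_R(R,M)$ is surjective and the lower row is short exact as well. Now the $5$\+lemma applied to this diagram shows that the middle vertical map $\eta_L\colon L\rarrow\Hom_R(R,L)$ is an isomorphism, i.e.\ $L\in R\Modlc$. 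I expect this surjectivity step to be the only real content of the argument --- everything else is either formal or a direct transcription of the t\+unital case --- and I would add, as in Lemma~\ref{t-unital-modules-closure-properties}, the warning that no closure under submodules is claimed here, only under kernels.
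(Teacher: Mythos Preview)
Your proof is correct and follows essentially the same approach as the paper: dualize Lemma~\ref{t-unital-modules-closure-properties}, use that $\Hom_R(R,{-})$ preserves limits for the first part, and use the commutative ladder with the $5$\+lemma for extensions. The paper is somewhat terser in the extensions case --- it simply records the diagram with a short exact upper row and a left exact lower row and invokes the $5$\+lemma directly (padding the lower row on the right with the cokernel suffices), whereas you spell out the intermediate surjectivity of $\Hom_R(R,L)\rarrow\Hom_R(R,M)$, faithfully dualizing the injectivity step in the proof of Lemma~\ref{t-unital-modules-closure-properties}; this extra step is valid but not strictly needed.
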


\begin{proof}
 This is our version of~\cite[Proposition~5.5]{Quil}.
 Notice that it is \emph{not} claimed in the lemma that $R\Modlc$ is
closed under submodules in $R\Modln$, but only under kernels.
 The proof is dual-analogous to that of
Lemma~\ref{t-unital-modules-closure-properties}.
 The closedness under limits follows from the fact that the Hom functor
$\Hom_R(R,{-})\simeq\Hom_{\widetilde R}(R,{-})$ preserves limits.

 To prove the closedness under extensions, consider a short exact
sequence $0\rarrow K\rarrow L\rarrow M\rarrow0$ in $R\Modln$ with
$K$, $M\in R\Modlc$.
 The assertion follows from commutativity of the diagram
$$
 \xymatrix{
  0 \ar[r] & K \ar[r] \ar@{=}[d] & L \ar[r] \ar[d]
  & M \ar[r] \ar@{=}[d] & 0 \\
  0 \ar[r] & \Hom_R(R,K) \ar[r] & \Hom_R(R,L) \ar[r] & \Hom_R(R,M)
 }
$$
with a short exact sequence in the upper row and a left exact sequence
in the lower row.
\end{proof}

\begin{rem} \label{Ext-perpendicularity-remark}
 Similarly to Remark~\ref{Tor-perpendicularity-remark}, the construction
of the full subcategory $R\Modlc\subset R\Modln$ can be viewed as
a particular case of the construction of
the \emph{$\Ext^{0,1}$\+perpendicular subcategory}
from~\cite[Proposition~1.1]{GL} or~\cite[Theorem~1.2(a)]{Pcta}.
 Given a nonunital left $R$\+module $P$, the natural map
$P=\Hom_{\widetilde R}(\widetilde R,P)\rarrow
\Hom_{\widetilde R}(R,P)=\Hom_R(R,P)$ is an isomorphism
\emph{if and only if} one has $\Hom_{\widetilde R}(\widetilde R/R,P)
\allowbreak=0=\Ext^1_{\widetilde R}(\widetilde R/R,P)$.
 (This observation is a particular case of the discussion
in~\cite[Definition~5.2]{Quil}.)
 Therefore, Lemma~\ref{c-unital-modules-closure-properties} becomes
a particular case of~\cite[first paragraph of Proposition~1.1]{GL}.

 Furthermore, the full subcategory $R\Modlc$ is closed under cokernels
in $R\Modln$ (and, consequently, abelian) whenever the projective
dimension of the unital left $\widetilde R$\+module $\widetilde R/R$
does not exceed~$1$, i.~e., in other words, the unital
$\widetilde R$\+module $R$ is projective.
 This can be also seen directly from the definition: if the functor
$\Hom_R(R,{-})\:R\Modln\rarrow R\Modln$ is exact (preserves cokernels),
then the full subcategory $R\Modlc\subset R\Modln$ is closed under
cokernels.
 The following example shows that this does \emph{not} hold in general.
\end{rem}

\begin{ex} \label{two-variables-non-c-unital-cokernel-counterex}
 Continuing the discussion from
Example~\ref{two-variables-non-t-unital-kernel-counterex}, let $R$ be
the ideal $R=(x,y)\subset\widetilde R=\boZ[x,y]$.
 Then the projective dimension of the unital $\widetilde R$\+module
$\boZ=\widetilde R/R$ is equal to~$2$, and the projective dimension of
the unital $\widetilde R$\+module $R$ is equal to~$1$.
 We claim that the full subcategory of c\+unital $R$\+modules
$R\Modlc$ is \emph{not} closed under cokernels in $R\Modln=
\widetilde R\Modl$, \emph{not} even under the cokernels
of monomorphisms.

 Indeed, consider the free $\widetilde R$\+module with one generator
$F=\boZ[x,y]$ and its quotient $\widetilde R$\+module
$G=F/xF=\boZ[y]$.
 Then one can easily see that $F\in R\Modlc$ but $G\notin R\Modlc$.
 Indeed, the $R$\+module map $r\longmapsto\frac ry \bmod \frac xy\:
R\rarrow G$ does not arise from any element of~$G$.

 The ring $R$ is this example is \emph{not} t\+unital; in fact,
$R^2\ne R$.
 Examples of t\+unital rings $T$ for which the full subcategory
$T\Modlc$ is not closed under cokernels in $T\Modln$ will be given
below in Examples~\ref{one-variable-non-c-unital-cokernel-counterex}
and~\ref{t-unital-ring-non-c-unital-cokernel-counterex}.
 It would be interesting to find a counterexample of an s\+unital
ring $S$ for which the full subcategory $S\Modlc$ is not closed
under cokernels in $S\Modln$.
\end{ex}

 For any t\+unital ring $R$,
Theorem~\ref{main-two-category-equivalences-thm}(b) below tells that
the category $R\Modlc$ is abelian.
 On the other hand, again over a t\+unital ring~$R$, according to
Corollary~\ref{closed-under-cokernels-iff-projective}, the full
subcategory of c\+unital modules $R\Modlc$ is closed under cokernels
in $R\Modln$ \emph{if and only if} $R$ is a projective unital left
$\widetilde R$\+module.

 In the more restrictive setting of \emph{rings with enough
idempotents} considered in Section~\ref{rings-from-categories-secn}
below, we will show that both the full subcategories $R\Modlt$ and
$R\Modlc$ are closed under both the kernels and cokernels in $R\Modln$.

\Section{Counterexamples}

 In this section we provide some more advanced versions of
counterexamples from
Examples~\ref{two-variables-non-t-unital-kernel-counterex},
\ref{t-unital-not-s-unital-easy-counterex},
and~\ref{two-variables-non-c-unital-cokernel-counterex}.
 All the examples in this section are commutative, associative rings.
 We start with an example of commutative t\+unital but not
s\+unital ring, continue with an example of idempotent but not
t\+unital ring, and finally come to an example of t\+unital
ring for which the class of t\+unital modules is not closed
under kernels and the class of c\+unital modules is not closed
under cokernels. 
 We also provide an example of a t\+unital ring for which
$R\Modlt$ is closed under kernels, but $R\Modlc$ is not closed
under cokernels in $R\Modln$.

\begin{ex} \label{commutative-t-unital-not-s-unital}
 Here is an example of a \emph{commutative} associative ring $R$ which
is t\+unital but \emph{not} s\+unital (see
Example~\ref{t-unital-not-s-unital-easy-counterex} for an easier
noncommutative example).
 Let $\widetilde R=\boZ[z^q\mid q\in\boQ_{\ge0}]$ be the free abelian
group spanned by the formal expressions~$z^q$, where $z$~is a variable
and $q$~ranges over the nonnegative rational numbers.
 The multiplication given by the obvious rule $z^{q'}z^{q''}=z^{q'+q''}$
makes $\widetilde R$ an associative, commutative, unital ring.
 Let $R\subset\widetilde R$ be the subgroup spanned by~$z^q$ with
$q>0$.
 Then $R$ is an ideal in $\widetilde R$, and $\widetilde R=\boZ\oplus R$
is the unitalization of the nonunital ring~$R$.
 We claim that $R$ is a t\+unital, but not s\+unital ring.

 Indeed, for any nonzero element $r\in R$ one has $r\notin Rr$, so
$R$ is not s\+unital.
 In other words, the ring $R$ is not s\+unital, since $R\ne0$ and
the ring $\widetilde R$ has no zero-divisors.
 On the other hand, the multiplication map $R\ot_RR\rarrow R$ is
surjective, since one has $z^{q/2}\ot z^{q/2}\longmapsto z^q$ for
every positive rational number~$q$.
 To show that the multiplication map is also injective, consider
any quadruple of positive rational numbers $a$, $b$, $c$, $d$ such
that $a+b=c+d$ in~$\boQ$.
 We have to show that $z^a\ot z^b=z^c\ot z^d$ in $R\ot_RR$.
 Indeed, if $a=c$ and $b=d$, then there is nothing to prove.
 Otherwise, assume without loss of generality that $a>c$ and $b<d$.
 Then we have $z^a\ot z^b=z^cz^{a-c}\ot z^b=z^c\ot z^{a-c}z^b=
z^c\ot z^d$ in $R\ot_RR$, as $a-c+b=d$ in~$\boQ$.
\end{ex}

\begin{ex} \label{one-variable-non-c-unital-cokernel-counterex}
 Let $R=\boZ[z^q\mid q\in\boQ_{>0}]$ be the ring from
Example~\ref{commutative-t-unital-not-s-unital}.
 Notice that $R=\bigcup_{n\ge1}z^{1/n}\widetilde R$ is a flat
unital $\widetilde R$\+module.
 According to Remark~\ref{Tor-perpendicularity-remark}, it follows
that the full subcategory of t\+unital $R$\+modules $R\Modlt$ is
closed under kernels in $R\Modln$.
 So $R\Modlt$ is an abelian category with an exact fully faithful
inclusion functor $R\Modlt\rarrow R\Modln$ (even though the ring $R$
is not s\+unital).
 
 On the other hand, the unital $\widetilde R$\+module $R$ is \emph{not}
projective.
 Indeed, the telescope construction provides a short exact sequence of
unital $\widetilde R$\+modules $0\rarrow
\bigoplus_{n\ge1}z^{1/n}\widetilde R\rarrow
\bigoplus_{n\ge1}z^{1/n}\widetilde R\rarrow R\rarrow0$.
 If $R$ were a projective unital $\widetilde R$\+module, it would follow
that this short exact seqeunce splits; so there is a section
$s\:R\rarrow\bigoplus_{n\ge1}z^{1/n}\widetilde R$.
 The image of the element $z^1\in R$ would have to belong to a finite
subsum $\bigoplus_{i=1}^m z^{1/n_i}\widetilde R$ of the direct sum in
the right-hand side.
 Since $z^{1/n}\widetilde R$ are torsion-free $\widetilde R$\+modules,
it would follow the image of the whole map~$s$ is contained in
the same finite subsum $\bigoplus_{i=1}^m z^{1/n_i}\widetilde R\subset
\bigoplus_{n\ge1}z^{1/n}\widetilde R$; a contradiction.
{\hbadness=1850\par}

 According to Corollary~\ref{closed-under-cokernels-iff-projective}
below, we can conclude that the full subcategory $R\Modlc$ is
\emph{not} closed under cokernels in $R\Modln$.
 (Still, the category $R\Modlc$ is abelian by
Theorem~\ref{main-two-category-equivalences-thm}(b).)
 A more explicit example of such behavior, with a discussion not
relying on references to later material, will be given in
Example~\ref{t-unital-ring-non-c-unital-cokernel-counterex}.
\end{ex}

\begin{ex} \label{idempotent-not-t-unital}
 Here is an example of an associative, commutative ring $S$ which is
idempotent, but not t\+unital; in other words, the natural map
$S\ot_SS\rarrow S$ is surjective, but not injective.
 Let $R\subset\widetilde R=\boZ[z^q\mid q\in\boQ_{\ge0}]$ be the rings
from Example~\ref{commutative-t-unital-not-s-unital}.
 Denote by $J\subset R$ the additive subgroup spanned by~$z^q$ with
$q\ge1$.
 Then $J$ is an ideal in~$\widetilde R$.
 Put $S=R/J$ and $\widetilde S=\widetilde R/J$.
 Clearly, $\widetilde S=\boZ\oplus S$ is the unitalization of
the nonunital ring~$S$.
 Denote by $\bar z^q=z^q+J\in\widetilde S$ the images of the basis
vectors $z^q\in\widetilde R$ under the surjective ring homomorphism
$\widetilde R\rarrow\widetilde S$.

 We have $S^2=S$, since $S$ is a homomorphic image of $R$ and $R^2=R$.
 Let us show that the map $S\ot_SS\rarrow S$ is \emph{not} injective.
 For any positive rational numbers $a$ and~$b$ such that $a+b=1$ we
have $\bar z^a\ot\bar z^b=\bar z^{1/2}\ot\bar z^{1/2}$ in $S\ot_SS$.
 The image of this element under the multiplication map
$S\ot_SS\rarrow S$ is $\bar z^1=0$ in~$S$.
 Now we claim that the element $\bar z^{1/2}\ot\bar z^{1/2}$ is nonzero
in $S\ot_SS$.

 One can argue as follows.
 Endow the rings $\widetilde R$ and $\widetilde S$ with a grading
indexed by the abelian group of rational numbers, given by the rule
$\deg z^q=q=\deg\bar z^q$.
 Then there is the induced $\boQ$\+valued grading on the tensor
products $R\ot_RR$ and $S\ot_SS$.
 The nonunital rings $R$ and $S$ are concentrated in positive degrees,
and the surjective ring homomorphism $R\rarrow S$ is an isomorphism
in the degrees~$<\nobreak1$.
 It follows easily that the induced surjective map $R\ot_RR\rarrow
S\ot_SS$ is an isomorphism in the degrees~$\le\nobreak1$.
 So $z^{1/2}\ot z^{1/2}\ne0$ in $R\ot_RR$ implies $\bar z^{1/2}\ot
\bar z^{1/2}\ne0$ in $S\ot_SS$.
\end{ex}

 The following lemma is standard.

\begin{lem}[K\"unneth formula] \label{kuenneth-formula}
 Let $A$ and $B$ be associative, unital rings.
 Let $K$ be a right $A$\+module, $N$ be a right $B$\+module,
$L$ be a left $A$\+module, and $M$ be a left $B$\+module.
 Assume that all the abelian groups $A$, $B$, $K$, $N$, $L$, $M$
have no torsion; and assume further that all the abelian groups\/
$\Tor_i^A(K,L)$ and\/ $\Tor_j^B(N,M)$, \ $i$, $j\ge0$, have
no torsion, either.
 Then there is a natural isomorphism of abelian groups
$$
 \Tor_n^{A\ot_\boZ B}(K\ot_\boZ N,\>L\ot_\boZ M)
 \simeq\bigoplus\nolimits_{i+j=n}
 \Tor_i^A(K,L)\ot_\boZ\Tor_j^B(N,M)
$$
for every $n\ge0$.
\end{lem}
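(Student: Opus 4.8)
The plan is to reduce the assertion to the classical algebraic Künneth theorem for complexes of abelian groups by exhibiting a convenient free resolution of $L\ot_\boZ M$ over the ring $A\ot_\boZ B$.

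First I would choose a free resolution $P_\bullet\rarrow L$ of the left $A$\+module $L$ and a free resolution $Q_\bullet\rarrow M$ of the left $B$\+module $M$, say with $P_i\simeq A^{(S_i)}$ and $Q_j\simeq B^{(T_j)}$. Since the abelian groups $A$ and $B$ have no torsion, they are flat over~$\boZ$, hence every $P_i$ and every $Q_j$ is flat over~$\boZ$ (being a direct sum of copies of $A$, respectively of~$B$). I claim that the total complex $G_\bullet=\operatorname{Tot}(P_\bullet\ot_\boZ Q_\bullet)$, which is naturally a complex of left $A\ot_\boZ B$\+modules, is a free resolution of $L\ot_\boZ M$ over $A\ot_\boZ B$. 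That its terms $P_i\ot_\boZ Q_j\simeq(A\ot_\boZ B)^{(S_i\times T_j)}$ are free over $A\ot_\boZ B$ is clear. For acyclicity I would run the spectral sequence of the first-quadrant bicomplex $P_\bullet\ot_\boZ Q_\bullet$: taking homology first in the $Q$\+direction and using the flatness of the $P_i$ over~$\boZ$, one sees that it collapses onto the row $j=0$, leaving the complex $P_\bullet\ot_\boZ M$, whose $i$\+th homology group is $\Tor_i^\boZ(L,M)$; since $L$ and $M$ are torsion-free, hence flat over~$\boZ$, the latter equals $L\ot_\boZ M$ for $i=0$ and vanishes for $i>0$, so $G_\bullet$ is indeed a resolution.

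Next I would use $G_\bullet$ to compute the left-hand side, namely $\Tor_n^{A\ot_\boZ B}(K\ot_\boZ N,\>L\ot_\boZ M)\simeq H_n\bigl((K\ot_\boZ N)\ot_{A\ot_\boZ B}G_\bullet\bigr)$. For every pair $(i,j)$ there is a natural isomorphism of abelian groups
$$(K\ot_\boZ N)\ot_{A\ot_\boZ B}(P_i\ot_\boZ Q_j)\;\simeq\;(K\ot_A P_i)\ot_\boZ(N\ot_B Q_j),$$
which one checks directly using that $P_i$ and $Q_j$ are free (both sides being identified with $(K\ot_A A^{(S_i)})\ot_\boZ(N\ot_B B^{(T_j)})\simeq K^{(S_i)}\ot_\boZ N^{(T_j)}$). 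These isomorphisms are compatible with the differentials, so $(K\ot_\boZ N)\ot_{A\ot_\boZ B}G_\bullet$ gets identified with the total complex of the bicomplex $X_\bullet\ot_\boZ Y_\bullet$, where $X_\bullet=K\ot_A P_\bullet$ and $Y_\bullet=N\ot_B Q_\bullet$ are complexes of abelian groups with $H_i(X_\bullet)=\Tor_i^A(K,L)$ and $H_j(Y_\bullet)=\Tor_j^B(N,M)$.

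Finally I would invoke the Künneth theorem over the principal ideal domain~$\boZ$. Since $K$ and $N$ are torsion-free, each $X_i\simeq K^{(S_i)}$ and each $Y_j\simeq N^{(T_j)}$ is torsion-free, hence flat over~$\boZ$; therefore there is a natural short exact sequence
$$0\rarrow\bigoplus\nolimits_{i+j=n}H_i(X_\bullet)\ot_\boZ H_j(Y_\bullet)\rarrow H_n\bigl(\operatorname{Tot}(X_\bullet\ot_\boZ Y_\bullet)\bigr)\rarrow\bigoplus\nolimits_{i+j=n-1}\Tor_1^\boZ\bigl(H_i(X_\bullet),\>H_j(Y_\bullet)\bigr)\rarrow0.$$
By hypothesis the groups $\Tor_i^A(K,L)=H_i(X_\bullet)$ have no torsion, so every $\Tor_1^\boZ$ term on the right vanishes and the left-hand arrow is an isomorphism; unwinding the identifications above yields the asserted natural isomorphism. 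The one step needing a little care is the first — verifying that $G_\bullet$ really is a resolution (this is where the torsion-freeness of $L$ and $M$ is used) and keeping the left/right $A\ot_\boZ B$\+module structures straight — while the rest is routine homological algebra.
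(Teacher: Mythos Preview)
Your proof is correct and follows essentially the same route as the paper's: choose resolutions of $L$ and $M$, tensor them over $\boZ$ to obtain a resolution of $L\ot_\boZ M$ over $A\ot_\boZ B$, identify the result after tensoring with $K\ot_\boZ N$ as the tensor product of two complexes of abelian groups, and apply the K\"unneth theorem over~$\boZ$. The only cosmetic difference is that the paper uses flat resolutions rather than free ones and is terser about the verification steps you spell out.
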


\begin{proof}
 Pick a flat resolution $F_\bu$ for the left $A$\+module $L$
and a flat resolution $G_\bu$ for the left $B$\+module~$M$.
 Then $F_\bu\ot_\boZ G_\bu$ is a flat resolution for the left
$(A\ot_\boZ B)$\+module $L\ot_\boZ M$, and one can compute
$(K\ot_\boZ N)\ot_{A\ot_\boZ B}(F_\bu\ot_\boZ G_\bu)=
(K\ot_AF_\bu)\ot_\boZ(N\ot_BG_\bu)$.
 Finally, for any (bounded above) complexes of torsion-free abelian
groups $C_\bu$ and $D_\bu$ with torsion-free homology groups, one has
$H_n(C_\bu\ot_\boZ D_\bu)=\bigoplus_{i+j=n}H_i(C_\bu)\ot_\boZ
H_j(D_\bu)$ for all $n\in\boZ$.
\end{proof}

 The following remark implies that t\+unital rings $R$
are defined by \emph{a low-degree part of} the set of conditions
expressed by saying that the unital ring homomorphism $\widetilde R
\rarrow\widetilde R/R=\boZ$ is a \emph{homological ring
epimorphism}~\cite[Section~4]{GL}.
 We are grateful to Jan \v St\!'ov\'\i\v cek for suggesting
this interpretation.

\begin{rem} \label{t-unital-rings-characterized-remark}
 Let $R$ be a (nonunital) associative ring.
 As a particular case of Remark~\ref{Tor-perpendicularity-remark},
we know that the ring $R$ is t\+unital if and only if
$\widetilde R/R\ot_{\widetilde R}R=0=
\Tor_1^{\widetilde R}(\widetilde R/R,R)$.
 Notice that the map $\widetilde R/R\ot_{\widetilde R}R\rarrow
{\widetilde R/R}\ot_{\widetilde R}\widetilde R=\widetilde R/R=\boZ$
induced by the inclusion $R\rarrow\widetilde R$ vanishes.
 Consequently, $\widetilde R/R\ot_{\widetilde R}R\simeq
\Tor_1^{\widetilde R}(\widetilde R/R,\widetilde R/R)$.
 Also we have $\Tor_1^{\widetilde R}(\widetilde R/R,R)\simeq
\Tor_2^{\widetilde R}(\widetilde R/R,\widetilde R/R)$.
 Thus a ring $R$ is t\+unital if and only if
$\Tor_1^{\widetilde R}(\widetilde R/R,\widetilde R/R)=0
=\Tor_2^{\widetilde R}(\widetilde R/R,\widetilde R/R)$, or in
other notation, if and only if $\Tor_1^{\widetilde R}(\boZ,\boZ)=0=
\Tor_2^{\widetilde R}(\boZ,\boZ)$.
\end{rem}

\begin{ex} \label{rational-power-polynomials-two-variables-t-unital}
 Let $\widetilde R=\boZ[x^p\mid p\in\boQ_{\ge0}]$ and
$\widetilde S=\boZ[y^q\mid q\in\boQ_{\ge0}]$ be two copies of
the unital ring from Example~\ref{commutative-t-unital-not-s-unital}.
 Consider the ring $\widetilde T=\widetilde R\ot_\boZ\widetilde S
=\boZ[x^py^q\mid p,q\in\boQ_{\ge0}]$.
 So $\widetilde T$ is an associative, commutative, unital ring.
 Let $T\subset\widetilde T$ be the subgroup spanned by $x^py^q$
with $(p,q)\ne(0,0)$.
 Then $T$ is an ideal in $\widetilde T$, and $\widetilde T=\boZ\oplus T$
is the unitalization of the nonunital ring~$T$.
 We claim that $T$ is a t\+unital ring.

 Indeed, following Remark~\ref{t-unital-rings-characterized-remark},
it suffices to check that $\Tor_1^{\widetilde T}(\boZ,\boZ)=0=
\Tor_2^{\widetilde T}(\boZ,\boZ)$.
 One easily computes that $\Tor^{\widetilde R}_i(\boZ,\boZ)=
\Tor^{\widetilde R}_{i-1}(\boZ,R)=0$ for all $i\ge1$ (in fact,
$R=\bigcup_{n\ge1}x^{1/n}\widetilde R$ is a flat $\widetilde R$\+module
and $R^2=R$).
 Similarly, $\Tor^{\widetilde S}_j(\boZ,\boZ)=0$ for all $j\ge1$.
 On the other hand, $\Tor_0^{\widetilde R}(\boZ,\boZ)=\boZ=
\Tor_0^{\widetilde S}(\boZ,\boZ)$.
 By the K\"unneth formula (Lemma~\ref{kuenneth-formula}), it follows
that $\Tor^{\widetilde T}_n(\boZ,\boZ)=0$ for all $n\ge1$.
\end{ex}

\begin{ex} \label{t-unital-ring-non-t-unital-kernel-counterex}
 This is an improved version of
Example~\ref{two-variables-non-t-unital-kernel-counterex}, providing
a commutative t\+unital ring $T$ for which the full subcategory
of t\+unital $T$\+modules $T\Modlt$ is \emph{not} closed under kernels
(\emph{not} even under the kernels of epimorphisms) in the abelian category of nonunital $T$\+modules $T\Modln$.
 We keep the notation of
Example~\ref{rational-power-polynomials-two-variables-t-unital}
for the rings $R$, $S$, and~$T$.
 
 Consider the localizations $\widetilde R[x^{-1}]=
\boZ[x^p\mid p\in\boQ]$ and $\widetilde S[y^{-1}]=
\boZ[y^q\mid q\in\boQ]$ of the rings $R$ and~$S$.
 Let $\widetilde P_x=\widetilde R[x^{-1}]/\widetilde R$ and
$\widetilde P_y=\widetilde S[y^{-1}]/\widetilde S$ be
the related generalized Pr\"ufer modules.
 Then $\Tor_0^{\widetilde R}(\boZ,\widetilde P_x)=0$ and
$\Tor_1^{\widetilde R}(\boZ,\widetilde P_x)=\boZ$.
 By the K\"unneth formula, it follows that
$\Tor_0^{\widetilde T}(\boZ,\>\widetilde P_x\ot_\boZ\widetilde P_y)=0
=\Tor_1^{\widetilde T}(\boZ,\>\widetilde P_x\ot_\boZ\widetilde P_y)$.
 Thus $\widetilde P_x\ot_\boZ\widetilde P_y$ is a t\+unital
$T$\+module (by Remark~\ref{Tor-perpendicularity-remark}).
 
 On the other hand, denote by $\widetilde Q_y=
\boZ\ot_\boZ \widetilde P_y$ the $\widetilde S$\+module
$\widetilde P_y$ viewed as a $\widetilde T$\+module
with the given (Pr\"ufer) action of the elements~$y^q$ and
the zero action of the elements $x^py^q\in T$ for $p>0$.
 Then, again by the K\"unneth formula, $\Tor_1^{\widetilde T}
(\boZ,\widetilde Q_y)=\Tor_1^{\widetilde S}(\boZ,\widetilde P_y)=\boZ$.
 So Remark~\ref{Tor-perpendicularity-remark} tells that
$\widetilde Q_y\notin T\Modlt$.

 Consider another version of generalized Pr\"ufer module
over~$\widetilde R$: namely, put $P_x=\widetilde R[x^{-1}]/R$.
 Then $P_x=\varinjlim_{n\ge1}\widetilde R[x^{-1}]/x^{1/n}\widetilde R
\simeq\varinjlim_{n\ge1}\widetilde P_x$ is a direct limit of copies
of the $\widetilde R$\+module $\widetilde P_x$.
 Furthermore, we have a short exact sequence of $\widetilde R$\+modules
$0\rarrow\boZ\rarrow P_x\rarrow\widetilde P_x\rarrow0$.
 The $\widetilde T$\+module $P_x\ot_\boZ\widetilde P_y$ is a direct
limit of copies of the $\widetilde T$\+module $\widetilde P_x\ot_\boZ
\widetilde P_y$; so Lemma~\ref{t-unital-modules-closure-properties}
tells that $P_x\ot_\boZ\widetilde P_y$ is a t\+unital T\+module.
 Finally, in the short exact sequence of $T$\+modules
$0\rarrow\widetilde Q_y\rarrow P_x\ot_\boZ\widetilde P_y\rarrow
\widetilde P_x\ot_\boZ\widetilde P_y\rarrow0$ the middle term and
the cokernel are t\+unital, but the kernel is not.
\end{ex}

\begin{ex} \label{t-unital-ring-non-c-unital-cokernel-counterex}
 This is an improved version of
Example~\ref{two-variables-non-c-unital-cokernel-counterex},
providing a commutative t\+unital ring $T$ for which the full
subcategory of c\+unital $T$\+modules $T\Modlc$ is \emph{not} closed
under cokernels (\emph{not} even under the cokernels of monomorphisms)
in the abelian category of nonunital $T$\+modules $T\Modln$.
 We still keep the notation of
Example~\ref{rational-power-polynomials-two-variables-t-unital}
for the rings $R$, $S$, and~$T$.

 Consider the flat $\widetilde T$\+module $\widetilde F=
\widetilde R\ot_\boZ S\subset T$ and its flat
$\widetilde T$\+submodule $F=R\ot_\boZ S\subset\widetilde F$.
 So the free abelian group $\widetilde F$ is spanned by
the elements~$x^py^q$ with $p\ge0$, \,$q>0$, while the free abelian
group $F$ is spanned by the elements~$x^py^q$ with $p>0$, \,$q>0$.
 Then both the $T$\+modules $F$ and $\widetilde F$ are c\+unital.
 In other words, this means that any $\widetilde T$\+module morphism
$T\rarrow F$ can be uniquely extended to a $\widetilde T$\+module
morphism $\widetilde T\rarrow F$, and similarly for morphisms
into~$\widetilde F$.

 Indeed, let $h\:T\rarrow F$ be a $T$\+module morphism.
 Consider the element $h(xy)\in F$.
 Then we have $h(xy)=xh(y)=yh(x)$ in $F$, and so the element
$h(xy)$ is divisible \emph{both} by~$x$ and by~$y$ in~$F$.
 It follows easily from the construction of $F$ that the element
$h(xy)$ must be divisible by~$xy$ in $F$, i.~e., there exists
an element $f\in F$ such that $h(xy)=xyf$.
 Hence the morphism $h\:T\rarrow F$ comes from the element $f\in F$
via the natural map $F\rarrow\Hom_T(T,F)$.
 The argument for $\widetilde F$ is exactly the same.
 Arguing in this way, one can show that any flat unital
$\widetilde T$\+module is a c\+unital $T$\+module.

 On the other hand, notice that the $S$\+module $S$ is \emph{not}
c\+unital, as the identity morphism $\id_S\:S\rarrow S$ does not belong
to the image of the natural map $S\rarrow\Hom_S(S,S)$.
 In other words, the morphism~$\id_S$ cannot be extended to
an $S$\+module morphism $\widetilde S\rarrow S$.
 Essentially for this reason, the quotient $T$\+module
$\widetilde F/F=S$ (with the zero action of the elements $x^py^q\in T$
for $p>0$) is also \emph{not} c\+unital.
 Indeed, the surjective morphism $T\rarrow T/(R\ot_\boZ\widetilde S)=S$
does not belong to the image of the natural map $S\rarrow\Hom_T(T,S)$.
 So, in the short exact sequence of $T$\+modules $0\rarrow F\rarrow
\widetilde F\rarrow S\rarrow0$ the middle term and the kernel are
c\+unital, but the cokernel is not c\+unital.

 Notice that the ring $T$ in this example is t\+unital, but
\emph{not} s\+unital.
 It would be interesting to find an example of s\+unital ring for which
the full subcategory of c\+unital modules is not closed under cokernels
in the ambient abelian category of nonunital modules.
\end{ex}

\Section{The Main Abelian Category Equivalence}
\label{equivalence-secn}

 Let $R$ be a (nonunital) ring.
 We will say that a left $R$\+module $M$ is a \emph{null-module} if
the action of $R$ in $M$ is zero, i.~e., $rm=0$ in $M$ for all
$r\in R$ and $m\in M$.
 Right null-modules are defined similarly.
 We will denote the full subcategories of null-modules by
$R\Modlz\subset R\Modln$ and $\Modrz R\subset\Modrn R$.
 (Cf.~\cite[Definitions~2.1 and~7.1]{Quil},
\cite[Section~2.1.3]{GR}.)

 Notice that the full subcategory of null-modules $R\Modlz$ is closed
under submodules, quotients, infinite direct sums, and infinite products
(hence under all limits and colimits) in $R\Modln$.
 The following easy lemma says more.

\begin{lem}
 Let $R$ be an idempotent ring, i.~e., $R^2=R$.
 Then the full subcategory $R\Modlz$ is closed under extensions in
$R\Modln$.
\end{lem}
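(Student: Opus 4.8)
The plan is to take a short exact sequence $0\rarrow K\rarrow L\rarrow M\rarrow0$ in $R\Modln$ with $K$, $M\in R\Modlz$, and to show directly that the action of $R$ in $L$ is zero. Fix an element $l\in L$ and an element $r\in R$; the goal is to prove $rl=0$ in~$L$. First I would pass to the quotient: since $M=L/K$ is a null-module, the image of $l$ in $M$ is annihilated by~$r$, so $rl$ lies in the submodule $K\subset L$. More generally, the same reasoning shows $sl\in K$ for every $s\in R$.

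Next I would exploit the hypothesis $R^2=R$. This lets us write $r$ as a finite sum $r=\sum_i s_it_i$ with $s_i$, $t_i\in R$. Then $rl=\sum_i s_it_il=\sum_i s_i(t_il)$. By the observation of the previous paragraph, each element $t_il$ belongs to~$K$. Since $K$ is a null-module, the action of $R$ on $K$ is zero, so $s_i(t_il)=0$ for every~$i$, and therefore $rl=0$ in~$L$. As $l$ and $r$ were arbitrary, $L$ is a null-module, which is what we wanted.

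Honestly, there is no real obstacle here: the argument is a two-line diagram chase combined with the factorization afforded by idempotency, and it is dual-analogous to the use of s\+unitality in the proof of Proposition~\ref{s-unital-hereditary-torsion}. The only point worth flagging is where idempotency is genuinely needed: without it one could take a ring $R$ with $R^2=0$, where \emph{every} module is a null-module and the statement is vacuous, but more pertinently one can have $R$ with $0\ne R^2\subsetneq R$ and build an extension of null-modules on which $R^2$ (hence some of $R$) acts nontrivially — so the hypothesis cannot simply be dropped. The right-module version is identical, reading $ls$ in place of $sl$ throughout.
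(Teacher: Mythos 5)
Your proof is correct and is essentially the same as the paper's: the paper also reduces, via $R^2=R$, to showing $r'r''l=0$, observes $r''l\in K$ because $M$ is a null-module, and concludes $r'(r''l)=0$ because $K$ is a null-module. You merely make the finite-sum decomposition $r=\sum_i s_it_i$ explicit, which the paper leaves implicit in the phrase ``it suffices to show that $r'r''l=0$ for all $r',r''\in R$.''
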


\begin{proof}
 Let $0\rarrow K\rarrow L\rarrow M\rarrow0$ be a short exact sequence
in $R\Modln$ with $K$, $M\in R\Modlz$.
 Given an element $l\in L$, it suffices to show that $r'r''l=0$ in $L$
for all $r'$, $r''\in R$.
 Now we have $r''(l+K)=0$ in $M$, since $M$ is a null-module.
 Hence $r''l\in K\subset L$.
 Finally $r'(r''l)=0$ in~$K$, since $K$ is a null-module.
\end{proof}

 Thus, over an idempotent ring $R$, the full subcategory of null-modules
is a thick subcategory, or in a different terminology, a \emph{Serre
subcategory} in the abelian category $R\Modln$
\,\cite[Section~1.11]{GrToh}, \cite[Section~III.1]{Gab}.
 Therefore, the abelian Serre quotient category $R\Modln/R\Modlz$
is defined.
 Ignoring set-theoretical issues, one can say that the quotient category
$R\Modln/R\Modlz$ can be obtained from $R\Modln$ by inverting all
the morphisms whose kernel and cokernel belong to $R\Modlz$.
 (See~\cite{GrToh,Gab} for a more concrete construction.)

 The aim of this section is to explain that, for a t\+unital ring $R$,
both the categories of t\+unital and c\+unital left $R$\+modules are
actually abelian, and naturally equivalent to each other and to
the Serre quotient category $R\Modln/R\Modlz$,
$$
 R\Modlt\simeq R\Modln/R\Modlz\simeq R\Modlc.
$$
 A more general version of this result is due to
Quillen~\cite[Theorems~4.5 and~5.6]{Quil}.

 The following lemma can be compared
to~\cite[Proposition~3.1(b)]{BP2} (cf.\ the paragraph preceding
Remark~\ref{t-unital-rings-characterized-remark} above).
 What we call t\+unital modules are called ``comodules'' in
the context of~\cite{BP2}.

\begin{lem} \label{t-unitalization-lemma}
 Let $R$ be a t\+unital ring.
 Then the functor $R\ot_R{-}\,\:R\Modln\rarrow R\Modlt$ is right adjoint
to the fully faithful inclusion functor $R\Modlt\rarrow R\Modln$.
 For any left $R$\+module $M$, the natural map $R\ot_RM\rarrow M$ is
the adjunction counit.
\end{lem}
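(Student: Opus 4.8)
The plan is to verify directly that the pair consisting of the inclusion functor $R\Modlt\rarrow R\Modln$ and the functor $R\ot_R{-}$ is an adjoint pair whose counit is given by the natural maps $\epsilon_M\:R\ot_R M\rarrow M$ of~\eqref{tensor-left-unitality-map}. Two preliminary observations make the statement meaningful: by Corollary~\ref{t-unitality-created-by-tensor-with-R}(a), for any $M\in R\Modln$ the module $R\ot_R M$ is t\+unital, so $R\ot_R{-}$ does define a functor $R\Modln\rarrow R\Modlt$; and the maps $\epsilon_M$ are plainly natural in $M$, so $\epsilon\:R\ot_R{-}\rarrow\id_{R\Modln}$ is a natural transformation. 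It then suffices to prove the universal property of $\epsilon_M$ as a counit: for every $M\in R\Modln$, every $T\in R\Modlt$, and every morphism $f\:T\rarrow M$ in $R\Modln$, there is a unique morphism $g\:T\rarrow R\ot_R M$ with $\epsilon_M\circ g=f$. Granting this, $R\ot_R{-}$ is right adjoint to the inclusion, $\epsilon$ is the counit, and there is a natural bijection $\Hom_R(T,M)\simeq\Hom_R(T,\>R\ot_R M)$.

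For existence, I would apply $R\ot_R{-}$ to $f$ and precompose with $\epsilon_T^{-1}$, which is available because $T$ is t\+unital: set $g=(R\ot_R f)\circ\epsilon_T^{-1}$. This is a morphism in $R\Modlt$, and the naturality square of $\epsilon$ at $f$, namely $\epsilon_M\circ(R\ot_R f)=f\circ\epsilon_T$, gives $\epsilon_M\circ g=f\circ\epsilon_T\circ\epsilon_T^{-1}=f$.

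For uniqueness, the one ingredient that is not purely formal is the identity $R\ot_R\epsilon_M=\epsilon_{R\ot_R M}$ between morphisms $R\ot_R(R\ot_R M)\rarrow R\ot_R M$, which is the special case $R'=R''=R$ of the compatibility between the associativity isomorphism~\eqref{nonunital-two-rings-tensor-associativity} and the unitality map~\eqref{tensor-left-unitality-map}; concretely, both morphisms send a decomposable tensor $r\ot(s\ot m)$ to $rs\ot m=r\ot sm$ in $R\ot_R M$, the last equality being the balancing relation defining $\ot_R$. Given this, if $g$ and $g'$ both satisfy $\epsilon_M\circ g=f=\epsilon_M\circ g'$, put $h=g-g'$, so that $\epsilon_M\circ h=0$; applying $R\ot_R{-}$ yields $(R\ot_R\epsilon_M)\circ(R\ot_R h)=0$, i.e., $\epsilon_{R\ot_R M}\circ(R\ot_R h)=0$, while the naturality square of $\epsilon$ at $h$ reads $\epsilon_{R\ot_R M}\circ(R\ot_R h)=h\circ\epsilon_T$; hence $h\circ\epsilon_T=0$, and as $\epsilon_T$ is an isomorphism, $h=0$.

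Finally, naturality of the bijection $\Hom_R(T,M)\simeq\Hom_R(T,\>R\ot_R M)$ in $T$ and in $M$ is a routine check using the explicit formula $g=(R\ot_R f)\circ\epsilon_T^{-1}$. I expect the only delicate step to be the identity $R\ot_R\epsilon_M=\epsilon_{R\ot_R M}$ (equivalently, commutativity of the relevant triangle built from~\eqref{nonunital-two-rings-tensor-associativity}); everything else is formal manipulation with the counit and its naturality, using only that $\epsilon_T$ is invertible precisely when $T$ is t\+unital.
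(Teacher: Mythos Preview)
Your proof is correct and is essentially the same as the paper's. You verify the universal property of the counit, while the paper writes down the hom-set bijection $\Hom_R(L,M)\simeq\Hom_R(L,\>R\ot_RM)$ explicitly and checks the two maps are inverse; in both cases the only non-formal step is the identity $R\ot_R\epsilon_M=\epsilon_{R\ot_RM}$, which the paper phrases as ``the two multiplication maps $R\ot_RR\ot_RM\rightrightarrows R\ot_RM$ are equal to each other.''
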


\begin{proof}
 For any left $R$\+module $M$, the left $R$\+module $R\ot_RM$ is
t\+unital by Corollary~\ref{t-unitality-created-by-tensor-with-R}(a).
 Now let $L$ be a t\+unital left $R$\+module.
 We need to construct a natural isomorphism of abelian groups
$\Hom_R(L,M)\simeq\Hom_R(L,\>R\ot_RM)$.
 For this purpose, to any $R$\+module morphism $f\:L\rarrow M$
we assign the morphism $R\ot_Rf\:L\simeq R\ot_RL\rarrow R\ot_RM$.
 To any $R$\+module morphism $g\:L\rarrow R\ot_RM$ we assign
the composition $L\overset g\rarrow R\ot_RM\rarrow M$.

 Let us check that these two maps between the two groups of morphisms
are inverse to each other.
 Starting with a morphism $f\:L\rarrow M$, it is clear that
the composition $L\simeq R\ot_R L\rarrow R\ot_RM\rarrow M$
is equal to~$f$.
 Starting with a morphism $g\:L\rarrow R\ot_RM$, producing
the related morphism $f\:L\rarrow M$, and coming back to a new morphism
$g'\:L\rarrow R\ot_RM$ via the constructions above, in order to check
that $g'=g$ one needs to observe that the two multiplication maps
$R\ot_RR\ot_RM\rightrightarrows R\ot_RM$ are equal to each other.
 The latter property holds by the definition of the tensor product
of $R$\+modules $R\ot_RM$.
\end{proof}

 The next lemma is our version of~\cite[Proposition~3.2(b)]{BP2}.
 What we call c\+unital modules are called ``contramodules''
in the context of~\cite{BP2}.

\begin{lem} \label{c-unitalization-lemma}
 Let $R$ be a t\+unital ring.
 Then the functor\/ $\Hom_R(R,{-})\:R\Modln\rarrow R\Modlc$ is left
adjoint to the fully faithful inclusion functor
$R\Modlc\rarrow R\Modln$.
 For any left $R$\+module $P$, the natural map $P\rarrow\Hom_R(R,P)$ is
the adjunction unit.
\end{lem}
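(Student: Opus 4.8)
The plan is to run the argument of Lemma~\ref{t-unitalization-lemma} in the opposite direction. First, by Corollary~\ref{c-unitality-created-by-Hom-from-R}, for every left $R$\+module $P$ the left $R$\+module $\Hom_R(R,P)$ is c\+unital, so the functor $\Hom_R(R,{-})$ does take values in $R\Modlc$. It therefore remains to produce, for an arbitrary left $R$\+module $P$ and an arbitrary c\+unital left $R$\+module $Q$, a natural isomorphism of abelian groups
\[
 \Hom_R(\Hom_R(R,P),\>Q)\;\simeq\;\Hom_R(P,Q)
\]
carrying the natural map $\eta_P\:P\rarrow\Hom_R(R,P)$ of~\eqref{Hom-unitality-map} to the adjunction unit. (The unit assertion will then be automatic, since $\id_{\Hom_R(R,P)}$ corresponds to $\eta_P$ under this isomorphism.)

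I would exhibit the isomorphism as a pair of mutually inverse maps. To a morphism $g\:\Hom_R(R,P)\rarrow Q$ assign the composition $P\overset{\eta_P}\rarrow\Hom_R(R,P)\overset g\rarrow Q$; conversely, to a morphism $f\:P\rarrow Q$ assign the composition of $\Hom_R(R,f)\:\Hom_R(R,P)\rarrow\Hom_R(R,Q)$ with the inverse of $\eta_Q\:Q\rarrow\Hom_R(R,Q)$, which is an isomorphism precisely because $Q$ is c\+unital. Naturality in $P$ and in $Q$ is clear from the constructions. That starting from~$f$ and applying both assignments returns~$f$ is immediate from the naturality square of~$\eta$ at~$f$ (that is, $\Hom_R(R,f)\circ\eta_P=\eta_Q\circ f$) together with the invertibility of~$\eta_Q$.

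The crux is the other round trip: given $g\:\Hom_R(R,P)\rarrow Q$ and setting $f=g\circ\eta_P$, one must check that $\eta_Q^{-1}\circ\Hom_R(R,f)=g$. Writing $\Hom_R(R,f)=\Hom_R(R,g)\circ\Hom_R(R,\eta_P)$ and applying the naturality square of~$\eta$ at~$g$ --- here one uses that $\Hom_R(R,P)$ is c\+unital, so that $\eta_{\Hom_R(R,P)}$ is an isomorphism --- this reduces, exactly as in the proof of Lemma~\ref{t-unitalization-lemma}, to the observation that the two natural maps $\Hom_R(R,P)\rightrightarrows\Hom_R(R,\Hom_R(R,P))$ coincide: one of them is~\eqref{Hom-unitality-map} for the left $R$\+module $\Hom_R(R,P)$, the other is $\Hom_R(R,{-})$ applied to~\eqref{Hom-unitality-map} for~$P$. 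Concretely, the former sends $\psi\in\Hom_R(R,P)$ to the map $r\mapsto(s\mapsto\psi(sr))$ and the latter to the map $r\mapsto(s\mapsto s\psi(r))$; the two agree because $\psi\:R\rarrow P$ is a homomorphism of left $R$\+modules, whence $\psi(sr)=s\psi(r)$. This equality of two maps is the Hom\+side counterpart of the defining relation $(r'r'')\ot m=r'\ot(r''m)$ of the tensor product used in Lemma~\ref{t-unitalization-lemma}. The one genuinely delicate point is the bookkeeping of the various left $R$\+module structures on the iterated Hom groups --- getting these straight is what lets one recognize that the displayed equality of two maps is the only thing left to verify. Everything else (the values lying in $R\Modlc$, naturality, the easy round trip, and the unit assertion) is routine, and t\+unitality of~$R$ is used only through Corollary~\ref{c-unitality-created-by-Hom-from-R}.
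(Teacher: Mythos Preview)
Your proof is correct and follows essentially the same approach as the paper's: both set up the same pair of mutually inverse maps, and both reduce the nontrivial round trip to the equality of the two natural maps $\Hom_R(R,P)\rightrightarrows\Hom_R(R,\Hom_R(R,P))$, which you verify explicitly by the formula $\psi(sr)=s\psi(r)$ while the paper phrases the same equality via the adjunction identification with $\Hom_R(R\ot_RR,\>P)$. Your parenthetical remark that $\eta_{\Hom_R(R,P)}$ is an isomorphism is not actually needed for the argument to go through (naturality of~$\eta$ at~$g$ plus the equality $\Hom_R(R,\eta_P)=\eta_{\Hom_R(R,P)}$ already gives $\Hom_R(R,f)=\eta_Q\circ g$ directly), but it is true and harmless.
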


\begin{proof}
 For any left $R$\+module $P$, the left $R$\+module $\Hom_R(R,P)$ is
t\+unital by Corollary~\ref{c-unitality-created-by-Hom-from-R}.
 Now let $Q$ be a c\+unital left $R$\+module.
 We need to construct a natural isomorphism of abelian groups
$\Hom_R(P,Q)\simeq\Hom_R(\Hom_R(R,P),Q)$.
 For this purpose, to any $R$\+module morphism $f\:P\rarrow Q$
we assign the morphism $\Hom_R(R,f)\:\Hom_R(R,P)\rarrow\Hom_R(R,Q)
\simeq Q$.
 To any $R$\+module morphism $g\:\Hom_R(R,P)\rarrow Q$ we assign
the composition $P\rarrow\Hom_R(R,P)\rarrow Q$.

 Let us check that these two maps between the two groups of morphisms
are inverse to each other.
 Starting with a morphism $f\:P\rarrow Q$, it is clear that
the composition $P\rarrow\Hom_R(R,P)\rarrow\Hom_R(R,Q)\simeq Q$
is equal to~$f$.
 Starting with a morphism $g\:\Hom_R(R,P)\rarrow Q$, producing
the related morphism $f\:P\rarrow Q$, and coming back to a new morphism
$g'\:\Hom_R(R,P)\rarrow Q$ via the constructions above, in order to
check that $g'=g$ one needs to observe that the two maps
$\Hom_R(R,P)\rightrightarrows\Hom_R(R\ot_RR,\>P)\simeq
\Hom_R(R,\Hom_R(R,P))$ induced by the maps $R\ot_RR\rarrow R$ and
$P\rarrow\Hom_R(R,P)$ are equal to each other.
 The latter property holds by the definition of the group of
$R$\+module morphisms $\Hom_R(R,P)$.
\end{proof}

 The following lemma is due to Quillen~\cite[discussions of
formulas~(2.2) and~(5.1)]{Quil}.

\begin{lem}[\cite{Quil}]
\label{unitality-comparisons-have-null-co-kernels}
 Let $R$ be an arbitrary (nonunital) ring.  Then \par
\textup{(a)} for any left $R$\+module $M$, the kernel and cokernel of
the natural $R$\+module morphism $R\ot_RM\rarrow M$ are null-modules;
\par
\textup{(b)} for any left $R$\+module $P$, the kernel and cokernel of
the natural $R$\+module morphism $P\rarrow\Hom_R(R,P)$ are null-modules.
\end{lem}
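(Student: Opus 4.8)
The plan is to prove both parts by a direct computation with the defining relations of the tensor product and of the Hom group; no unitality hypothesis on~$R$ is needed, and in fact each assertion will reduce to a single one-line identity.

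First I would dispose of part~(a). For the cokernel of the multiplication map $\mu\:R\ot_RM\rarrow M$~\eqref{tensor-left-unitality-map}: its image is the additive subgroup $RM\subseteq M$ generated by the products $rm$, so $\operatorname{coker}\mu=M/RM$, and since $rm\in RM$ for all $r\in R$ and $m\in M$, the action of $R$ on $M/RM$ vanishes; thus $\operatorname{coker}\mu$ is a null-module. For the kernel, the key identity I would establish is that
$$
 r\xi = r\ot\mu(\xi) \quad\text{in } R\ot_RM \quad\text{for all } \xi\in R\ot_RM \text{ and } r\in R;
$$
indeed, writing $\xi=\sum_i r_i\ot m_i$, one has $r\xi=\sum_i rr_i\ot m_i=\sum_i r\ot r_im_i=r\ot\bigl(\sum_i r_im_i\bigr)=r\ot\mu(\xi)$, where the second equality is precisely the defining relation of~$\ot_R$. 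Hence if $\mu(\xi)=0$, then $r\xi=r\ot0=0$ for every $r\in R$, so $R$ annihilates $\ker\mu$, and $\ker\mu$ is a null-module.

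For part~(b), I would first recall that the left $R$\+module structure on $\Hom_R(R,P)=\Hom_{\widetilde R}(R,P)$~\eqref{nonunital-and-unital-Hom} comes from right multiplication on the source $R$\+$R$\+bimodule~$R$, so that $(s\phi)(r)=\phi(rs)$ for $s$, $r\in R$ and $\phi\in\Hom_R(R,P)$. The kernel of the map $\nu\:P\rarrow\Hom_R(R,P)$~\eqref{Hom-unitality-map} is $\{p\in P\mid rp=0\text{ for all }r\in R\}$, which is annihilated by $R$ by its very description, hence a null-module. For the cokernel, the identity dual to the one above is
$$
 s\phi = \nu(\phi(s)) \quad\text{for all } s\in R \text{ and } \phi\in\Hom_R(R,P),
$$
which follows from $(s\phi)(r)=\phi(rs)=r\phi(s)=\nu(\phi(s))(r)$, valid for all $r\in R$ because $\phi$ is a left $R$\+module homomorphism. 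Thus $s\phi$ lies in $\nu(P)$ for every $s\in R$, so the action of $R$ on $\Hom_R(R,P)/\nu(P)$ vanishes, and $\operatorname{coker}\nu$ is a null-module.

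There is no genuine obstacle here; the only point demanding attention is the bookkeeping of module structures — the left $R$\+action on $R\ot_RM$ is through the left tensor factor, while the left $R$\+action on $\Hom_R(R,P)$ is through right multiplication on the source copy of~$R$ — after which both parts collapse to the two displayed identities $r\xi=r\ot\mu(\xi)$ and $s\phi=\nu(\phi(s))$.
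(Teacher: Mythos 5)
Your proposal is correct and follows essentially the same route as the paper's proof: the cokernel (resp.\ kernel) is identified as the maximal null quotient (resp.\ submodule) directly, and the remaining half of each part is settled by exactly the same one-line identities, $r\sum_i r_i\ot m_i=r\ot\sum_i r_im_i$ for the kernel in~(a) and $(s\phi)(r)=\phi(rs)=r\phi(s)$ showing $s\phi$ comes from $\phi(s)$ for the cokernel in~(b). The bookkeeping of the left $R$\+action on $\Hom_R(R,P)$ via right multiplication on the source is also handled the same way as in the paper.
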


\begin{proof}
 Part~(a): the cokernel of the map $R\ot_RM\rarrow M$ is obviously
the maximal null quotient module of~$M$.
 Concerning the kernel, let $\sum_{i=1}^nr_i\ot m_i\in R\ot_RM$ be
an element annihilated by the map $R\ot_RM\rarrow M$.
 Then, for any $r\in R$, one has $r\sum_{i=1}^nr_i\ot m_i=\sum_{i=1}^n
rr_i\ot m_i=\sum_{i=1}^n r\ot r_im_i=r\ot\sum_{i=1}^nr_im_i=0$ in
$R\ot_RM$.
 Part~(b): the kernel of the map $P\rarrow\Hom_R(R,P)$ is obviously
the maximal null submodule of~$M$.
 Concerning the cokernel, let $f\in\Hom_R(R,P)$ be a morphism and
$r\in R$ be an element.
 Then the $R$\+module morphism $rf\:R\rarrow P$, given by the rule
$(rf)(r')=f(r'r)=r'f(r)$ for all $r'\in R$, comes from the element
$f(r)\in P$ via the natural map $P\rarrow\Hom_R(R,P)$.
 So $R$ acts by zero on the coset of the element~$f$ in the cokernel
of the map $P\rarrow\Hom_R(R,P)$.
\end{proof}

 In the proof of part~(b) of the next lemma, the following well-known
fact is used.
 Let $A$ be a unital ring, $E$ be a right $A$\+module, and $M$ be
a left $A$\+module.
 Then, for every $n\ge0$, there is a natural isomorphism of abelian
groups
$$
 \Ext^n_A(M,\Hom_\boZ(E,\boQ/\boZ))\simeq
 \Hom_\boZ(\Tor_n^A(E,M),\boQ/\boZ).
$$

\begin{lem} \label{null-modules-Ext-Tor-vanishing-lemma}
 Let $R$ be a t\+unital ring, $\widetilde R=\boZ\oplus R$ be its
unitalization, and $N$ be a left null-module over~$R$.
 Then \par
\textup{(a)} $R\ot_RN=R\ot_{\widetilde R}N=
0=\Tor^{\widetilde R}_1(R,N)$; \par
\textup{(b)} $\Hom_R(R,N)=\Hom_{\widetilde R}(R,N)=0=
\Ext_{\widetilde R}^1(R,N)$.
\end{lem}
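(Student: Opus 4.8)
The plan is to prove (a) and~(b) almost symmetrically, reducing each to the vanishing of a single $\Tor_1$\+group already contained in the characterization of t\+unitality in Remark~\ref{Tor-perpendicularity-remark}, and otherwise using only that $\widetilde R/R\simeq\boZ$ is a hereditary ring and that $\Tor$ commutes with direct sums.

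First the ``trivial halves''. A t\+unital ring is idempotent, $R^2=R$, so for a left null\+module $N$ and any $r=\sum_i a_ib_i\in R$ (with $a_i$,~$b_i\in R$) one has $r\ot n=\sum_i a_ib_i\ot n=\sum_i a_i\ot b_in=0$ in $R\ot_{\widetilde R}N$, hence $R\ot_{\widetilde R}N=0$; dually, any $\widetilde R$\+linear map $f\:R\rarrow N$ satisfies $f(r)=\sum_i a_if(b_i)=0$, hence $\Hom_{\widetilde R}(R,N)=0$. Combined with the identifications $R\ot_RN=R\ot_{\widetilde R}N$ and $\Hom_R(R,N)=\Hom_{\widetilde R}(R,N)$ of~\eqref{nonunital-and-unital-tensor-product} and~\eqref{nonunital-and-unital-Hom}, this yields the first equalities in~(a) and~(b).

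For the $\Tor_1$ assertion in~(a): I would first note $\Tor_1^{\widetilde R}(R,\widetilde R/R)=0$, which is the right\+handed instance of Remark~\ref{Tor-perpendicularity-remark} and also drops out of the long exact sequence for $\Tor^{\widetilde R}(R,{-})$ applied to $0\rarrow R\rarrow\widetilde R\rarrow\widetilde R/R\rarrow0$, once one sees that the map $R\ot_{\widetilde R}R\rarrow R\ot_{\widetilde R}\widetilde R=R$ is the multiplication $R\ot_RR\rarrow R$, an isomorphism by t\+unitality. Now let $N$ be any null\+module, i.e., any abelian group with $R$ acting by zero, and choose a presentation $0\rarrow K\rarrow F\rarrow N\rarrow0$ with $F$ free abelian; then $K$ is free abelian too ($\boZ$ being a PID), and both are null\+modules. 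Since $\Tor$ commutes with direct sums, $\Tor_1^{\widetilde R}(R,F)=\bigoplus\Tor_1^{\widetilde R}(R,\widetilde R/R)=0$, while $R\ot_{\widetilde R}K=0$ by the trivial half; the long exact sequence then squeezes $\Tor_1^{\widetilde R}(R,N)$ between these two groups, so it vanishes.

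The $\Ext^1$ assertion in~(b) is the delicate one, and it is here that the character\+module formula quoted before the lemma enters: the resolution argument of~(a) does \emph{not} dualize, since $\Ext^1_{\widetilde R}(R,{-})$ need not commute with infinite direct sums ($R$ need not be finitely presented over $\widetilde R$). Instead, every abelian group $N$ embeds into one of the form $\Hom_\boZ(F,\boQ/\boZ)$ with $F$ free abelian --- take $F$ free on the set $\Hom_\boZ(N,\boQ/\boZ)$, so that evaluation embeds $N$ into $\Hom_\boZ(F,\boQ/\boZ)=\prod\boQ/\boZ$ because $\boQ/\boZ$ cogenerates $\boZ\Modl$. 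Viewing $F$ as a null right $\widetilde R$\+module, the quoted formula gives $\Ext^1_{\widetilde R}(R,\Hom_\boZ(F,\boQ/\boZ))\simeq\Hom_\boZ(\Tor_1^{\widetilde R}(F,R),\boQ/\boZ)$, and $\Tor_1^{\widetilde R}(F,R)=\bigoplus\Tor_1^{\widetilde R}(\widetilde R/R,R)=0$ by Remark~\ref{Tor-perpendicularity-remark} for the left module~$R$; hence $\Ext^1_{\widetilde R}(R,{-})$ vanishes on such character modules. For a general null\+module $N$, choose $0\rarrow N\rarrow\Hom_\boZ(F,\boQ/\boZ)\rarrow C\rarrow0$ with $C$ null; in the long exact sequence for $\Ext_{\widetilde R}(R,{-})$ the group $\Ext^1_{\widetilde R}(R,N)$ lies between $\Hom_{\widetilde R}(R,C)=0$ and $\Ext^1_{\widetilde R}(R,\Hom_\boZ(F,\boQ/\boZ))=0$, hence is $0$; and $\Hom_R(R,N)=\Hom_{\widetilde R}(R,N)$ by~\eqref{nonunital-and-unital-Hom}. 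An alternative for this last part, avoiding character modules: $0\rarrow R\rarrow\widetilde R\rarrow\widetilde R/R\rarrow0$ gives $\Ext^1_{\widetilde R}(R,N)\simeq\Ext^2_{\widetilde R}(\widetilde R/R,N)$, and for null $N$ this $\Ext^2$ equals $H^2(\Hom_\boZ(C_\bullet,N))$, where $C_\bullet$ is the complex of free abelian groups obtained by reducing an $\widetilde R$\+projective resolution of $\widetilde R/R$ modulo~$R$, so that $H_i(C_\bullet)=\Tor_i^{\widetilde R}(\widetilde R/R,\widetilde R/R)$; these vanish for $i=1$,~$2$ by Remark~\ref{t-unital-rings-characterized-remark}, so the universal coefficient theorem over $\boZ$ forces $H^2(\Hom_\boZ(C_\bullet,N))=0$.
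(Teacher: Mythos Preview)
Your proof is correct and follows essentially the same route as the paper's: the ``trivial halves'' via $R^2=R$, the $\Tor_1$ vanishing in~(a) via a free-abelian presentation of~$N$ as a null-module together with $\Tor_1^{\widetilde R}(R,\widetilde R/R)=0$, and the $\Ext^1$ vanishing in~(b) via an embedding of~$N$ into a product of copies of~$\boQ/\boZ$ together with the character-module formula reducing to $\Tor_1^{\widetilde R}(\widetilde R/R,R)=0$. Your added alternative for~(b) via $\Ext^1_{\widetilde R}(R,N)\simeq\Ext^2_{\widetilde R}(\widetilde R/R,N)$ and universal coefficients is a nice bonus not present in the paper.
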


\begin{proof}
 Part~(a): one has $R\ot_RN=0$ for any null-module $N$ over
an idempotent ring~$R$, as $r'r''\ot n=r'\ot r''n=0$ in $R\ot_RN$
for any $r'$, $r''\in R$ and $n\in N$.
 To prove the $\Tor_1$ vanishing, notice that the category of (left)
null-modules over $R$ is isomorphic to the category of abelian groups:
any abelian group can be uniquely endowed with a null-module structure.
 Consider a short exact sequence of abelian groups $0\rarrow G\rarrow F
\rarrow N\rarrow0$ with a free abelian group $F$, and view it as
a short exact sequence of left null-modules over~$R$.
 We have already seen that $R\ot_{\widetilde R}G=0$.
 Following Remark~\ref{Tor-perpendicularity-remark}, we have
$\Tor^{\widetilde R}_1(R,\boZ)=\Tor^{\widetilde R}_1(R,\widetilde R/R)
=0$, since $R$ is a t\+unital right $R$\+module.
 Hence $\Tor^{\widetilde R}_1(R,F)=0$, and we can conclude that
$\Tor^{\widetilde R}_1(R,N)=0$.

 Part~(b): one has $\Hom_R(R,N)=0$ for any null-module $N$ over
an idempotent ring $R$, as $f(r'r'')=r'f(r'')=0$ in $N$ for any
$r'$, $r''\in R$ and $f\in\Hom_R(R,N)$.
 To prove the $\Ext^1$ vanishing, consider a short exact sequence of
abelian groups $0\rarrow N\rarrow J\rarrow K\rarrow0$, where $J$ is
a product of copies of $\boQ/\boZ$, and view it as a short exact
sequence of left null-modules over~$R$.
 We have already seen that $\Hom_{\widetilde R}(R,K)=0$.
 By Remark~\ref{Tor-perpendicularity-remark}, we have
$\Tor^{\widetilde R}_1(\boZ,R)=\Tor^{\widetilde R}_1(\widetilde R/R,R)
=0$, since $R$ is a t\+unital left $R$\+module.
 Hence, viewing $\boQ/\boZ$ as an abelian group \emph{and} a left
null-module ${}_{\widetilde R}\boQ/\boZ$, and viewing $\boZ$ as a right
null-module $\boZ_{\widetilde R}$, we have
$\Ext^1_{\widetilde R}(R,{}_{\widetilde R}\boQ/\boZ)=
\Ext^1_{\widetilde R}(R,\Hom_\boZ(\boZ_{\widetilde R},\boQ/\boZ))=
\Hom_\boZ(\Tor_1^{\widetilde R}(\boZ_{\widetilde R},R),\boQ/\boZ)=0$.
 Therefore, $\Ext^1_{\widetilde R}(R,J)=0$, and we can conclude that
$\Ext^1_{\widetilde R}(R,N)=0$.
\end{proof}

\begin{prop} \label{null-isomorphisms-inverted-by-tensor-Hom-R}
 Let $R$ be a t\+unital ring and $f\:L\rarrow M$ be a left $R$\+module
morphism whose kernel and cokernel are null-modules.  Then \par
\textup{(a)} the left $R$\+module morphism $R\ot_Rf\:R\ot_RL\rarrow
R\ot_RM$ is an isomorphism; \par
\textup{(b)} the left $R$\+module morphism $\Hom_R(R,f)\:\Hom_R(R,L)
\rarrow\Hom_R(R,M)$ is an isomorphism.
\end{prop}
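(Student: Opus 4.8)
The plan is to factor $f$ through its image and then feed the two resulting short exact sequences into the long exact sequences of $\Tor$ and $\Ext$ over the unitalization $\widetilde R$, invoking Lemma~\ref{null-modules-Ext-Tor-vanishing-lemma}. Let $K$ denote the kernel of~$f$, let $C$ denote its cokernel, and let $I\subset M$ be the image of~$f$. Then there are short exact sequences of left $R$\+modules
$$
 0\rarrow K\rarrow L\rarrow I\rarrow 0 \qquad\text{and}\qquad 0\rarrow I\rarrow M\rarrow C\rarrow 0,
$$
in which, by assumption, $K$ and $C$ are null-modules. Since $f$ is the composition of the epimorphism $L\rarrow I$ with the monomorphism $I\rarrow M$, it suffices to prove that the functors $R\ot_R{-}$ (for part~(a)) and $\Hom_R(R,{-})$ (for part~(b)) take each of these two maps to an isomorphism.

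For part~(a), I would apply the right exact functor $R\ot_R{-}=R\ot_{\widetilde R}{-}$ and use the $\Tor^{\widetilde R}(R,{-})$ long exact sequence. From the first short exact sequence one obtains the exact fragment $R\ot_R K\rarrow R\ot_R L\rarrow R\ot_R I\rarrow 0$; as $R\ot_R K=0$ by Lemma~\ref{null-modules-Ext-Tor-vanishing-lemma}(a), the map $R\ot_R L\rarrow R\ot_R I$ is an isomorphism. From the second short exact sequence one obtains $\Tor_1^{\widetilde R}(R,C)\rarrow R\ot_R I\rarrow R\ot_R M\rarrow R\ot_R C\rarrow 0$; as both $\Tor_1^{\widetilde R}(R,C)=0$ and $R\ot_R C=0$ by Lemma~\ref{null-modules-Ext-Tor-vanishing-lemma}(a), the map $R\ot_R I\rarrow R\ot_R M$ is an isomorphism. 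Composing the two isomorphisms shows that $R\ot_R f$ is an isomorphism.

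Part~(b) is the dual-analogous argument with the left exact functor $\Hom_R(R,{-})=\Hom_{\widetilde R}(R,{-})$ and the $\Ext_{\widetilde R}^\bullet(R,{-})$ long exact sequence. From the second short exact sequence, left exactness yields $0\rarrow\Hom_R(R,I)\rarrow\Hom_R(R,M)\rarrow\Hom_R(R,C)$ with $\Hom_R(R,C)=0$ by Lemma~\ref{null-modules-Ext-Tor-vanishing-lemma}(b), so $\Hom_R(R,I)\rarrow\Hom_R(R,M)$ is an isomorphism. From the first short exact sequence one obtains $0\rarrow\Hom_R(R,K)\rarrow\Hom_R(R,L)\rarrow\Hom_R(R,I)\rarrow\Ext^1_{\widetilde R}(R,K)$ with $\Hom_R(R,K)=0=\Ext^1_{\widetilde R}(R,K)$ by Lemma~\ref{null-modules-Ext-Tor-vanishing-lemma}(b), so $\Hom_R(R,L)\rarrow\Hom_R(R,I)$ is an isomorphism. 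Composing gives that $\Hom_R(R,f)$ is an isomorphism.

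I do not expect a genuine obstacle here; the substance is entirely in Lemma~\ref{null-modules-Ext-Tor-vanishing-lemma}, which is already available. The only points meriting care are the identifications $R\ot_R{-}=R\ot_{\widetilde R}{-}$ and $\Hom_R(R,{-})=\Hom_{\widetilde R}(R,{-})$ (so that the $\Tor$ and $\Ext$ long exact sequences over the unital ring $\widetilde R$ may legitimately be applied) and the remark that $f$ factors literally as $L\rarrow I\rarrow M$, which is what lets functoriality reduce everything to the two short exact sequences above.
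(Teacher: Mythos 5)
Your proof is correct and follows essentially the same route as the paper: the paper likewise reduces to the two cases of a surjection with null kernel and an injection with null cokernel (your explicit factorization through the image), and then applies Lemma~\ref{null-modules-Ext-Tor-vanishing-lemma} via the $\Tor$/$\Ext$ sequences over~$\widetilde R$ in exactly the way you describe.
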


\begin{proof}
 A more general version of part~(a) can be found
in~\cite[Proposition~4.3]{Quil}.
 Our argument is based on
Lemma~\ref{null-modules-Ext-Tor-vanishing-lemma}.
 In both parts~(a) and~(b), it suffices to consider two cases:
either~$f$ is a surjective map with a null kernel, or $f$~is
an injective map with a null cokernel.
 In the former case, we have a short exact sequence $0\rarrow N\rarrow
L\rarrow M\rarrow0$ with a null-module~$N$.
 Then the vanishing of the tensor product $R\ot_RN$ implies
an isomorphism $R\ot_RL\simeq R\ot_RM$, and
Lemma~\ref{null-modules-Ext-Tor-vanishing-lemma}(b) implies
an isomorphism $\Hom_R(R,L)\simeq\Hom_R(R,M)$.
 In the latter case, we have a short exact sequence $0\rarrow L\rarrow
M\rarrow N\rarrow0$ with a null-module~$N$.
 Then Lemma~\ref{null-modules-Ext-Tor-vanishing-lemma}(a) implies
an isomorphism $R\ot_RL\simeq R\ot_RM$, and the vanishing of the Hom
group $\Hom_R(R,N)$ implies an isomorphism
$\Hom_R(R,L)\simeq\Hom_R(R,M)$.
\end{proof}

 The following corollary is due to Quillen.
 Part~(a) is contained in~\cite[Proposition~2.6]{Quil},
part~(b) in~\cite[Proposition~4.1]{Quil}, and part~(c)
in~\cite[Proposition~5.3]{Quil}.

\begin{cor}[\cite{Quil}]
\label{null-isomorphisms-inverted-by-tensor-Hom-with-t-c}
 Let $R$ be a t\+unital ring and $f\:L\rarrow M$ be a left $R$\+module
morphism whose kernel and cokernel are null-modules.  Then \par
\textup{(a)} for any t\+unital right $R$\+module $D$, the induced map
of abelian groups $D\ot_RL\rarrow D\ot_RM$ is an isomorphism; \par
\textup{(b)} for any t\+unital left $R$\+module $E$, the induced map
of abelian groups\/ $\Hom_R(E,L)\rarrow\Hom_R(E,M)$ is an isomorphism;
\hbadness=1400\par
\textup{(c)} for any c\+unital left $R$\+module $P$, the induced map
of abelian groups\/ $\Hom_R(M,P)\rarrow\Hom_R(L,P)$ is an isomorphism.
\end{cor}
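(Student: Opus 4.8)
The plan is to reduce all three parts to Proposition~\ref{null-isomorphisms-inverted-by-tensor-Hom-R}, using the t\+unitality of $D$ and $E$ and the c\+unitality of $P$ to replace these modules by $D\ot_RR$, $R\ot_RE$, and $\Hom_R(R,P)$ respectively, and then invoking the tensor associativity isomorphism~\eqref{nonunital-two-rings-tensor-associativity} or the Hom--tensor adjunction~\eqref{nonunital-two-rings-Hom-associativity} to move the extra factor of $R$ next to the morphism~$f$. In each case the morphism induced by $f$ gets identified with a morphism of the form $(\text{something})\ot_R(R\ot_Rf)$ or $\Hom(\text{something},\Hom_R(R,f))$ or $\Hom(R\ot_Rf,\text{something})$, and Proposition~\ref{null-isomorphisms-inverted-by-tensor-Hom-R} says that $R\ot_Rf$ and $\Hom_R(R,f)$ are already isomorphisms.

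Concretely, for part~(a) I would use the isomorphism $D\simeq D\ot_RR$ together with the associativity isomorphisms $(D\ot_RR)\ot_RL\simeq D\ot_R(R\ot_RL)$ and $(D\ot_RR)\ot_RM\simeq D\ot_R(R\ot_RM)$, which fit into a commutative square identifying $D\ot_Rf$ with $D\ot_R(R\ot_Rf)$; the latter is an isomorphism by Proposition~\ref{null-isomorphisms-inverted-by-tensor-Hom-R}(a). For part~(b) I would dually use $E\simeq R\ot_RE$ and the adjunction $\Hom_R(R\ot_RE,{-})\simeq\Hom_R(E,\Hom_R(R,{-}))$ to identify $\Hom_R(E,f)$ with $\Hom_R(E,\Hom_R(R,f))$, which is an isomorphism by Proposition~\ref{null-isomorphisms-inverted-by-tensor-Hom-R}(b). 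For part~(c) I would use $P\simeq\Hom_R(R,P)$ (valid since $P$ is c\+unital) and the adjunction $\Hom_R({-},\Hom_R(R,P))\simeq\Hom_R(R\ot_R{-},\>P)$ to identify the map $\Hom_R(M,P)\rarrow\Hom_R(L,P)$ with $\Hom_R(R\ot_Rf,\>P)\:\Hom_R(R\ot_RM,\>P)\rarrow\Hom_R(R\ot_RL,\>P)$, which is an isomorphism because $R\ot_Rf$ is an isomorphism by Proposition~\ref{null-isomorphisms-inverted-by-tensor-Hom-R}(a).

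The only content beyond Proposition~\ref{null-isomorphisms-inverted-by-tensor-Hom-R} is verifying that these replacements are compatible with the morphism induced by $f$, i.e.\ that the squares built out of the unitality comparison maps~\eqref{tensor-right-unitality-map} and~\eqref{Hom-unitality-map} together with the associativity/adjunction isomorphisms~\eqref{nonunital-two-rings-tensor-associativity} and~\eqref{nonunital-two-rings-Hom-associativity} commute. This is a routine diagram chase relying on the naturality of these isomorphisms, and I expect no real obstacle; the only point requiring mild care is the reversal of arrows in part~(c) caused by the contravariance in the first argument of $\Hom$, but this is harmless since Proposition~\ref{null-isomorphisms-inverted-by-tensor-Hom-R}(a) applies to $R\ot_Rf$ for every morphism $f$ of the given kind.
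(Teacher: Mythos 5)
Your proposal is correct and follows essentially the same route as the paper: each part is reduced to Proposition~\ref{null-isomorphisms-inverted-by-tensor-Hom-R} via the natural isomorphisms $D\ot_RK\simeq D\ot_R(R\ot_RK)$, \ $\Hom_R(E,K)\simeq\Hom_R(E,\Hom_R(R,K))$, and $\Hom_R(K,P)\simeq\Hom_R(R\ot_RK,\>P)$, with the naturality of these isomorphisms providing the required compatibility with~$f$. Your extra remarks about checking commutativity of the comparison squares only spell out what the paper leaves implicit.
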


\begin{proof}
 Part~(a) follows from
Proposition~\ref{null-isomorphisms-inverted-by-tensor-Hom-R}(a)
and the natural isomorphisms $D\ot_R\nobreak K\simeq(D\ot_RR)\ot_RK
\simeq D\ot_R(R\ot_RK)$ for all left $R$\+modules~$K$
(in particular, $K=L$ or $K=M$).
 Part~(b) follows from
Proposition~\ref{null-isomorphisms-inverted-by-tensor-Hom-R}(b)
and the natural isomorphisms $\Hom_R(E,K)\simeq\Hom_R(R\ot_RE,\>K)
\simeq\Hom_R(E,\Hom_R(R,K))$ for all left $R$\+modules~$K$.
 Part~(c) follows from
Proposition~\ref{null-isomorphisms-inverted-by-tensor-Hom-R}(a)
and the natural isomorphisms $\Hom_R(K,P)\simeq\Hom_R(K,\Hom_R(R,P))
\simeq\Hom_R(R\ot_RK,\>P)$ for all left $R$\+modules~$K$.
\end{proof}

 The following theorem is a particular case of Quillen's
\cite[Theorems~4.5 and~5.6]{Quil}.

\begin{thm}[\cite{Quil}]
\label{main-two-category-equivalences-thm}
 Let $R$ be a t\+unital ring.  In this setting: \par
\textup{(a)} The additive category $R\Modlt$ is abelian, and
the functor $R\ot_R{-}\,\:R\Modln\rarrow R\Modlt$ is exact.
 This functor factorizes through the localization functor $R\Modln
\rarrow R\Modln/R\Modlz$, inducing a functor $R\Modln/R\Modlz
\rarrow R\Modlt$, which is an equivalence of abelian categories
$R\Modln/R\Modlz\simeq R\Modlt$.
 The composition of the inclusion and localization functors
$R\Modlt\rarrow R\Modln\rarrow R\Modln/R\Modlz$ provides
the inverse equivalence. \par
\textup{(b)} The additive category $R\Modlc$ is abelian, and
the functor\/ $\Hom_R(R,{-})\:R\Modln\allowbreak\rarrow R\Modlc$
is exact.
 This functor factorizes through the localization functor $R\Modln
\rarrow R\Modln/R\Modlz$, inducing a functor $R\Modln/R\Modlz
\rarrow R\Modlc$, which is an equivalence of abelian categories
$R\Modln/R\Modlz\simeq R\Modlc$.
 The composition of the inclusion and localization functors
$R\Modlc\rarrow R\Modln\rarrow R\Modln/R\Modlz$ provides
the inverse equivalence.  \hfuzz=4pt
\end{thm}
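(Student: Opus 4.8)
The plan is to establish part~(a) in full and to obtain part~(b) by the dual argument, interchanging the coreflection $R\ot_R{-}$ of Lemma~\ref{t-unitalization-lemma} with the reflection $\Hom_R(R,{-})$ of Lemma~\ref{c-unitalization-lemma}. First I would note that a t\+unital ring is idempotent, so the full subcategory $R\Modlz\subset R\Modln$ of null\+modules is closed under extensions (as well as under subobjects, quotients, and all limits and colimits), hence is a Serre subcategory of the Grothendieck abelian category $R\Modln\simeq\widetilde R\Modl$; consequently the Serre quotient $R\Modln/R\Modlz$ is abelian, the localization functor $Q\:R\Modln\rarrow R\Modln/R\Modlz$ is exact, and $Q$ inverts precisely the morphisms whose kernel and cokernel are null\+modules, being universal with this property. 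My candidate for the equivalence $R\Modlt\simeq R\Modln/R\Modlz$ in one direction is the composite $\bar G=Q\circ\iota$, where $\iota\:R\Modlt\rarrow R\Modln$ is the inclusion; the bulk of the proof is to show that $\bar G$ is fully faithful and essentially surjective.

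Essential surjectivity is immediate: every object of $R\Modln/R\Modlz$ is $Q(M)$ for some $M\in R\Modln$, and by Lemma~\ref{unitality-comparisons-have-null-co-kernels}(a) the counit $R\ot_RM\rarrow M$ has null kernel and cokernel, so $Q$ carries it to an isomorphism $Q(R\ot_RM)\simeq Q(M)$, while $R\ot_RM$ is t\+unital by Corollary~\ref{t-unitality-created-by-tensor-with-R}(a); thus $Q(M)\simeq\bar G(R\ot_RM)$. For full faithfulness I would invoke Gabriel's description of morphisms in a Serre quotient: for $L$, $M\in R\Modlt$, the abelian group $\Hom_R(\bar G L,\bar G M)$ in $R\Modln/R\Modlz$ is the filtered colimit of $\Hom_R(L',M/M_0)$ over subobjects $L'\subseteq L$ with $L/L'$ a null\+module and subobjects $M_0\subseteq M$ that are null\+modules, and under this identification $\bar G$ acts on morphism groups via the canonical map out of $\Hom_R(L,M)$. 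The decisive point is that t\+unitality of $L$ collapses the first filtered system to the single term $L'=L$: the maximal null quotient module of $L$ is the cokernel of $R\ot_RL\rarrow L$, which vanishes since $L$ is t\+unital, so $L$ has no nonzero null quotient. The colimit then runs only over the null submodules $M_0\subseteq M$; for each such $M_0$ the epimorphism $M\rarrow M/M_0$ has null kernel and zero cokernel, so Corollary~\ref{null-isomorphisms-inverted-by-tensor-Hom-with-t-c}(b), applied with the t\+unital module $L$, shows $\Hom_R(L,M)\rarrow\Hom_R(L,M/M_0)$ is an isomorphism; the same corollary turns every transition map of the colimit into an isomorphism, so the canonical map $\Hom_{R\Modlt}(L,M)=\Hom_R(L,M)\rarrow\Hom_R(\bar G L,\bar G M)$, which is $\bar G$ itself, is an isomorphism.

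Once $\bar G$ is known to be an equivalence, $R\Modlt$ inherits an abelian structure from $R\Modln/R\Modlz$ (concretely, cokernels in $R\Modlt$ are computed as in $R\Modln$ by Lemma~\ref{t-unital-modules-closure-properties}, and kernels are obtained by applying the right adjoint $R\ot_R{-}$ to the kernel computed in $R\Modln$). To identify the inverse equivalence with the functor induced by $R\ot_R{-}$: by Proposition~\ref{null-isomorphisms-inverted-by-tensor-Hom-R}(a) the functor $R\ot_R{-}\:R\Modln\rarrow R\Modlt$ inverts every morphism with null kernel and cokernel, hence factors, by the universal property of $Q$, as $R\ot_R{-}\simeq\bar F\circ Q$ for an essentially unique functor $\bar F\:R\Modln/R\Modlz\rarrow R\Modlt$; the natural isomorphisms $\bar G(R\ot_RM)=Q(R\ot_RM)\simeq Q(M)$ then exhibit this $\bar F$ as a quasi\+inverse of $\bar G$. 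In particular $R\ot_R{-}\simeq\bar F\circ Q$ is the composite of an exact functor with an equivalence, hence exact, and $\bar G=Q\circ\iota$ is the asserted inverse equivalence, which completes part~(a). Part~(b) is proved verbatim in the dual form, using Lemma~\ref{c-unitalization-lemma}, Corollary~\ref{c-unitality-created-by-Hom-from-R}, Lemma~\ref{c-unital-modules-closure-properties}, Proposition~\ref{null-isomorphisms-inverted-by-tensor-Hom-R}(b), and Corollary~\ref{null-isomorphisms-inverted-by-tensor-Hom-with-t-c}(c) in place of their t\+unital counterparts; the role previously played by ``a t\+unital module has no nonzero null quotient'' is now played by ``a c\+unital module $P$ has no nonzero null submodule'' (its maximal null submodule being the kernel of $P\rarrow\Hom_R(R,P)$, which vanishes), which collapses the $M_0$\+system in Gabriel's formula.

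The hard part is organizational rather than conceptual: one must handle Gabriel's colimit formula for Hom in the Serre quotient carefully enough to see that t\+unitality (resp.\ c\+unitality) kills one of its two filtered systems, and that Corollary~\ref{null-isomorphisms-inverted-by-tensor-Hom-with-t-c} makes the remaining transition maps isomorphisms; and one must verify that the factorization of $R\ot_R{-}$ (resp.\ $\Hom_R(R,{-})$) through $Q$ furnished by Proposition~\ref{null-isomorphisms-inverted-by-tensor-Hom-R} really is a quasi\+inverse of $Q\circ\iota$, not merely some functor in the right direction. Everything else follows formally from the lemmas already established.
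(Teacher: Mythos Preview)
Your proof is correct, but it takes a different route from the paper's. The paper argues via adjunction: having observed just before the proof that an adjoint pair $(F,G)$ between categories descends to an adjoint pair between localizations whenever both functors are compatible with the localizing classes, it notes that the adjunction of Lemma~\ref{t-unitalization-lemma} descends (using Proposition~\ref{null-isomorphisms-inverted-by-tensor-Hom-R}(a)) to an adjunction between $\bar G=Q\circ\iota$ and the induced functor $\bar F\:R\Modln/R\Modlz\rarrow R\Modlt$, and then checks directly that the unit and counit of this induced adjunction are isomorphisms (the unit by the definition of t\+unitality, the counit by Lemma~\ref{unitality-comparisons-have-null-co-kernels}(a)). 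You instead verify that $\bar G$ is an equivalence by hand, establishing full faithfulness through Gabriel's filtered\+colimit description of Hom in a Serre quotient and showing that t\+unitality of the source collapses one index and Corollary~\ref{null-isomorphisms-inverted-by-tensor-Hom-with-t-c}(b) trivializes the other. Your approach is more elementary in that it avoids the abstract lemma on descent of adjunctions, at the cost of invoking and manipulating Gabriel's Hom formula; the paper's approach is shorter and more conceptual once that descent lemma is in place, and it simultaneously identifies $\bar F$ as the right adjoint rather than first proving $\bar G$ is an equivalence and only afterwards matching its quasi\+inverse with~$\bar F$.
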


\begin{rem} \label{Giraud-remark}
 Notice that the functor $R\ot_R{-}\,\:R\Modln\rarrow R\Modln$ is
\emph{not} exact in general.
 The functor $R\ot_R{-}$ is only exact when it is viewed as
\emph{taking values in $R\Modlt$}.
 The inclusion functor $R\Modlt\rarrow R\Modln$ is also \emph{not}
exact in general, as
Example~\ref{t-unital-ring-non-t-unital-kernel-counterex}
illustrates.
 It is only the composition $R\Modlt\rarrow R\Modln\rarrow R\Modln/
R\Modlz$ that is exact.

 Similarly, the functor $\Hom_R(R,{-})\:R\Modln\rarrow R\Modln$ is
\emph{not} exact in general.
 The functor $\Hom_R(R,{-})$ is only exact when it is viewed as
\emph{taking values in $R\Modlc$}.
 The inclusion functor $R\Modlc\rarrow R\Modln$ is also \emph{not}
exact in general, as
Examples~\ref{one-variable-non-c-unital-cokernel-counterex}
and~\ref{t-unital-ring-non-c-unital-cokernel-counterex}
illustrate.
 It is only the composition $R\Modlc\rarrow R\Modln\rarrow R\Modln/
R\Modlz$ that is exact.

 The full subcategory $R\Modlc\subset R\Modln$ is a \emph{Giraud
subcategory} in the sense of~\cite[Section~X.1]{Sten}.
 Dually, the full subcategory $R\Modlt\subset R\Modln$ is
a \emph{co-Giraud subcategory}~\cite[Section~1]{CFM}.
 This terminology means that the inclusion functor $R\Modln/R\Modlz
\simeq R\Modlt\rarrow R\Modln$ is left adjoint to the localization
functor $R\Modln\rarrow R\Modln/R\Modlz$, while the inclusion
functor $R\Modln/R\Modlz\simeq R\Modlc\rarrow R\Modln$ is right
adjoint to the localization functor.
\end{rem}

 In the proof of the theorem, the following observation is used.
 Let $\sC$ and $\sD$ be two categories, and let $\sS\subset\sC$
and $\sT\subset\sD$ be two multiplicative subsets of morphisms.
 Consider the localizations $\sC[\sS^{-1}]$ and $\sD[\sT^{-1}]$
of the categories $\sC$ and~$\sD$.
 Let $F\:\sC\rarrow\sD$ and $G\:\sD\rarrow\sC$ be a pair of adjoint
functors, with the functor $F$ left adjoint to~$G$.
 Assume that the functors $F$ and $G$ descend to well-defined
functors $\overline F\:\sC[\sS^{-1}]\rarrow\sD[\sT^{-1}]$ and
$\overline G\:\sD[\sT^{-1}]\rarrow\sC[\sS^{-1}]$.
 Then the functor $\overline F$ is left adjoint to
the functor~$\overline G$.
 The adjunction unit and counit for the pair $(\overline F,
\overline G)$ are induced by the adjunction unit and counit
for the pair $(F,G)$.
 
\begin{proof}[Proof of
Theorem~\ref{main-two-category-equivalences-thm}]
 Part~(a): it is clear from
Proposition~\ref{null-isomorphisms-inverted-by-tensor-Hom-R}(a) that
the functor $R\ot_R{-}\,\:R\Modln\rarrow R\Modlt$ factorizes (uniquely)
through the localization functor $R\Modln\rarrow R\Modln/R\Modlz$,
inducing a well-defined functor from the quotient category
$R\Modln/R\Modlz\rarrow R\Modlt$.
 Then it follows from Lemma~\ref{t-unitalization-lemma} that
the resulting functor is right adjoint to the composition
$R\Modlt\rarrow R\Modln\rarrow R\Modln/R\Modlz$.
 It remains to show that the adjunction morphisms in $R\Modlt$
and $R\Modln/R\Modlz$ are isomorphisms.
 The adjunction unit $M\rarrow R\ot_RM$ (which is only defined for
$M\in R\Modlt$) is an isomorphism for every $M\in R\Modlt$
by construction.
 The adjunction counit $R\ot_RL\rarrow L$ is an isomorphism in
the quotient category $R\Modln/R\Modlz$ for every left $R$\+module $L$
by Lemma~\ref{unitality-comparisons-have-null-co-kernels}(a).
 Finally, the functor $R\ot_R{-}\,\:R\Modln\rarrow R\Modlt$ is exact
as the composition of two exact functors $R\Modln\rarrow R\Modln/
R\Modlz\rarrow R\Modlt$ (notice that any equivalence of abelian
categories is an exact functor).

 Part~(b): it is clear from
Proposition~\ref{null-isomorphisms-inverted-by-tensor-Hom-R}(b) that
the functor $\Hom_R(R,{-})\:R\Modln\allowbreak\rarrow R\Modlc$
factorizes (uniquely) through the localization functor $R\Modln\rarrow
R\Modln/R\Modlz$, inducing a well-defined functor from the quotient category $R\Modln/R\Modlz\rarrow R\Modlc$.
 Then it follows from Lemma~\ref{c-unitalization-lemma} that
the resulting functor is left adjoint to the composition
$R\Modlc\rarrow R\Modln\rarrow R\Modln/R\Modlz$.
 It remains to show that the adjunction morphisms in $R\Modlc$
and $R\Modln/R\Modlz$   are isomorphisms.
 The adjunction counit $\Hom_R(R,P)\rarrow P$ (which is only defined
for $P\in R\Modlc$) is an isomorphism for every $P\in R\Modlc$
by construction.
 The adjunction unit $Q\rarrow\Hom_R(R,Q)$ is an isomorphism in
the quotient category $R\Modln/R\Modlz$ for every left $R$\+module $Q$
by Lemma~\ref{unitality-comparisons-have-null-co-kernels}(b).
 Finally, the functor $\Hom_R(R,{-})\:R\Modln\rarrow R\Modlc$ is exact
as the composition of two exact functors $R\Modln\rarrow R\Modln/
R\Modlz\rarrow R\Modlc$.
\end{proof}

\begin{cor} \label{main-three-abelian-category-equivalence-cor}
 Let $R$ be a t\+unital ring.
 Then there is a natural equivalence of three abelian categories
\begin{equation} \label{main-three-abelian-category-equivalence-eqn}
 R\Modlt\simeq R\Modln/R\Modlz\simeq R\Modlc.
\end{equation}
 In particular, the functors $R\ot_R{-}$ and\/ $\Hom_R(R,{-})$ restrict
to mutually inverse equivalences
\begin{equation} \label{t-unital-c-unital-equivalence-eqn}
 \xymatrix{
  {\Hom_R(R,{-})\: R\Modlt} \ar@{=}[rr]
  && {R\Modlc\,:\!R\ot_R{-}\,.}
 }
\end{equation}
\end{cor}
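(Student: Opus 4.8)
The plan is to read the three-way equivalence~\eqref{main-three-abelian-category-equivalence-eqn} straight off Theorem~\ref{main-two-category-equivalences-thm}. Part~(a) of that theorem provides a pair of mutually quasi-inverse equivalences $a\:R\Modlt\rarrow R\Modln/R\Modlz$ (the composition of the inclusion and the localization functors) and $a'\:R\Modln/R\Modlz\rarrow R\Modlt$ (the functor induced by $R\ot_R{-}$), while part~(b) provides a pair of mutually quasi-inverse equivalences $b\:R\Modlc\rarrow R\Modln/R\Modlz$ (again inclusion followed by localization) and $b'\:R\Modln/R\Modlz\rarrow R\Modlc$ (the functor induced by $\Hom_R(R,{-})$). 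Concatenating these two pairs at the common category $R\Modln/R\Modlz$ gives~\eqref{main-three-abelian-category-equivalence-eqn} at once.

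For the ``in particular'' clause, I would first observe that the functors $\Hom_R(R,{-})$ and $R\ot_R{-}$ genuinely restrict to functors $R\Modlt\rarrow R\Modlc$ and $R\Modlc\rarrow R\Modlt$: indeed $\Hom_R(R,M)$ is c\+unital and $R\ot_RP$ is t\+unital for all $M$, $P\in R\Modln$, by Corollaries~\ref{c-unitality-created-by-Hom-from-R} and~\ref{t-unitality-created-by-tensor-with-R}. The key point is then the identification $\Hom_R(R,{-})|_{R\Modlt}=b'\circ a$ and $R\ot_R{-}|_{R\Modlc}=a'\circ b$. This follows from the part of Theorem~\ref{main-two-category-equivalences-thm} asserting that $\Hom_R(R,{-})\:R\Modln\rarrow R\Modlc$ equals the composition of the localization functor with $b'$, and $R\ot_R{-}\:R\Modln\rarrow R\Modlt$ equals the composition of the localization functor with $a'$; one then precomposes with the inclusions $R\Modlt\rarrow R\Modln$ and $R\Modlc\rarrow R\Modln$. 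Being composites of equivalences, $b'\circ a$ and $a'\circ b$ are themselves equivalences; and using the natural isomorphisms $a\circ a'\simeq\id_{R\Modln/R\Modlz}$ and $b\circ b'\simeq\id_{R\Modln/R\Modlz}$ one computes $(b'\circ a)\circ(a'\circ b)\simeq b'\circ b\simeq\id_{R\Modlc}$ and $(a'\circ b)\circ(b'\circ a)\simeq a'\circ a\simeq\id_{R\Modlt}$, which is precisely the pair of mutually inverse equivalences~\eqref{t-unital-c-unital-equivalence-eqn}.

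There is no essential obstacle here: the whole thing is a matter of bookkeeping which functor points in which direction and of invoking the precise factorization statements in Theorem~\ref{main-two-category-equivalences-thm}. The one place where care is needed is making sure one does not confuse $a$ with $a'$ (resp.\ $b$ with $b'$), since only the correct composite is naturally isomorphic to the restricted tensor (resp.\ Hom) functor.

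For a reader who prefers a verification of~\eqref{t-unital-c-unital-equivalence-eqn} not passing through the quotient category, I would also record the following direct argument. For $M\in R\Modlt$ the natural map $M\rarrow\Hom_R(R,M)$ has null-module kernel and cokernel by Lemma~\ref{unitality-comparisons-have-null-co-kernels}(b), so applying $R\ot_R{-}$ turns it into an isomorphism $R\ot_RM\overset{\sim}{\rarrow}R\ot_R\Hom_R(R,M)$ by Proposition~\ref{null-isomorphisms-inverted-by-tensor-Hom-R}(a); composing with the isomorphism $R\ot_RM\simeq M$ (valid because $M$ is t\+unital) yields a natural isomorphism $M\simeq R\ot_R\Hom_R(R,M)$. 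Dually, for $P\in R\Modlc$ the map $R\ot_RP\rarrow P$ has null-module kernel and cokernel by Lemma~\ref{unitality-comparisons-have-null-co-kernels}(a), so applying $\Hom_R(R,{-})$ turns it into an isomorphism $\Hom_R(R,R\ot_RP)\overset{\sim}{\rarrow}\Hom_R(R,P)$ by Proposition~\ref{null-isomorphisms-inverted-by-tensor-Hom-R}(b); composing with the isomorphism $P\simeq\Hom_R(R,P)$ (valid because $P$ is c\+unital) yields a natural isomorphism $\Hom_R(R,R\ot_RP)\simeq P$. Together with the membership statements above, these natural isomorphisms exhibit $\Hom_R(R,{-})|_{R\Modlt}$ and $R\ot_R{-}|_{R\Modlc}$ as mutually quasi-inverse equivalences.
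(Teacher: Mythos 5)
Your argument is correct and follows essentially the same route as the paper: the three-way equivalence is obtained by concatenating the two parts of Theorem~\ref{main-two-category-equivalences-thm} at the common quotient category, and the ``in particular'' clause by unwinding the factorization statements in that theorem. Your supplementary direct verification of~\eqref{t-unital-c-unital-equivalence-eqn} via Lemma~\ref{unitality-comparisons-have-null-co-kernels} and Proposition~\ref{null-isomorphisms-inverted-by-tensor-Hom-R} is also sound and closely parallels the paper's own instructive discussion (the ambidextrous adjunction and its unit/counit maps) accompanying its proof.
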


\begin{proof}
 The first assertion is obtained by combining the two parts of
Theorem~\ref{main-two-category-equivalences-thm}, and the second
assertion follows by looking into the constructions of two category
equivalences in Theorem~\ref{main-two-category-equivalences-thm}.
 This is sufficient to prove the corollary.

 It is instructive, however, to consider the adjunction constructions
for the two mutually inverse
equivalences~\eqref{t-unital-c-unital-equivalence-eqn}.
 Here one observes an explicit \emph{ambidextrous
adjunction}~\cite{CGN,Lau}, meaning a pair of functors that are adjoint
to each other on both sides.
 This ambidextrous adjunction is easier to establish than
the full assertions of the corollary.

 On the one hand, there is a natural adjunction where the functor
$\Hom_R(R,{-})$ is the right adjoint, and the functor $R\ot_R{-}$
is the left adjoint:
\begin{equation} \label{t-unital-c-unital-first-adjunction-eqn}
 \xymatrix{
  {\Hom_R(R,{-})\: R\Modlt} \ar@<-2pt>[rr]
  && {R\Modlc\,:\!R\ot_R{-}\,.} \ar@<-4pt>[ll]
 }
\end{equation}
 Indeed, consider the pair of adjoint endofunctors on the category of
unital left $\widetilde R$\+modules induced by
the $\widetilde R$\+$\widetilde R$\+bimodule $R$,
\begin{equation} \label{R-adjunction-eqn}
 \xymatrix{
  {\Hom_{\widetilde R}(R,{-})\: \widetilde R\Modl} \ar@<-2pt>[rr]
  && {\widetilde R\Modl\,:\!R\ot_{\widetilde R}{-}\,.} \ar@<-4pt>[ll]
 }
\end{equation}
 This is a particular case of the adjunction expressed by
the natural isomorphism~\eqref{Hom-associativity} from the introduction
or~\eqref{nonunital-two-rings-Hom-associativity} from
Section~\ref{c-unital-secn}.
 The adjunction~\eqref{t-unital-c-unital-first-adjunction-eqn}
is obtained by restricting the adjunction~\eqref{R-adjunction-eqn} to
the full subcategories $R\Modlt\subset R\Modln$ and
$R\Modlc\subset R\Modln\simeq\widetilde R\Modl$.

 On the other hand, there is a natural adjunction where the functor
$\Hom_R(R,{-})$ is the left adjoint, and the functor $R\ot_R{-}$
is the right adjoint:
\begin{equation} \label{t-unital-c-unital-second-adjunction-eqn}
 \xymatrix{
  {\Hom_R(R,{-})\: R\Modlt} \ar@<4pt>[rr]
  && {R\Modlc\,:\!R\ot_R{-}\,.} \ar@<2pt>[ll]
 }
\end{equation}
 The adjunction~\eqref{t-unital-c-unital-second-adjunction-eqn}
is the composition of the two adjunctions
$$
 \xymatrix{
  {R\Modlt} \ar@<4pt>[rr]^{\mathrm{inclusion}}
  && {R\Modln} \ar@<2pt>[ll]^{R\ot_R{-}}
  \ar@<4pt>[rr]^{\Hom_R(R,{-})}
  && {R\Modlc} \ar@<2pt>[ll]^{\mathrm{inclusion}}
 }
$$
provided by Lemmas~\ref{t-unitalization-lemma}
and~\ref{c-unitalization-lemma}.
\end{proof}

\begin{rem}
 It may be also instructive to consider the unit and counit morphisms
of the two adjunctions~\eqref{t-unital-c-unital-first-adjunction-eqn}
and~\eqref{t-unital-c-unital-second-adjunction-eqn}.
 For any left $R$\+module $M$, the evaluation map
$R\ot_R\Hom_R(R,M)\rarrow M$ is the counit of
the adjunction~\eqref{R-adjunction-eqn}.
 In the particular case when $M$ is a t\+unital $R$\+module,
the same evaluation map is the counit of
the adjunction~\eqref{t-unital-c-unital-first-adjunction-eqn}.
 On the other hand, applying the functor $R\ot_R{-}$ to
the natural map $M\rarrow\Hom_R(R,M)$, we obtain a natural map
$R\ot_RM\rarrow R\ot_R\Hom_R(R,M)$, which in the case of a t\+unital
$R$\+module $M$ provides a natural map $M\rarrow R\ot_R\Hom_R(R,M)$.
 The latter map is the unit of
the adjunction~\eqref{t-unital-c-unital-second-adjunction-eqn}.
 One can easily check that the composition $M\rarrow R\ot_R\Hom_R(R,M)
\rarrow M$ is the identity map.

 Similarly, for any left $R$\+module $P$, the natural map
$P\rarrow\Hom_R(R,\>R\ot_RP)$ is the unit of
the adjunction~\eqref{R-adjunction-eqn}.
 In the particular case when $P$ is a c\+unital $R$\+module,
the same map is the unit of
the adjunction~\eqref{t-unital-c-unital-first-adjunction-eqn}.
 On the other hand, applying the functor $\Hom_R(R,{-})$ to
the multiplication map $R\ot_RP\rarrow P$, we obtain a natural map
$\Hom_R(R,\>R\ot_RP)\rarrow\Hom_R(R,P)$, which in the case of
a c\+unital $R$\+module $P$ provides a natural map
$\Hom_R(R,\>R\ot_RP)\rarrow P$.
 The latter map is the counit of
the adjunction~\eqref{t-unital-c-unital-second-adjunction-eqn}.
 One can easily check that the composition $P\rarrow\Hom_R(R,\>R\ot_RP)
\rarrow P$ is the identity map.
\end{rem}

\Section{Rings Arising from Small Preadditive Categories}
\label{rings-from-categories-secn}

 Let $R$ be a (nonunital) ring.
 A \emph{family of orthogonal idempotents} $(e_x\in R)_{x\in X}$ in $R$
is a family of elements such that $e_x^2=e_x$ for all $x\in X$ and
$e_xe_y=0$ for all $x\ne y\in X$.
 A family of orthogonal idempotents $(e_x)_{x\in X}$ in $R$ is said
to be \emph{complete} if
$R=\bigoplus_{x\in X}R e_x=\bigoplus_{x\in X}e_x R$,
or equivalently, $R=\bigoplus_{x,y\in X} e_yRe_x$.
 Following the terminology of~\cite{Nys} (going back to~\cite{Ful}),
we will say that $R$ is \emph{a ring with enough idempotents} if
there is a chosen complete family of orthogonal idempotents in~$R$.
 
 The following lemma is easy and well-known~\cite{Nys}.

\begin{lem}
 Any ring with enough idempotents in s\+unital
(and consequently, t\+unital).
\end{lem}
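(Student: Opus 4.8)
The plan is to verify directly that the ring $R$ is both left and right s\+unital, and then invoke Corollary~\ref{s-unital-implies-t-unital-for-rings} to get t\+unitality for free.

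First I would unwind the completeness hypothesis. Let $(e_x)_{x\in X}$ be the chosen complete family of orthogonal idempotents, so that in particular $R=\bigoplus_{x\in X}e_xR$. Concretely this says that every $r\in R$ has a (unique) expansion as a finite sum $r=\sum_{x\in X}e_xr$ in which all but finitely many of the summands $e_xr$ vanish; one checks this using $e_xe_y=0$ for $x\ne y$ and $e_x^2=e_x$, which forces $e_x r$ to be precisely the $e_xR$\+component of~$r$. Given $r\in R$, let $Y\subseteq X$ be the finite set of indices $y$ with $e_yr\ne0$, and put $e=\sum_{y\in Y}e_y\in R$. Then $er=\sum_{y\in Y}e_yr=\sum_{x\in X}e_xr=r$ in~$R$, so the left $R$\+module $R$ is s\+unital. (Note that the definition of s\+unitality imposes no idempotency condition on the witness element $e$, although here $e$ happens to be idempotent, being a finite sum of orthogonal idempotents.)

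Symmetrically, using the other half $R=\bigoplus_{x\in X}Re_x$ of the completeness hypothesis, for any $r\in R$ one writes $r=\sum_{x\in X}re_x$ with only finitely many nonzero terms, lets $X'\subseteq X$ be the finite set of $x$ with $re_x\ne0$, and checks that $rf=r$ for $f=\sum_{x\in X'}e_x$. Hence the right $R$\+module $R$ is s\+unital as well, so $R$ is an s\+unital ring. By Corollary~\ref{s-unital-implies-t-unital-for-rings}, $R$ is then t\+unital.

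There is essentially no obstacle here; the only point needing a moment's care is the verification that the finite idempotent $e$ (respectively $f$) assembled from exactly the indices occurring in the expansion of $r$ really does absorb $r$ on the correct side, which is immediate from orthogonality $e_ye_x=0$ for $y\ne x$ together with $e_x^2=e_x$. Everything else is bookkeeping with finite sums.
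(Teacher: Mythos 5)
Your proof is correct and follows essentially the same route as the paper: expand $r$ using the complete family of orthogonal idempotents, take the finite sum $e$ of the idempotents occurring, check that it absorbs $r$, and then cite Corollary~\ref{s-unital-implies-t-unital-for-rings} for t\+unitality. The only cosmetic difference is that the paper uses a single finite idempotent $e$ with $er=r=re$ (take the union of your two finite index sets), whereas you use separate witnesses for the left and right sides, which the definition of s\+unitality permits anyway.
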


\begin{proof}
 The point is that for any element $r\in R$ one can choose a finite
subset $Z\subset X$ such that the (idempotent) element
$e=\sum_{z\in Z}e_z\in R$ satisfies $er=r=re$.
 The assertion in parentheses follows by
Corollary~\ref{s-unital-implies-t-unital-for-rings}.
\end{proof}

\begin{lem} \label{with-enough-idempotents-is-projective}
 Let $R$ be a ring with enough idempotents and $\widetilde R=
\boZ\oplus R$ be its unitalization.
 Then $R$ is an (infinitely generated) projective unital left
$\widetilde R$\+module and projective unital right
$\widetilde R$\+module.
 Specifically, one has
\begin{equation} \label{R-decomposed-as-left-right-module}
 R=\bigoplus\nolimits_{x\in X}\widetilde R e_x
 =\bigoplus\nolimits_{x\in X}e_x\widetilde R,
\end{equation}
where, for every $x\in X$, the left $\widetilde R$\+module
$\widetilde R e_x=Re_x$ is projective with a single generator~$e_x$,
and the right $\widetilde R$\+module $e_x\widetilde R=e_xR$ is
projective with a single generator~$e_x$.
\end{lem}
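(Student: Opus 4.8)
The plan is to reduce the statement to the standard fact that, for an idempotent element $e$ of a unital ring $A$, the left ideal $Ae$ is a direct summand of $A$ --- hence a projective $A$-module with the single generator~$e$ --- and then to assemble these summands over $x\in X$ using completeness of the idempotent family. First I would note that each $e_x$ remains idempotent in $\widetilde R$ and that the adjoined unit $1\in\widetilde R$ furnishes the complementary idempotent $1-e_x\in\widetilde R$: one checks $(1-e_x)^2=1-2e_x+e_x^2=1-e_x$ and $e_x(1-e_x)=0=(1-e_x)e_x$. Consequently $\widetilde R=\widetilde Re_x\oplus\widetilde R(1-e_x)$ as left $\widetilde R$-modules, so $\widetilde Re_x$ is a projective unital left $\widetilde R$-module generated by~$e_x$; symmetrically, $\widetilde R=e_x\widetilde R\oplus(1-e_x)\widetilde R$ as right $\widetilde R$-modules, so $e_x\widetilde R$ is a projective unital right $\widetilde R$-module generated by~$e_x$.

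Next I would identify $\widetilde Re_x$ with $Re_x$: for $n\in\boZ$ and $r\in R$, the element $(n+r)e_x=ne_x+re_x$ lies in $Re_x$ because $ne_x=(ne_x)e_x\in Re_x$ (here $ne_x$ is an element of the abelian group~$R$), whence $\widetilde Re_x=Re_x$; dually $e_x\widetilde R=e_xR$. In particular each $Re_x=\widetilde Re_x$ is a left $\widetilde R$-submodule of~$R$, and each $e_xR=e_x\widetilde R$ is a right $\widetilde R$-submodule of~$R$.

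Finally, the completeness assumption $R=\bigoplus_{x\in X}Re_x$ (a decomposition of abelian groups) combined with the preceding identification yields the internal direct sum decomposition $R=\bigoplus_{x\in X}\widetilde Re_x$ in the category of left $\widetilde R$-modules --- that is, the left half of~\eqref{R-decomposed-as-left-right-module} --- and likewise $R=\bigoplus_{x\in X}e_x\widetilde R$ in the category of right $\widetilde R$-modules. Since an arbitrary direct sum of projective modules is projective, $R$ is a projective unital left $\widetilde R$-module and a projective unital right $\widetilde R$-module, as claimed. I do not expect a genuine obstacle here: the only step requiring a word of justification is the equality $\widetilde Re_x=Re_x$ (equivalently $\boZ e_x\subseteq Re_x$), which rests on the idempotency of $e_x$ together with $R$ being closed under integer multiples.
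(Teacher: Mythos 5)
Your proof is correct and follows essentially the same route as the paper's: the paper's (terser) proof rests on the observation that $\widetilde Rr=Rr$ for any element $r$ of an s\+unital ring, which in your argument appears in the special form $\widetilde Re_x=Re_x$ verified directly from the idempotency of~$e_x$, after which both arguments read off the decomposition~\eqref{R-decomposed-as-left-right-module} from the completeness of the family $(e_x)_{x\in X}$ and deduce projectivity from the splitting $\widetilde R=\widetilde Re_x\oplus\widetilde R(1-e_x)$. Your write-up merely spells out the details the paper leaves implicit.
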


\begin{proof}
 Notice that one has $\widetilde Rr=Rr$ and $r\widetilde R=rR$
for any s\+unital ring $R$ and element $r\in R$.
 With this simple observation, all the assertions of the lemma
follow immediately from the definition of a ring with enough
idempotents.
\end{proof}

\begin{cor} \label{t-unital-s-unital-modules-abelian-categories}
\textup{(a)} Let $R$ be a t\+unital ring.
 Then the categories of t\+unital and c\+unital left $R$\+modules
$R\Modlt$ and $R\Modlc$ are abelian. \par
\textup{(b)} Let $R$ be a left s\+unital ring (for example, $R$ is
a ring with enough idempotents).
 Then the full subcategory of t\+unital left $R$\+modules $R\Modlt$ is
closed under submodules, quotients, extensions, and direct sums in
$R\Modln$.
 Hence, in particular, the fully faithful inclusion functor of abelian
categories $R\Modlt\rarrow R\Modln$ is exact. \par
\textup{(c)} Let $R$ be a ring with enough idempotents.
 Then the full subcategory of c\+unital $R$\+modules $R\Modlc$
is closed under kernels, cokernels, extensions, and direct products
in $R\Modln$.
 Hence, in particular, the fully faithful inclusion functor of abelian
categories $R\Modlc\rarrow R\Modln$ is exact. 
\end{cor}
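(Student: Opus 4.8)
The plan is to assemble all three parts from results already in hand, with essentially no new computation. Part~(a) is immediate: Theorem~\ref{main-two-category-equivalences-thm} asserts precisely that, for a t\+unital ring $R$, each of $R\Modlt$ and $R\Modlc$ is an abelian category (in fact each is equivalent to the Serre quotient $R\Modln/R\Modlz$).

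For part~(b) I would first record that a left s\+unital ring $R$ is t\+unital by Corollary~\ref{s-unital-implies-t-unital-for-rings}, so that the full subcategory $R\Modlt\subset R\Modln$ makes sense and is abelian by~(a). The crucial identification is $R\Modlt=R\Modls$ as full subcategories of $R\Modln$, which is Corollary~\ref{s-unital-iff-t-unital-for-modules}(a). Then Proposition~\ref{s-unital-hereditary-torsion} directly yields that $R\Modls$, hence $R\Modlt$, is closed under submodules, quotients, extensions, and all colimits in $R\Modln$; this contains the asserted list (direct sums being colimits). In particular $R\Modlt$ is closed under kernels and cokernels computed in $R\Modln$, and a full additive subcategory of an abelian category closed under kernels and cokernels inherits these as its own kernels and cokernels, so the inclusion functor $R\Modlt\rarrow R\Modln$ is exact. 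The parenthetical case of a ring with enough idempotents is covered because such a ring is s\+unital, as recorded just before Lemma~\ref{with-enough-idempotents-is-projective}.

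For part~(c) I would combine Lemma~\ref{c-unital-modules-closure-properties}, which already gives closure of $R\Modlc$ under kernels, direct products, and extensions in $R\Modln$ for an arbitrary ring, with Remark~\ref{Ext-perpendicularity-remark} for closure under cokernels. The latter requires the unital left $\widetilde R$\+module $R$ to be projective, equivalently that the functor $\Hom_R(R,{-})\simeq\Hom_{\widetilde R}(R,{-})\:R\Modln\rarrow R\Modln$ be exact; and Lemma~\ref{with-enough-idempotents-is-projective} supplies exactly this projectivity for a ring with enough idempotents, via the decomposition $R=\bigoplus_{x\in X}\widetilde Re_x$. Putting these together, $R\Modlc$ is closed under kernels, cokernels, extensions, and direct products in $R\Modln$; and, exactly as in part~(b), closure under kernels and cokernels makes the inclusion $R\Modlc\rarrow R\Modln$ exact.

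I do not expect a real obstacle, since each part is a bookkeeping synthesis of earlier statements. The one point deserving a sentence of care is the step from ``closed under kernels and cokernels in $R\Modln$'' to ``abelian with exact inclusion into $R\Modln$'', i.e.\ verifying that the abelian structure produced abstractly in Theorem~\ref{main-two-category-equivalences-thm} agrees with the kernels and cokernels computed inside $R\Modln$. This is the standard fact about full subcategories of an abelian category that are closed under kernels and cokernels, and it is forced by the closure properties just established.
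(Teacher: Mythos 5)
Your proposal is correct and follows essentially the same route as the paper: part~(a) from Theorem~\ref{main-two-category-equivalences-thm}, part~(b) from Proposition~\ref{s-unital-hereditary-torsion} combined with Corollary~\ref{s-unital-iff-t-unital-for-modules}(a), and part~(c) from Lemma~\ref{c-unital-modules-closure-properties} together with Remark~\ref{Ext-perpendicularity-remark} and Lemma~\ref{with-enough-idempotents-is-projective}. Your extra remark on why closure under kernels and cokernels forces the inclusion functors to be exact is the standard fact the paper leaves implicit, and it is handled correctly.
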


\begin{proof}
 Part~(a) is a part of
Theorem~\ref{main-two-category-equivalences-thm}(a\+-b).
 Part~(b) is Proposition~\ref{s-unital-hereditary-torsion}
together with Corollary~\ref{s-unital-iff-t-unital-for-modules}(a).
 Part~(c) follows from Lemma~\ref{c-unital-modules-closure-properties}
together with Remark~\ref{Ext-perpendicularity-remark} and the first
assertion of Lemma~\ref{with-enough-idempotents-is-projective}.
\end{proof}

 Let $\sE$ be a small preadditive category, i.~e., a small category
enriched in abelian groups.
 This means that, for every pair of objects $x$, $y\in\sE$, there is
the abelian group of morphisms $\Hom_\sE(x,y)$, and the composition maps
$$
 \Hom_\sE(y,z)\ot_\boZ\Hom_\sE(x,y)\lrarrow\Hom_\sE(x,z)
$$
are defined for all objects $x$, $y$, $z\in\sE$ in such a way that
the usual associativity and unitality axioms (for identity elements
$\id_x\in\Hom_\sE(x,x)$) are satisfied.

 Then the abelian group $R_\sE=\bigoplus_{x,y\in\sE}\Hom_\sE(x,y)$
has a natural structure of (nonunital) ring with the multiplication
map given by the composition of morphisms in~$\sE$.
 The following lemma is certainly standard and well-known.

\begin{lem} \label{rings-with-enough-idempotents-are-categories}
 A ring $R$ has enough idempotents if and only if it has the form
$R=R_\sE$ for some small preadditive category\/~$\sE$.
\end{lem}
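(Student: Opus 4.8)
The plan is to prove the two implications separately. In each direction the required structure is read off directly from the given data, and verifying the relevant axioms reduces to routine bookkeeping with the orthogonality relations $e_xe_y=\delta_{xy}e_x$.

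For the ``if'' direction, suppose $R=R_\sE$ for a small preadditive category $\sE$, and put $X=\mathrm{Ob}(\sE)$ with $e_x=\id_x\in\Hom_\sE(x,x)\subseteq R$. The relation $e_x^2=e_x$ is the unitality axiom $\id_x\circ\id_x=\id_x$ in $\sE$, while $e_xe_y=0$ for $x\ne y$ holds because only composable pairs of morphisms multiply nontrivially in $R_\sE$. For completeness, writing $r=\sum_{u,v}r_{uv}$ with $r_{uv}\in\Hom_\sE(u,v)$ one gets $re_x=\sum_v r_{xv}$ by right unitality in $\sE$, hence $Re_x=\bigoplus_{v\in X}\Hom_\sE(x,v)$ and $\bigoplus_{x\in X}Re_x=R$; the computation for $e_xR$ is symmetric. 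Thus $(e_x)_{x\in X}$ is a complete family of orthogonal idempotents, and $R$ has enough idempotents.

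For the ``only if'' direction, suppose $(e_x)_{x\in X}$ is a complete family of orthogonal idempotents in $R$. I would take $\sE$ to have object set $X$, hom-groups $\Hom_\sE(x,y)=e_yRe_x$, composition $\Hom_\sE(y,z)\ot_\boZ\Hom_\sE(x,y)\to\Hom_\sE(x,z)$ sending $(e_zre_y)\ot(e_yr'e_x)$ to $e_zre_yr'e_x\in e_zRe_x$ (the product taken in $R$), and identity morphisms $\id_x=e_x\in e_xRe_x$. Associativity of composition is associativity of multiplication in $R$, and the identity axioms follow from $e_x$ being idempotent, so $\sE$ is a small preadditive category. Its ring is $R_\sE=\bigoplus_{x,y\in X}e_yRe_x$, and the equivalent form of completeness, $R=\bigoplus_{x,y\in X}e_yRe_x$, says exactly that the inclusions $e_yRe_x\hookrightarrow R$ assemble into an additive isomorphism $R_\sE\rarrow R$. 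This map is multiplicative: on a summand $e_zRe_y\ot e_{y'}Re_x$ the product in $R_\sE$ is zero unless $y=y'$, while in $R$ the product $e_zre_y\cdot e_{y'}r'e_x$ equals $e_zre_ye_{y'}r'e_x$, which vanishes for $y\ne y'$ by orthogonality and agrees with the composite in $\sE$ for $y=y'$. Hence $R\cong R_\sE$ as rings.

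The argument is formal throughout; the only point to watch is the compatibility of the two notions of ``completeness'' of the idempotent family with the direct-sum decomposition defining $R_\sE$, and the check that the resulting additive bijection $R\cong R_\sE$ respects multiplication and not merely addition --- both of which come down to the orthogonality relation $e_xe_y=\delta_{xy}e_x$. I do not expect a genuine obstacle.
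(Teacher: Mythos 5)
Your proof is correct and follows essentially the same route as the paper: in one direction take $e_x=\id_x$ as the complete orthogonal family, and in the other define $\sE$ with object set $X$ and $\Hom_\sE(x,y)=e_yRe_x$. You merely spell out the routine verifications (orthogonality, completeness, and multiplicativity of the identification $R\cong R_\sE$) that the paper leaves implicit.
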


\begin{proof}
 Let $R$ be a ring with enough idempotents, and let
$(e_x\in R)_{x\in X}$ be its complete family of orthogonal idempotents.
 Then $\sE$ is the category with the set of objects $X$ and the groups
of morphisms $\Hom_\sE(x,y)=e_yRe_x$.
 Conversely, if $R=R_\sE$ for a small preadditive category $\sE$, then
the identity elements $\id_x\in\Hom_\sE(x,x)$, viewed as elements of
$R_\sE$, form a complete family of orthogonal idempotents in~$R$.
\end{proof}

 Let $\sE$ be a small preadditive category.
 By a \emph{left\/ $\sE$\+module} we mean a covariant additive functor
$\sE\rarrow\Ab$ from $\sE$ to the category of abelian groups.
 In other words, a left $\sE$\+module $\sM$ is a rule assigning to every
object $x\in\sE$ an abelian group $\sM(x)$, and to every pair of
objects $x$, $y\in\sE$ the \emph{action map}
$$
 \Hom_\sE(x,y)\ot_\boZ\sM(x)\lrarrow\sM(y)
$$
in such a way that the usual associativity and unitality axioms are
satisfied.

 Similarly, a \emph{right\/ $\sE$\+module} $\sN$ is a contravariant
additive functor $\sE^\sop\rarrow\Ab$.
 In other words, it is a rule assigning to every object $x\in\sE$
an abelian group $\sN(x)$, and to every pair of objects $x$, $y\in\sE$
the action map
$$
 \Hom_\sE(x,y)\ot_\boZ\sN(y)\lrarrow\sN(x)
$$
satisfying the usual associativity and unitality axioms.
 We denote the abelian categories of left and right $\sE$\+modules by
$\sE\Modl$ and $\Modr\sE$, respectively.

 The next proposition is also well-known.
 One relevant reference is~\cite[Section~7]{Mit}.

\begin{prop} \label{t-unital-over-ring-with-enough-idempotents}
 Let\/ $\sE$ be a small preadditive category and $R=R_\sE$ be
the related ring with enough idempotents.
 In this context: \par
\textup{(a)} The category of t\+unital left $R$\+modules is naturally
equivalent to the category of left\/ $\sE$\+modules,
$$
 R_\sE\Modlt\simeq\sE\Modl.
$$
 The equivalence assigns to a left\/ $\sE$\+module\/ $\sM$
the t\+unital left $R$\+module $M=\bigoplus_{x\in\sE}\sM(x)$, with
the action of $R$ in $M$ induced by the action of\/ $\sE$ in\/~$\sM$.
\par
\textup{(b)} The category of t\+unital right $R$\+modules is naturally
equivalent to the category of right\/ $\sE$\+modules,
$$
 \Modrt R_\sE\simeq\Modr\sE.
$$
 The equivalence assigns to a right\/ $\sE$\+module\/ $\sN$
the t\+unital right $R$\+module $N=\bigoplus_{x\in\sE}\sN(x)$, with
the action of $R$ in $N$ induced by the action of\/ $\sE$ in\/~$\sN$.
\end{prop}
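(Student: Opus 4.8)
The plan is to construct the equivalence explicitly, taking as quasi-inverse to the stated functor the functor $M\mapsto(e_xM)_{x\in\sE}$, and to trade t\+unitality for s\+unitality throughout. Write $(e_x)_{x\in X}$ for the complete family of orthogonal idempotents of $R=R_\sE$, so that $\Hom_\sE(x,y)=e_yRe_x$ and $R=\bigoplus_{x,y}e_yRe_x=\bigoplus_ye_yR$; recall that $R$ is s\+unital (hence t\+unital), and that over an s\+unital ring a module is t\+unital if and only if it is s\+unital by Corollary~\ref{s-unital-iff-t-unital-for-modules}. I would prove only part~(a); part~(b) then follows by applying~(a) to the opposite ring $R^\rop=R_{\sE^\sop}$, which again has enough idempotents with the same idempotents, together with the identifications $\Modr R\simeq R^\rop\Modl$ and $\Modr\sE\simeq\sE^\sop\Modl$.

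First, to a left $R$\+module $M$ I assign the left $\sE$\+module $\Phi(M)$ with $\Phi(M)(x)=e_xM$; since $e_x$ acts as an idempotent operator on $M$, this is exactly $\{m\in M:e_xm=m\}$. For $a\in\Hom_\sE(x,y)=e_yRe_x$ and $m\in e_xM$ one has $am=e_y(am)\in e_yM$, and the induced maps $\Hom_\sE(x,y)\ot_\boZ e_xM\rarrow e_yM$ satisfy the associativity and unitality axioms (the latter since $\id_x=e_x$ acts as the identity on $e_xM$) by the ring axioms in $R$; a morphism $M\rarrow M'$ restricts to maps $e_xM\rarrow e_xM'$, so $\Phi$ is a functor. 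In the other direction, to a left $\sE$\+module $\sM$ I assign $\Psi(\sM)=\bigoplus_x\sM(x)$ with the $R$\+action whose $y$\+component on $(m_x)_x$ is $\sum_x(e_yre_x)m_x$; this is a left $R$\+module by the $\sE$\+module axioms, and it is the functor described in the statement.

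The key step is to check that $\Phi$ and $\Psi$ are mutually quasi-inverse after restricting $\Phi$ to $R\Modlt$. For any left $\sE$\+module $\sM$, the module $\Psi(\sM)$ is \emph{s\+unital}: for a finitely supported element $(m_x)_{x\in Z}$, the idempotent $e=\sum_{x\in Z}e_x\in R$ satisfies $e\cdot(m_x)=(m_x)$ by orthogonality and unitality of the $e_x$; hence $\Psi(\sM)$ is t\+unital, so $\Psi$ lands in $R\Modlt$, and one sees at once that $e_x\Psi(\sM)=\sM(x)$ compatibly with the $\sE$\+action, giving $\Phi\Psi\simeq\id$. Conversely, for a t\+unital (equivalently s\+unital) module $M$ I must show that the summation map $\bigoplus_xe_xM\rarrow M$ is an isomorphism. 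For surjectivity, given $m\in M$, s\+unitality provides $e\in R$ with $em=m$; decomposing $e=\sum_{y\in F}e_ye$ as a finite sum (using $R=\bigoplus_ye_yR$) gives $m=\sum_{y\in F}e_y(em)=\sum_{y\in F}e_ym$ with $e_ym\in e_yM$. For injectivity, if $\sum_xm_x=0$ with $m_x\in e_xM$, applying $e_y$ and using that $e_ye_x$ kills $e_xM$ unless $x=y$ forces $m_y=0$ for every $y$. Thus $M\cong\bigoplus_xe_xM$ canonically, i.e.\ $\Psi\Phi\simeq\id$.

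Both natural isomorphisms are visibly compatible with morphisms (a map of $\sE$\+modules induces the evident $R$\+linear map of direct sums, and an $R$\+linear map respects the decomposition $M=\bigoplus_xe_xM$), so $\Phi$ and $\Psi$ furnish the asserted equivalence $R\Modlt\simeq\sE\Modl$. The only place where more than bookkeeping is required is the decomposition $M\cong\bigoplus_xe_xM$: without the t\+unitality hypothesis one obtains at best an inclusion $\bigoplus_xe_xM\hookrightarrow M$ and a map into the product $\prod_xe_xM$, and it is precisely s\+unitality — not completeness of the idempotent family alone — that collapses these into an isomorphism; that is the step I would treat most carefully. Part~(b) is then obtained verbatim for right modules, or formally via the opposite ring as indicated.
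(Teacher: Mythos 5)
Your proof is correct, and it reaches the same decomposition $M=\bigoplus_{x\in X}e_xM$ with the same pair of mutually inverse functors as the paper, but by a different route for the key step. The paper does not pass through s\+unitality at all: it uses the right $R$\+module decomposition $R\simeq\bigoplus_{x\in X}e_xR$ together with $e_xR=e_x\widetilde R$ to compute, for an \emph{arbitrary} left $R$\+module $M$, that $R\ot_RM\simeq\bigoplus_{x\in X}e_xR\ot_RM=\bigoplus_{x\in X}e_xM$; t\+unitality then yields $M=\bigoplus_xe_xM$ at once, and the t\+unitality of $\bigoplus_x\sM(x)$ is implicit in the same formula. You instead invoke Corollary~\ref{s-unital-iff-t-unital-for-modules} (hence Tominaga's Theorem~\ref{main-s-unital-tensor-product-theorem}) to replace t\+unitality by s\+unitality, and then argue elementwise that the summation map $\bigoplus_xe_xM\rarrow M$ is bijective and that $\bigoplus_x\sM(x)$ is s\+unital. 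Your version is more elementary in its handling of elements and makes explicit two verifications the paper leaves tacit (that $\Psi(\sM)$ is t\+unital, and the $R$\+linearity/naturality of the comparison maps), at the cost of leaning on the s\+unital machinery of Section~\ref{s-unital-secn}; the paper's tensor computation is shorter, needs only Lemma~\ref{with-enough-idempotents-is-projective}, and has the added benefit of identifying the coreflector $R\ot_R{-}$ concretely as $M\mapsto\bigoplus_xe_xM$, in parallel with $\Hom_R(R,P)\simeq\prod_xe_xP$ in Proposition~\ref{c-unital-over-ring-with-enough-idempotents} and as used in Proposition~\ref{four-category-equivalence}. Your reduction of part~(b) to part~(a) via $R^\rop=R_{\sE^\sop}$ is fine; the paper simply treats (b) as symmetric.
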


\begin{proof}
 Let us explain part~(a).
 By Lemmas~\ref{with-enough-idempotents-is-projective}
and~\ref{rings-with-enough-idempotents-are-categories}, we have
an isomorphism of right $R$\+modules $R\simeq\bigoplus_{x\in X} e_xR$,
where $X$ is the set of objects of $\sE$ and $e_x=\id_x$.
 Consequently, for any left $R$\+module $M$ we have a natural
direct sum decomposition
$$
 R\ot_R M\simeq\bigoplus\nolimits_{x\in X} e_xR\ot_R M=
 \bigoplus\nolimits_{x\in X} e_xM.
$$
 The second equality holds because $e_xR=e_x\widetilde R$.
 When the $R$\+module $M$ is t\+unital, we obtain a direct sum
decomposition $M=\bigoplus_{x\in X}e_xM$, allowing to recover
the corresponding $\sE$\+module $\sM$ by the rule $\sM(x)=e_xM$
for all $x\in\sE$.
\end{proof}

\begin{prop} \label{c-unital-over-ring-with-enough-idempotents}
 Let\/ $\sE$ be a small preadditive category and $R=R_\sE$ be
the related ring with enough idempotents.
 Then the category of c\+unital left $R$\+modules is naturally
equivalent to the category of left\/ $\sE$\+modules,
$$
 R_\sE\Modlc\simeq\sE\Modl.
$$
 The equivalence assigns to a left\/ $\sE$\+module\/ $\sP$
the c\+unital left $R$\+module $P=\prod_{x\in\sE}\sP(x)$, with
the action of $R$ in $P$ induced by the action of\/ $\sE$ in\/~$\sP$.
\end{prop}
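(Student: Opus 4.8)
The plan is to derive the equivalence $R_\sE\Modlc\simeq\sE\Modl$ by composing two equivalences that are already available: the equivalence $R_\sE\Modlt\simeq\sE\Modl$ of Proposition~\ref{t-unital-over-ring-with-enough-idempotents}(a), and the equivalence $R\Modlt\simeq R\Modlc$ furnished by the mutually inverse functors $\Hom_R(R,{-})$ and $R\ot_R{-}$ of Corollary~\ref{main-three-abelian-category-equivalence-cor} (which applies, since a ring with enough idempotents is t\+unital, indeed s\+unital). Once this composition is in place, the remaining task is to identify its effect on objects with the formula $P=\prod_{x\in\sE}\sP(x)$ from the statement and to check that the induced $R$\+module structure on this product is the evident one.

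The first step I would carry out is the computation of the functor $\Hom_R(R,{-})$ over a ring with enough idempotents. Writing $X$ for the set of objects of $\sE$ and $e_x=\id_x$, Lemma~\ref{with-enough-idempotents-is-projective} gives $R=\bigoplus_{x\in X}\widetilde Re_x$ as a unital left $\widetilde R$\+module, with each $\widetilde Re_x$ a direct summand of $\widetilde R$ generated by~$e_x$. From this I would deduce, for an arbitrary left $R$\+module $P$, a natural isomorphism
$$
 \Hom_R(R,P)=\Hom_{\widetilde R}(R,P)
 =\prod\nolimits_{x\in X}\Hom_{\widetilde R}(\widetilde Re_x,P)
 \simeq\prod\nolimits_{x\in X}e_xP,
$$
where the last map sends a homomorphism $f$ to the tuple $(f(e_x))_{x\in X}$ and uses $f(e_x)=f(e_xe_x)=e_xf(e_x)\in e_xP$. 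The crucial observation is then that, under this identification, the unitality morphism~\eqref{Hom-unitality-map} $P\rarrow\Hom_R(R,P)$ becomes $p\longmapsto(e_xp)_{x\in X}$; consequently $P$ is c\+unital exactly when this map is an isomorphism.

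Next I would feed a left $\sE$\+module $\sP$ into the composite equivalence. Proposition~\ref{t-unital-over-ring-with-enough-idempotents}(a) turns $\sP$ into the t\+unital module $M_\sP=\bigoplus_{x\in X}\sP(x)$, on which $e_x=\id_x$ (viewed inside $R=\bigoplus_{y,z}\Hom_\sE(y,z)$) acts as the projection onto the $\sP(x)$\+summand, so $e_xM_\sP=\sP(x)$. Then Corollary~\ref{main-three-abelian-category-equivalence-cor} turns $M_\sP$ into the c\+unital module $\Hom_R(R,M_\sP)$, which by the previous step is $\prod_{x\in X}e_xM_\sP=\prod_{x\in X}\sP(x)$, and unwinding the $\widetilde R$\+bimodule structure on $R$ exhibits its $R$\+action as the one induced by $\sP$ (for $r=\sum_{y,z}\phi_{yz}$ with $\phi_{yz}\in\Hom_\sE(y,z)$, the $z$\+coordinate of $r\cdot(p_y)_y$ is the finite sum $\sum_y\sP(\phi_{yz})(p_y)$). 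In the other direction, $R\ot_R{-}$ sends a c\+unital $P$ to the t\+unital module $R\ot_RP$, which under Proposition~\ref{t-unital-over-ring-with-enough-idempotents}(a) corresponds to the $\sE$\+module $x\mapsto e_x(R\ot_RP)\simeq e_xP$ with the obvious action of $\Hom_\sE(x,y)=e_yRe_x$. Since $\Hom_R(R,{-})$ and $R\ot_R{-}$ are mutually inverse equivalences between $R\Modlt$ and $R\Modlc$ and the functors of Proposition~\ref{t-unital-over-ring-with-enough-idempotents}(a) are mutually inverse equivalences between $R\Modlt$ and $\sE\Modl$, the two constructions above are mutually inverse equivalences between $R\Modlc$ and $\sE\Modl$, with $\sP\mapsto\prod_{x\in\sE}\sP(x)$, as required.

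I expect the only delicate point to be the verification in the second paragraph that the natural map~\eqref{Hom-unitality-map} is identified with $p\mapsto(e_xp)_{x\in X}$ — equivalently, that the decomposition $R=\bigoplus_x\widetilde Re_x$ of Lemma~\ref{with-enough-idempotents-is-projective} is compatible with the evaluation isomorphisms $\Hom_{\widetilde R}(\widetilde Re_x,P)\simeq e_xP$. Everything else is either an immediate consequence of the cited results or routine bookkeeping; in particular, no subtlety arises from the infinite product, since every element of $R$ is a finite sum of morphisms and therefore acts on $\prod_{x}\sP(x)$ through only finitely many of the factors $\sP(x)$.
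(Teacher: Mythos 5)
Your argument is correct, but it is organized differently from the paper's. The paper proves Proposition~\ref{c-unital-over-ring-with-enough-idempotents} directly and elementarily: using the decomposition $R\simeq\bigoplus_{x\in X}\widetilde Re_x$ of Lemma~\ref{with-enough-idempotents-is-projective} it computes $\Hom_R(R,P)\simeq\prod_{x\in X}e_xP$ for an arbitrary nonunital module $P$, observes that c\+unitality of $P$ is exactly the decomposition $P=\prod_{x\in X}e_xP$, and recovers $\sP$ by $\sP(x)=e_xP$ --- with no appeal to t\+unitality of $R$, to Proposition~\ref{t-unital-over-ring-with-enough-idempotents}, or to the Serre-quotient machinery. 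Your core computation in the second paragraph (including the identification of the unitality map with $p\mapsto(e_xp)_{x\in X}$) is precisely this argument, but you embed it in a heavier strategy: you compose the equivalence $\sE\Modl\simeq R\Modlt$ of Proposition~\ref{t-unital-over-ring-with-enough-idempotents}(a) with the equivalence $R\Modlt\simeq R\Modlc$ of Corollary~\ref{main-three-abelian-category-equivalence-cor}, which ultimately rests on Theorem~\ref{main-two-category-equivalences-thm}, and then identify the composite on objects. What this buys is that your identification of both composites ($\sP\mapsto\Hom_R(R,M_\sP)=\prod_x\sP(x)$ and $P\mapsto(x\mapsto e_x(R\ot_RP)\simeq e_xP)$) essentially proves the commutativity of the square in Proposition~\ref{four-category-equivalence} at the same time, which the paper establishes as a separate statement afterwards; what it costs is logical economy, since the paper's route needs only the projectivity decomposition and in particular would not need the ring to be t\+unital as an input. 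Your closing remarks on the only delicate points (compatibility of the evaluation isomorphisms $\Hom_{\widetilde R}(\widetilde Re_x,P)\simeq e_xP$, and finiteness of the action on the infinite product) are accurate, and I see no gap.
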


\begin{proof}
 By Lemmas~\ref{with-enough-idempotents-is-projective}
and~\ref{rings-with-enough-idempotents-are-categories}, we have
an isomorphism of left $R$\+modules $R\simeq\bigoplus_{x\in X} Re_x$,
where $X$ is the set of objects of $\sE$ and $e_x=\id_x$.
 Consequently, for any left $R$\+module $P$ we have a natural
direct product decomposition
$$
 \Hom_R(R,P)\simeq\prod\nolimits_{x\in X}\Hom_R(Re_x,P)=
 \prod\nolimits_{x\in X} e_xP.
$$
 The second equality holds because $Re_x=\widetilde Re_x$.
 When the $R$\+module $P$ is c\+unital, we obtain a direct product
decomposition $P=\prod_{x\in X}e_xP$, allowing to recover
the corresponding $\sE$\+module $\sP$ by the rule $\sP(x)=e_xP$
for all $x\in\sE$.
\end{proof}

\begin{prop} \label{four-category-equivalence}
 Let\/ $\sE$ be a small preadditive category and $R=R_\sE$ be
the related ring with enough idempotents.
 Then there is a commutative square diagram of category equivalences
provided by
Propositions~\ref{t-unital-over-ring-with-enough-idempotents}(a)
and~\ref{c-unital-over-ring-with-enough-idempotents} together
with Corollary~\ref{main-three-abelian-category-equivalence-cor},
\begin{equation}
\begin{gathered}
 \xymatrix{
  && {R\Modln/R\Modlz} \ar@{=}[lld] \ar@{=}[rrd] \\
  R_\sE\Modlt \ar@{=}[rrd] &&&& R_\sE\Modlc \ar@{=}[lld] \\
  && \sE\Modl
 }
\end{gathered}
\end{equation}
\end{prop}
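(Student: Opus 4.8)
The plan is to verify that the two composite equivalences $R\Modln/R\Modlz\rarrow\sE\Modl$ running along the two sides of the square are naturally isomorphic, by exhibiting both as the functor induced on the quotient category by one and the same functor $\Phi\:R\Modln\rarrow\sE\Modl$. Throughout, $X$ denotes the set of objects of $\sE$ and $e_x=\id_x\in R$ the associated complete family of orthogonal idempotents; recall from Lemma~\ref{with-enough-idempotents-is-projective} that $R=\bigoplus_{x\in X}Re_x=\bigoplus_{x\in X}e_xR$, with $Re_x=\widetilde Re_x$ and $e_xR=e_x\widetilde R$, and each of these a projective unital one-sided $\widetilde R$-module.

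First I would introduce the functor $\Phi$ sending a nonunital left $R$-module $L$ to the $\sE$-module $x\mapsto e_xL$, with a morphism $\alpha\in\Hom_\sE(x,y)=e_yRe_x$ acting as left multiplication $e_xL\rarrow e_yL$, $v\mapsto\alpha v$, inside~$L$. Since $e_xL\simeq e_x\widetilde R\ot_{\widetilde R}L$ and $e_x\widetilde R$ is a flat (indeed projective) unital right $\widetilde R$-module, each functor $L\mapsto e_xL$ is exact; moreover $e_xN=0$ for every null-module $N$, so $\Phi$ sends every morphism of $R\Modln$ with null-module kernel and cokernel to an isomorphism. Consequently it suffices to identify, up to natural isomorphism, the composition of each side of the square with the quotient functor $R\Modln\rarrow R\Modln/R\Modlz$ — that is, the corresponding functor on $R\Modln$ — with $\Phi$: the passage back to the quotient is then automatic, because $R\Modln\rarrow R\Modln/R\Modlz$ is the localization at the morphisms with null kernel and cokernel, and both $R\ot_R{-}$ and $\Hom_R(R,{-})$ (by Proposition~\ref{null-isomorphisms-inverted-by-tensor-Hom-R}) and $\Phi$ invert this class.

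Next I would carry out the two identifications. On the left side, the equivalence $R\Modln/R\Modlz\simeq R_\sE\Modlt$ is the one induced by $R\ot_R{-}$ (Theorem~\ref{main-two-category-equivalences-thm}(a)), and under the equivalence $R_\sE\Modlt\simeq\sE\Modl$ of Proposition~\ref{t-unital-over-ring-with-enough-idempotents}(a) a t-unital module $M$ corresponds to the $\sE$-module $x\mapsto e_xM$; composing and using $e_x(R\ot_RL)=e_xR\ot_RL=e_x\widetilde R\ot_{\widetilde R}L\simeq e_xL$, where the last isomorphism is induced by multiplication, one recovers $\Phi$, naturally in $L$. On the right side, the equivalence $R\Modln/R\Modlz\simeq R_\sE\Modlc$ is induced by $\Hom_R(R,{-})$ (Theorem~\ref{main-two-category-equivalences-thm}(b)), and under the equivalence $R_\sE\Modlc\simeq\sE\Modl$ of Proposition~\ref{c-unital-over-ring-with-enough-idempotents} a c-unital module $P$ corresponds to $x\mapsto e_xP$; composing and using $e_x\Hom_R(R,L)\simeq\Hom_{\widetilde R}(\widetilde Re_x,L)\simeq e_xL$, where the last isomorphism is evaluation at $e_x$, one again recovers $\Phi$, naturally in $L$. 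This yields the commutativity of the square.

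The one point requiring genuine care is checking that the multiplication and evaluation isomorphisms $e_x(R\ot_RL)\simeq e_xL$ and $e_x\Hom_R(R,L)\simeq e_xL$ are morphisms of $\sE$-modules, that is, that they intertwine the $\sE$-actions transported along Propositions~\ref{t-unital-over-ring-with-enough-idempotents} and~\ref{c-unital-over-ring-with-enough-idempotents} with the ``left multiplication inside $L$'' action defining $\Phi$. These are short manipulations with the idempotent identities $\alpha e_x=\alpha=e_y\alpha$ for $\alpha\in e_yRe_x$, together with the definitions of the tensor product and of the Hom group over $R$; but they are precisely what makes the square commute on the nose, rather than merely up to an unspecified isomorphism.
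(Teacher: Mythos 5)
Your proposal is correct and rests on exactly the same key computation as the paper's proof, namely the natural isomorphisms $e_xR\ot_RL\simeq e_xL\simeq\Hom_R(Re_x,L)$ for a nonunital left $R$\+module $L$; the only difference is organizational, in that you factor both composites through a single functor $\Phi\:L\longmapsto(x\mapsto e_xL)$ on $R\Modln$ and invoke the universal property of the localization $R\Modln\rarrow R\Modln/R\Modlz$, whereas the paper compares the composite $R_\sE\Modlt\rarrow\sE\Modl\rarrow R_\sE\Modlc$ directly with the Hom-functor equivalence of Corollary~\ref{main-three-abelian-category-equivalence-cor}. Your version spells out the equivariance checks that the paper leaves implicit, which is fine and does not change the substance of the argument.
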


\begin{proof}
 It suffices to check that the composition
$$
 R_\sE\Modlt\lrarrow\sE\Modl\lrarrow R_\sE\Modlc
$$
of the category equivalences from
Propositions~\ref{t-unital-over-ring-with-enough-idempotents}(a)
and~\ref{c-unital-over-ring-with-enough-idempotents} is isomorphic to
the Hom functor from~\eqref{t-unital-c-unital-equivalence-eqn},
and/or that the composition of category equivalences
$$
 R_\sE\Modlc\lrarrow\sE\Modl\lrarrow R_\sE\Modlt
$$
is isomorphic to the tensor product functor
from~\eqref{t-unital-c-unital-equivalence-eqn}.
 For this purpose, one observes that, for any left $R$\+module $L$,
one has
$$
 e_xR\ot_RL \simeq e_xL \simeq \Hom_R(Re_x,L).
$$
\end{proof}

\Section{More on t-Unital and c-Unital Modules}
\label{more-on-t-c-unital-secn}

 Let $R$ be a t\+unital ring.
 According to
Examples~\ref{one-variable-non-c-unital-cokernel-counterex},
\ref{t-unital-ring-non-t-unital-kernel-counterex},
and~\ref{t-unital-ring-non-c-unital-cokernel-counterex}, the full
subcategory of t\+unital modules $R\Modlt$ \emph{need not} be closed
under kernels in the abelian category $R\Modln$, and the full
subcategory of c\+unital modules $R\Modlc$ \emph{need not} be closed
under cokernels in $R\Modln$.
 Nevertheless,
Corollary~\ref{t-unital-s-unital-modules-abelian-categories}(a) tells
that the categories $R\Modlt$ and $R\Modlc$ are abelian; so all kernels
and cokernels exist in them.
 The following two lemmas and proposition explain more.

\begin{lem} \label{t-unital-modules-kernels}
 Let $R$ be a t\+unital ring.
 Then all limits and colimits exist in the category of t\+unital
modules $R\Modlt$.
\end{lem}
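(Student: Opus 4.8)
The plan is to treat colimits and limits separately, relying on two different earlier results: the subcategory $R\Modlt$ is well-behaved for colimits inside $R\Modln$, but for limits one must coreflect.

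First I would dispose of colimits. By Lemma~\ref{t-unital-modules-closure-properties}, the full subcategory $R\Modlt$ is closed under all colimits in the cocomplete abelian category $R\Modln\simeq\widetilde R\Modl$. Hence any diagram in $R\Modlt$ has a colimit computed in $R\Modln$, this colimit already lies in $R\Modlt$, and --- since $R\Modlt$ is a full subcategory --- it is a colimit there as well. In particular, colimits in $R\Modlt$ coincide with those in $R\Modln$.

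For limits the situation is different: by Example~\ref{t-unital-ring-non-t-unital-kernel-counterex} the inclusion $R\Modlt\rarrow R\Modln$ need not preserve kernels, so limits in $R\Modlt$ cannot be computed naively in $R\Modln$ and must be obtained by coreflecting. The plan is to use the adjunction of Lemma~\ref{t-unitalization-lemma}: the inclusion $R\Modlt\rarrow R\Modln$ is left adjoint to $R\ot_R{-}\:R\Modln\rarrow R\Modlt$, with counit the multiplication map $R\ot_R L\rarrow L$. Given a diagram $D\:J\rarrow R\Modlt$, I would form the limit $L=\varprojlim_J D$ in $R\Modln$ (which exists, as $R\Modln$ is complete) and declare $R\ot_R L$, together with the composite cone $R\ot_R L\rarrow L\rarrow D_j$, to be the limit of $D$ in $R\Modlt$. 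To verify this, for every $T\in R\Modlt$ one has natural isomorphisms
$$
 \Hom_R(T,\>R\ot_R L)\simeq\Hom_R(T,L)\simeq\varprojlim\nolimits_J\Hom_R(T,D),
$$
where the first isomorphism is the adjunction of Lemma~\ref{t-unitalization-lemma} and the second is the universal property of $L$ in $R\Modln$. Since $R\Modlt$ is a full subcategory of $R\Modln$, this identifies $\Hom_R(T,\>R\ot_R L)$ with $\varprojlim_J\Hom_{R\Modlt}(T,D)$ naturally in $T\in R\Modlt$, which is precisely the statement that $R\ot_R L$ is a limit of $D$ in $R\Modlt$.

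The only point that genuinely requires attention is this coreflection in the limit construction --- recognizing that one must apply $R\ot_R{-}$ to the $R\Modln$\+limit rather than work inside $R\Modln$ directly; everything else is formal. (Alternatively, the whole statement follows at once from Corollary~\ref{main-three-abelian-category-equivalence-cor} together with Remark~\ref{Giraud-remark}: $R\Modlc$ is a Giraud subcategory of the Grothendieck category $R\Modln$, hence is itself a Grothendieck category and so complete and cocomplete, and the equivalence $R\Modlt\simeq R\Modlc$ transports these properties back to $R\Modlt$.)
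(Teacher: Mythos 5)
Your proof is correct and follows essentially the same route as the paper: colimits are handled by the closure property of Lemma~\ref{t-unital-modules-closure-properties}, and limits by applying the coreflection $R\ot_R{-}$ (right adjoint to the inclusion, Lemma~\ref{t-unitalization-lemma}) to the limit computed in $R\Modln$. Your explicit Hom-set verification just spells out what the paper summarizes by saying that the right adjoint $R\ot_R{-}$ preserves limits, and your parenthetical alternative via the Giraud-subcategory description is also consistent with the paper's later Proposition~\ref{grothendieck-with-exact-products}.
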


\begin{proof}
 The abelian category $R\Modln\simeq\widetilde R\Modl$ obviously has
all limits and colimits.
 By Lemma~\ref{t-unital-modules-closure-properties}, the full
subcategory $R\Modlt$ is closed under colimits in $R\Modln$.
 So all colimits exist in $R\Modlt$ and can be computed as
the colimits in $R\Modln$.
 On the other hand, Lemma~\ref{t-unitalization-lemma} tells that
the functor $R\ot_R{-}\,\:R\Modln\rarrow R\Modlt$ is right adjoint
to the inclusion $R\Modlt\rarrow R\Modln$.
 Consequently, the functor $R\ot_R{-}\,\:R\Modln\rarrow R\Modlt$
preserves limits.
 Now, in order to compute the limit of a diagram $(M_\xi)_{\xi\in\Xi}$
in $R\Modlt$, it suffices to apply the functor $R\ot_R{-}$ to
the limit of the same diagram in $R\Modln$.
\end{proof}

\begin{lem} \label{c-unital-modules-cokernels}
 Let $R$ be a t\+unital ring.
 Then all limits and colimits exist in the category of c\+unital
modules $R\Modlc$.
\end{lem}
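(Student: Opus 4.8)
The plan is to dualize the proof of Lemma~\ref{t-unital-modules-kernels}, interchanging limits with colimits and the functor $R\ot_R{-}$ with the functor $\Hom_R(R,{-})$.

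First I would recall that the ambient category $R\Modln\simeq\widetilde R\Modl$, being a category of unital modules over a unital ring, has all small limits and colimits. By Lemma~\ref{c-unital-modules-closure-properties}, the full subcategory $R\Modlc\subset R\Modln$ is closed under limits in $R\Modln$; hence every small diagram valued in $R\Modlc$ has a limit in $R\Modlc$, computed as its limit in $R\Modln$. This disposes of the existence of limits.

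For colimits, the key input is Lemma~\ref{c-unitalization-lemma}, which says that $\Hom_R(R,{-})\colon R\Modln\rarrow R\Modlc$ is left adjoint to the fully faithful inclusion $R\Modlc\rarrow R\Modln$. Given a small diagram $(P_\xi)_{\xi\in\Xi}$ in $R\Modlc$, I would form its colimit $C$ in $R\Modln$ and put $P=\Hom_R(R,C)$; note that $P\in R\Modlc$ by Corollary~\ref{c-unitality-created-by-Hom-from-R}. I then claim that $P$ is the colimit of $(P_\xi)_\xi$ in $R\Modlc$. Indeed, for any $Q\in R\Modlc$ one has, writing $\Hom_R$ for morphisms both in $R\Modln$ and in its full subcategory $R\Modlc$,
$$\Hom_R(P,Q)\simeq\Hom_R(C,Q)\simeq\varprojlim\nolimits_{\xi}\Hom_R(P_\xi,Q),$$
where the first isomorphism is the adjunction of Lemma~\ref{c-unitalization-lemma} (applied with $C$ in place of the ``$P$'' of that lemma, using that $Q$ is c\+unital), and the second is the universal property of $C$ in $R\Modln$.

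I do not anticipate a genuine obstacle; the argument is completely parallel to Lemma~\ref{t-unital-modules-kernels}, with $\Hom_R(R,{-})$ now playing the role of the right adjoint $R\ot_R{-}$ and Lemma~\ref{c-unital-modules-closure-properties} playing the role of Lemma~\ref{t-unital-modules-closure-properties}. The only point deserving a moment's attention is that the hom groups in the displayed chain may be read interchangeably in $R\Modln$ and in $R\Modlc$, which holds because the inclusion $R\Modlc\rarrow R\Modln$ is fully faithful; the displayed natural isomorphism then exhibits $P$ as representing the functor $Q\mapsto\varprojlim_\xi\Hom_R(P_\xi,Q)$ on $R\Modlc$, that is, as the colimit of $(P_\xi)_\xi$ in $R\Modlc$.
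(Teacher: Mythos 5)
Your proposal is correct and follows essentially the same route as the paper: limits exist because $R\Modlc$ is closed under limits in $R\Modln$ (Lemma~\ref{c-unital-modules-closure-properties}), and colimits are obtained by applying the reflector $\Hom_R(R,{-})$ of Lemma~\ref{c-unitalization-lemma} to the colimit computed in $R\Modln$. Your explicit verification of the universal property via the adjunction isomorphism is just a slightly more detailed spelling-out of the paper's appeal to the left adjoint, so there is nothing to add.
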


\begin{proof}
 By Lemma~\ref{c-unital-modules-closure-properties}, the full
subcategory $R\Modlc$ is closed under limits in $R\Modln$.
 So all limits exist in $R\Modlc$ and can be computed as
the limits in $R\Modln$.
 On the other hand, Lemma~\ref{c-unitalization-lemma} tells that
the functor $\Hom_R(R,{-})\:R\Modln\rarrow R\Modlc$ is left adjoint
to the inclusion $R\Modlc\rarrow R\Modln$.
 Consequently, the functor $\Hom_R(R,{-})\:R\Modln\rarrow R\Modlc$
preserves colimits.
 Now, in order to compute the colimit of a diagram $(P_\xi)_{\xi\in\Xi}$
in $R\Modlc$, it suffices to apply the functor $\Hom_R(R,{-})$ to
the colimit of the same diagram in $R\Modln$.
\end{proof}

 A study of abelian categories of the form $R\Modln/R\Modlz$ (for
idempotent rings~$R$) can be found in the note of Roos~\cite{Roos}
(cf.~\cite[Section~6.3]{Quil}).
 The following result is a particular case
of~\cite[Th\'eor\`eme~1]{Roos}.

\begin{prop}[\cite{Roos}] \label{grothendieck-with-exact-products}
 Let $R$ be a t\+unital ring.
 Then the abelian category $R\Modlt\simeq R\Modln/R\Modlz\simeq
R\Modlc$ is a Grothendieck abelian category with exact functors of
infinite product.
\end{prop}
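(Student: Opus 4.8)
The category in question is abelian by Theorem~\ref{main-two-category-equivalences-thm} (equivalently Corollary~\ref{t-unital-s-unital-modules-abelian-categories}(a)), so the content of the statement is the Grothendieck property together with the exactness of products. The plan is to run everything through the Serre-quotient description $R\Modlt\simeq R\Modln/R\Modlz\simeq R\Modlc$ of Corollary~\ref{main-three-abelian-category-equivalence-cor}, exploiting that over the t\+unital --- hence idempotent --- ring $R$ the null-module subcategory $R\Modlz\subset R\Modln$ is not merely a Serre subcategory but a \emph{localizing} one: as recorded in Section~\ref{equivalence-secn} it is closed under subobjects, quotients, extensions (this last using $R^2=R$), and arbitrary coproducts. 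First I would obtain the Grothendieck property from this: the ambient category $R\Modln\simeq\widetilde R\Modl$ is Grothendieck, and the quotient of a Grothendieck category by a localizing subcategory is again Grothendieck --- it is cocomplete, has exact filtered colimits, and has a generator, namely the image of a generator of $R\Modln$ under the localization functor $Q\:R\Modln\rarrow R\Modln/R\Modlz$ (see \cite[Chapter~III]{Gab}, \cite[Sections~VI.3 and~X.1]{Sten}). By Corollary~\ref{main-three-abelian-category-equivalence-cor} the equivalent categories $R\Modlt$ and $R\Modlc$ are then Grothendieck as well.

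For the exactness of products I would work in the model $R\Modlc$. By Lemma~\ref{c-unital-modules-closure-properties} the full subcategory $R\Modlc\subset R\Modln$ is closed under products, so the product of a family in $R\Modlc$ coincides with its product computed in $R\Modln\simeq\widetilde R\Modl$, and the category of unital modules over a unital ring has exact products (it satisfies AB4${}^{*}$). Given a family of epimorphisms $(g_i\:B_i\rarrow C_i)$ in $R\Modlc$, the point is that, since the equivalence $R\Modlc\simeq R\Modln/R\Modlz$ is induced by the \emph{exact} localization functor $Q$, a morphism $g$ of $R\Modlc$ is an epimorphism there if and only if $\operatorname{coker}_{R\Modln}(g)$ lies in $R\Modlz$; so each $\operatorname{coker}_{R\Modln}(g_i)$ is a null-module. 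Exactness of products in $\widetilde R\Modl$ gives $\operatorname{coker}_{R\Modln}\!\bigl(\prod_i g_i\bigr)\simeq\prod_i\operatorname{coker}_{R\Modln}(g_i)$, and $R\Modlz$ is closed under products in $R\Modln$, so this object is again a null-module; hence $\prod_i g_i$ is an epimorphism in $R\Modlc$. Since products always preserve monomorphisms and kernels, and kernels in $R\Modlc$ agree with kernels in $R\Modln$ (Lemma~\ref{c-unital-modules-closure-properties} once more), a product of short exact sequences in $R\Modlc$ is short exact; thus products are exact in $R\Modlc\simeq R\Modln/R\Modlz\simeq R\Modlt$.

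The delicate point --- where a naive argument would break down --- is that the inclusion $R\Modlc\rarrow R\Modln$ is \emph{not} exact (Examples~\ref{one-variable-non-c-unital-cokernel-counterex}, \ref{t-unital-ring-non-c-unital-cokernel-counterex}, and Remark~\ref{Giraud-remark}), so ``epimorphism in $R\Modlc$'' is strictly weaker than ``epimorphism in $R\Modln$'' and cokernels in $R\Modlc$ are \emph{not} computed in $R\Modln$; the translation between the two notions of epimorphism must go through the Serre quotient and the exactness of $Q$, as above, rather than be read off inside $R\Modln$. Once that is in hand, the whole statement descends from the corresponding properties of the honest module category $\widetilde R\Modl$ together with the closure properties of $R\Modlz$. (As noted in the statement, this is a special case of \cite[Th\'eor\`eme~1]{Roos}; the argument sketched here is self-contained on the basis of the results already established.)
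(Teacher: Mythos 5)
Your proof is correct and follows essentially the same route as the paper: the Grothendieck property is obtained from the Gabriel--Stenstr\"om theorem on quotients of Grothendieck categories by localizing subcategories, and the exactness of products is descended from $\widetilde R\Modl$ using the closure of $R\Modlz$ (and of $R\Modlc$) under products together with the exact localization functor. The only difference is presentational: where the paper phrases the product step abstractly (the localization functor preserves products since it has a left adjoint, resp.\ since the Serre subcategory is closed under products), you verify the same thing concretely in the model $R\Modlc$ by computing products in $R\Modln$ and detecting epimorphisms via null-module cokernels.
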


\begin{proof}
 The equivalences of abelian categories
$R\Modlt\simeq R\Modln/R\Modlz\simeq R\Modlc$ hold by
Corollary~\ref{main-three-abelian-category-equivalence-cor}.
 Furthermore, the Serre subcategory $R\Modlz$ is closed under direct
sums and direct products in $R\Modln$, as mentioned in the beginning
of Section~\ref{equivalence-secn}; and the localization functor
$R\Modln\rarrow R\Modln/R\Modlz$ has adjoints on both sides, as
per Remark~\ref{Giraud-remark}.

 It is well-known that for any Grothendieck abelian category $\sA$
with a Serre subcategory $\sS\subset\sA$ such that the localization
functor $\sA\rarrow\sA/\sS$ admits a right adjoint, the quotient
category $\sA/\sS$ is also Grothendieck~\cite[Proposition~8
in Section~III.4]{Gab}, \cite[Proposition~X.1.3]{Sten}.
 In particular, the direct limit functors in $\sA/\sS$ are exact,
since they are exact in $\sA$ and the localization functor
$\sA\rarrow\sA/\sS$, being a left adjoint, preserves direct limits.
 Similarly, if the functor $\sA\rarrow\sA/\sS$ has a left adjoint,
then it preserves direct products; and if the direct products are
exact in $\sA$, then it follows that they are also exact in~$\sA/\sS$.

 In fact, it is not difficult to prove directly that for any abelian
category $\sA$ with exact functors of coproduct and a Serre subcategory
$\sS\subset\sA$ closed under coproducts, the localization functor
$\sA\rarrow\sA/\sS$ preserves coproducts (and takes any set of
generators of $\sA$ to a set of generators of $\sA/\sS$).
 Dually, if an abelian category $\sA$ has exact products and a Serre
subcategory $\sS\subset\sA$ is closed under products, then
the localization functor $\sA\rarrow\sA/\sS$ preserves products.
 See, e.~g., \cite[Lemma~1.5]{BN} or~\cite[Lemma~3.2.10]{Neem} for
a similar argument in the triangulated category realm.
 (Cf.\ the discussion in~\cite[Section~6.4 and Proposition~6.5]{Quil}
for module categories.)
\end{proof}

\begin{cor} \label{closed-under-kernels-iff-flat}
 Let $R$ be a t\+unital ring and $\widetilde R=\boZ\oplus R$ be its
unitalization.
 Then the full subcategory $R\Modlt$ is closed under kernels in
$R\Modln$ \emph{if and only if} the unital right $\widetilde R$\+module
$R$ is flat.
\end{cor}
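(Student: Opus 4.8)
The plan is to reduce both sides of the claimed equivalence to a single intermediate condition: the exactness of the endofunctor $R\ot_R{-}\,\:R\Modln\rarrow R\Modln$. Via the natural isomorphism~\eqref{nonunital-and-unital-tensor-product}, this endofunctor is identified with $R\ot_{\widetilde R}{-}\,\:\widetilde R\Modl\rarrow\widetilde R\Modl$; since the latter is always right exact, its exactness is by definition the same thing as flatness of the unital right $\widetilde R$\+module~$R$. So it suffices to show that $R\Modlt$ is closed under kernels in $R\Modln$ if and only if $R\ot_R{-}\,\:R\Modln\rarrow R\Modln$ is exact.

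For the implication ``$R\ot_R{-}$ exact $\Rightarrow$ $R\Modlt$ closed under kernels'' I would simply invoke the direct observation already made in Remark~\ref{Tor-perpendicularity-remark}: given a morphism $f\:L\rarrow M$ between t\+unital modules, form the commutative square whose top row is $R\ot_Rf$, whose bottom row is~$f$, and whose vertical maps are the unitality maps~\eqref{tensor-left-unitality-map} for $L$ and $M$ (both isomorphisms by t\+unitality); if $R\ot_R{-}$ preserves kernels, then passing to kernels of the horizontal maps identifies the natural map $R\ot_R\ker f\rarrow\ker f$ (with $\ker f$ formed in $R\Modln$) with an isomorphism, so $\ker f\in R\Modlt$. (This half uses no hypothesis on~$R$ at all.) In particular, if $R$ is a flat right $\widetilde R$\+module, then $R\Modlt$ is closed under kernels in $R\Modln$.

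For the converse, suppose $R\Modlt$ is closed under kernels in $R\Modln$. First I would observe that, by Lemma~\ref{t-unital-modules-closure-properties}, $R\Modlt$ is also closed under cokernels in $R\Modln$, and that $R\Modlt$ is abelian by Theorem~\ref{main-two-category-equivalences-thm}(a); hence kernels and cokernels computed in $R\Modlt$ coincide with those computed in $R\Modln$, and so the fully faithful inclusion functor $R\Modlt\rarrow R\Modln$ is exact. Next, Theorem~\ref{main-two-category-equivalences-thm}(a) also tells that the functor $R\ot_R{-}\,\:R\Modln\rarrow R\Modlt$ is exact. The composite of this functor with the inclusion $R\Modlt\rarrow R\Modln$ coincides with the endofunctor $R\ot_R{-}\,\:R\Modln\rarrow R\Modln$, which is therefore exact, being a composite of two exact functors; by the reduction above, this means precisely that $R$ is flat as a unital right $\widetilde R$\+module.

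The only step in this argument that is more than bookkeeping is the exactness of the inclusion $R\Modlt\rarrow R\Modln$, and this is where I would be careful; but it follows at once from closedness under kernels (the hypothesis) together with closedness under cokernels (Lemma~\ref{t-unital-modules-closure-properties}), once one knows $R\Modlt$ is abelian (Theorem~\ref{main-two-category-equivalences-thm}(a)). I do not anticipate any genuine obstacle.
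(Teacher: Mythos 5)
Your proposal is correct and follows essentially the same route as the paper: the ``if'' direction is the observation from Remark~\ref{Tor-perpendicularity-remark}, and the ``only if'' direction composes the exact functor $R\ot_R{-}\,\:R\Modln\rarrow R\Modlt$ from Theorem~\ref{main-two-category-equivalences-thm}(a) with the inclusion $R\Modlt\rarrow R\Modln$, which preserves kernels by hypothesis, to conclude that the endofunctor $R\ot_R{-}\simeq R\ot_{\widetilde R}{-}$ is exact. The only cosmetic difference is that you invoke closedness under cokernels (Lemma~\ref{t-unital-modules-closure-properties}) to get full exactness of the inclusion, whereas the paper gets by with kernel-preservation alone, the tensor functor being right exact anyway.
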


\begin{proof}
 The ``if'' assertion was explained in
Remark~\ref{Tor-perpendicularity-remark}; it does not depend on
the assumption that $R$ is t\+unital.
 The ``only if'' claim is a corollary of
Theorem~\ref{main-two-category-equivalences-thm}(a).
 Assume that the full subcategory $R\Modlt$ is closed under kernels
in $R\Modln$; then the inclusion functor $R\Modlt\rarrow R\Modln$
preserves kernels.
 The localization functor $R\Modln\rarrow R\Modln/R\Modlz\simeq
R\Modlt$ is exact, so it also preserves kernels.
 Thus the composition $R\Modln\rarrow R\Modlt\rarrow R\Modln$ preserves
kernels.
 But this composition is the tensor product functor $R\ot_R{-}\,\:
R\Modln\rarrow R\Modln$, that is, in other notation, the tensor
product functor $R\ot_{\widetilde R}{-}\,\:
\widetilde R\Modl\rarrow\widetilde R\Modl$.
 Alternatively, one could refer to Remark~\ref{t-flat-long-remarks}(3)
and Corollary~\ref{t-flat-cor} below.
\end{proof}

\begin{cor} \label{left-s-unital-right-flat}
 Let $R$ be a left s\+unital ring and $\widetilde R=\boZ\oplus R$ be its
unitalization.
 Then the unital right $\widetilde R$\+module $R$ is flat.
\end{cor}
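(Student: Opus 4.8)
The plan is to deduce this immediately from the two corollaries proved just above. The point is that a left s\+unital ring is in particular t\+unital, so Corollary~\ref{closed-under-kernels-iff-flat} applies to $R$ and reduces the flatness of the unital right $\widetilde R$\+module $R$ to the assertion that the full subcategory $R\Modlt\subset R\Modln$ is closed under kernels; and this closure property is already available for left s\+unital rings from Corollary~\ref{t-unital-s-unital-modules-abelian-categories}(b).

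Carried out in detail: first, by Corollary~\ref{s-unital-implies-t-unital-for-rings} the left s\+unital ring $R$ is t\+unital, so Corollary~\ref{closed-under-kernels-iff-flat} tells us that $R$ is flat as a unital right $\widetilde R$\+module if and only if $R\Modlt$ is closed under kernels in $R\Modln$. Second, Corollary~\ref{t-unital-s-unital-modules-abelian-categories}(b) says that over a left s\+unital ring the subcategory $R\Modlt$ is closed under submodules (indeed under submodules, quotients, extensions, and direct sums) in $R\Modln$; since the kernel of a morphism in $R\Modln$ is a submodule of its source, $R\Modlt$ is in particular closed under kernels. Combining the two statements yields the flatness. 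One should note that this really uses the s\+unitality assumption, not merely t\+unitality: Corollary~\ref{t-unital-s-unital-modules-abelian-categories}(b) rests on the identification $R\Modlt=R\Modls$ from Corollary~\ref{s-unital-iff-t-unital-for-modules}(a) together with the fact that $R\Modls$ is a hereditary torsion class in $R\Modln$, Proposition~\ref{s-unital-hereditary-torsion}.

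Thus there is essentially no obstacle here; the substantive work was done in Sections~\ref{s-unital-secn} and~\ref{equivalence-secn}, and the statement is just a matter of assembling it. If one preferred an argument not passing through Corollary~\ref{closed-under-kernels-iff-flat}, one could instead note directly that for a left s\+unital ring $R$ and any nonunital left $R$\+module $M$ the multiplication map $R\ot_RM\rarrow M$ is injective with image the submodule $RM\subseteq M$: indeed $R\ot_RM$ is s\+unital by Lemma~\ref{s-unital-tensor-product-lemma}(a) (taking $B=R$), an s\+unital module contains no nonzero null submodule, and the kernel of $R\ot_RM\rarrow M$ is a null submodule by Lemma~\ref{unitality-comparisons-have-null-co-kernels}(a). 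Hence $R\ot_R{-}\cong R\ot_{\widetilde R}{-}$ is naturally isomorphic to the functor $M\mapsto RM$, which evidently preserves injections, so $R$ is flat over $\widetilde R$. I would, however, present the short route via the two corollaries.
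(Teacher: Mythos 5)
Your main argument is correct and is essentially the paper's own proof: the paper likewise combines Corollary~\ref{s-unital-implies-t-unital-for-rings}, the identification $R\Modlt=R\Modls$ together with its closure under submodules (Proposition~\ref{s-unital-hereditary-torsion} and Corollary~\ref{s-unital-iff-t-unital-for-modules}(a), which is exactly what Corollary~\ref{t-unital-s-unital-modules-abelian-categories}(b) packages), and then applies the ``only if'' direction of Corollary~\ref{closed-under-kernels-iff-flat}. Your alternative direct argument --- that $R\ot_RM\rarrow M$ is injective because its kernel is a null submodule (Lemma~\ref{unitality-comparisons-have-null-co-kernels}(a)) of the s\+unital module $R\ot_RM$ (Lemma~\ref{s-unital-tensor-product-lemma}(a)), so $R\ot_{\widetilde R}{-}$ is the subfunctor $M\mapsto RM$ and hence preserves injections --- is also sound and bypasses the quotient-category machinery, but the route you present as primary coincides with the paper's.
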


\begin{proof}
 By Corollary~\ref{s-unital-implies-t-unital-for-rings}, the ring $R$
is t\+unital.
 By Corollary~\ref{s-unital-iff-t-unital-for-modules}(a), a left
$R$\+module is t\+unital if and only if it is s\+unital,
$R\Modlt=R\Modls$.
 Proposition~\ref{s-unital-hereditary-torsion} tells that the full
subcategory $R\Modls$ of s\+unital left $R$\+modules is closed under
submodules (hence also under kernels) in $R\Modln$.
 Finally, it remains to apply the ``only if'' assertion of
Corollary~\ref{closed-under-kernels-iff-flat}.
\end{proof}

\begin{cor} \label{closed-under-cokernels-iff-projective}
  Let $R$ be a t\+unital ring and $\widetilde R=\boZ\oplus R$ be its
unitalization.
 Then the full subcategory $R\Modlc$ is closed under cokernels in
$R\Modln$ \emph{if and only if} the unital left $\widetilde R$\+module
$R$ is projective.
\end{cor}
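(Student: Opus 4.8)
The plan is to establish this in complete duality with the proof of Corollary~\ref{closed-under-kernels-iff-flat} given just above, systematically interchanging the tensor product functor $R\ot_R{-}$ with the Hom functor $\Hom_R(R,{-})$, flatness with projectivity, kernels with cokernels, and right modules with left modules. As there, only one direction requires t\+unitality.

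For the ``if'' implication I would simply cite Remark~\ref{Ext-perpendicularity-remark}. If $R$ is a projective unital left $\widetilde R$\+module, then, since $\widetilde R$ is free and $0\rarrow R\rarrow\widetilde R\rarrow\widetilde R/R\rarrow0$ is exact, the projective dimension of the unital left $\widetilde R$\+module $\widetilde R/R\simeq\boZ$ does not exceed~$1$; hence the functor $\Hom_R(R,{-})\simeq\Hom_{\widetilde R}(R,{-})\:R\Modln\rarrow R\Modln$ is exact, and in particular preserves cokernels. As noted in Remark~\ref{Ext-perpendicularity-remark}, this already implies that the $\Ext^{0,1}$\+perpendicular full subcategory $R\Modlc\subset R\Modln$ is closed under cokernels. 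This direction does not use the hypothesis that $R$ is t\+unital.

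For the ``only if'' implication I would invoke Theorem~\ref{main-two-category-equivalences-thm}(b). Assume $R\Modlc$ is closed under cokernels in $R\Modln$; then the fully faithful inclusion functor $R\Modlc\rarrow R\Modln$ preserves cokernels. By Theorem~\ref{main-two-category-equivalences-thm}(b) the functor $\Hom_R(R,{-})\:R\Modln\rarrow R\Modlc$ is exact, so it preserves cokernels as well. Hence the composition $R\Modln\rarrow R\Modlc\rarrow R\Modln$ preserves cokernels, i.e., is right exact. But this composition is precisely the endofunctor $\Hom_R(R,{-})\:R\Modln\rarrow R\Modln$, which under~\eqref{nonunital-and-unital-Hom} and $R\Modln\simeq\widetilde R\Modl$ is the functor $\Hom_{\widetilde R}(R,{-})\:\widetilde R\Modl\rarrow\widetilde R\Modl$. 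A Hom functor out of a fixed module is always left exact; being moreover right exact, it is exact, and exactness of $\Hom_{\widetilde R}(R,{-})$ on $\widetilde R\Modl$ is equivalent to $R$ being a projective unital left $\widetilde R$\+module.

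I do not expect a real obstacle here; the only point needing a moment of care is the identification of the composite endofunctor of $R\Modln$ assembled from the inclusion $R\Modlc\rarrow R\Modln$ and the localization $R\Modln\rarrow R\Modln/R\Modlz\simeq R\Modlc$ with the plain functor $\Hom_R(R,{-})$, which is exactly the content of Theorem~\ref{main-two-category-equivalences-thm}(b) (just as the analogous identification for the tensor functor is what makes the proof of Corollary~\ref{closed-under-kernels-iff-flat} work). As noted there, an alternative to this direct route would be to appeal to Remark~\ref{t-flat-long-remarks}(3) and Corollary~\ref{t-flat-cor} below.
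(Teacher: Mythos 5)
Your proof is correct and follows essentially the same route as the paper: the ``if'' direction by citing Remark~\ref{Ext-perpendicularity-remark}, and the ``only if'' direction by using Theorem~\ref{main-two-category-equivalences-thm}(b) to see that the composition $R\Modln\rarrow R\Modlc\rarrow R\Modln$, which is $\Hom_{\widetilde R}(R,{-})$, preserves cokernels, hence is exact, hence $R$ is projective over~$\widetilde R$. The only slip is in your closing aside: the alternative references for \emph{this} corollary are Remark~\ref{c-projective-long-remarks}(3) and Corollary~\ref{c-projective-cor}, not Remark~\ref{t-flat-long-remarks}(3) and Corollary~\ref{t-flat-cor}, which pertain to the kernels/flatness statement.
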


\begin{proof}
 The ``if'' assertion was explained in
Remark~\ref{Ext-perpendicularity-remark}; it does not depend on
the assumption that $R$ is t\+unital.
 The ``only if'' claim is a corollary of
Theorem~\ref{main-two-category-equivalences-thm}(b).
 Assume that the full subcategory $R\Modlc$ is closed under cokernels
in $R\Modln$; then the inclusion functor $R\Modlc\rarrow R\Modln$
preserves cokernels.
 The localization functor $R\Modln\rarrow R\Modln/R\Modlz\simeq
R\Modlc$ is exact, so it also preserves cokernels.
 Thus the composition $R\Modln\rarrow R\Modlc\rarrow R\Modln$ preserves
cokernels.
 But this composition is the functor $\Hom_R(R,{-})\:
R\Modln\rarrow R\Modln$, that is, in other notation, the functor
$\Hom_{\widetilde R}(R,{-})\: \widetilde R\Modl\rarrow
\widetilde R\Modl$.
 Alternatively, one could refer to
Remark~\ref{c-projective-long-remarks}(3)
and Corollary~\ref{c-projective-cor} below.
\end{proof}

\begin{rem}
 The theory developed in Sections~\ref{t-unital-secn},
\ref{c-unital-secn}, \ref{equivalence-secn},
and~\ref{more-on-t-c-unital-secn} admits a far-reaching
generalization of the following kind.

 Let $\widetilde R$ be a unital ring and $R\subset\widetilde R$ be
a two-sided ideal.
 Let us say that a left $\widetilde R$\+module $M$ is
\emph{t\+$R$\+unital} if the natural map $R\ot_{\widetilde R}M
\rarrow M$ is an isomorphism, a right $\widetilde R$\+module $N$
is t\+$R$\+unital if the natural map $N\ot_{\widetilde R}R
\rarrow N$ is an isomorphism, and a left $\widetilde R$\+module $P$
is \emph{c\+$R$\+unital} if the natural map
$P\rarrow\Hom_{\widetilde R}(R,P)$ is an isomorphism.
 Let us say that the ideal $R\subset\widetilde R$ itself is
\emph{t\+$R$\+unital} if it is t\+$R$\+unital as a left/right
$\widetilde R$\+module, i.~e., the natural map
$R\ot_{\widetilde R}R\rarrow R$ is an isomorphism.

 All the results of Sections~\ref{t-unital-secn}, \ref{c-unital-secn},
\ref{equivalence-secn}, and~\ref{more-on-t-c-unital-secn}, as well as
of the next Section~\ref{flat-proj-inj-secn}, can be extended to
this more general context.
 We chose to restrict ourselves to the more narrow setting of nonunital
rings in these five sections in order to keep our exposition more
accessible and facilitate the comparison with
Sections~\ref{s-unital-secn} and~\ref{rings-from-categories-secn}.

 A detailed discussion in the context of idempotent two-sided ideals
in unital rings can be found in the manuscript~\cite{Quil}.
 Moreover, it is explained in~\cite[\S9]{Quil} that the resulting
theory does not depend on (in our notation) the chosen embedding of
a given nonunital ring $R$ into a unital ring $\widetilde R$
where $R$ is an ideal.
\end{rem}

\Section{t-Flat, c-Projective, and t-Injective Modules}
\label{flat-proj-inj-secn}

 We start with a discussion of c\+projectivity, then pass to
t\+injectivity, and in the end come to t\+flatness.

\begin{defn} \label{c-projective-defn}
 Let $R$ be a t\+unital ring.
 We will say that a left $R$\+module $Q$ is \emph{c\+projective} if
the covariant functor $\Hom_R(Q,{-})\:R\Modlc\rarrow\Ab$ preserves
cokernels (i.~e., takes cokernels in the category $R\Modlc$ to
cokernels in the category of abelian groups~$\Ab$).
\end{defn}

\begin{rems} \label{c-projective-long-remarks}
 (0)~Notice that, by the definition, a c\+projective $R$\+module
\emph{need not} be c\+unital.

 (1)~Furthermore, a projective nonunital $R$\+module (i.~e.,
a projective unital $\widetilde R$\+module) \emph{need not} be
c\+projective.
 For example, let $T$ be the commutative ring from
Example~\ref{rational-power-polynomials-two-variables-t-unital}
and $0\rarrow F\rarrow\widetilde F\rarrow S\rarrow0$ be
the short exact sequence of $T$\+modules from
Example~\ref{t-unital-ring-non-c-unital-cokernel-counterex}.
 So $F$ and $\widetilde F$ are c\+unital $T$\+modules, but $S$ is not.
 One can easily compute that $\Hom_T(T,S)=\Hom_S(S,S)=\widetilde S$,
where the basis elements $x^py^q\in T$ act by zero in $\widetilde S$
when $p>0$.
 According to the proof of Lemma~\ref{c-unital-modules-cokernels},
\,$\widetilde S$ is the cokernel of the morphism $F\rarrow\widetilde F$
in $T\Modlc$.
 Since the map $\widetilde F\rarrow\widetilde S$ is not surjective
(i.~e., not an epimorphism in $T\Modln$ or in~$\Ab$), it follows that
the free $\widetilde T$\+module $\widetilde T$ is \emph{not}
c\+projective as a $T$\+module.

 (2)~Quite generally, the free left $\widetilde R$\+module
$\widetilde R$ is c\+projective as an $R$\+module \emph{if and only if}
the full subcategory $R\Modlc$ is closed under cokernels in $R\Modln$.
 If this is the case (e.~g., this holds for all rings $R$ with enough
idempotents, see
Corollary~\ref{t-unital-s-unital-modules-abelian-categories}(c)),
then all projective nonunital $R$\+modules are c\+projective.

 (3)~The left $R$\+module $R$ is c\+projective if and only if
the left $R$\+module $\widetilde R$ is (cf.\
Proposition~\ref{c-projective-prop} below).
 Consequently, the left $R$\+module $R$ is c\+projective if and only if
the full subcategory $R\Modlc$ is closed under cokernels in $R\Modln$.
\end{rems}

\begin{lem} \label{c-projective-lemma}
 Let $R$ be a t\+unital ring and $Q$ be a t\+unital left $R$\+module.
 Then the $R$\+module $Q$ is c\+projective if and only if, for every
right exact sequence $L\rarrow M\rarrow E\rarrow0$ in $R\Modln$ with
$L$, $M\in R\Modlc$, the sequence of abelian groups $\Hom_R(Q,L)\rarrow
\Hom_R(Q,M)\rarrow\Hom_R(Q,E)\rarrow0$ is right exact.
\end{lem}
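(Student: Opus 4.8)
The plan is to show that the two conditions in the statement are equivalent by identifying each of them with one and the same assertion of right exactness; the bridge between the two formulations is the c\+unitalization functor $\Hom_R(R,{-})$ together with the t\+unitality of~$Q$.

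First I would record how cokernels are computed in $R\Modlc$. A right exact sequence $L\rarrow M\rarrow E\rarrow0$ in $R\Modln$ with $L$, $M\in R\Modlc$ is precisely the datum of a morphism $f\:L\rarrow M$ between c\+unital modules together with its cokernel $E$ in the abelian category $R\Modln$. By Lemma~\ref{c-unitalization-lemma} the functor $\Hom_R(R,{-})\:R\Modln\rarrow R\Modlc$ is left adjoint to the inclusion, hence preserves cokernels; combined with the c\+unitality of $L$ and~$M$ (so that $\Hom_R(R,L)\simeq L$ and $\Hom_R(R,M)\simeq M$, with $\Hom_R(R,f)$ corresponding to~$f$), this shows that the cokernel of $f$ in $R\Modlc$ is $\Hom_R(R,E)$, with structure map $M\rarrow\Hom_R(R,E)$ equal to the composition of the cokernel projection $M\rarrow E$ in $R\Modln$ with the natural map of~\eqref{Hom-unitality-map}, by naturality of the latter. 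Consequently, ``$Q$ is c\+projective'' asserts precisely that the sequence $\Hom_R(Q,L)\rarrow\Hom_R(Q,M)\rarrow\Hom_R(Q,\Hom_R(R,E))\rarrow0$ is right exact for every such~$f$, whereas the second condition in the lemma is the same assertion with $\Hom_R(R,E)$ replaced by~$E$.

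The main step is then to see that this replacement is harmless. The natural map $E\rarrow\Hom_R(R,E)$ of~\eqref{Hom-unitality-map} has null kernel and cokernel by Lemma~\ref{unitality-comparisons-have-null-co-kernels}(b); since $Q$ is t\+unital, Corollary~\ref{null-isomorphisms-inverted-by-tensor-Hom-with-t-c}(b) (with $Q$ playing the role of the t\+unital module) tells us that the induced map $\Hom_R(Q,E)\rarrow\Hom_R(Q,\Hom_R(R,E))$ is an isomorphism. (Alternatively, one may deduce the same from the associativity isomorphism~\eqref{nonunital-two-rings-Hom-associativity}, which gives $\Hom_R(Q,\Hom_R(R,E))\simeq\Hom_R(R\ot_RQ,\>E)$, together with $R\ot_RQ\simeq Q$.) By naturality in $E$, this isomorphism intertwines the map $\Hom_R(Q,M)\rarrow\Hom_R(Q,\Hom_R(R,E))$ with the map $\Hom_R(Q,M)\rarrow\Hom_R(Q,E)$; postcomposing the last arrow in a sequence with an isomorphism affects neither surjectivity nor the kernel, so the two sequences displayed above are right exact simultaneously, and both implications of the lemma follow at once.

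I expect the only point requiring genuine care to be the bookkeeping just sketched: one must check not merely that the cokernel of $f$ in $R\Modlc$ and the cokernel $E$ in $R\Modln$ become isomorphic after applying $\Hom_R(Q,{-})$, but that the comparison isomorphism is compatible with the maps out of $\Hom_R(Q,M)$, so that right exactness genuinely transports between the two formulations rather than merely the final term being abstractly isomorphic. All the substantive input is already in place --- Lemmas~\ref{c-unitalization-lemma} and~\ref{unitality-comparisons-have-null-co-kernels}, Corollary~\ref{null-isomorphisms-inverted-by-tensor-Hom-with-t-c}, and the Hom associativity isomorphism~\eqref{nonunital-two-rings-Hom-associativity} --- so no new construction should be needed.
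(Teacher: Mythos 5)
Your proof is correct and takes essentially the same route as the paper's: identify the cokernel in $R\Modlc$ of a morphism of c\+unital modules as $\Hom_R(R,E)$ (via the left adjoint of Lemma~\ref{c-unitalization-lemma}, exactly as in the proof of Lemma~\ref{c-unital-modules-cokernels}), and then replace $\Hom_R(Q,\Hom_R(R,E))$ by $\Hom_R(Q,E)$ using the t\+unitality of~$Q$. The paper performs this last replacement directly through the adjunction isomorphisms $\Hom_R(Q,\Hom_R(R,E))\simeq\Hom_R(R\ot_RQ,\>E)\simeq\Hom_R(Q,E)$, which you mention as your alternative, while your primary appeal to Lemma~\ref{unitality-comparisons-have-null-co-kernels}(b) and Corollary~\ref{null-isomorphisms-inverted-by-tensor-Hom-with-t-c}(b) amounts to the same thing (and your extra attention to the naturality of the comparison is a welcome, if routine, check).
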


\begin{proof}
 Following the proof of Lemma~\ref{c-unital-modules-cokernels},
the c\+unital $R$\+module $\Hom_R(R,E)$ is the cokernel of
the morphism $L\rarrow M$ in $R\Modlc$.
 By the definition, c\+projectivity of $Q$ means right exactness of
the sequences of abelian groups $\Hom_R(Q,L)\rarrow\Hom_R(Q,M)
\rarrow\Hom_R(Q,\Hom_R(R,E))\rarrow0$.
 It remains to recall the natural isomorphisms of abelian groups
$\Hom_R(Q,\Hom_R(R,E))\simeq\Hom_R(R\ot_RQ,\>E)\simeq\Hom_R(Q,E)$
for a t\+unital $R$\+module~$Q$.
\end{proof}

\begin{prop} \label{c-projective-prop}
 Let $R$ be a t\+unital ring and $Q$ be a left $R$\+module.
 Then the following conditions are equivalent:
\begin{enumerate}
\item $Q$ is a c\+projective $R$\+module;
\item the c\+unital $R$\+module\/ $\Hom_R(R,Q)$ is c\+projective;
\item the t\+unital $R$\+module $R\ot_RQ$ is c\+projective.
\end{enumerate}
\end{prop}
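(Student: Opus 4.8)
The plan is to deduce all three equivalences from a single observation: for \emph{any} left $R$\+module $Q'$, the functors $\Hom_R(Q',{-})$ and $\Hom_R(R\ot_RQ',{-})$ from $R\Modlc$ to $\Ab$ are naturally isomorphic, so that $Q'$ is c\+projective if and only if $R\ot_RQ'$ is. To construct this natural isomorphism, I would first apply the Hom associativity/adjunction isomorphism~\eqref{nonunital-two-rings-Hom-associativity} with $R'=R=R''$ and $B=R$, obtaining $\Hom_R(R\ot_RQ',P)\simeq\Hom_R(Q',\Hom_R(R,P))$ naturally in $P\in R\Modln$. Then, restricting $P$ to range over $R\Modlc$, the c\+unitality of $P$ says precisely that the natural map $P\rarrow\Hom_R(R,P)$ is an isomorphism, so $\Hom_R(Q',\Hom_R(R,P))\simeq\Hom_R(Q',P)$, again naturally in $P$. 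Composing, $\Hom_R(R\ot_RQ',{-})\simeq\Hom_R(Q',{-})$ as functors on $R\Modlc$. Since a functor naturally isomorphic to one that preserves cokernels itself preserves cokernels, Definition~\ref{c-projective-defn} yields the claimed equivalence.

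With this principle in hand, the proof is quick. Taking $Q'=Q$ gives the equivalence of (1) and (3). Taking $Q'=\Hom_R(R,Q)$ shows that (2) holds if and only if $R\ot_R\Hom_R(R,Q)$ is c\+projective. It therefore remains only to identify $R\ot_R\Hom_R(R,Q)$ with $R\ot_RQ$: by Lemma~\ref{unitality-comparisons-have-null-co-kernels}(b), the natural $R$\+module morphism $Q\rarrow\Hom_R(R,Q)$ has kernel and cokernel that are null\+modules, and by Proposition~\ref{null-isomorphisms-inverted-by-tensor-Hom-R}(a) the functor $R\ot_R{-}$ sends any such morphism to an isomorphism. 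Hence $R\ot_RQ\simeq R\ot_R\Hom_R(R,Q)$, so these two modules are simultaneously c\+projective or not, and combining this with the equivalence of (1) and (3) finishes the proof.

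I do not expect a genuine obstacle here; the argument is formal bookkeeping over the adjunction~\eqref{nonunital-two-rings-Hom-associativity} together with the two earlier results cited above. The one point deserving attention is to keep every isomorphism natural \emph{in the argument $P$ running over $R\Modlc$}, since it is precisely this naturality that allows the cokernel\+preservation property to be transported between the functors $\Hom_R(Q,{-})$, $\Hom_R(\Hom_R(R,Q),{-})$, and $\Hom_R(R\ot_RQ,{-})$ on $R\Modlc$; once that is kept in view, each step is immediate. A slightly more symmetric alternative would be to observe that $Q$, $\Hom_R(R,Q)$, and $R\ot_RQ$ are all joined by natural morphisms that $R\ot_R{-}$ inverts, hence induce naturally isomorphic functors $\Hom_R({-},P)$ on $R\Modlc$; I would nevertheless present the asymmetric version above, as it is marginally shorter.
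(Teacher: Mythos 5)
Your proposal is correct. Your treatment of (1)\,$\Leftrightarrow$\,(3) is exactly the paper's: the chain $\Hom_R(R\ot_RQ,P)\simeq\Hom_R(Q,\Hom_R(R,P))\simeq\Hom_R(Q,P)$, natural in $P\in R\Modlc$, transported through Definition~\ref{c-projective-defn}. Where you diverge is in handling (2): the paper deduces (1)\,$\Leftrightarrow$\,(2) directly from the adjunction of Lemma~\ref{c-unitalization-lemma}, which gives $\Hom_R(\Hom_R(R,Q),P)\simeq\Hom_R(Q,P)$ naturally for c\+unital $P$ in one step, whereas you route (2) through (3) by applying your key observation to $Q'=\Hom_R(R,Q)$ and then identifying $R\ot_R\Hom_R(R,Q)$ with $R\ot_RQ$ via Lemma~\ref{unitality-comparisons-have-null-co-kernels}(b) and Proposition~\ref{null-isomorphisms-inverted-by-tensor-Hom-R}(a). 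This detour is valid (c\+projectivity is manifestly an isomorphism invariant), and it is in effect a hybrid of the paper's two routes: the paper itself records, as an alternative, that both equivalences follow from Lemma~\ref{unitality-comparisons-have-null-co-kernels} together with Corollary~\ref{null-isomorphisms-inverted-by-tensor-Hom-with-t-c}(c), which is the same null\+(co)kernel machinery you invoke, just applied on the Hom side rather than the tensor side. The paper's direct use of Lemma~\ref{c-unitalization-lemma} is marginally shorter for (2), but nothing is lost in your version, and your emphasis on naturality in $P$ over $R\Modlc$ is indeed the one point that needs to be kept in view.
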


\begin{proof}
 (1)\,$\Longleftrightarrow$\,(2) follows from
Lemma~\ref{c-unitalization-lemma}.

 (1)\,$\Longleftrightarrow$\,(3) holds due to the natural isomorphisms
$\Hom_R(R\ot_RQ,\>P)\simeq\Hom_R(Q,\Hom_R(R,P))\simeq\Hom_R(Q,P)$ for
all c\+unital left $R$\+modules~$P$.  {\hbadness=1200\par}
 
 Alternatively, both the equivalences follow from
Lemma~\ref{unitality-comparisons-have-null-co-kernels} and
Corollary~\ref{null-isomorphisms-inverted-by-tensor-Hom-with-t-c}(c).
\end{proof}

\begin{rem} \label{c-projective-short-remark}
 Clearly, a c\+unital left $R$\+module is c\+projective if and only if
it is a projective object in the abelian category $R\Modlc$.
 Now, in view of
Corollary~\ref{main-three-abelian-category-equivalence-cor},
Proposition~\ref{c-projective-prop} implies that a t\+unital
left $R$\+module is c\+projective if and only if it is a projective
object in the abelian category $R\Modlt$.
 Furthermore, by Theorem~\ref{main-two-category-equivalences-thm}(b),
a nonunital left $R$\+module is c\+projective if and only if it
represents a projective object in the quotient category
$R\Modln/R\Modlz$.
\end{rem}

 The following corollary is our version of the projective
modules/objects claim in~\cite[Proposition~6.2]{Quil}.

\begin{cor} \label{c-projective-cor}
 Let $R$ be a t\+unital ring and $Q$ be a t\+unital left $R$\+module.
 Then $Q$ is c\+projective if and only if it is projective as
an object of the category of nonunital modules $R\Modln$.
\end{cor}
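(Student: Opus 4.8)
The plan is to reduce the assertion, via Remark~\ref{c-projective-short-remark}, to comparing projectivity in the two abelian categories $R\Modlt$ and $R\Modln$, and then to establish the two implications separately. Since $Q$ is assumed t\+unital, Remark~\ref{c-projective-short-remark} already tells us that $Q$ is c\+projective if and only if $Q$ is a projective object of $R\Modlt$; so what has to be proved is that a t\+unital left $R$\+module $Q$ is projective in $R\Modlt$ if and only if it is projective in $R\Modln$. For the ``only if'' implication I would combine Lemma~\ref{t-unitalization-lemma} (the inclusion $R\Modlt\rarrow R\Modln$ is left adjoint to $R\ot_R{-}\,\:R\Modln\rarrow R\Modlt$) with Theorem~\ref{main-two-category-equivalences-thm}(a) (this right adjoint $R\ot_R{-}$ is exact): the adjunction isomorphism, together with fullness of $R\Modlt\subset R\Modln$, identifies the functor $\Hom_R(Q,{-})$ on $R\Modln$ with the composition of the exact functor $R\ot_R{-}\,\:R\Modln\rarrow R\Modlt$ followed by $\Hom_R(Q,{-})\:R\Modlt\rarrow\Ab$; if $Q$ is projective in $R\Modlt$ the latter is exact, hence so is the composition, so $Q$ is projective in $R\Modln$.

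For the ``if'' implication, suppose $Q$ is t\+unital and projective in $R\Modln$; I would take an epimorphism $p\:M\rarrow N$ in $R\Modlt$ and a morphism $g\:Q\rarrow N$ in $R\Modlt$, observing that $g$ is just a morphism $Q\rarrow N$ in $R\Modln$ since $R\Modlt$ is a full subcategory. The first step is to show that the cokernel $C$ of $p$ \emph{computed in $R\Modln$} is a null-module: applying the exact functor $R\ot_R{-}\,\:R\Modln\rarrow R\Modlt$ to $M\rarrow N\rarrow C\rarrow0$ and using t\+unitality of $M$ and $N$ identifies $R\ot_R C$ with the cokernel of $p$ in $R\Modlt$, which is zero because $p$ is an epimorphism there; since the image of the multiplication map $R\ot_R C\rarrow C$ is $RC$, this forces $RC=0$. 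The second step is that the composite $Q\rarrow N\rarrow C$ vanishes: it lies in $\Hom_R(Q,C)$, and for t\+unital $Q$ and a null-module $C$ over the idempotent ring $R$ one has $\Hom_R(Q,C)\simeq\Hom_R(R\ot_RQ,\>C)\simeq\Hom_R(Q,\Hom_R(R,C))=0$ by Lemma~\ref{null-modules-Ext-Tor-vanishing-lemma}(b). Hence $g$ factors through the inclusion into $N$ of the image $I$ of $p$ in $R\Modln$; since $M\rarrow I$ is an epimorphism in $R\Modln$ and $Q$ is projective in $R\Modln$, the induced map $Q\rarrow I$ lifts to $h\:Q\rarrow M$, and then $ph=g$ in $R\Modln$, hence in $R\Modlt$. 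Thus $Q$ is projective in $R\Modlt$.

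I expect the main obstacle to lie in the bookkeeping of the ``if'' direction rather than in any hard idea: one has to be careful to distinguish cokernels, images and epimorphisms formed in $R\Modln$ from those formed in $R\Modlt$, exploiting that $R\ot_R{-}$ is exact precisely as a functor into $R\Modlt$ (Remark~\ref{Giraud-remark}), and to use fullness of $R\Modlt\subset R\Modln$ both to regard $g$ as an $R\Modln$\+morphism and to promote a lift obtained in $R\Modln$ to a lift in $R\Modlt$. The one genuinely substantive point, the vanishing $\Hom_R(Q,C)=0$ for a t\+unital source and a null target, is already packaged in the earlier lemmas, so once these translations are in place the verification is a short diagram chase.
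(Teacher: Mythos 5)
Your argument is correct, and its ``only if'' half is essentially the paper's: you unfold the same adjunction of Lemma~\ref{t-unitalization-lemma} (inclusion $R\Modlt\rarrow R\Modln$ left adjoint to the exact functor $R\ot_R{-}$ of Theorem~\ref{main-two-category-equivalences-thm}(a)), which is exactly the ``left adjoint of an exact functor preserves projectives'' step the paper performs via Remark~\ref{Giraud-remark}. For the ``if'' half you take a genuinely different route: the paper simply invokes the ``if'' assertion of Lemma~\ref{c-projective-lemma} --- if $Q$ is projective in $R\Modln$ then $\Hom_R(Q,{-})$ is exact on $R\Modln$, so the right-exactness condition of that lemma holds trivially and $Q$ is c\+projective --- whereas you prove directly that $Q$ is a projective object of $R\Modlt$ (legitimate by Remark~\ref{c-projective-short-remark}) by a lifting argument, using exactness of $R\ot_R{-}$ to see that the $R\Modln$\+cokernel $C$ of an $R\Modlt$\+epimorphism is a null-module and then $\Hom_R(Q,C)=0$. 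Your chase is valid, but note it can be shortened: by Lemma~\ref{t-unital-modules-closure-properties} the subcategory $R\Modlt$ is closed under cokernels in $R\Modln$, so $C$ is itself t\+unital and $C\simeq R\ot_RC=0$; that is, epimorphisms in $R\Modlt$ between t\+unital modules are honestly surjective, the image $I$ equals $N$, and projectivity of $Q$ in $R\Modln$ gives the lift at once, with the null-module and $\Hom_R(Q,C)=0$ steps becoming superfluous. The trade-off is that the paper's route leans on the previously established characterization through $R\Modlc$, while yours stays entirely on the t\+unital side and is self-contained modulo the earlier structural lemmas.
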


\begin{proof}
 By Remark~\ref{c-projective-short-remark}, \,$Q$  is c\+projective
if and only if it represents a projective object in $R\Modln/R\Modlz$.
 As mentioned in Remark~\ref{Giraud-remark}, the inclusion functor
$R\Modln/R\Modlz\simeq R\Modlt\rarrow R\Modln$ is left adjoint to
the localization functor $R\Modln\rarrow R\Modln/R\Modlz$.
 Since the latter functor is exact, the former one takes projectives
to projectives.
 This proves the ``only if'' implication of the corollary.
 The ``if'' implication follows from the ``if'' assertion of
Lemma~\ref{c-projective-lemma}.
\end{proof}

 The next definition is dual-analogous to
Definition~\ref{c-projective-defn}.

\begin{defn}
 Let $R$ be a t\+unital ring.
 We will say that a left $R$\+module $J$ is \emph{t\+injective} if
the contravariant functor $\Hom_R({-},J)\:R\Modlt^\sop\rarrow\Ab$
takes the kernels in $R\Modlt$ to the cokernels in~$\Ab$.
\end{defn}

\begin{rems} \label{t-injective-long-remarks}
 (0)~Notice that, by the definition, a t\+injective $R$\+module
\emph{need not} be t\+unital.

 (1)~Furthermore, an injective nonunital $R$\+module (i.~e.,
an injective unital $\widetilde R$\+module) \emph{need not} be
t\+injective.
 Quite generally, the cofree left $\widetilde R$\+module
$\Hom_\boZ(\widetilde R,\boQ/\boZ)$ is t\+injective as an $R$\+module
\emph{if and only if} the full subcategory $R\Modlt$ is closed under
kernels in $R\Modln$.
 Example~\ref{t-unital-ring-non-t-unital-kernel-counterex} shows that
this is not always the case.

 (2) If the full subcategory $R\Modlt$ is closed under kernels in
$R\Modln$ (e.~g., this holds for left s\+unital rings~$R$;
see Proposition~\ref{s-unital-hereditary-torsion}
and Corollary~\ref{s-unital-iff-t-unital-for-modules}(a)),
then all injective nonunital $R$\+modules are t\+injective.
\end{rems}

\begin{lem} \label{t-injective-lemma}
 Let $R$ be a t\+unital ring and $J$ be a c\+unital left $R$\+module.
 Then the $R$\+module $J$ is t\+injective if and only if, for every
left exact sequence $0\rarrow K\rarrow L\rarrow M$ in $R\Modln$ with
$L$, $M\in R\Modlt$, the sequence of abelian groups $\Hom_R(M,J)
\rarrow\Hom_R(L,J)\rarrow\Hom_R(K,J)\rarrow0$ is right exact.
\end{lem}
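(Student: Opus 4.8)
The plan is to follow the proof of Lemma~\ref{c-projective-lemma}, dualized: the crux is to identify, for a morphism $f\:L\rarrow M$ in $R\Modlt$, the kernel of $f$ formed inside the abelian category $R\Modlt$ with the module $R\ot_RK$, where $K=\ker f$ is computed in the ambient category $R\Modln$.

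First I would recall, as in the proof of Lemma~\ref{t-unital-modules-kernels}, that the functor $R\ot_R{-}\,\:R\Modln\rarrow R\Modlt$ is exact (Theorem~\ref{main-two-category-equivalences-thm}(a)) and right adjoint to the inclusion functor $R\Modlt\rarrow R\Modln$ (Lemma~\ref{t-unitalization-lemma}), with the map~\eqref{tensor-left-unitality-map} serving as the adjunction counit. Applying this exact functor to a left exact sequence $0\rarrow K\rarrow L\rarrow M$ in $R\Modln$ with $L$, $M\in R\Modlt$, one obtains a left exact sequence $0\rarrow R\ot_RK\rarrow R\ot_RL\rarrow R\ot_RM$ in $R\Modlt$; since $L$ and $M$ are t\+unital, its last two terms are identified with $L$ and~$M$, so $R\ot_RK$ is the kernel of $f\:L\rarrow M$ taken in $R\Modlt$. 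By naturality of the counit, the resulting inclusion $R\ot_RK\rarrow L$ in $R\Modlt$ is the composition of the counit $R\ot_RK\rarrow K$ with the inclusion $K\rarrow L$ in $R\Modln$. Conversely, every kernel diagram in $R\Modlt$ arises in this way, namely from the morphism $f\:L\rarrow M$ itself together with its kernel $K$ taken in $R\Modln$; note that $K$ need not be t\+unital.

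By the definition of t\+injectivity, the $R$\+module $J$ is t\+injective if and only if, for every such kernel diagram, the sequence of abelian groups $\Hom_R(M,J)\rarrow\Hom_R(L,J)\rarrow\Hom_R(R\ot_RK,\>J)\rarrow0$ is right exact. It then remains to rewrite the rightmost term using the Hom\+tensor associativity isomorphism~\eqref{nonunital-two-rings-Hom-associativity}, namely $\Hom_R(R\ot_RK,\>J)\simeq\Hom_R(K,\Hom_R(R,J))$, together with the isomorphism $\Hom_R(R,J)\simeq J$ provided by c\+unitality of~$J$; composing, $\Hom_R(R\ot_RK,\>J)\simeq\Hom_R(K,J)$. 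One has to check — by commutativity of the evident triangular diagram, in the manner of Lemmas~\ref{t-unital-c-unital-Hom-lemma} and~\ref{c-projective-lemma} — that under this identification the map $\Hom_R(L,J)\rarrow\Hom_R(R\ot_RK,\>J)$ becomes the restriction map $\Hom_R(L,J)\rarrow\Hom_R(K,J)$ along $K\rarrow L$. Once this is done, the t\+injectivity condition for $J$ coincides word for word with the condition in the lemma.

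The step requiring the most care is this last compatibility verification, together with the careful bookkeeping distinguishing the kernel $K$ formed in $R\Modln$ (which may fail to be t\+unital, so that $\Hom_R(K,J)$ is not a priori accessible through $R\Modlt$ or $R\Modlc$) from the kernel $R\ot_RK$ formed in $R\Modlt$ and the counit map between them. As in the dual Lemma~\ref{c-projective-lemma}, once these two kernels and the connecting counit are kept track of, the argument is routine triangle\+chasing, and I do not anticipate a genuine obstacle.
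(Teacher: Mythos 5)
Your proposal is correct and follows essentially the same route as the paper: identify the kernel of $f\:L\rarrow M$ formed in $R\Modlt$ with $R\ot_RK$ (as in the proof of Lemma~\ref{t-unital-modules-kernels}), unwind the definition of t\+injectivity, and rewrite $\Hom_R(R\ot_RK,\>J)\simeq\Hom_R(K,\Hom_R(R,J))\simeq\Hom_R(K,J)$ using the adjunction~\eqref{nonunital-two-rings-Hom-associativity} and c\+unitality of~$J$. The extra care you take with the counit map $R\ot_RK\rarrow K$ and the compatibility of the restriction maps is exactly the routine verification the paper leaves implicit.
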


\begin{proof}
 Following the proof of Lemma~\ref{t-unital-modules-kernels},
the t\+unital $R$\+module $R\ot_RK$ is the kernel of the morphism
$L\rarrow M$ in $R\Modlt$.
 By the definition, t\+injectivity of $J$ means right exactness of
the sequences of abelian groups  $\Hom_R(M,J)\rarrow\Hom_R(L,J)
\rarrow\Hom_R(R\ot_RK,\>J)\rarrow0$.
 It remains to recall the natural isomorphisms of abelian groups
$\Hom_R(R\ot_RK,\>J)\simeq\Hom_R(K,\Hom_R(R,J))\simeq
\Hom_R(K,J)$ for a c\+unital $R$\+module~$J$.
\end{proof}

\begin{prop} \label{t-injective-prop}
 Let $R$ be a t\+unital ring and $J$ be a left $R$\+module.
 Then the following conditions are equivalent:
\begin{enumerate}
\item $J$ is a t\+injective $R$\+module;
\item the t\+unital $R$\+module $R\ot_RJ$ is t\+injective;
\item the c\+unital $R$\+module\/ $\Hom_R(R,J)$ is t\+injective.
\end{enumerate}
\end{prop}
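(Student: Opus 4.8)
The plan is to argue in a manner dual-analogous to the proof of Proposition~\ref{c-projective-prop}. For the equivalence $(1)\Longleftrightarrow(2)$ I would invoke Lemma~\ref{t-unitalization-lemma}: since the functor $R\ot_R{-}\,\:R\Modln\rarrow R\Modlt$ is right adjoint to the fully faithful inclusion $R\Modlt\rarrow R\Modln$, there is a natural isomorphism $\Hom_R(M,\>R\ot_RJ)\simeq\Hom_R(M,J)$ for every t\+unital left $R$\+module $M$. Hence the contravariant functors $\Hom_R({-},J)$ and $\Hom_R({-},\>R\ot_RJ)$ on $R\Modlt^\sop$ are naturally isomorphic, so $J$ is t\+injective if and only if $R\ot_RJ$ is. For $(1)\Longleftrightarrow(3)$ I would use the natural isomorphisms $\Hom_R(M,\Hom_R(R,J))\simeq\Hom_R(R\ot_RM,\>J)\simeq\Hom_R(M,J)$, valid for every t\+unital left $R$\+module $M$ (the second isomorphism being the t\+unitality of $M$); these identify $\Hom_R({-},J)$ with $\Hom_R({-},\Hom_R(R,J))$ on $R\Modlt^\sop$, whence the equivalence.

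Alternatively, both equivalences can be deduced in one stroke from Lemma~\ref{unitality-comparisons-have-null-co-kernels} together with Corollary~\ref{null-isomorphisms-inverted-by-tensor-Hom-with-t-c}(b). Indeed, the multiplication map $R\ot_RJ\rarrow J$ and the natural map $J\rarrow\Hom_R(R,J)$ both have null kernels and cokernels by Lemma~\ref{unitality-comparisons-have-null-co-kernels}; so, by part~(b) of the Corollary, the functor $\Hom_R(E,{-})$ sends each of these maps to an isomorphism for every t\+unital left $R$\+module $E$. This yields natural isomorphisms of functors $\Hom_R({-},\>R\ot_RJ)\simeq\Hom_R({-},J)\simeq\Hom_R({-},\Hom_R(R,J))$ on $R\Modlt^\sop$, and so the three t\+injectivity conditions coincide. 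Should one prefer to work with honest short exact sequences in $R\Modln$ rather than with kernels computed inside $R\Modlt$, Lemma~\ref{t-injective-lemma} provides the relevant reformulation for the c\+unital representatives $\Hom_R(R,J)$ (and $J$ itself, when it is c\+unital).

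The only point that needs attention is the purely formal bookkeeping that the displayed comparison maps are isomorphisms of functors on $R\Modlt^\sop$ compatible with the ``kernels to cokernels'' clause in the definition of t\+injectivity; granting this, the equivalences are immediate, so there is no genuine obstacle. It is natural to record afterwards, in parallel with Remark~\ref{c-projective-short-remark}, that in view of Corollary~\ref{main-three-abelian-category-equivalence-cor} and Theorem~\ref{main-two-category-equivalences-thm} the proposition says that a nonunital left $R$\+module is t\+injective precisely when it represents an injective object of $R\Modln/R\Modlz\simeq R\Modlt\simeq R\Modlc$; in particular, a c\+unital module is t\+injective if and only if it is injective as an object of $R\Modlc$, and a t\+unital module is t\+injective if and only if it is injective as an object of $R\Modlt$.
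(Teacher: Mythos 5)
Your proposal is correct and follows essentially the same route as the paper's own proof: (1)$\Leftrightarrow$(2) via the adjunction of Lemma~\ref{t-unitalization-lemma}, (1)$\Leftrightarrow$(3) via the isomorphisms $\Hom_R(M,\Hom_R(R,J))\simeq\Hom_R(R\ot_RM,\>J)\simeq\Hom_R(M,J)$ for t\+unital $M$, with the same alternative argument from Lemma~\ref{unitality-comparisons-have-null-co-kernels} and Corollary~\ref{null-isomorphisms-inverted-by-tensor-Hom-with-t-c}(b). The extra bookkeeping remarks and the concluding observation (which the paper records separately as Remark~\ref{t-injective-short-remark}) are fine but add nothing beyond the paper's argument.
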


\begin{proof}
 (1)\,$\Longleftrightarrow$\,(2) follows from
Lemma~\ref{t-unitalization-lemma}.

 (1)\,$\Longleftrightarrow$\,(3) holds due to the natural isomorphisms
$\Hom_R(M,\Hom_R(R,J))\simeq\Hom_R(R\ot_RM,\>J)\simeq\Hom_R(M,J)$ for
all t\+unital left $R$\+modules~$M$.

 Alternatively, both the equivalences follow from
Lemma~\ref{unitality-comparisons-have-null-co-kernels} and
Corollary~\ref{null-isomorphisms-inverted-by-tensor-Hom-with-t-c}(b).
\end{proof}

\begin{rem} \label{t-injective-short-remark}
 Clearly, a t\+unital left $R$\+module is t\+injective if and only if
it is an injective object in the abelian category $R\Modlt$.
 Now, in view of
Corollary~\ref{main-three-abelian-category-equivalence-cor},
Proposition~\ref{t-injective-prop} implies that a c\+unital
left $R$\+module is t\+injective if and only if it is an injective
object in the abelian category $R\Modlc$.
 Furthermore, by Theorem~\ref{main-two-category-equivalences-thm}(a),
a nonunital left $R$\+module is t\+injective if and only
if it represents an injective object in the quotient category
$R\Modln/R\Modlz$.
\end{rem}

 The next corollary is our version of the injective
modules/objects claim in~\cite[Proposition~6.2]{Quil}.

\begin{cor} \label{t-injective-cor}
 Let $R$ be a t\+unital ring and $J$ be a c\+unital left $R$\+module.
 Then $J$ is t\+injective if and only if it is injective as
an object of the category of nonunital modules $R\Modln$.
\end{cor}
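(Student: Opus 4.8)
The plan is to mirror the proof of Corollary~\ref{c-projective-cor}, exploiting the evident duality between the tensor and Hom sides of the formalism; the ingredients are Remarks~\ref{t-injective-short-remark} and~\ref{Giraud-remark}, the equivalences of Corollary~\ref{main-three-abelian-category-equivalence-cor}, and the ``if'' half of Lemma~\ref{t-injective-lemma}. No new computations are needed.

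For the ``only if'' implication, I would argue as follows. By Remark~\ref{t-injective-short-remark}, a c\+unital left $R$\+module is t\+injective precisely when it is an injective object of the abelian category $R\Modlc$; and under the equivalence $R\Modlc\simeq R\Modln/R\Modlz$ of Corollary~\ref{main-three-abelian-category-equivalence-cor} this says that $J$ represents an injective object of the Serre quotient category $R\Modln/R\Modlz$. By Remark~\ref{Giraud-remark}, the inclusion functor $R\Modln/R\Modlz\simeq R\Modlc\rarrow R\Modln$ is right adjoint to the localization functor $R\Modln\rarrow R\Modln/R\Modlz$, and the latter is exact. A right adjoint of an exact functor carries injective objects to injective objects, since $\Hom_{R\Modln}({-},J)$ is then computed as the composition of the (exact) localization functor with $\Hom_{R\Modln/R\Modlz}({-},J)$. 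Hence $J$ is injective in $R\Modln$.

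For the ``if'' implication, I would invoke the ``if'' assertion of Lemma~\ref{t-injective-lemma}: since $J$ is c\+unital, it suffices to check that for every left exact sequence $0\rarrow K\rarrow L\rarrow M$ in $R\Modln$ with $L$, $M\in R\Modlt$, the sequence $\Hom_R(M,J)\rarrow\Hom_R(L,J)\rarrow\Hom_R(K,J)\rarrow0$ is right exact. This is immediate from injectivity of $J$ in $R\Modln$ and uses neither t\+unitality of $L$ nor of $M$: surjectivity of $\Hom_R(L,J)\rarrow\Hom_R(K,J)$ follows from the monomorphism $K\rarrow L$, and exactness at $\Hom_R(L,J)$ follows because a morphism $L\rarrow J$ vanishing on $K$ factors through $L/K$, which embeds into $M$ as $K=\ker(L\rarrow M)$, and then extends to $M$ by injectivity.

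The only real point to be careful about is the bookkeeping on the ``only if'' side: one must be comfortable passing through the chain of equivalences that identifies a t\+injective c\+unital module with an injective object of $R\Modln/R\Modlz$, and one should recall (or quickly reprove) the general fact that the right adjoint of an exact functor preserves injectives. Everything else is a routine recombination of earlier results, dual to the argument already recorded for Corollary~\ref{c-projective-cor}.
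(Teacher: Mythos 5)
Your proposal is correct and follows essentially the same route as the paper's proof: the ``only if'' direction via Remark~\ref{t-injective-short-remark}, the equivalence with injective objects of $R\Modln/R\Modlz$, and the fact that the inclusion $R\Modln/R\Modlz\simeq R\Modlc\rarrow R\Modln$, being right adjoint to the exact localization functor (Remark~\ref{Giraud-remark}), preserves injectives; and the ``if'' direction via the ``if'' assertion of Lemma~\ref{t-injective-lemma}. The only differences are cosmetic: the paper additionally cites Stenstr\"om's Proposition~X.1.4 for context, while you spell out the routine verification of the hypothesis of Lemma~\ref{t-injective-lemma}, which the paper leaves implicit.
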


\begin{proof}
 This is a particular case of~\cite[Proposition~X.1.4]{Sten}.
 By Remark~\ref{t-injective-short-remark}, \,$J$  is t\+injective
if and only if it represents an injective object in $R\Modln/R\Modlz$.
 As mentioned in Remark~\ref{Giraud-remark}, the inclusion functor
$R\Modln/R\Modlz\simeq R\Modlc\rarrow R\Modln$ is right adjoint to
the localization functor $R\Modln\rarrow R\Modln/R\Modlz$.
 Since the latter functor is exact, the former one takes injectives
to injectives.
 This proves the ``only if'' implication of the corollary.
 The ``if'' implication follows from the ``if'' assertion of
Lemma~\ref{t-injective-lemma}.
\end{proof}

 Finally, we come to the last main definition of this section.

\begin{defn}
 Let $R$ be a t\+unital ring.
 We will say that a left $R$\+module $F$ is \emph{t\+flat} if
the covariant functor ${-}\ot_RF\:\Modrt R\rarrow\Ab$ preserves
kernels (i.~e., takes kernels in $\Modrt R$ to kernels in~$\Ab$).
 The definition of a t\+flat right $R$\+module is similar.
\end{defn}

\begin{rems} \label{t-flat-long-remarks}
 (0)~Notice that, by the definition, a t\+flat $R$\+module \emph{need
not} be t\+unital.

 (1)~Furthermore, a flat nonunital $R$\+module (i.~e., a flat
unital $\widetilde R$\+module) \emph{need not} be t\+flat.
 Quite generally, the free left $\widetilde R$\+module $\widetilde R$
is t\+flat as a left $R$\+module \emph{if and only if} the full
subcategory of t\+flat right $R$\+modules $\Modrt R$ is closed under
kernels in $\Modrn R$.
 Example~\ref{t-unital-ring-non-t-unital-kernel-counterex} shows that
this is not always the case.

 (2)~If the full subcategory $\Modrt R$ is closed under kernels in
$\Modrn R$, then all flat nonunital $R$\+modules are t\+flat.

 (3)~The left $R$\+module $R$ is t\+flat if and only if the left
$R$\+module $\widetilde R$ is (cf.\ Proposition~\ref{t-flat-prop}
below).
 Consequently, the left $R$\+module $R$ is t\+flat if and only if
the full subcategory $\Modrt R$ is closed under kernels in $\Modrn R$.
\end{rems}

\begin{lem} \label{t-flat-lemma}
 Let $R$ be a t\+unital ring and $F$ be a t\+unital left $R$\+module.
 Then the $R$\+module $F$ is t\+flat if and only if, for every left
exact sequence\/ $0\rarrow K\rarrow L\rarrow M$ in $\Modrn R$ with
$L$, $M\in\Modrt R$, the sequence of abelian groups\/
$0\rarrow K\ot_RF\rarrow L\ot_RF\rarrow M\ot_RF$ is left exact.
\end{lem}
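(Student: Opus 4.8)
The plan is to prove this lemma exactly in parallel with Lemmas~\ref{c-projective-lemma} and~\ref{t-injective-lemma}, transported to the tensor-product side with the roles of left and right modules interchanged. The one point that requires genuine care is that $\Modrt R$ \emph{need not} be closed under kernels in $\Modrn R$, so the kernel of a morphism in $\Modrt R$ is not literally the kernel taken in $\Modrn R$. By the right-module version of Lemma~\ref{t-unitalization-lemma} (proved by the same argument), the functor ${-}\ot_RR\:\Modrn R\rarrow\Modrt R$ is right adjoint to the fully faithful inclusion $\Modrt R\rarrow\Modrn R$; hence, running the argument of Lemma~\ref{t-unital-modules-kernels} for right modules, for any morphism $L\rarrow M$ in $\Modrt R$ with $K=\ker(L\rarrow M)$ computed in $\Modrn R$, the kernel of $L\rarrow M$ in the category $\Modrt R$ is the t\+unital right $R$\+module $K\ot_RR$.

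The second ingredient is the natural identification of abelian groups $(K\ot_RR)\ot_RF\simeq K\ot_RF$. Since $F$ is t\+unital, the multiplication map $R\ot_RF\rarrow F$ of~\eqref{tensor-left-unitality-map} is an isomorphism, so the associativity isomorphism~\eqref{nonunital-two-rings-tensor-associativity} gives a natural isomorphism $(K\ot_RR)\ot_RF\simeq K\ot_R(R\ot_RF)\simeq K\ot_RF$; moreover this isomorphism is induced by the map $K\ot_RR\rarrow K$, so it is compatible with the maps into $L\ot_RF$ coming from $L\ot_RR\rarrow L$. (Alternatively, one may apply the right-module analogue of Corollary~\ref{null-isomorphisms-inverted-by-tensor-Hom-with-t-c}(a): the natural map $K\ot_RR\rarrow K$ has null kernel and cokernel by Lemma~\ref{unitality-comparisons-have-null-co-kernels}(a), and tensoring this map over $R$ with the t\+unital module $F$ produces an isomorphism $(K\ot_RR)\ot_RF\simeq K\ot_RF$.)

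With these two facts in hand the equivalence unwinds directly from the definition of t\+flatness. For the ``only if'' direction, assume $F$ is t\+flat and take a left exact sequence $0\rarrow K\rarrow L\rarrow M$ in $\Modrn R$ with $L$, $M\in\Modrt R$; then $K\ot_RR$ is the kernel of $L\rarrow M$ in $\Modrt R$, so t\+flatness of $F$ gives exactness of $0\rarrow(K\ot_RR)\ot_RF\rarrow L\ot_RF\rarrow M\ot_RF$, and the identification $(K\ot_RR)\ot_RF\simeq K\ot_RF$ turns this into the asserted left exact sequence $0\rarrow K\ot_RF\rarrow L\ot_RF\rarrow M\ot_RF$. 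For the ``if'' direction, given a morphism $L\rarrow M$ in $\Modrt R$ with kernel $K$ in $\Modrn R$, the sequence $0\rarrow K\rarrow L\rarrow M$ is left exact in $\Modrn R$, so by hypothesis $0\rarrow K\ot_RF\rarrow L\ot_RF\rarrow M\ot_RF$ is left exact; rewriting $K\ot_RF\simeq(K\ot_RR)\ot_RF$ shows that ${-}\ot_RF$ sends the $\Modrt R$\+kernel $K\ot_RR$ of $L\rarrow M$ to the kernel of $L\ot_RF\rarrow M\ot_RF$, i.e.\ $F$ is t\+flat. The only delicate step, as noted, is tracking the compatibility of the identification $(K\ot_RR)\ot_RF\simeq K\ot_RF$ with the maps into $L\ot_RF$, which is a routine diagram chase resting on the naturality of~\eqref{nonunital-two-rings-tensor-associativity} and of the multiplication map~\eqref{tensor-left-unitality-map}.
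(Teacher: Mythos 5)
Your proof is correct and follows essentially the same route as the paper: identify the kernel of $L\rarrow M$ in $\Modrt R$ as $K\ot_RR$ (via the right-module analogue of Lemma~\ref{t-unital-modules-kernels}), and then use the natural isomorphism $(K\ot_RR)\ot_RF\simeq K\ot_R(R\ot_RF)\simeq K\ot_RF$ for t\+unital $F$ to translate the definition of t\+flatness into the stated condition. Your extra care about the compatibility of this identification with the maps into $L\ot_RF$ is a welcome elaboration of what the paper leaves implicit, but it is not a different argument.
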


\begin{proof}
 Following the proof of Lemma~\ref{t-unital-modules-kernels},
the t\+unital right $R$\+module $K\ot_RR$ is the kernel of
the morphism $L\rarrow M$ in $\Modrt R$.
 By the definition, t\+flatness of $F$ means left exactness of
the sequences of abelian groups $0\rarrow (K\ot_RR)\ot_RF\rarrow
L\ot_RF\rarrow M\ot_RF$.
 It remains to recall the natural isomorphisms of abelian groups
$(K\ot_RR)\ot_RF\simeq K\ot_R(R\ot_RF)\simeq K\ot_RF$ for
a t\+unital $R$\+module~$F$.
\end{proof}

\begin{prop} \label{t-flat-prop}
 Let $R$ be a t\+unital ring and $F$ be a left $R$\+module.
 Then the following conditions are equivalent:
\begin{enumerate}
\item $F$ is a t\+flat $R$\+module;
\item the t\+unital $R$\+module $R\ot_RF$ is t\+flat;
\item the c\+unital $R$\+module\/ $\Hom_R(R,F)$ is t\+flat.
\end{enumerate}
\end{prop}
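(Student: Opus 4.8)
The plan is to follow the template of Propositions~\ref{c-projective-prop} and~\ref{t-injective-prop}: I would reduce all three conditions to the single assertion that one particular functor $\Modrt R\rarrow\Ab$ takes kernels (computed in $\Modrt R$) to kernels of abelian groups, by producing natural isomorphisms, over the category $\Modrt R$ of t\+unital right $R$\+modules, between the three functors ${-}\ot_R F$, \ ${-}\ot_R(R\ot_R F)$, and ${-}\ot_R\Hom_R(R,F)$. Once such isomorphisms are in place, ``one functor preserves kernels'' is a property shared by all three, which is exactly the equivalence of (1), (2), (3).

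For (1)\,$\Longleftrightarrow$\,(2): by Lemma~\ref{unitality-comparisons-have-null-co-kernels}(a) the natural left $R$\+module map $R\ot_R F\rarrow F$ has null kernel and cokernel, so Corollary~\ref{null-isomorphisms-inverted-by-tensor-Hom-with-t-c}(a) yields, for every t\+unital right $R$\+module $N$, an isomorphism $N\ot_R(R\ot_R F)\simeq N\ot_R F$ natural in~$N$. (Alternatively, the associativity isomorphism~\eqref{nonunital-two-rings-tensor-associativity} together with the t\+unitality isomorphism $N\ot_R R\simeq N$ gives $N\ot_R(R\ot_R F)\simeq(N\ot_R R)\ot_R F\simeq N\ot_R F$ directly.) Hence ${-}\ot_R F$ and ${-}\ot_R(R\ot_R F)$ are isomorphic as functors on $\Modrt R$, and one of them preserves kernels if and only if the other does. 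For (1)\,$\Longleftrightarrow$\,(3): similarly, by Lemma~\ref{unitality-comparisons-have-null-co-kernels}(b) the natural map $F\rarrow\Hom_R(R,F)$ has null kernel and cokernel, so a second application of Corollary~\ref{null-isomorphisms-inverted-by-tensor-Hom-with-t-c}(a) gives a natural isomorphism $N\ot_R F\simeq N\ot_R\Hom_R(R,F)$ for every t\+unital right $R$\+module $N$; thus ${-}\ot_R F\simeq{-}\ot_R\Hom_R(R,F)$ as functors $\Modrt R\rarrow\Ab$, completing the chain.

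I do not expect a genuine obstacle here; the whole argument is routine given the machinery already assembled, and it parallels the two earlier propositions almost verbatim. The one point deserving a moment's attention is that t\+flatness is tested only against \emph{t\+unital} right modules, which is precisely what makes Corollary~\ref{null-isomorphisms-inverted-by-tensor-Hom-with-t-c}(a) and the identification $N\ot_R R\simeq N$ available; one should also note that all the isomorphisms used are natural in the test module $N$, so that the phrase ``isomorphic functors'' is legitimate, and both the associativity isomorphism and the isomorphisms of Corollary~\ref{null-isomorphisms-inverted-by-tensor-Hom-with-t-c}(a) are manifestly natural. As in Propositions~\ref{c-projective-prop} and~\ref{t-injective-prop}, the companion statements of Remark~\ref{t-flat-long-remarks}(3) (about the module $R$, equivalently $\widetilde R$, being t\+flat) fall out of the same computation but are not needed for the proposition itself.
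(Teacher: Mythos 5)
Your proposal is correct and follows essentially the same route as the paper: the paper also deduces the equivalences from Corollary~\ref{null-isomorphisms-inverted-by-tensor-Hom-with-t-c}(a) applied to the comparison maps $R\ot_RF\rarrow F$ and $F\rarrow\Hom_R(R,F)$, whose kernels and cokernels are null-modules by Lemma~\ref{unitality-comparisons-have-null-co-kernels}, phrasing this as the observation that t\+flatness depends only on the image of $F$ in $R\Modln/R\Modlz$. Your explicit remarks on naturality in the test module $N$ and the alternative associativity argument for (1)\,$\Leftrightarrow$\,(2) are fine but add nothing beyond the paper's argument.
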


\begin{proof}
 More generally, the claim is that t\+flatness of a left $R$\+module
$F$ only depends on the image of $F$ in the Serre quotient category
$R\Modln/R\Modlz$.
 In other words, if $F\rarrow G$ is a morphism of left $R$\+modules
with null kernel and cokernel, that the $R$\+module $F$ is t\+flat
if and only if the $R$\+module $G$ is.
 This claim follows from
Corollary~\ref{null-isomorphisms-inverted-by-tensor-Hom-with-t-c}(a),
and it implies the desired equivalences by
Lemma~\ref{unitality-comparisons-have-null-co-kernels}.
\end{proof}

 For any right $R$\+module $N$, the \emph{character module}
$N^+=\Hom_\boZ(N,\boQ/\boZ)$ is a left $R$\+module.
 At the end of this section we collect several simple observations
related to or provable by the passage to the character module.

\begin{lem} \label{character-duality-preserves-reflects-t-c}
 Let $R$ be an arbitrary (nonunital) ring.
 Then a right $R$\+module $N$ is t\+unital if and only if the left
$R$\+module $N^+=\Hom_\boZ(N,\boQ/\boZ)$ is c\+unital.
\end{lem}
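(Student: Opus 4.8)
The plan is to obtain the statement from the character-module functor $(-)^+=\Hom_\boZ({-},\boQ/\boZ)$ together with the Hom-tensor adjunction. The key property of $(-)^+$ is that, since $\boQ/\boZ$ is an injective cogenerator of the category of abelian groups, the functor $(-)^+\:\Modrn R\rarrow R\Modln$ is exact and \emph{reflects isomorphisms}: if $g$ is an abelian group homomorphism with $g^+$ an isomorphism, then $(\ker g)^+\simeq\coker(g^+)=0$ and $(\coker g)^+\simeq\ker(g^+)=0$, hence $\ker g=0=\coker g$ and $g$ is an isomorphism; the converse implication, that $(-)^+$ preserves isomorphisms, is of course trivial.

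Next I would record the natural isomorphism of abelian groups
$$(N\ot_R R)^+\simeq\Hom_R(R,N^+).$$
This is the $n=0$ case of the well-known isomorphism recalled just before Lemma~\ref{null-modules-Ext-Tor-vanishing-lemma}, applied over the unital ring $A=\widetilde R$ with $E=N$ (a right $\widetilde R$\+module) and $M=R$ (a left $\widetilde R$\+module), combined with the identifications \eqref{nonunital-and-unital-tensor-product} and \eqref{nonunital-and-unital-Hom}; alternatively, it is a special case of~\eqref{nonunital-two-rings-Hom-associativity}.

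Then I would check that this isomorphism is compatible with the two canonical maps of interest: the dual $(\mu_N)^+\:N^+\rarrow(N\ot_R R)^+$ of the multiplication map $\mu_N\:N\ot_R R\rarrow N$ from~\eqref{tensor-right-unitality-map} corresponds, under the displayed isomorphism, to the natural c\+unitality map $N^+\rarrow\Hom_R(R,N^+)$ from~\eqref{Hom-unitality-map}. This is a short direct computation on elements: for $\phi\in N^+$ the functional $(\mu_N)^+(\phi)$ on $N\ot_R R$ is $n\ot r\mapsto\phi(nr)$, and its image under the adjunction isomorphism is the map $r\mapsto(n\mapsto\phi(nr))=r\phi$, which is precisely the value at $\phi$ of the map~\eqref{Hom-unitality-map}. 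Thus we get a commutative square whose vertical arrows are the isomorphism above and the identity of $N^+$, identifying the map~\eqref{Hom-unitality-map} for $N^+$ with $(\mu_N)^+$.

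Combining the pieces: $N$ is t\+unital if and only if $\mu_N$ is an isomorphism; since $(-)^+$ preserves and reflects isomorphisms, this holds if and only if $(\mu_N)^+$ is an isomorphism; and by the commutative square this holds if and only if the map~\eqref{Hom-unitality-map} for $N^+$ is an isomorphism, i.e.\ if and only if $N^+$ is c\+unital. The only step that is not purely formal is the compatibility check in the previous paragraph, and even that is just a routine unravelling of the tensor-hom adjunction, so I expect no real obstacle.
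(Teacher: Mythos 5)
Your proposal is correct and follows essentially the same route as the paper's own (very terse) proof: the natural isomorphism $(N\ot_RR)^+\simeq\Hom_R(R,N^+)$ identifies the character dual of the multiplication map~\eqref{tensor-right-unitality-map} with the c\+unitality map~\eqref{Hom-unitality-map} for $N^+$, and one concludes because the character functor preserves and reflects isomorphisms. You have merely spelled out the compatibility check and the reflection-of-isomorphisms argument that the paper leaves implicit.
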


\begin{proof}
 One needs to observe that the passage to the character modules
transforms the natural morphism $N\ot_RR\rarrow N$ into the natural
morphism $P\rarrow\Hom_R(R,P)$ for the $R$\+module $P=N^+$.
 Notice the natural isomorphism of abelian groups/left $R$\+modules
$\Hom_\boZ(N\ot_RR,\>\boQ/\boZ)\simeq\Hom_R(R,\Hom_R(N,\boQ/\boZ))$.
\end{proof}

\begin{lem} \label{character-exact-from-t-to-c}
 For any t\+unital ring $R$, the contravariant functor
$N\longmapsto N^+\:\allowbreak(\Modrt R)^\sop\rarrow R\Modlc$ is exact.
\end{lem}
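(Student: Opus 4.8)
The plan is to exhibit $N\mapsto N^+$ as a composite of three functors with known exactness behaviour. By Lemma~\ref{character-duality-preserves-reflects-t-c}, the left $R$\+module $N^+$ is c\+unital whenever the right $R$\+module $N$ is t\+unital, so the adjunction counit $\Hom_R(R,N^+)\rarrow N^+$ is an isomorphism for such~$N$; hence the functor in question is naturally isomorphic to the composite
\[
 (\Modrt R)^\sop\rarrow(\Modrn R)^\sop\xrightarrow{(-)^+}R\Modln
 \xrightarrow{\Hom_R(R,{-})}R\Modlc,
\]
where the first arrow is induced by the inclusion, the functor $(-)^+=\Hom_\boZ(-,\boQ/\boZ)$ is exact (and contravariant) because $\boQ/\boZ$ is an injective abelian group, and $\Hom_R(R,{-})\:R\Modln\rarrow R\Modlc$ is exact by Theorem~\ref{main-two-category-equivalences-thm}(b). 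The inclusion $\Modrt R\rarrow\Modrn R$ is \emph{not} exact, so the content of the lemma lies precisely in controlling what it does to short exact sequences.

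So, let $0\rarrow K\overset{\kappa}{\rarrow} L\overset{f}{\rarrow} M\rarrow0$ be a short exact sequence in $\Modrt R$, and I examine it inside $\Modrn R$. Since $\Modrt R$ is closed under cokernels in $\Modrn R$ (the right\+module version of Lemma~\ref{t-unital-modules-closure-properties}), the pair $(M,f)$ is also the cokernel of $\kappa$ computed in $\Modrn R$; in particular $f$ is surjective and $\mathrm{im}_{\Modrn R}(\kappa)=\ker_{\Modrn R}(f)=:K_0$. Writing $W=\ker_{\Modrn R}(\kappa)$, we obtain an exact sequence $0\rarrow W\rarrow K\overset{\kappa}{\rarrow} L\overset{f}{\rarrow} M\rarrow0$ in $\Modrn R$. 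I claim $W$ is a null\+module. Indeed, by the right\+module version of the proof of Lemma~\ref{t-unital-modules-kernels}, the pair $(K,\kappa)$ is isomorphic to $K_0\ot_R R$ together with the composite $K_0\ot_R R\rarrow K_0\rarrow L$, where the first map is the multiplication map~\eqref{tensor-right-unitality-map} and the second is the inclusion of $K_0$ into~$L$; as the latter is a monomorphism in $\Modrn R$, we get $W=\ker(K_0\ot_R R\rarrow K_0)$, which is a null right $R$\+module by the right\+module version of Lemma~\ref{unitality-comparisons-have-null-co-kernels}(a).

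It remains to push this through. Applying the exact contravariant functor $(-)^+$ gives an exact sequence $0\rarrow M^+\rarrow L^+\rarrow K^+\rarrow W^+\rarrow0$ in $R\Modln$, in which $W^+$ is a null left $R$\+module. Applying the exact functor $\Hom_R(R,{-})\:R\Modln\rarrow R\Modlc$ gives an exact sequence in $R\Modlc$; since $M^+$, $L^+$, $K^+$ are c\+unital, the functor $\Hom_R(R,{-})$ leaves them unchanged, and since $W^+$ is null we have $\Hom_R(R,W^+)=0$ by Lemma~\ref{null-modules-Ext-Tor-vanishing-lemma}(b). Therefore this exact sequence is $0\rarrow M^+\rarrow L^+\rarrow K^+\rarrow0$, which is what we wanted: the contravariant functor $N\mapsto N^+$ carries short exact sequences in $\Modrt R$ to short exact sequences in $R\Modlc$.

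I expect the main obstacle to be the middle paragraph: realizing that closure of $\Modrt R$ under cokernels forces the image of $\kappa$ in $\Modrn R$ to be all of the naive kernel $K_0$, so that the failure of the sequence to be short exact in $\Modrn R$ is concentrated in a single \emph{null} term $W$, which $\Hom_R(R,{-})$ then annihilates; and, more routinely, keeping straight which map is which under the identification $K\cong K_0\ot_R R$.
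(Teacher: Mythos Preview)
Your proof is correct. The core insight---that the failure of a short exact sequence in $\Modrt R$ to remain short exact in $\Modrn R$ is measured by a single null module, which is then annihilated upon passing to $R\Modlc$---is exactly right, and your identification of $W$ as the kernel of the multiplication map $K_0\ot_RR\rarrow K_0$ (hence null by Lemma~\ref{unitality-comparisons-have-null-co-kernels}(a)) is clean.

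The paper takes a more conceptual and shorter route. Rather than tracking one short exact sequence, it observes once and for all that, in view of Theorem~\ref{main-two-category-equivalences-thm}, a complex in $\Modrt R$ (respectively $R\Modlc$) is exact if and only if its cohomology modules computed in the ambient category $\Modrn R$ (respectively $R\Modln$) are null. Since the functor $N\longmapsto N^+\:(\Modrn R)^\sop\rarrow R\Modln$ is exact and preserves and reflects null modules, exactness is transported directly. This avoids the explicit factorization through $\Hom_R(R,{-})$ and the separate bookkeeping of $W$. Your approach has the virtue of making the ``null defect'' completely concrete, while the paper's approach generalizes immediately to arbitrary complexes without further work.
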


\begin{proof}
 The point is that a complex of $R$\+modules is exact in the quotient
category $R\Modln/R\Modlz$ if and only if its cohomology modules
computed in the abelian category $R\Modln$ are null-modules.
 Consequently (in view of
Theorem~\ref{main-two-category-equivalences-thm}), the same criterion
applies in the abelian categories $\Modrt R$ and $R\Modlc$: a complex
in any one of these categories is exact if and only if, viewed as
a complex, respectively, in $\Modrn R$ or $R\Modln$,
it has null cohomology modules (cf.~\cite[Section~4.6]{Quil}).

 Now it remains to say that the functor
$N\longmapsto N^+\:\allowbreak(\Modrn R)^\sop\rarrow R\Modln$ preserves
the cohomology of complexes, and a right $R$\+module $N$ is null if
and only if the left $R$\+module $N^+$ is null.
\end{proof}

\begin{lem} \label{t-flat-iff-character-dual-t-injective}
 Let $R$ be a t\+unital ring and $F$ be a right $R$\+module.
 Then the left $R$\+module\/ $F^+=\Hom_\boZ(F,\boQ/\boZ)$ is
t\+injective if and only if the right $R$\+module $F$ is t\+flat.
\end{lem}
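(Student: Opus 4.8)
The plan is to reduce the statement to the tensor--Hom adjunction for the character module functor, combined with the fact that $\boQ/\boZ$ is an injective cogenerator of the category of abelian groups. For every left $R$\+module $M$ there is a natural isomorphism of abelian groups
$$
 (F\ot_R M)^+=\Hom_\boZ(F\ot_R M,\>\boQ/\boZ)\simeq
 \Hom_R(M,\Hom_\boZ(F,\boQ/\boZ))=\Hom_R(M,F^+),
$$
which is the case $n=0$ of the well-known fact recalled just before Lemma~\ref{null-modules-Ext-Tor-vanishing-lemma}, applied over the unital ring $\widetilde R$ and transported back via the identifications~\eqref{nonunital-and-unital-tensor-product} and~\eqref{nonunital-and-unital-Hom}. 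Since this isomorphism is natural in $M$, it identifies the contravariant functor $\Hom_R({-},F^+)\:R\Modlt^\sop\rarrow\Ab$ with the composition of the covariant functor $F\ot_R{-}\:R\Modlt\rarrow\Ab$ and the contravariant functor $({-})^+=\Hom_\boZ({-},\boQ/\boZ)\:\Ab\rarrow\Ab$.

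Next I would record the two relevant properties of $({-})^+$: it is an exact functor, because $\boQ/\boZ$ is an injective abelian group; and it reflects exactness, because $\boQ/\boZ$ is a cogenerator of $\Ab$. In combination, for any three-term complex of abelian groups, the sequence $0\rarrow A\rarrow B\rarrow C$ is exact if and only if the dual sequence $C^+\rarrow B^+\rarrow A^+\rarrow0$ is exact. Now fix an arbitrary morphism $\lambda\:L\rarrow M$ in the abelian category $R\Modlt$ (abelian by Corollary~\ref{t-unital-s-unital-modules-abelian-categories}(a)) and let $\kappa\:K\rarrow L$ be its kernel in $R\Modlt$. By the definitions, $F$ is t\+flat precisely when, for every such $\lambda$, the sequence $0\rarrow F\ot_R K\rarrow F\ot_R L\rarrow F\ot_R M$ is exact; and $F^+$ is t\+injective precisely when, for every such $\lambda$, the sequence $\Hom_R(M,F^+)\rarrow\Hom_R(L,F^+)\rarrow\Hom_R(K,F^+)\rarrow0$ is exact. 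Using the natural isomorphism of the previous paragraph, the latter sequence is $(F\ot_R M)^+\rarrow(F\ot_R L)^+\rarrow(F\ot_R K)^+\rarrow0$, which is exact if and only if $0\rarrow F\ot_R K\rarrow F\ot_R L\rarrow F\ot_R M$ is exact. As this equivalence holds for every $\lambda$, the two conditions ``$F$ is t\+flat'' and ``$F^+$ is t\+injective'' are equivalent.

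The argument is essentially formal; the only point deserving attention is that the exactness of the $(F\ot_R{-})$\+sequence and that of its character dual are genuinely \emph{equivalent}, rather than linked by a single implication. This is precisely where the cogenerator property of $\boQ/\boZ$ enters (mere injectivity would give only one direction), and it is what makes the lemma a bi-implication. The only bookkeeping to be careful with is the variance: the functor $\Hom_R({-},F^+)$ is contravariant and the composite $({-})^+\circ(F\ot_R{-})$ must be matched with it object- and morphism-wise, which is exactly the content of the naturality of the adjunction isomorphism in $M$.
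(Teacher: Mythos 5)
Your proof is correct and follows essentially the same route as the paper's: the paper likewise applies the character-module functor $\Hom_\boZ({-},\boQ/\boZ)$ to the sequence $0\rarrow F\ot_RK\rarrow F\ot_RL\rarrow F\ot_RM$ to obtain $\Hom_R(M,F^+)\rarrow\Hom_R(L,F^+)\rarrow\Hom_R(K,F^+)\rarrow0$ via the tensor--Hom adjunction, and concludes from the faithful exactness of $({-})^+$ (that is, $\boQ/\boZ$ being an injective cogenerator) that the one sequence is right exact if and only if the other is left exact. Your write-up merely makes explicit the naturality and the ``reflects exactness'' point that the paper leaves implicit.
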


\begin{proof}
 Let $0\rarrow K\rarrow L\rarrow M$ be a left exact sequence in
$R\Modlt$.
 Then the sequence of abelian groups $\Hom_R(M,F^+)\rarrow
\Hom_R(L,F^+)\rarrow\Hom_R(K,F^+)\rarrow0$ can be obtained by applying
the functor $\Hom_\boZ({-},\boQ/\boZ)$ to the sequence of abelian
groups $0\rarrow F\ot_RK\rarrow F\ot_RL\rarrow F\ot_RM$.
 Hence the former sequence is right exact if and only if the latter
one is left exact.
\end{proof}

\begin{cor} \label{t-flat-cor}
 Let $R$ be a t\+unital ring and $F$ be a t\+unital $R$\+module.
 Then $F$ is t\+flat if and only if it is a flat nonunital $R$\+module,
or in other words, if and only if $F$ is a flat unital
$\widetilde R$\+module.
\end{cor}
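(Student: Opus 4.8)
The plan is to pass to the character module $F^+=\Hom_\boZ(F,\boQ/\boZ)$ and reduce the statement to the classical Lambek criterion for flatness over the unital ring $\widetilde R$. I will write the argument for a right $R$\+module $F$, so that Lemma~\ref{t-flat-iff-character-dual-t-injective} applies directly; the case of a left $R$\+module is entirely analogous.

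First I would observe that, since $F$ is a t\+unital right $R$\+module, Lemma~\ref{character-duality-preserves-reflects-t-c} shows that $F^+$ is a c\+unital left $R$\+module. This is the one place where the t\+unitality hypothesis on $F$ is used, and it is essential: without it, a flat nonunital module need not be t\+flat (cf.\ Remark~\ref{t-flat-long-remarks}(1)). Next, Lemma~\ref{t-flat-iff-character-dual-t-injective} gives that $F$ is t\+flat if and only if $F^+$ is t\+injective. Because $F^+$ is c\+unital, Corollary~\ref{t-injective-cor} then applies and yields that $F^+$ is t\+injective if and only if $F^+$ is injective as an object of $R\Modln\simeq\widetilde R\Modl$, i.e., if and only if $F^+$ is an injective unital $\widetilde R$\+module.

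Finally I would invoke Lambek's theorem over the unital ring $\widetilde R$: a right $\widetilde R$\+module $F$ is flat if and only if its character module $F^+$ is an injective left $\widetilde R$\+module. Chaining the three equivalences, $F$ is t\+flat if and only if $F$ is a flat unital $\widetilde R$\+module, which is by definition the same as being a flat nonunital $R$\+module. There is no genuine obstacle in this argument --- it is a concatenation of earlier results with one standard fact --- so the only thing to be careful about is to verify at the outset that $F^+$ is c\+unital, which is exactly what the hypothesis that $F$ is t\+unital provides, making Corollary~\ref{t-injective-cor} applicable.
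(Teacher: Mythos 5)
Your proposal is correct and follows essentially the same route as the paper's own proof: pass to the character module, use Lemma~\ref{character-duality-preserves-reflects-t-c} to see that $F^+$ is c\+unital, apply Lemma~\ref{t-flat-iff-character-dual-t-injective} and Corollary~\ref{t-injective-cor}, and finish with the classical Lambek criterion over $\widetilde R$. No gaps.
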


\begin{proof}
 Let $F$ be a t\+unital right $R$\+module.
 By Lemma~\ref{character-duality-preserves-reflects-t-c}, \,$F^+$ is
a c\+unital left $R$\+module.
 Lemma~\ref{t-flat-iff-character-dual-t-injective} tells that $F$
is t\+flat if and only if $F^+$ is t\+injective.
 According to Corollary~\ref{t-injective-cor}, \,$F^+$ is t\+injective
if and only if it is an injective object of $R\Modln$, or in other
words, if and only if $F^+$ is an injective left
$\widetilde R$\+module.
 The latter condition is well-known to hold if and only if $F$ is
a flat right $\widetilde R$\+module.
\end{proof}

\begin{prop} \label{c-projective-is-t-flat}
 Let $R$ be a t\+unital ring.
 Then any c\+projective $R$\+module is t\+flat.
\end{prop}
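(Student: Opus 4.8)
The plan is to reduce to the case of a t\+unital module and then recognize c\+projectivity there as ordinary projectivity, for which t\+flatness is immediate.

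First I would use Propositions~\ref{c-projective-prop} and~\ref{t-flat-prop} to observe that both c\+projectivity and t\+flatness of a left $R$\+module $Q$ are equivalent to the corresponding property of the t\+unital module $R\ot_RQ$ (this is the equivalence of conditions~(1) and~(3) in each of those propositions). Hence it is enough to prove the assertion for t\+unital modules: if $R\ot_RQ$ is t\+flat, then $Q$ is. So from now on assume $Q$ is t\+unital and c\+projective.

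Next I would invoke Corollary~\ref{c-projective-cor}: since $Q$ is t\+unital and c\+projective, it is a projective object of the abelian category of nonunital $R$\+modules $R\Modln$, i.e., $Q$ is a projective unital $\widetilde R$\+module. Any projective module over a unital ring is flat, so $Q$ is a flat unital $\widetilde R$\+module, equivalently, a flat nonunital $R$\+module. Finally, Corollary~\ref{t-flat-cor} tells us that a t\+unital $R$\+module is t\+flat precisely when it is flat as a nonunital $R$\+module; applied to $Q$, this yields t\+flatness of $Q$, as desired.

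I do not anticipate a real obstacle here: the substantive work has already been packaged into Corollaries~\ref{c-projective-cor} and~\ref{t-flat-cor} and into the Serre\+quotient invariance expressed by Propositions~\ref{c-projective-prop} and~\ref{t-flat-prop}. The only thing to watch is the direction of the reductions — c\+projectivity must be transported from $Q$ down to $R\ot_RQ$, while t\+flatness is transported from $R\ot_RQ$ back up to $Q$ — but both directions are available since the relevant conditions in those propositions are stated as equivalences. As a self\+contained alternative to the last step one could argue directly from Lemma~\ref{t-flat-lemma}: for a left exact sequence $0\rarrow K\rarrow L\rarrow M$ in $\Modrn R$ with $L$, $M\in\Modrt R$, flatness of $Q$ over $\widetilde R$ makes the sequence $0\rarrow K\ot_RQ\rarrow L\ot_RQ\rarrow M\ot_RQ$ exact (using $K\ot_RQ=K\ot_{\widetilde R}Q$, and similarly for $L$ and $M$), which verifies the criterion of that lemma.
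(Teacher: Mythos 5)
Your argument is correct, but it is not the route the paper takes as its main proof; it coincides instead with the one-sentence alternative the paper mentions at the end. The paper's primary argument works directly with the definition of c\+projectivity and character modules: given a left exact sequence $0\rarrow K\rarrow L\rarrow M$ in $\Modrt R$, it dualizes to a right exact sequence $M^+\rarrow L^+\rarrow K^+\rarrow0$ in $R\Modlc$ (Lemmas~\ref{character-duality-preserves-reflects-t-c} and~\ref{character-exact-from-t-to-c}), applies $\Hom_R(Q,{-})$ using c\+projectivity, and then recovers left exactness of $0\rarrow K\ot_RQ\rarrow L\ot_RQ\rarrow M\ot_RQ$ from the adjunction with $\Hom_\boZ({-},\boQ/\boZ)$ and faithful injectivity of $\boQ/\boZ$. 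Your route instead reduces to a t\+unital $Q$ via Propositions~\ref{c-projective-prop} and~\ref{t-flat-prop} (and your remark about the directions of the two reductions is exactly right, since both propositions are equivalences), then quotes Corollary~\ref{c-projective-cor} to get projectivity, hence flatness, over $\widetilde R$, and Corollary~\ref{t-flat-cor} (in its left-module form, which holds by the evident left--right symmetry) to conclude t\+flatness; your direct verification of the criterion of Lemma~\ref{t-flat-lemma} from flatness over $\widetilde R$ is a clean way to avoid even that corollary. The trade-off: your argument leans on the heavier packaged results (which themselves rest on the Serre-quotient equivalences and the injectivity/flatness duality), whereas the paper's primary proof is more self-contained, needing only the two character-module lemmas and the definition of c\+projectivity; both are valid.
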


\begin{proof}
 Let $Q$ be a c\+projective left $R$\+module.
 For any left exact sequence $0\rarrow K\rarrow L\rarrow M$ in
$\Modrt R$, the sequence of character modules $M^+\rarrow L^+\rarrow
K^+\rarrow0$ is right exact in $R\Modlc$ by
Lemmas~\ref{character-duality-preserves-reflects-t-c}
and~\ref{character-exact-from-t-to-c}.
 Now the sequence of abelian groups $\Hom_R(Q,M^+)\rarrow
\Hom_R(Q,L^+)\rarrow\Hom_R(Q,K^+)\rarrow0$ can be obtained by applying
the functor $\Hom_\boZ({-},\boQ/\boZ)$ to the sequence of abelian
groups $0\rarrow K\ot_RQ\rarrow L\ot_RQ\rarrow M\ot_RQ$.
 Since the former sequence is right exact by assumption, it follows
that the latter one is left exact.

 Alternatively, one could assume without loss of generality that $Q$
is t\+unital (using Propositions~\ref{c-projective-prop}
and~\ref{t-flat-prop}), and compare Corollary~\ref{c-projective-cor}
with Corollary~\ref{t-flat-cor}.
\end{proof}

 Let $R$ be a t\+unital ring.
 Then the abelian category $R\Modlt\simeq R\Modln/R\Modlz\simeq R\Modlc$
is Grothendieck by Proposition~\ref{grothendieck-with-exact-products}.
 Hence there are enough injective objects in this abelian category
(cf.\ a more general result in~\cite[Lemma~5.4]{Quil}).

 A beautiful construction of Quillen provides (a more general version
of) the following lemma, claiming existence of enough t\+flat
modules.

\begin{lem}[\cite{Quil}]
 Let $R$ be a t\+unital ring and $M$ be a t\+unital $R$\+module.
 Then $M$ is a quotient module of a t\+flat t\+unital $R$\+module.
 In fact, $M$ is the cokernel of a morphism of t\+flat t\+unital
$R$\+modules.
\end{lem}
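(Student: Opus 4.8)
The plan is to split the statement into two parts: (i) every t\+unital left $R$\+module is a quotient module of a t\+flat t\+unital one, and (ii) the ``cokernel'' refinement, which follows from~(i) by an easy diagram chase. I would establish (ii) as a reduction first, and then concentrate on~(i), which is where Quillen's construction enters.

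For the reduction, assume~(i) and let $M$ be t\+unital. Choose a surjection $p\:F_0\rarrow M$ with $F_0$ t\+flat and t\+unital, and set $N=\ker p$. Applying the right exact functor $R\ot_R{-}$ to $0\rarrow N\rarrow F_0\rarrow M\rarrow0$, and using that the counits $R\ot_RF_0\rarrow F_0$ and $R\ot_RM\rarrow M$ are isomorphisms (because $F_0$, $M$ are t\+unital), together with their naturality, one finds that the image of the composition $R\ot_RN\rarrow R\ot_RF_0\simeq F_0$ equals $N=\ker p$; since this composition factors through the multiplication map $R\ot_RN\rarrow N$, the latter is surjective. The module $R\ot_RN$ is t\+unital by Corollary~\ref{t-unitality-created-by-tensor-with-R}(a), so~(i) provides a t\+flat t\+unital module $F_1$ with a surjection $F_1\rarrow R\ot_RN$. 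Composing $F_1\rarrow R\ot_RN\rarrow N\rarrow F_0$, I obtain a morphism $F_1\rarrow F_0$ of t\+flat t\+unital modules whose image is exactly $N$; hence its cokernel in $R\Modln$ — which lies in $R\Modlt$ by Lemma~\ref{t-unital-modules-closure-properties} — is $F_0/N\simeq M$.

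The heart of the matter is~(i). Given a t\+unital $M$ we have $RM=M$, so for every $m\in M$ I fix a finite expression $m=\sum_i r_{m,i}\,m_{m,i}$ with $r_{m,i}\in R$ and $m_{m,i}\in M$. Let $\widetilde R^{(M)}$ be the free unital $\widetilde R$\+module on symbols $(e_m)_{m\in M}$, let $\pi\:\widetilde R^{(M)}\rarrow M$ be the surjection $e_m\mapsto m$, and define an $\widetilde R$\+linear endomorphism $\phi$ of $\widetilde R^{(M)}$ by $\phi(e_m)=\sum_i r_{m,i}\,e_{m_{m,i}}$. Then $\pi\circ\phi=\pi$, and $\phi$ has image inside $R\cdot\widetilde R^{(M)}$ (all coefficients $r_{m,i}$ lie in~$R$, and $R$ is an ideal in~$\widetilde R$). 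Put
$$
 F=\varinjlim\bigl(\widetilde R^{(M)}\overset{\phi}{\rarrow}
 \widetilde R^{(M)}\overset{\phi}{\rarrow}\widetilde R^{(M)}
 \overset{\phi}{\rarrow}\dotsb\bigr).
$$
As a filtered colimit of free (hence flat) unital $\widetilde R$\+modules, $F$ is a flat unital $\widetilde R$\+module. Every element of $F$ comes from one of the copies of $\widetilde R^{(M)}$, whose image in the next copy lies in $R\cdot\widetilde R^{(M)}$, so $RF=F$; since $F$ is $\widetilde R$\+flat, this means the natural map $R\ot_RF\rarrow F$ is an isomorphism, i.e.\ $F$ is t\+unital. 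A t\+unital flat unital $\widetilde R$\+module is t\+flat by Corollary~\ref{t-flat-cor}. Finally, the maps $\pi$ are compatible with the colimit system (as $\pi\circ\phi=\pi$), so they induce the desired surjection $F\rarrow M$.

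I expect the construction of $F$ in~(i) to be the only genuine obstacle. The tension is that one needs a module that is simultaneously $\widetilde R$\+flat and t\+unital, which rules out the naive candidates: free $\widetilde R$\+modules are flat but not t\+unital, while $R\ot_R\widetilde R^{(I)}\simeq R^{(I)}$ is t\+unital but not $\widetilde R$\+flat in general (precisely when $R\Modlt$ fails to be closed under kernels, cf.\ Corollary~\ref{closed-under-kernels-iff-flat}). The mapping telescope of~$\phi$ circumvents this by building t\+unitality into the transition maps while flatness survives the filtered colimit. The remaining verifications — the naturality of the unitality maps used in the reduction, and the implication ``$\widetilde R$\+flat and $RF=F$'' $\Rightarrow$ ``t\+unital'' — are routine.
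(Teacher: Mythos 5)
Your proposal is correct, but it is not the route the paper takes: the paper gives no argument at all for this lemma, deferring the first assertion to Quillen's manuscript (Lemma~2.4 there) and the cokernel refinement to Quillen's Proposition~2.6\,(a)\,$\Rightarrow$\,(b), with only the remark that cokernels in $R\Modlt$ and $R\Modln$ agree here. What you have written is a self-contained reconstruction, and your telescope $F=\varinjlim(\widetilde R^{(M)}\xrightarrow{\phi}\widetilde R^{(M)}\xrightarrow{\phi}\cdots)$ is essentially the ``beautiful construction of Quillen'' the paper alludes to. All the steps check out: $\pi\circ\phi=\pi$ holds because $M$ t\+unital gives $RM=M$; the image of $\phi$ lies in $R\cdot\widetilde R^{(M)}$ since $R$ is an ideal in $\widetilde R$, whence $RF=F$; flatness survives the filtered colimit; and ``$F$ flat over $\widetilde R$ with $RF=F$ implies $F$ t\+unital'' follows by tensoring $0\rarrow R\rarrow\widetilde R\rarrow\boZ\rarrow0$ with $F$, using $\Tor_1^{\widetilde R}(\boZ,F)=0$ for injectivity and $RF=F$ for surjectivity of $R\ot_{\widetilde R}F\rarrow F$ (note this step does not even need $R$ t\+unital). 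Then Corollary~\ref{t-flat-cor}, in its left-module form by the evident left--right symmetry, gives t\+flatness of~$F$. Your reduction of the cokernel statement is also sound: the diagram chase showing that $R\ot_RN\rarrow N$ is surjective is correct, $R\ot_RN$ is t\+unital by Corollary~\ref{t-unitality-created-by-tensor-with-R}(a), and the composite $F_1\rarrow R\ot_RN\rarrow N\subset F_0$ has image exactly~$N$, so its cokernel is~$M$. Comparatively, the paper's citation inherits Quillen's more general setting (idempotent ideals in unital rings), while your argument has the virtue of using only material already developed in the paper and making explicit why the naive candidates (free $\widetilde R$\+modules, or $R^{(I)}$) fail and how the telescope fixes both defects at once.
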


\begin{proof}
 Notice that the inclusion $R\Modlt\rarrow R\Modln$ preserves
epimorphisms and cokernels; so the assertions are unambiguous.
 The first assertion is~\cite[Lemma~2.4]{Quil}; the second one
is~\cite[Proposition~2.6\,(a)\,$\Rightarrow$\,(b)]{Quil}.
\end{proof}

 On the other hand, the discussion in~\cite[Section~2.5]{Quil},
based on Kaplansky's theorem that all projective modules over
a local ring are free~\cite[Theorem~2]{Kap}, shows that there
\emph{need not} be enough projective objects in $R\Modln/R\Modlz$.
 The following example confirms that this phenomenon occurs for
t\+unital commutative rings~$R$.

\begin{ex}
 Let $R\subset\widetilde R$ be the rings from
Example~\ref{commutative-t-unital-not-s-unital}.
 Choose a prime number~$p$, and put $P=p\boZ\oplus R\subset
\boZ\oplus R=\widetilde R$.
 Then $P$ is a maximal ideal in~$\widetilde R$, since the quotient
ring $\widetilde R/P\simeq\boZ/p\boZ$ is a field.
 The complement $\widetilde R\setminus P$ consists of all the elements
$n+r\in\boZ\oplus R$ such that $n$~is not divisible by~$p$.

 Consider the localizations $\widetilde T'=\widetilde R_{(P)}=
S^{-1}\widetilde R$ and $T=R\widetilde R_{(P)}=S^{-1}R$.
 Then $T$ is an ideal in the commutative local ring~$\widetilde T'$.
 The ring $\widetilde T'$ differs from the unitalization $\widetilde T=
\boZ\oplus T$ of the ring~$T$; in fact, one has $\widetilde T'=
\boZ_{(p)}\oplus T$, where $\boZ_{(p)}$ is the localization of the ring
of integers at the maximal ideal $(p)\subset\boZ$.
 There is a natural injective ring homomorphism $\widetilde T\rarrow
\widetilde T'$ acting by the identity map on~$T$.

 Let us show that any c\+projective $T$\+module is zero.
 By Proposition~\ref{c-projective-prop}, it suffices to check that
any c\+projective t\+unital $T$\+module $F$ is zero.
 It is clear from the definition that any t\+unital $T$\+module is
a $\widetilde T'$\+module (since $\widetilde T'$ is the localization
of $\widetilde T$ at $S'=\boZ\setminus{(p)}$ and $T$ is
a $\widetilde T'$\+module; cf.~\cite[Proposition~9.2]{Quil}).
 By Corollary~\ref{c-projective-cor}, \,$F$ is a projective
$\widetilde T$\+module.
 Hence $F\simeq\widetilde T'\ot_{\widetilde T}F$ is also a projective
$\widetilde T'$\+module.
 Since $\widetilde T'$ is a local ring, Kaplansky's theorem tells
that $F$ is a free $\widetilde T'$\+module.
 But a nonzero free $\widetilde T'$\+module cannot be t\+unital
over~$T$. 
\end{ex}

\Section{s-Unital and t-Unital Homomorphisms of Rings}
\label{homomorphisms-secn}

 We start with a brief discussion of s\+unital ring homomorphisms
before passing to the somewhat more complicated t\+unital case.

 Let $f\:K\rarrow R$ be a homomorphism of nonunital rings.
 We will say that the homomorphism~$f$ is \emph{left s\+unital}
if $R$ an s\+unital left $K$\+module in the module structure
induced by~$f$.
 Similarly, we say that $f$~is \emph{right s\+unital} if
$f$~makes $R$ an s\+unital right $K$\+module.
 Finally, we say that $f$~is \emph{s\+unital} if it is both
left and right s\+unital.

\begin{lem} \label{s-unital-homomorphisms}
 Let $f\:K\rarrow R$ be a homomorphism of nonunital rings.
 Assume that $f$~is s\+unital (respectively, left s\+unital,
right s\+unital).
 Then the ring $R$ is s\+unital (resp., left s\+unital,
right s\+unital).
\end{lem}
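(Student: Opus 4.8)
The plan is to unwind the definitions; the statement is essentially immediate. Recall that the left $K$\+module structure on $R$ induced by~$f$ is given by $k\cdot r=f(k)r$ for $k\in K$ and $r\in R$, so that the assertion ``$f$~is left s\+unital'' means precisely that $R$ is an s\+unital left $K$\+module for this action, i.~e., that for every $r\in R$ there exists an element $e\in K$ with $f(e)r=r$ in~$R$. Likewise, ``$f$~is right s\+unital'' means that for every $r\in R$ there is $e\in K$ with $rf(e)=r$.

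First I would fix an arbitrary element $r\in R$ and, invoking left s\+unitality of~$f$, choose $e\in K$ with $f(e)r=r$. Then the element $e'=f(e)$ lies in $R$ and satisfies $e'r=r$. As $r$ was arbitrary, this exhibits $R$ as an s\+unital left $R$\+module, that is, as a left s\+unital ring. The right s\+unital case is entirely symmetric: given $r\in R$, pick $e\in K$ with $rf(e)=r$ and set $e'=f(e)\in R$, so that $re'=r$, whence $R$ is right s\+unital. The two-sided s\+unital case follows at once by combining the two one-sided statements.

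I do not expect any real obstacle here: the key (and trivial) point is that the $K$\+action on $R$ factors through the multiplication in $R$ via the image subring $f(K)\subseteq R$, so every s\+unitality witness $e\in K$ for the $K$\+module $R$ transports to an s\+unitality witness $f(e)\in R$ for the ring~$R$. Nothing about $f$ beyond its being a ring homomorphism is used; the more substantial content of the s\+unital-homomorphism notion appears in the subsequent results on restriction of scalars, not in this lemma.
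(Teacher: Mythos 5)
Your proposal is correct and follows essentially the same argument as the paper: unwind the definition of the induced $K$\+module structure on $R$ and transport each witness $e\in K$ with $f(e)r=r$ to the witness $g=f(e)\in R$, with the right-sided and two-sided cases handled symmetrically.
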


\begin{proof}
 Let $r\in R$ be an element.
 The homomorphism~$f$ being left s\+unital means existence of
an element $e\in K$ such that $f(e)r=r$ in~$R$.
 The ring $R$ being left s\+unital means existence of
an element $g\in R$ such that $gr=r$.
 In order to show that the former implies the latter, it remains
to take $g=f(e)$.
\end{proof}

 The next lemma is even more obvious.

\begin{lem} \label{restriction-of-scalars-reflects-s-unitality-lemma}
 Let $K\rarrow R$ be a homomorphism of nonunital rings.
 Then any $R$\+module that is s\+unital as a $K$\+module is
also s\+unital as an $R$\+module. \qed
\end{lem}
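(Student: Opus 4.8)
The plan is simply to unwind the definitions; there is essentially nothing else to do. Let $f\colon K\rarrow R$ be the given homomorphism of nonunital rings, and let $M$ be a left $R$\+module which is s\+unital when regarded as a left $K$\+module via the restriction of scalars along~$f$. First I would fix an arbitrary element $m\in M$. By the s\+unitality of $M$ over~$K$, there exists an element $e\in K$ such that $em=m$ in~$M$. The next point is to recall that the left $K$\+module structure on $M$ is the one induced by~$f$, so that $em$ denotes $f(e)m$; hence in fact $f(e)m=m$ in~$M$. It then remains to put $g=f(e)\in R$, and this element witnesses the s\+unitality condition for $m$ as a left $R$\+module element.

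Since $m$ was arbitrary, this shows that $M$ is s\+unital as a left $R$\+module. The same argument applies verbatim to right $R$\+modules (replacing $em$ and $f(e)m$ by $me$ and $mf(e)$), and for bimodules one simply runs the argument on each of the two sides. The only thing requiring any care is bookkeeping: writing $em$ for an element $e\in K$ and $m\in M$ implicitly refers to the $f$\+induced action, and the whole content of the lemma is that this action is realized by the single element $f(e)$ of~$R$. I do not expect any genuine obstacle here; this is a one\+line verification of the kind flagged in the sentence preceding the statement.
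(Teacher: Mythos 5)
Your proposal is correct and is exactly the argument the paper has in mind: the lemma is stated with a \qed precisely because the verification consists of taking $g=f(e)\in R$ for an element $e\in K$ with $f(e)m=m$, just as you do (and as in the proof of Lemma~\ref{s-unital-homomorphisms}). Nothing further is needed.
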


\begin{cor} \label{restriction-scalars-preserves-reflects-s-unitality}
 Let $f\:K\rarrow R$ be a left s\+unital homomorphism of nonunital
rings.
 Then a left $R$\+module is s\+unital as an $R$\+module if and only if
it is s\+unital as a $K$\+module.
\end{cor}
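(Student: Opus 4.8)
The plan is to prove the two implications separately, and only one of them uses the left s-unitality of~$f$. The ``if'' direction---that every left $R$\+module which is s\+unital as a $K$\+module is also s\+unital as an $R$\+module---is precisely Lemma~\ref{restriction-of-scalars-reflects-s-unitality-lemma}, and it holds for an arbitrary homomorphism of nonunital rings; so nothing needs to be done there.

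For the ``only if'' direction, I would let $M$ be a left $R$\+module that is s\+unital as an $R$\+module, and argue element by element. Given $m\in M$, first apply s\+unitality of $M$ over~$R$ to obtain an element $r\in R$ with $rm=m$ in~$M$. Now invoke the hypothesis that $f$~is left s\+unital, i.e., that $R$ is an s\+unital left $K$\+module in the structure induced by~$f$: this produces an element $e\in K$ such that $f(e)r=r$ in~$R$. Composing the two witnesses, one computes in~$M$ that $f(e)m=f(e)(rm)=(f(e)r)m=rm=m$, where the middle equality is associativity of the $R$\+action on~$M$. Since $em=f(e)m$ by the definition of the $K$\+module structure on~$M$ induced by~$f$, this shows $em=m$, so $M$ is s\+unital as a $K$\+module.

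I do not expect any real obstacle: the argument is a two-step absorption (absorb $m$ by an element of~$R$, then absorb that element of~$R$ by an element of~$K$), and the only thing to keep straight is which module structure is being used at each step. It is worth noting that left s\+unitality of~$f$ is exactly what is needed and cannot be dropped, and that, combined with Lemma~\ref{s-unital-homomorphisms}, the statement also recovers the fact that $R$ itself is then left s\+unital. If desired, Corollary~\ref{simultaneous-for-several} could be used to upgrade the conclusion to a single $e\in K$ with $em_i=m_i$ for any finite family $m_1,\dotsc,m_n\in M$, but for the statement as given the single-element version suffices.
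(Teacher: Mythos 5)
Your proposal is correct and follows essentially the same route as the paper: the ``if'' direction is quoted from Lemma~\ref{restriction-of-scalars-reflects-s-unitality-lemma}, and the ``only if'' direction is the same two-step absorption (find $r\in R$ with $rm=m$, then $e\in K$ with $f(e)r=r$, and compute $f(e)m=f(e)rm=rm=m$). The paper's proof is identical up to notation.
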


\begin{proof}
 The ```if'' implication is provided by
Lemma~\ref{restriction-of-scalars-reflects-s-unitality-lemma}.
 To prove the ``only if'', let $M$ be an s\+unital left $R$\+module
and $m\in M$ be an element.
 By assumption, there exist an element $g\in R$ such that $gm=m$ in $M$
and an element $e\in K$ such that $f(e)g=g$ in~$R$.
 Now it is clear that $f(e)m=f(e)gm=gm=m$ in~$M$.
\end{proof}

 We will say that a ring homomorphism $f\:K\rarrow R$ is \emph{left
t\+unital} if $R$ a t\+unital left $K$\+module in the module structure
induced by~$f$.
 Similarly, we say that $f$~is \emph{right t\+unital} if
$f$~makes $R$ a t\+unital right $K$\+module.
 At last, we say that $f$~is \emph{t\+unital} if it is both
left and right t\+unital.

\begin{lem}
\textup{(a)} If a ring $K$ is left s\+unital, then a ring homomorphism
$f\:K\rarrow R$ is left t\+unital if and only if $f$~is left s\+unital.
\par
\textup{(b)} If a ring $K$ is right s\+unital, then a ring homomorphism
$f\:K\rarrow R$ is right t\+unital if and only if $f$~is
right s\+unital.
\end{lem}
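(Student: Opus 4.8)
The plan is to reduce both parts directly to Corollary~\ref{s-unital-iff-t-unital-for-modules}, which has already done the real work. First I would unwind the definitions. By definition, saying that the homomorphism $f\:K\rarrow R$ is left t\+unital means precisely that $R$, viewed as a left $K$\+module via~$f$, is a t\+unital left $K$\+module; and saying that $f$~is left s\+unital means precisely that this same left $K$\+module $R$ is s\+unital. Thus part~(a) is asking exactly whether, for the one particular left $K$\+module $R$, t\+unitality is equivalent to s\+unitality.

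Now I would invoke the hypothesis. Since the ring $K$ is assumed to be left s\+unital, Corollary~\ref{s-unital-iff-t-unital-for-modules}(a) applies and tells us that \emph{every} left $K$\+module is t\+unital if and only if it is s\+unital; specializing to the left $K$\+module $R$ proves part~(a). Part~(b) is entirely symmetric: under the hypothesis that $K$ is right s\+unital, one applies Corollary~\ref{s-unital-iff-t-unital-for-modules}(b) to the right $K$\+module $R$ obtained by restriction of scalars along~$f$. There is no genuine obstacle here; the only points requiring a modicum of care are keeping the left/right bookkeeping consistent and remembering that throughout, the $K$\+module structure on $R$ is the one induced by~$f$ (not the internal multiplication of~$R$).

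For completeness I would also note the self-contained version of the argument, which merely re-traces the proof of the cited corollary: the implication ``$R$ left s\+unital over $K$ $\Rightarrow$ $R$ left t\+unital over $K$'' is an instance of Theorem~\ref{main-s-unital-tensor-product-theorem}(a) (using that $K$ is left s\+unital), while for the converse one observes that $K\ot_KR$ is always an s\+unital left $K$\+module when $K$ is s\+unital as a left $K$\+module (Lemma~\ref{s-unital-tensor-product-lemma}(a) with $R'=R''=K$, $B=K$, $M=R$), so that a t\+unital $R$, being isomorphic to $K\ot_KR$ via~\eqref{tensor-left-unitality-map}, is s\+unital. I expect to simply cite Corollary~\ref{s-unital-iff-t-unital-for-modules} rather than spell this out.
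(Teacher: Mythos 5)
Your proposal is correct and is essentially identical to the paper's proof, which simply cites Corollary~\ref{s-unital-iff-t-unital-for-modules} applied to the $K$\+module $R$ obtained by restriction of scalars along~$f$. Your extra unwinding of the corollary's proof is fine but not needed.
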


\begin{proof}
 Follows from Corollary~\ref{s-unital-iff-t-unital-for-modules}.
\end{proof}

 Clearly, if $KR=R$ or $RK=R$ (which means $f(K)R=R$ or $Rf(K)=R$,
respectively), then $R^2=R$.
 The following proposition, providing a direct analogue of
Lemma~\ref{s-unital-homomorphisms} for t\+unitality, is a bit
more involved.

\begin{prop} \label{t-unital-homomorphisms}
 Let $f\:K\rarrow R$ be a homomorphism of nonunital rings.
 Assume that $f$~is \emph{either} left t\+unital \emph{or} right
t\+unital.
 Then the ring $R$ is t\+unital.
\end{prop}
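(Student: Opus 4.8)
The plan is to reduce to the left t\+unital case and then prove injectivity of the multiplication map by a short diagram chase. Replacing the rings $K$, $R$ by their opposites $K^\rop$, $R^\rop$ converts a right t\+unital homomorphism into a left t\+unital one and converts t\+unitality of $R$ into t\+unitality of $R^\rop$ (the map $R\ot_RR\rarrow R$ corresponds, up to the usual identifications, to the analogous map for $R^\rop$), so it is enough to treat the case where $f$~is left t\+unital. This means that the multiplication map $\kappa\:K\ot_KR\rarrow R$, \,$a\ot r\mapsto f(a)r$, is an isomorphism of left $K$\+modules. Its image is $f(K)R$, so $f(K)R=R$, and therefore $R=f(K)R\subseteq R^2\subseteq R$; thus $R$~is idempotent and $\mu\:R\ot_RR\rarrow R$ is surjective. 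It remains to prove that $\mu$~is injective.

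Three ingredients will be combined. First, the hypothesis: $\kappa$~is an isomorphism. Second, viewing $R$ as a $K$\+$R$\+bimodule (with left $K$\+action via~$f$ and right $R$\+action by multiplication), Lemma~\ref{t-unital-tensor-product-lemma}(a), applied with $R'=K$, \,$R''=R$, \,$B=R$ (which is t\+unital as a left $K$\+module by the hypothesis), and $M=R$, shows that $R\ot_RR$ is t\+unital as a left $K$\+module; write $\kappa'\:K\ot_K(R\ot_RR)\rarrow R\ot_RR$ for the resulting (invertible) multiplication map. Third, Lemma~\ref{unitality-comparisons-have-null-co-kernels}(a): the kernel $N=\ker\mu\subseteq R\ot_RR$ is a null left $R$\+module, so $f(K)\cdot N\subseteq R\cdot N=0$.

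The chase: let $\omega\in N$, and write $\omega=\kappa'(\xi)$ with $\xi=\sum_ia_i\ot\omega_i\in K\ot_K(R\ot_RR)$, so $\omega=\sum_if(a_i)\omega_i$. Since $\mu$~is $R$\+linear, $\kappa\bigl((\id_K\ot\mu)(\xi)\bigr)=\sum_if(a_i)\mu(\omega_i)=\mu(\omega)=0$, and injectivity of~$\kappa$ gives $(\id_K\ot\mu)(\xi)=0$ in $K\ot_KR$. Applying the right exact functor $K\ot_K{-}$ to the short exact sequence $0\rarrow N\rarrow R\ot_RR\overset{\mu}{\rarrow}R\rarrow0$ yields an exact sequence $K\ot_KN\rarrow K\ot_K(R\ot_RR)\overset{\id_K\ot\mu}{\rarrow}K\ot_KR\rarrow0$, so $\xi$ is the image of some $\sum_jb_j\ot n_j\in K\ot_KN$. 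Applying~$\kappa'$ (which on $K\ot_KN$ is again the multiplication map, by naturality of~\eqref{tensor-left-unitality-map}) gives $\omega=\sum_jf(b_j)n_j\in f(K)\cdot N=0$. Hence $\mu$~is injective, therefore bijective, and $R$~is t\+unital.

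The main obstacle I anticipate is recognizing why the naive approach fails: applying $K\ot_K{-}$ to the defining short exact sequence of~$\mu$ and invoking the five lemma is circular, because under the isomorphisms $\kappa$ and~$\kappa'$ the functor $K\ot_K{-}$ sends $\mu$ to (an isomorphic copy of)~$\mu$ again. The resolution is to bring in the \emph{null}-module structure of $\ker\mu$ (Lemma~\ref{unitality-comparisons-have-null-co-kernels}(a)) together with right exactness of $K\ot_K{-}$, which jointly force $\ker\mu=f(K)\cdot\ker\mu=0$. A minor but persistent point of care throughout is bookkeeping the one-sided module structures and keeping track of which notion of ``t\+unital'' --- as a left $K$\+module versus as a ring --- is in play at each step.
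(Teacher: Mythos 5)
Your proof is correct; all steps check out (the bimodule structures, the use of Lemma~\ref{t-unital-tensor-product-lemma}(a) to get that $K\ot_K(R\ot_RR)\rarrow R\ot_RR$ is an isomorphism, the naturality square relating $\kappa$, $\kappa'$ and $\mu$, right exactness of $K\ot_K{-}$, and the null-module property of $\ker\mu$ from Lemma~\ref{unitality-comparisons-have-null-co-kernels}(a)), and the reduction of the right t\+unital case to the left one via opposite rings is legitimate. However, your route differs from the paper's. The paper works one level up, with the initial fragment of the relative bar-complex $R\ot_KR\ot_KR\rarrow R\ot_KR\rarrow R\rarrow0$, whose right exactness is equivalent to $R\ot_RR\simeq R$, and proves it by writing down an explicit contracting homotopy: the components $h_0$ and $h_1$ insert a $K$\+factor on the left using the inverse of $K\ot_KR\simeq R$ followed by $f\ot_K{-}$, and one checks $\partial h+h\partial=\id$ on $R\ot_KR$. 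That argument is self-contained (it needs nothing beyond naturality and associativity of the tensor product) and actually yields slightly more, namely that the whole bar-complex is split exact as a complex of left $K$\+modules; for the right t\+unital case the paper instead points to Proposition~\ref{restriction-of-scalars-t-c-unitality}(a). Your version trades the explicit homotopy for a chase that reuses earlier results of the paper --- surjectivity from $f(K)R=R$, then injectivity by combining the inherited $K$\+t\+unitality of $R\ot_RR$, the null-kernel statement of Lemma~\ref{unitality-comparisons-have-null-co-kernels}(a), and right exactness of $K\ot_K{-}$ to force $\ker\mu=f(K)\cdot\ker\mu=0$. Both proofs ultimately exploit the same mechanism (inserting a $K$\+factor via $\kappa^{-1}$ and naturality), but yours isolates the role of the null kernel, while the paper's is more economical in prerequisites and exhibits the splitting explicitly.
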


\begin{proof}
 Assuming that the multiplication map $K\ot_KR\rarrow R$ is
an isomorphism, we need to prove that so is the multiplication map
$R\ot_RR\rarrow R$.

 Indeed, consider the initial fragment of a relative bar-complex
\begin{equation} \label{bar-complex-fragment}
 R\ot_KR\ot_KR\overset\partial\lrarrow R\ot_KR
 \overset\partial\lrarrow R\lrarrow0
\end{equation}
with the differentials $\partial(r'\ot r''\ot r''') = r'r''\ot r'''
- r'\ot r''r'''$ and $\partial(r'\ot r'') = r'r''$ for all
$r'$, $r''$, $r'''\in R$.
 We have to show that the short sequence~\eqref{bar-complex-fragment}
is right exact.
 This is obviously equivalent to the desired isomorphism
$R\ot_RR\simeq R$.

 Actually, the short sequence of left
$K$\+modules~\eqref{bar-complex-fragment} is split right exact
(and in fact, the whole bar-complex is split exact as a complex
of left $K$\+modules).
 Let us construct a contracting homotopy~$h$.
 Its component $h_0\:R\rarrow R\ot_KR$ is defined as the composition
$R\simeq K\ot_KR\rarrow R\ot_KR$ of the inverse map to the natural
isomorphism $K\ot_KR\rarrow R$ and the map $f\ot_KR\:K\ot_KR\rarrow
R\ot_KR$ induced by the $K$\+$K$\+bimodule morphism $f\:K\rarrow R$.
 Similarly, the component $h_1\:R\ot_KR\rarrow R\ot_KR\ot_KR$ is
defined as the composition $R\ot_KR\rarrow K\ot_KR\ot_KR\rarrow
R\ot_KR\ot_KR$ of the inverse map to the natural isomorphism
$K\ot_KR\ot_KR\rarrow R\ot_KR$ and the map $f\ot_KR\ot_KR\:
K\ot_KR\ot_KR\rarrow R\ot_KR\ot_KR$ induced by the morphism~$f$.
 It is straightforward to check that $\partial h+h\partial=\id\:
R\ot_KR\rarrow R\ot_KR$, proving the desired exactness at
the term $R\ot_KR$.

 Alternatively, one could obtain the desired result (for right
t\+unital ring homomorphisms~$f$) as a particular case of the next
Proposition~\ref{restriction-of-scalars-t-c-unitality}(a).
\end{proof}

 Arguing similarly to the proof of
Proposition~\ref{t-unital-homomorphisms} and using initial fragments
of relative bar/cobar-complexes for $R$\+modules, one can show that,
if a ring homomorphism $f\:K\rarrow R$ is right t\+unital, then
any right $R$\+module that is t\+unital as a $K$\+module is also
t\+unital as an $R$\+module, and any left $R$\+module that is
c\+unital as a $K$\+module is also c\+unital as an $R$\+module.
 A more direct approach allows to obtain the same conclusions under
more relaxed assumptions.

\begin{prop} \label{restriction-of-scalars-t-c-unitality}
 Let $f\:K\rarrow R$ be a homomorphism of nonunital rings.
 Assume that $RK=R$, or more generally, $KR\subset RK$ in~$R$.
 Then \par
\textup{(a)} any right $R$\+module that is t\+unital as a $K$\+module
is also t\+unital as an $R$\+module; \par
\textup{(b)} any left $R$\+module that is c\+unital as a $K$\+module
is also c\+unital as an $R$\+module. \par
 In particular, if the ring $R$ is commutative, then
the assertions~\textup{(a\+-b)} hold for any homomorphism~$f$.
\end{prop}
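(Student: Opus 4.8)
The plan is to prove part~(b) directly, by a short diagram chase based on restriction of scalars along~$f$, and then to deduce part~(a) from part~(b) by passing to character modules. The very last assertion is then immediate: if $R$ is commutative, then $f(k)r=rf(k)$ for all $k\in K$ and $r\in R$, so the subgroups $KR=f(K)R$ and $RK=Rf(K)$ of~$R$ coincide, and in particular $KR\subseteq RK$.

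For part~(b), let $P$ be a left $R$\+module which is c\+unital as a left $K$\+module, and write $\alpha\:P\rarrow\Hom_R(R,P)$ for the natural map~\eqref{Hom-unitality-map}. Restriction of scalars along~$f$ provides an abelian group homomorphism $\beta\:\Hom_R(R,P)\rarrow\Hom_K(K,P)$, \ $\phi\longmapsto\phi\circ f$ (the composite $\phi\circ f$ is $K$\+linear because $\phi$ is $R$\+linear and $f$ is a ring homomorphism). One checks directly that $\beta\circ\alpha\:P\rarrow\Hom_K(K,P)$ is exactly the natural map~\eqref{Hom-unitality-map} for $P$ regarded as a left $K$\+module; by hypothesis it is an isomorphism. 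Injectivity of~$\alpha$ then follows at once: $\alpha(p)=0$ implies $f(k)p=0$ for all $k\in K$, hence $\beta(\alpha(p))=0$, hence $p=0$.

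It remains to prove that $\alpha$ is surjective, and this is the step where the hypothesis $KR\subseteq RK$ is used. Given $\phi\in\Hom_R(R,P)$, I would put $p=(\beta\circ\alpha)^{-1}(\phi\circ f)\in P$, so that $\phi(f(k))=f(k)p$ for all $k\in K$, and then set $q_r=\phi(r)-rp\in P$ for an arbitrary $r\in R$; the goal is to show $q_r=0$. For any $k\in K$, left $R$\+linearity of $\phi$ gives $f(k)q_r=\phi(f(k)r)-f(k)rp$. Here the hypothesis lets us write $f(k)r=\sum_i s_i f(k_i)$ with $s_i\in R$ and $k_i\in K$, whence $\phi(f(k)r)=\sum_i s_i\phi(f(k_i))=\sum_i s_i f(k_i)p=(f(k)r)p$, so that $f(k)q_r=0$ for every $k\in K$. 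Therefore $\beta(\alpha(q_r))=0$, and the injectivity of~$\alpha$ forces $q_r=0$, i.e.\ $\phi(r)=rp$ for all $r\in R$; thus $\phi=\alpha(p)$, and $\alpha$ is an isomorphism.

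Finally, part~(a) follows from part~(b) by character\+module duality. If $N$ is a right $R$\+module that is t\+unital as a right $K$\+module, then the left $K$\+module structure on $N^+=\Hom_\boZ(N,\boQ/\boZ)$ obtained by restriction along~$f$ is precisely the character\+module structure of the right $K$\+module~$N$; so Lemma~\ref{character-duality-preserves-reflects-t-c}, applied to the ring~$K$, shows that $N^+$ is c\+unital as a left $K$\+module. By part~(b), $N^+$ is then c\+unital as a left $R$\+module, and Lemma~\ref{character-duality-preserves-reflects-t-c}, now applied to the ring~$R$, shows that $N$ is t\+unital as a right $R$\+module. I expect the surjectivity argument in part~(b) --- specifically the identity $f(k)q_r=0$, which is exactly where the relation $KR\subseteq RK$ enters --- to be the only nonformal point; everything else is bookkeeping with the two adjunction\+type natural maps.
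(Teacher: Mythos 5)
Your proof is correct. Part~(b) is essentially the paper's own argument: the paper also establishes injectivity of $P\rarrow\Hom_R(R,P)$ by noting that an element annihilated by $R$ is annihilated by $K$, and proves surjectivity by subtracting $r\mapsto rp$ from the given morphism and using $KR\subseteq RK$ to show the difference is killed by the $K$\+action; your computation with $q_r$ and $f(k)r=\sum_i s_if(k_i)$ unpacks exactly the paper's chain $h|_{RK}=0\Rightarrow h|_{KR}=0\Rightarrow Kh(r)=0$. Where you genuinely diverge is part~(a): the paper's primary proof is a direct element-level argument with tensors (surjectivity of $N\ot_RR\rarrow N$ from $NK=N$, injectivity by showing $N\ot_KK\rarrow N\ot_RR$ is surjective, moving factors across the tensor sign via $KR\subseteq RK$), and only mentions in a closing sentence that one can instead deduce~(a) from~(b) via Lemma~\ref{character-duality-preserves-reflects-t-c}. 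You take that dual route, and you correctly verify the one point it requires, namely that the restricted left $K$\+structure on $N^+$ is the character-module structure of the restricted right $K$\+module, so the lemma (which holds for arbitrary nonunital rings, hence for both $K$ and $R$) applies in both directions. The duality route is shorter but less self-contained and less explicit; the paper's tensor argument is elementary and avoids invoking $\boQ/\boZ$\+duality. One small wording slip: at the end of the surjectivity step you say ``the injectivity of $\alpha$ forces $q_r=0$,'' but what you actually use is the injectivity of $\beta\circ\alpha$ (the natural $K$\+map), which you had already established is an isomorphism; the logic is fine, only the attribution should be adjusted.
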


\begin{proof}
 Part~(b): Let $P$ be a left $R$\+module that is c\+unital as
a left $K$\+module.
 Then, first of all, the map $P\rarrow\Hom_K(K,P)$ is injective;
this means that no nonzero element of $P$ is annihilated by~$K$.
 It follows that no nonzero element of $P$ is annihilated by~$R$,
either; so the map $P\rarrow\Hom_R(R,P)$ is injective.

 To prove surjectivity of the latter map, consider a left
$R$\+module morphism $g\:R\rarrow P$.
 The composition $K\rarrow R\rarrow P$ is a left $K$\+module
morphism $gf\:K\rarrow P$.
 Since $P$ is c\+unital over~$K$, there exists an element $p\in P$
such that $gf(k)=kp$ for all $k\in K$.
 Consider the $R$\+module morphism $h\:R\rarrow P$ given by
the formula $h(r)=g(r)-rp$ for all $r\in R$.
 Then the composition $hf\:K\rarrow P$ vanishes; and we have to
prove that the whole map~$h$ vanishes.

 Indeed, for all $r\in R$ and $k\in K$ we have $h(rf(k))=rh(f(k))=0$
in~$P$; so $h|_{RK}=0$.
 By assumption, we have $KR\subset RK\subset R$; hence $h|_{KR}=0$.
 This means that for any $r\in R$ and $k\in K$ we have $kh(r)=
h(f(k)r)=0$ in~$P$.
 Thus the element $h(r)\in P$ is annihilated by $K$, and we can
conclude that $h(r)=0$.

 Part~(a): let $N$ be a right $R$\+module that is t\+unital as
a right $K$\+module.
 Then, first of all, the map $N\ot_KK\rarrow N$ is surjective;
so $NK=N$, and it follows that $NR=N$, or in other words, the map
$N\ot_RR\rarrow N$ is surjective.

 To prove injectivity of the latter map, consider the composition
$N\ot_KK\rarrow N\ot_RR\rarrow N$.
 By assumption, the map $N\ot_KK\rarrow N$ is an isomorphism.
 So it suffices to show that the map $N\ot_KK\rarrow N\ot_RR$ is
surjective.
 We have $NK=N$; so any element $n\in N$ has the form $n=\sum_{i=1}^j
n_ik_i$ for some $n_i\in N$ and $k_i\in K$.
 Hence, for any $r\in R$, we have $n\ot_Rr=\sum_{i=1}^jn_ik_i\ot_Rr
=\sum_{i=1}^jn_i\ot_Rk_ir$.

 By assumption, $KR\subset RK$; so, for every $1\le i\le j$, we have
$k_ir=\sum_{u=1}^{v_i}r_{i,u}k'_{i,u}$ for some $r_{i,u}\in R$ and
$k'_{i,u}\in K$.
 Thus $n\ot_Rr=\sum_{i=1}^jn_i\ot_Rk_ir=\sum_{i=1}^j\sum_{u=1}^{v_i}
n_i\ot_Rr_{i,u}k'_{i,u}=\sum_{i=1}^j\sum_{u=1}^{v_i}n_ir_{i,u}
\ot_Rk'_{i,u}$, and the latter element belongs to the image of
the map $N\ot_KK\rarrow N\ot_RR$, as desired.

 Alternatively, one can deduce part~(a) from part~(b) using
Lemma~\ref{character-duality-preserves-reflects-t-c}.
\end{proof}

\begin{cor}
 Let $f\:K\rarrow R$ be a right t\+unital homomorphism of nonunital
rings.  Then \par
\textup{(a)} a right $R$\+module is t\+unital as an $R$\+module if and
only if it is t\+unital as a $K$\+module; \par
\textup{(b)} a left $R$\+module is c\+unital as an $R$\+module if and
only if it is c\+unital as a $K$\+module.
\end{cor}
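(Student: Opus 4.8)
The plan is to obtain the two ``if'' implications from Proposition~\ref{restriction-of-scalars-t-c-unitality}, and the two ``only if'' implications from Lemmas~\ref{t-unital-tensor-product-lemma}(b) and~\ref{t-unital-c-unital-Hom-lemma}. The first observation I would record is that a right t\+unital homomorphism $f\:K\rarrow R$ satisfies $RK=R$: by hypothesis the natural map $R\ot_KK\rarrow R$ is an isomorphism, hence in particular surjective, and surjectivity is precisely the statement $Rf(K)=R$. (It follows from Proposition~\ref{t-unital-homomorphisms} that $R$ is itself t\+unital, though that fact is not needed below, since neither Proposition~\ref{restriction-of-scalars-t-c-unitality} nor Lemmas~\ref{t-unital-tensor-product-lemma}(b) and~\ref{t-unital-c-unital-Hom-lemma} assume t\+unitality of the rings involved.) With the equation $RK=R$ available, Proposition~\ref{restriction-of-scalars-t-c-unitality}(a) yields the ``if'' direction of~(a) and Proposition~\ref{restriction-of-scalars-t-c-unitality}(b) yields the ``if'' direction of~(b).

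For the ``only if'' direction of~(a), suppose $N$ is a right $R$\+module which is t\+unital over~$R$, so that the multiplication map $N\ot_RR\rarrow N$ is an isomorphism of right $R$\+modules; restricting scalars along~$f$, it is also an isomorphism of right $K$\+modules. Viewing $R$ as an $R$\+$K$\+bimodule via~$f$ on the right, the hypothesis that $f$ is right t\+unital says exactly that this bimodule is t\+unital as a right $K$\+module, so Lemma~\ref{t-unital-tensor-product-lemma}(b) (with $R'=R$, $R''=K$, $B=R$) tells that $N\ot_RR$ is t\+unital as a right $K$\+module. Since t\+unitality of a module is invariant under isomorphism, $N$ is t\+unital over~$K$. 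The ``only if'' direction of~(b) is the exact analogue: if $P$ is a left $R$\+module which is c\+unital over~$R$, then the natural map $P\rarrow\Hom_R(R,P)$ is an isomorphism of left $R$\+modules, hence of left $K$\+modules, and Lemma~\ref{t-unital-c-unital-Hom-lemma} (with $R'=R$, $R''=K$, $B=R$) shows that $\Hom_R(R,P)$ is c\+unital as a left $K$\+module; therefore so is~$P$.

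The only steps calling for a bit of care are bookkeeping: one must verify that the right $K$\+module structure on $N\ot_RR$ produced by Lemma~\ref{t-unital-tensor-product-lemma}(b), and the left $K$\+module structure on $\Hom_R(R,P)$ produced by Lemma~\ref{t-unital-c-unital-Hom-lemma}, coincide on the nose with the structures obtained by restriction of scalars along~$f$ from the respective $R$\+module structures (both reduce to the identity $k\cdot x = f(k)x$), and that the properties of being t\+unital and c\+unital are preserved under isomorphisms of modules (each is the assertion that a certain natural map is invertible, and such natural maps fit into commutative squares under isomorphisms). Neither point is substantive; I expect the statement to follow formally from the results already established, with no genuine obstacle.
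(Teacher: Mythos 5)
Your proposal is correct and follows essentially the same route as the paper: the equality $RK=R$ extracted from right t\+unitality feeds Proposition~\ref{restriction-of-scalars-t-c-unitality} for the ``if'' directions, and the ``only if'' directions come from Lemma~\ref{t-unital-tensor-product-lemma}(b) and Lemma~\ref{t-unital-c-unital-Hom-lemma} with $B=R'=R$, $R''=K$, combined with the isomorphisms $N\simeq N\ot_RR$ and $P\simeq\Hom_R(R,P)$. The extra bookkeeping remarks about module structures are harmless and do not change the argument.
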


\begin{proof}
 If $f$~is right t\+unital, then $RK=R$.
 So Proposition~\ref{restriction-of-scalars-t-c-unitality} is
applicable, providing the ``if'' implications in (a) and~(b).

 ``Only if'' in~(a): for any right $R$\+module $N$,
the right $R$\+module $N\ot_RR$ is t\+unital as a right $K$\+module
by Lemma~\ref{t-unital-tensor-product-lemma}(b) applied to $B=R'=R$
and $R''=K$ (since $R$ is a t\+unital right $K$\+module).
 If $N$ is a t\+unital right $R$\+module, then $N\simeq N\ot_RR$.

 ``Only if'' in~(b): for any left $R$\+module $P$,
the left $R$\+module $\Hom_R(R,P)$ is c\+unital as a left $K$\+module
by Lemma~\ref{t-unital-c-unital-Hom-lemma} applied to $B=R'=R$
and $R''=K$ (since $R$ is a t\+unital right $K$\+module).
 If $P$ is a c\+unital left $R$\+module, then $P\simeq\Hom_R(R,P)$.
\end{proof}

\begin{ex} \label{t-c-unitality-restriction-of-scalars-counterex}
 (1)~The following counterexample shows that the assertion of
Proposition~\ref{restriction-of-scalars-t-c-unitality}(a) is
\emph{not} true for an arbitrary ring homomorphism~$f$ in general.

 Let $\widetilde R=\boZ[x]$ and $\widetilde S=\boZ[y]$ be two
rings of polynomials in one variable with integer coefficients,
and let $\widetilde T=\boZ\{x,y\}$ be the ring of noncommutative
polynomials in two variables $x$, $y$, i.~e., the associative
unital ring freely generated by its unital subrings
$\widetilde R$ and $\widetilde S$.
 Let $R=(x)=xR\subset\widetilde R$ be the ideal spanned by~$x$
in $\widetilde R$ and $T=(x,y)\subset\widetilde T$ be the ideal
spanned by $x$ and~$y$ in~$\widetilde T$.
 So $R$ is a subring in~$T$.

 The left $T$\+module $T$ decomposes as a direct sum of two free unital
$\widetilde T$\+modules with one generator
$T=\widetilde Tx\oplus\widetilde Ty$.
 Consequently, one has $N\ot_TT\simeq N\oplus N$ for any right
$R$\+module $N$, and the map $N\ot_TT\rarrow N$ can be identified
with the map $N\oplus N\xrightarrow{(x,y)}N$.
 On the other hand, $R=\widetilde Rx$ is a free unital
$\widetilde R$\+module with one generator.
 So for any $R$\+module $N$ one has $N\ot_RR\simeq N$, and the map
$N\ot_RR\rarrow N$ can be identified with the map
$N\overset x\rarrow N$.

 Now let $N$ be any nonzero unital right $\widetilde T$\+module where
$x$~acts by an invertible map $x\:N\simeq N$.
 Then it is clear that $N\oplus N\xrightarrow{(x,y)}N$ is a surjective,
noninjective map.
 So $N$ is \emph{not} a t\+unital $T$\+module, even though it is
a t\+unital module over the subring $R\subset T$.

 (2)~Similarly one constructs a counterexample showing that
the assertion of
Proposition~\ref{restriction-of-scalars-t-c-unitality}(b) does not
generalize to arbitrary ring homomorphisms~$f$.

 Let $R\subset T$ be the same pair of rings as in~(1).
 Then, for any left $T$\+module $P$, the direct sum decomposition
$T=\widetilde Tx\oplus\widetilde Ty$ of the left $T$\+module $T$
implies an isomorphism $\Hom_T(T,P)\simeq P\oplus P$.
 The map $P\rarrow\Hom_T(T,P)$ can be identified with
the map $P\xrightarrow{(x,y)}P\oplus P$.
 On the other hand, since $R=\widetilde Rx$ is a free unital
$\widetilde R$\+module, for any $R$\+module $P$ one has
$\Hom_R(R,P)\simeq P$, and the map $P\rarrow\Hom_R(R,P)$
can be identified with the map $P\overset x\rarrow P$.

 Now let $P$ be any nonzero unital left $\widetilde T$\+module where
$x$~acts by an invertible map $x\:P\simeq P$.
 Then $P\xrightarrow{(x,y)}P\oplus P$ is an injective,
nonsurjective map.
 So $P$ is \emph{not} a c\+unital $T$\+module, even though it is
a c\+unital module over the subring $R\subset T$.
\end{ex}

\begin{ex}
 This is an improved version of
Example~\ref{t-c-unitality-restriction-of-scalars-counterex} providing
a (ring, subring) pair $R\subset T$ such that \emph{both the rings $R$
and $T$ are t\+unital}, but the assertions of
Proposition~\ref{restriction-of-scalars-t-c-unitality} still do not
hold for the inclusion map $f\:R\rarrow T$.

 Let $\widetilde R=\boZ[x^p\mid p\in\boQ_{\ge0}]$ and
$\widetilde S=\boZ[y^q\mid q\in\boZ_{\ge0}]$ be the two rings from
Example~\ref{rational-power-polynomials-two-variables-t-unital}.
 Let $\widetilde T$ be the associative unital ring freely generated
by its unital subrings $\widetilde R$ and~$\widetilde S$.
 So, as an abelian group, $\widetilde T$ has a basis consisting of
words in the alphabet~$x^p$ and~$y^q$, with $p$, $q\in\boQ_{>0}$
and the powers of~$x$ alternating with the powers of~$y$ in
the sequences forming the words.
 Let $T\subset\widetilde T$ be the subgroup spanned by nonempty words,
or in other words, the ideal in $\widetilde T$ spanned by $x^p$
and~$y^q$ with $p$, $q>0$.

 Example~\ref{commutative-t-unital-not-s-unital} tells that
the ring $R=\boZ[x^p\mid p\in\boQ_{>0}]\subset\widetilde R$ is
t\+unital.
 A similar explicit argument proves that the ring $T$ is t\+unital.
 Recall also the notation
$S=\boZ[y^q\mid q\in\boQ_{>0}]\subset\widetilde S$.
 The left $T$\+module $T$ decomposes as a direct sum of two modules
$T=\bigcup_{n\ge1}\widetilde Tx^{1/n}\oplus\bigcup_{n\ge1}
\widetilde Ty^{1/n}\simeq\widetilde T\ot_{\widetilde R}R\oplus
\widetilde T\ot_{\widetilde S}S$.

 (1)~Let $N$ be any nonzero unital right $\widetilde T$\+module
where all the elements $x^p$ and~$y^q$ act by invertible operators.
 Then one has $N\ot_RR\simeq N$, and a natural choice of the latter
isomorphism identifies the map $N\ot_RR\rarrow N$ with the identity
map~$\id_N$.
 One also has $N\ot_TT\simeq N\oplus N$, and the map $N\ot_TT\rarrow N$
can be identified with the diagonal projection $N\oplus N
\xrightarrow{(1,1)}N$.
 So $N$ is \emph{not} a t\+unital $T$\+module, but $N$ is
a t\+unital module over the subring $R\subset T$.

 (2)~Let $P$ be any nonzero unital left $\widetilde T$\+module
where all the elements $x^p$ and~$y^q$ act by invertible operators.
 Then one has $\Hom_R(R,P)\simeq P$, and a natural choice of
the latter isomorphism identifies the map $P\rarrow\Hom_R(R,P)$
with the identity map~$\id_P$.
 One also has $\Hom_T(T,P)\simeq P\oplus P$, and the map
$P\rarrow\Hom_T(T,P)$ can be identified with the diagonal embedding
$P\xrightarrow{(1,1)}P\oplus P$.
 So $P$ is \emph{not} a c\+unital $T$\+module, but $P$ is
a c\+unital module over the subring $R\subset T$.
\end{ex}

\bigskip

\end{document}